\numberwithin{equation}{section}
\newtheorem{thm}{Theorem}[section]
\newtheorem{cor}[thm]{Corollary}
\newtheorem{prop}[thm]{Proposition}
\newtheorem{defn}[thm]{Definition}
\newtheorem{lem}[thm]{Lemma}
\theoremstyle{definition}
\newtheorem{rem}[thm]{Remark}
\newtheorem{ex}[thm]{Example}
\def\R{{\mathbb{R}}}
\def\T{{\mathbb{T}}}
\def\C{{\mathbb{C}}}
\def\N{{\mathbb{N}}}
\def\Z{{\mathbb{Z}}}
\def\S{{\mathbb{S}}}
\def\Nh{\mathbb{N}_h}
\renewcommand{\atop}[2]{\substack{{#1}\\{#2}}}
\newcommand{\Kl}{K_{\lambda}}
\newcommand{\supp}{{\rm supp}\,}
\newcommand{\sgn}{{\rm sgn}\,}
\renewcommand{\a}{\alpha}
\renewcommand{\b}{\beta}
\newcommand{\g}{\gamma}
\renewcommand{\d}{\delta}
\newcommand{\la}{\lambda}
\newcommand{\eps}{\varepsilon}
\newcommand{\e}{\varepsilon}
\renewcommand{\t}{\tau}
\newcommand{\te}{\theta}
\newcommand{\s}{\sigma}
\newcommand{\vp}{\varphi}
\newcommand{\8}{\infty}
\newcommand{\dpi}{2\pi i}
\newcommand{\vt}{\vartheta}
\newcommand{\ind}{\mathbbm{1}}
\renewcommand{\r}{\right}
\begin{document}

\begin{abstract}
We establish an asymptotic formula for the number of
lattice points in the sets 
\[
\mathbf S_{h_1, h_2, h_3}(\lambda):
=\{x\in\mathbb Z_+^3:\lfloor h_1(x_1)\rfloor+\lfloor h_2(x_2)\rfloor+\lfloor h_3(x_3)\rfloor=\lambda\}
\quad \text{with}\quad \lambda\in\mathbb Z_+;
\]
where functions  $h_1, h_2, h_3$ are constant multiples of  regularly
varying functions of the form $h(x):=x^c\ell_h(x)$, where the exponent $c>1$ (but
close to $1$) and a function $\ell_h(x)$ is taken from a certain wide class of slowly
varying functions. Taking $h_1(x)=h_2(x)=h_3(x)=x^c$ we will also derive 
an asymptotic formula for the number of
lattice points in the sets
\[
\mathbf S_{c}^3(\lambda)
:=
\{x \in \mathbb Z^3 : \lfloor |x_1|^c \rfloor + \lfloor |x_2|^c \rfloor + \lfloor |x_3|^c \rfloor= \lambda \}
\quad \text{with}\quad \lambda\in\mathbb Z_+;
\]
which can be thought of as a perturbation of the classical Waring problem in three variables. 

We will use the latter asymptotic formula to study, the main results of this paper,  norm and
pointwise convergence  of  the ergodic averages
\[
\frac{1}{\#\mathbf S_{c}^3(\lambda)}\sum_{n\in \mathbf S_{c}^3(\lambda)}f(T_1^{n_1}T_2^{n_2}T_3^{n_3}x) 
\quad \text{as}\quad \lambda\to\infty;
\]
where $T_1, T_2, T_3:X\to X$ are commuting invertible and
measure-preserving transformations of a $\sigma$-finite measure space
$(X, \nu)$ for any function  $f\in L^p(X)$ with $p>\frac{11-4c}{11-7c}$. Finally, we will study the equidistribution problem corresponding to the 
spheres $\mathbf S_{c}^3(\lambda)$.

\end{abstract}

\title[]{Lattice points problem, equidistribution
and ergodic theorems for certain  arithmetic spheres}

\author[A. Iosevich]{Alex Iosevich}
\address{Alex Iosevich,     \newline
			University of Rochester,
		Department of Mathematics, \newline
      RC Box 270138, Rochester, 
      NY 14627, USA
      }
\email{iosevich@math.rochester.edu}

\author[B. Langowski]{Bartosz Langowski}
\address{Bartosz Langowski, \newline
Indiana University,
Department of Mathematics, \newline 
831 East 3rd St., Bloomington, 
IN 47405, USA \newline
\indent and \newline			
Wroc\l{}aw University of Science and Technology,
Faculty of Pure and Applied Mathematics, \newline
      Wyb{.} Wyspia\'nskiego 27,
      50--370 Wroc\l{}aw, Poland }
 \email{balango@iu.edu}

\author[M. Mirek]{Mariusz Mirek }
\address{Mariusz Mirek, \newline
Department of Mathematics,
Rutgers University, \newline
Piscataway, NJ 08854-8019, USA \newline
\indent and \newline
University of Wroc\l aw, 
Mathematical Institute, \newline
Plac Grunwaldzki 2/4,
 50--384 Wroc{\l}aw, Poland
}
\email{mariusz.mirek@rutgers.edu}

\author[T.Z. Szarek]{Tomasz Z. Szarek}
\address{Tomasz Z. Szarek,     \newline
BCAM - Basque Center for Applied Mathematics, \newline
48009 Bilbao,
Spain \newline
\indent and \newline
University of Wroc\l aw, 
Mathematical Institute, \newline
Plac Grunwaldzki 2/4,
 50--384 Wroc{\l}aw, Poland
}
\email{szarektomaszz@gmail.com}

\maketitle

\footnotetext{

\noindent 2010 Mathematics Subject Classification: 11P05, 37A99, 42B25.\\ 
Key words and phrases: Lattice points, ergodic theorem, spherical maximal function, equidistribution problem, discrepancy, Fourier transform estimate, variational estimate, exponential sum. \\
\indent The first listed author was supported by the National Science Foundation
under grant no. HDR TRIPODS - 1934962. The second and the fourth listed authors were supported by the
National Science Centre of Poland within the research project OPUS
2017/27/B/ST1/01623.  Mariusz Mirek was partially supported by
Department of Mathematics at Rutgers University, and by the National
Science Centre in Poland, grant Opus 2018/31/B/ST1/00204.  The fourth
listed author was supported also by the Foundation for Polish Science via the
START Scholarship.  }

\section{Introduction}

Let $\lambda\in\Z_+$ be a positive integer and define the set
$\mathbf S_{2}^3(\lambda)$ of all lattice points on a two-dimensional
sphere of radius $\lambda^{1/2}$ by
\[
\mathbf S_{2}^3(\lambda)
:=
\{x \in \mathbb Z^3 : x_1^2 +x_2^2 +x_3^2 = \lambda \}.
\]
The study of the behavior of $\mathbf S_{2}^3(\lambda)$ as
$\lambda\to\infty$ is a central problem in number theory, which has
gone through a period of considerable change and development in the
past three decades. One of many interesting features of this problem is that 
the sets $\mathbf S_{2}^3(\lambda)$ might
be empty for some choices of $\lambda\in\Z_+$. A celebrated result of
Legendre, whose complete proof was given by Gauss \cite{Gauss}, states
that $\mathbf S_{2}^3(\lambda)\neq\emptyset$ if and only if
$\lambda\neq 4^m(8n+7)$ for $m, n\in\N$ (see also
 to \cite{gros}). Hence, for $\lambda\neq 4^m(8n+7)$ it makes
sense to study the counting function
\[
r_2(\lambda):=\# \mathbf S_{2}^3(\lambda).
\]
The behavior of $r_2(\lambda)$ as $\lambda\to\infty$ is very
delicate. On the one hand, $r_2(4^m)=6$ for $m\in\Z_+$. It is well known, (see e.g. the paper of Bateman \cite{Bat}),  that
\begin{align}
\label{eq:40}
r_2(\lambda)=\frac{\pi^{3/2}}{\Gamma(3/2)}\lambda^{1/2}\mathfrak S_3(\lambda)=
\frac{2^3\Gamma(1+1/2)^{3}}{\Gamma(3/2)}\lambda^{3/2-1}\mathfrak S_3(\lambda),
\end{align}
where $\Gamma$ is the standard Gamma function and the factor
$\mathfrak S_3(\lambda)$ is called the singular series, see \cite{Bat}
for more details. Formula \eqref{eq:40} can be also obtained using the
spectral theory of automorphic forms \cite{Duke, Iw}. We also refer to
\cite[Theorem 20.15, p. 478]{IK} for a more extensive treatment of
the formula for $r_2(\lambda)$ and its relations with the Kloosterman
circle method. The formula \eqref{eq:40} is a genuine asymptotic only
if the singular series $\mathfrak S_3(\lambda)$ does not
vanish. However, this is a very subtle question.  The upper bound $r_2(\lambda)\lesssim \lambda^{\frac{1}{2}+o(1)}$ can be obtained by
analyzing the singular series $\mathfrak S_3(\lambda)$ as
$\lambda\to\infty$. Moreover, if $\lambda\neq 0, 4, 7\mod 8$, then
there is also a lower bound
$r_2(\lambda)\gtrsim \lambda^{\frac{1}{2}-o(1)}$, which also follows
from \eqref{eq:40} and Siegel's bounds \cite{Sieg}, which ensure that
$|\mathfrak S_3(\lambda)|\gtrsim\lambda^{-o(1)}$; see also \cite[Remark
below formula (20.130), p. 479]{IK}, where more details are given.

Having many lattice points in $\mathbf S_{2}^3(\lambda)$ as
$\lambda\to\infty$ and $\lambda\neq 0, 4, 7\mod 8$, it is natural
to understand distribution of their projections 
\[
\mathbf P_2^3(\lambda):=\{\lambda^{-1/2}x: x\in\mathbf S_2^3(\lambda)\}\subset \mathbb S^2
\]
on the unit sphere $\mathbb S^2\subset\R^3$.  This line of
investigations were initiated by Linnik \cite{Lin}, who proved under
the Generalized Riemann Hypothesis, that the projected lattice points
$\mathbf P_2^3(\lambda)$ become equidistributed on the unit sphere
$\mathbb S^2$ as $\lambda\to\infty$ and
$\lambda\neq 0, 4, 7\mod 8$. More precisely, if
$\Omega\subseteq \mathbb S^2$ is a ``nice'' set then
\[
\frac{\# (\mathbf P_2^3(\lambda)\cap \Omega)}{r_2(\lambda)}\sim\nu_2(\Omega)
\]
as $\lambda\to\infty$ and $\lambda\neq 0, 4, 7\mod 8$, where $\nu_2$
is a normalized area measure on $\mathbb S^2$. Linnik's result was
proved unconditionally (without (GRH)) by Duke \cite{Duke} and
Golubeva and Fomenko \cite{GF} following a breakthrough paper by
Iwaniec \cite{Iw}. Linnik's ergodic method and related topics were
recently carefully discussed by Ellenberg, Michel and Venkantesh in
\cite{EMV}. We also refer to the recent paper of Bourgain, Rudnick and
Sarnak \cite{BRS}, where the spatial distribution of point sets on the
sphere $\mathbb S^2$ obtained from the representation of a large
integer as a sum of three squares were
investigated. The authors gave a strong evidence to the thesis that
the solutions behave randomly, which
stands in sharp contrast to what happens with sums of two or four or
more squares.

In this paper we  consider perturbations of 
the discrete spheres $\mathbf S_2^3(\lambda)$ and we will be mainly concerned
with three-dimensional variants  of the following sets
\begin{align}
\label{eq:39}
\mathbf S_{c}^d(\lambda)
:=
\{x \in \mathbb Z^d : \lfloor |x_1|^c \rfloor +\ldots +\lfloor |x_d|^c \rfloor= \lambda \}
\end{align}
for any $\lambda\in\Z_+$, where $d\in\Z_+$ and $c>1$. The sets
$\mathbf S_{c}^d(\lambda)$ will be called the arithmetic spheres or
arithmetic $c$-spheres.  We see that $\mathbf S_{c}^d(\lambda)$ with
$c=2$ coincides with the discrete $d$-dimensional Euclidean spheres
$\mathbf S_{2}^d(\lambda)$. We will be mainly  focused on the case $d=3$, and
our main aim is to show that the sets $\mathbf S_{c}^3(\lambda)$ can
be used as much simpler models than $\mathbf S_{2}^3(\lambda)$ or even
toy models (especially for $c>1$, which is close to $1$) to study
fundamental problems in number theory and ergodic theory as discussed
above. The arithmetic spheres also provide a good source of examples
exhibiting some seemingly counterintuitive phenomena. These aspects will be
discussed in detail later in the paper.

\vskip.125in 

We now briefly highlight the main results of this article.

\vskip.125in 

\begin{enumerate}[label*={\arabic*}.] 
\item {\bf Lattice point problems in $\Z^3$.} Using a variant of the
circle method we establish a precise asymptotic formula (in the spirit
of the classical Waring problem) for the number of lattice points   in $\mathbf S_{c}^3(\lambda)$ for $c\in(1, 9/8)$, i.e. 
\[
r_c(\lambda)=\#\mathbf S_{c}^3\sim \frac{2^3\Gamma(1+1/c)^3 }{\Gamma(3/c)}
\lambda^{3/c - 1}
\quad \text{ as } \quad \lambda\to\infty,
\]
 see
Corollary \ref{cor:asym}, and compare with the formula for
$r_2(\lambda)$ from \eqref{eq:40}.  This contrasts sharply with the
situation for $\mathbf S_{2}^3(\lambda)$, where the asymptotic formula
for the number of lattice points is still unknown.  The argument that
we present will also cover a more general situation; generalized
arithmetic spheres \eqref{eq:34} which are induced by certain
regularly varying functions, see Definition \ref{defn} and Theorem
\ref{thm:asymr}, which is the main result of this subsection.

\vskip.125in 

\item {\bf Ergodic theorems and corresponding maximal estimates in
$\Z^3$ and $\R^3$.} The asymptotic formula for the number of lattice
points in $\mathbf S_{c}^3(\lambda)$ allows us to study the main
results of this paper; norm and pointwise convergence for ergodic
averages \eqref{id:1} over the arithmetic spheres
$\mathbf S_{c}^3(\lambda)$. Let $(X,\nu, T)$ be a measure-preserving system  equipped with a family
$T = (T_1, T_2, T_3)$ of commuting invertible and measure-preserving
transformations $T_1, T_2, T_3:X\to X$. In Theorem \ref{thm:erg} we show that for every $p>\frac{11-4c}{11-7c}$ and every $f\in L^p(X)$  the ergodic averages
\[
\frac{1}{\#\mathbf S_{c}^3(\lambda)}\sum_{n\in \mathbf S_{c}^3(\lambda)}f(T_1^{n_1}T_2^{n_2}T_3^{n_3}x) 
\]
converge in $L^p(X)$ norm and almost-everywhere on $X$ as
$\lambda\to\infty$.  Similar problems where considered for the
discrete spheres induced by the Euclidean norm but only in dimensions
$d\ge5$. Here we show that $\mathbf S_{c}^3(\lambda)$ may be used to
illustrate these kind of phenomena in dimension $d=3$. We will obtain
these results by establishing maximal ergodic theorems for the ergodic
averages $A_{\lambda}^c$ from \eqref{id:1}, see Theorem \ref{thm:erg}.
Pointwise convergence will be established by studying $r$-variational
estimates. We will also prove sharp lacunary maximal estimates  for averaging operators over
$\mathbf S_{c}^3(\lambda)$, which are in marked contrast to the
behavior of averaging operators over the discrete Euclidean spheres in
$\Z^d$, see Theorem \ref{thm:L2}. Corresponding maximal and
variational estimates for continuous averaging operators \eqref{def:A}
over the spheres $\S_c^2\subset \R^3$ (see Section \ref{sec:not} for a
definition of $\S_c^2$) will be also discussed, see Theorem
\ref{thm:contsph}.

\vskip.125in 

\item {\bf Equidistribution  problems.} Finally we will discuss a variant of equidistribution problem for the
arithmetic spheres $\mathbf S_{c}^3(\lambda)$. More precisely,
we will study the following projections 
\begin{align}
\label{eq:45}
\mathbf P_c^3(\lambda):=\{\lambda^{-1/c}x: x\in\mathbf S_c^3(\lambda)\}
\end{align}
on a neighborhood of the unit sphere $\mathbb S^2_c\subset\R^3$, where
$$\S_c^2:=\{x\in\R^3:|x|_{c}=1\}$$ (see
Section \ref{sec:not} for a definition of the norm $|\cdot|_{c}$).
Even
though $\mathbf P_c^3(\lambda)\not\subseteq \mathbb S^2_c$ we will
show that the points from $\mathbf P_c^3(\lambda)$ can be interpreted
as equidistributed on the unit sphere $\mathbb S^2_c\subset\R^3$ as
$\lambda\to\infty$. Namely for some nice functions $\phi$ one has
\begin{align*}
\frac{1}{r_c(\lambda)}
\sum_{x \in \mathbf P_c^3(\lambda)} \phi (x)
\xrightarrow[\lambda \to\infty]{} 
\int_{\S_c^2} \phi (x) \, d\nu_{c} (x),
\end{align*}
where $\nu_{c}$ is a probability measure on $\S_c^2$ obtained by
normalization of the measue $\mu_c$, see Theorem \ref{thm:equi} and \eqref{polar}.
We will achieve this by upgrading the circle
method that led us to the asymptotic formula for the number of lattice
points in $\mathbf S_{c}^3(\lambda)$. We will also study the
discrepancy function corresponding to the sets $\mathbf P_c^3(\lambda)$.
\end{enumerate}

In the next three subsections, which will correspond to the
topics described above, we will formulate the main results of this
paper and present a broader context of undertaken problems and give  a brief overview of the proofs. 

\subsection{Lattice point problems in $\Z^3$}
As in \cite{M1, M2, Piat} we begin with introducing a class of
regularly varying or $c$-regularly varying functions $\mathcal{F}_c$,
which will be of our interest.
\begin{defn}\label{defn}
Let $c\in[1, 2)$ and $\mathcal{F}_c$ be the family of all functions
$h:[x_0, \8)\to [1, \8)$ for some $x_0\ge1$, satisfying the
following properties
\begin{enumerate}
\item[(i)] $h\in C^3([x_0, \8))$ and
$$h'(x)>0,\ \ \ \ h''(x)>0, \ \ \mbox{for every \  $x\ge x_0$.}$$
\item[(ii)] There exists a real valued function $\vartheta\in C^2([x_0, \8))$ and a constant $C_h>0$ such that
\begin{align}\label{eq1}
  h(x)=C_hx^c\ell_h(x), \ \ \mbox{where}\ \ \ell_h(x)=\exp\bigg(\int_{x_0}^x\frac{\vt(t)}{t}dt\bigg), \ \ \mbox{for every \  $x\ge x_0$,}
\end{align}
and if $c>1$, then
\begin{align}\label{eq2}
  \lim_{x\to\8}\vartheta(x)=0,\ \ \lim_{x\to\8}x\vartheta'(x)=0,\ \ \lim_{x\to\8}x^2\vartheta''(x)=0.
\end{align}
  \item[(iii)] If $c=1$, then $\vt(x)$ is positive, decreasing and  for every $\varepsilon>0$ we have
  \begin{align}\label{eq3}
    \frac{1}{\vt(x)}\lesssim_{\varepsilon}x^{\varepsilon}, \ \ \mbox{and} \ \ \lim_{x\to\8}\frac{x}{h(x)}=0.
  \end{align}
  Furthermore,
  \begin{align}\label{eq4}
  \lim_{x\to\8}\vartheta(x)=0,\ \ \lim_{x\to\8}\frac{x\vartheta'(x)}{\vt(x)}=0,\ \ \lim_{x\to\8}\frac{x^2\vartheta''(x)}{\vt(x)}=0.
\end{align}
\end{enumerate}
\end{defn}

Definition \ref{defn} will be essential to formulate the main result
of this subsection for the arithmetic spheres as well as generalized
arithmetic spheres.

For a fixed $c\in[1, 2)$ and a fixed function $h\in\mathcal F_c$ 
we define the sets
\begin{align}
\label{eq:35}
\mathbb N_h := \{\lfloor h(m) \rfloor : m \ge N_0\},
\end{align}
where  $N_0\in\Z_+$ is a sufficiently large absolute constant depending only on the function $h$.

For each $k\in[3]$ we fix $c_k \in [1,2)$ and a function
$h_k \in \mathcal{F}_{c_k}$. Since $h'_k(x) \ge 1$ for each
$k\in[3]$, see \eqref{condu1}, we may assume without loss of generality that there exists
an absolute constant $N_0\in\Z_+$ such that for $x\ge N_{0}$ the
functions $x \mapsto \lfloor h_k(x) \rfloor$ are well defined and
injective. From now on we assume that for each $k\in[3]$ the set
$\N_{h_k}$ is defined with this choice of $N_0$.
We now define
generalized arithmetic spheres with radius $\lambda\in\mathbb Z_+$ by
\begin{align}
\label{eq:34}
\mathbf S_{h_1, h_2, h_3}(\lambda):
=\{x\in(\Z_+\setminus[N_{0} - 1])^3: \lfloor h_1(x_1)\rfloor+ \lfloor h_2(x_2)\rfloor+\lfloor h_3(x_3)\rfloor=\lambda\}.
\end{align}
Let us define the corresponding counting function by
\begin{align}
\label{eq:36}
r_{h_1, h_2, h_3}(\lambda):=\#\mathbf S_{h_1, h_2, h_3}(\lambda).
\end{align}
Then it is not difficult to see that 
\begin{align*}
r_{h_1, h_2, h_3}(\lambda)=
\#\{ (n_1, n_2, n_3) \in \N_{h_1} \times \N_{h_2} \times \N_{h_3} : n_1 + n_2 + n_3 = \la\}.
\end{align*}
We now formulate the first result of this paper establishing 
the asymptotic formula for \eqref{eq:36}.

\begin{thm}\label{thm:asymr}
For $k\in[3]$ let $c_k \in [1,4/3)$ and $\g_k := 1/c_{k}$ be such that
\begin{align} \nonumber
4(1 - \g_1) + 5(1-\g_2)/2 + 5(1-\g_3)/2 &< 1, \\ \label{assum}
5(1 - \g_1)/2 + 4(1-\g_2) + 5(1-\g_3)/2 &< 1, \\ \nonumber
5(1 - \g_1)/2 + 5(1-\g_2)/2 + 4(1-\g_3) &< 1.
\end{align} 
Assume that $h_k \in \mathcal{F}_{c_k}$ for each $k\in[3]$, and let $\vp_k$ be its inverse. Then
\begin{align*}
r_{h_1, h_2, h_3}(\lambda) = 
\frac{\Gamma(\g_1) \Gamma(\g_2) \Gamma(\g_3) }{\Gamma(\g_1 + \g_2 +\g_3)}
\la^2 \vp_1'(\la) \vp_2'(\la) \vp_3'(\la)
+ o \left(  \la^2 \vp_1'(\la) \vp_2'(\la) \vp_3'(\la) \right),
\end{align*}
where $\Gamma$ denotes the standard Gamma function. 
\end{thm}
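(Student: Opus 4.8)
The plan is to realize $r_{h_1,h_2,h_3}(\lambda)$ as a Fourier coefficient and run the circle method. Writing $e(t):=e^{2\pi i t}$ and introducing the generating (Weyl-type) sums
\[
f_k(\alpha):=\sum_{\atop{m\ge N_0}{\lfloor h_k(m)\rfloor\le\lambda}} e\big(\alpha\lfloor h_k(m)\rfloor\big),
\qquad k\in[3],
\]
we have the exact identity
\begin{align*}
r_{h_1,h_2,h_3}(\lambda)=\int_0^1 f_1(\alpha)f_2(\alpha)f_3(\alpha)\,e(-\lambda\alpha)\,d\alpha.
\end{align*}
The first step is to record the basic size of each $f_k$: a counting argument using that $x\mapsto\lfloor h_k(x)\rfloor$ is injective for $x\ge N_0$, combined with the regular variation \eqref{eq1}--\eqref{eq2} and the asymptotics of the inverse $\varphi_k$, gives $f_k(0)=\#(\N_{h_k}\cap[1,\lambda])=\lambda\varphi_k'(\lambda)(1+o(1))$ (up to lower-order terms coming from the floor and the cutoff $N_0$). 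The heuristic main term is then $\lambda^2\varphi_1'(\lambda)\varphi_2'(\lambda)\varphi_3'(\lambda)$ times the constant $\Gamma(\gamma_1)\Gamma(\gamma_2)\Gamma(\gamma_3)/\Gamma(\gamma_1+\gamma_2+\gamma_3)$, which is exactly the $\beta$-function-type constant one expects from the convolution of three densities behaving like $t^{\gamma_k-1}$.

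Next I would split $[0,1]$ (viewed as $\T$) into a \emph{major arc} $\mathfrak M=\{\alpha:|\alpha|\le \delta(\lambda)\}$ around $0$ and its complement, the \emph{minor arcs} $\mathfrak m=\T\setminus\mathfrak M$, for a suitable threshold $\delta(\lambda)\to 0$ slowly (a small negative power of $\lambda$, or a logarithmic gauge, calibrated to the exponents in \eqref{assum}). On the major arc one performs a stationary-phase / summation-by-parts analysis: each $f_k(\alpha)$ is compared to its integral analogue $\int_{0}^{\lambda} e(\alpha t)\,d\big(\#(\N_{h_k}\cap[1,t])\big)\approx \int_0^\lambda e(\alpha t)\,\varphi_k'(t)\,dt$, the floor functions $\lfloor h_k(m)\rfloor=h_k(m)-\{h_k(m)\}$ are handled by a truncated Fourier expansion of the sawtooth $\{\cdot\}$ (or by van der Corput / Kusmin–Landau), and the error in replacing $f_k$ by this integral is controlled using the derivative estimates $h_k'>0$, $h_k''>0$ and \eqref{eq2} on the exponent $\vartheta_k$. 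Integrating the product of the three integral analogues against $e(-\lambda\alpha)$ over all of $\R$ and extending the major arc to $\R$ (the tail being negligible) produces the claimed main term via the Fourier-analytic identity for the Dirichlet/beta integral; this is where the Gamma-function constant appears. The key inputs here are the change of variables $t\mapsto h_k^{-1}$, the fact that $\varphi_k$ is itself $\gamma_k$-regularly varying, and uniform control of $\varphi_k'$ near $\lambda$.

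The main obstacle — and the place where the hypothesis \eqref{assum} is used — is the minor-arc bound: one must show $\int_{\mathfrak m}|f_1 f_2 f_3|\,d\alpha = o\big(\lambda^2\varphi_1'(\lambda)\varphi_2'(\lambda)\varphi_3'(\lambda)\big)$. For this I would prove a pointwise Weyl-type estimate for each $f_k$ on $\mathfrak m$ of the shape $|f_k(\alpha)|\lesssim \lambda^{\gamma_k}\lambda^{-\rho_k(\alpha)+o(1)}$ for an appropriate gain $\rho_k$, using van der Corput's method (the $k$-th derivative tests / exponent pairs) applied to the phase $\alpha\, h_k(t)$ — here the near-$x^{c_k}$ shape with $c_k<4/3$ is exactly what makes the relevant derivative bounds favourable, and the contribution of the sawtooth parts $\{h_k(m)\}$ is absorbed by the same method after Fourier expansion. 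One then interpolates: keep one factor, say $|f_1|$, controlled in $L^\infty$ on $\mathfrak m$ by its Weyl bound, and bound $\int_{\mathfrak m}|f_2 f_3|$ by Cauchy–Schwarz together with mean-value (Hua-type / large-sieve) estimates $\int_0^1 |f_k|^2\,d\alpha \ll \#(\N_{h_k}\cap[1,\lambda]) \ll \lambda^{\gamma_k}$. Summing the resulting exponents and requiring the total power of $\lambda$ to fall strictly below $2+(\gamma_1-1)+(\gamma_2-1)+(\gamma_3-1) = \gamma_1+\gamma_2+\gamma_3-1$ reduces precisely to the three inequalities \eqref{assum} (the asymmetry reflecting which of the three sums is placed in $L^\infty$). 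Assembling the major- and minor-arc contributions yields the stated asymptotic. I expect the technical heart to be making the van der Corput estimates uniform over the slowly varying class $\mathcal F_{c_k}$, i.e. checking that the error terms produced by $\ell_{h_k}$ and by $\vartheta_k$ in \eqref{eq2} are genuinely lower order — this is routine in spirit but requires care with the $C^3$ bounds on $h_k$ and on $\varphi_k$.
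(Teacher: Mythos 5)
Your proposal is sound and rests on the same analytic core as the paper, but it is organized quite differently. You follow the textbook Hardy--Littlewood template: split $\T$ into a small major arc around the origin and a minor arc, obtain the $\Gamma$-constant on the major arc by replacing each generating function by a continuous integral analogue and extending the singular integral to $\R$, and kill the minor arc contribution by a Weyl-type $L^\infty$ bound on one factor together with Plancherel $L^2$ bounds on the other two. The paper avoids arcs entirely. It introduces the auxiliary trigonometric polynomials $F_\la^k(t)=\sum_{N_0\le n\le\la}\vp_k'(n)e(nt)$ alongside the true generating functions $G_\la^k$ (your $f_k$), observes that $\int_\T F_\la^1F_\la^2F_\la^3\,e(-\la t)\,dt$ equals the \emph{discrete} weighted count $J_{\vp_1',\vp_2',\vp_3'}(\la)$ exactly, proves the asymptotics of $J$ directly by a Riemann-sum/Beta-integral argument (Proposition \ref{prop:J3}), and then controls the discrepancy via the telescoping H\"older bound $\|F^1F^2F^3-G^1G^2G^3\|_{L^1(\T)}\le\sum_k\|F^k-G^k\|_{L^\infty}\prod_{j\ne k}\|\cdot\|_{L^2}$, where the crucial ingredient is the \emph{uniform over all of $\T$} estimate $\|F_\la-G_\la\|_{L^\infty(\T)}\lesssim\vp(\la)\la^{-\chi}$ of Proposition \ref{prop:FG}. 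Both routes rely on identical technical inputs: Fourier expansion of the sawtooth, van der Corput's second-derivative test applied to the phase $m\vp_k(n)$, and the one-$L^\infty$-two-$L^2$ bookkeeping that produces the constraint $4(1-\g_k)+5\chi_k<1$ underlying \eqref{assum}. What the paper's arrangement buys you is a cleaner error accounting: there is no arc width $\delta(\la)$ to optimize, no step of extending the major arc integral to $\R$ with an attendant tail bound, and only one sum-to-integral comparison (inside Proposition \ref{prop:J3}) rather than the two implicit in your major-arc analysis. Your approach should still close, but it stacks an extra layer of approximation that the paper sidesteps.
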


In other words Theorem \ref{thm:asymr} says that every sufficiently
large integer $\lambda\in\Z_+$ can be represented as a sum of three
integers $n_1\in\N_{h_1}$ $n_2\in\N_{h_2}$ and $n_3\in\N_{h_3}$ for
some fixed regularly varying functions $h_1, h_2$ and $h_3$. Theorem
\ref{thm:asymr} can be thought of as a low-dimensional counterpart of
the classical Waring problem asserting that every sufficiently large
positive integer is the sum of a bounded number of $k$-th powers of
positive integers for a fixed integer $k\ge2$. We refer to \cite{IK}
and \cite{Nat} for more details about the classical Waring
problem. Our interest of the behavior of the counting function
$r_{h_1, h_2, h_3}(\lambda)$ was rekindled by the fact that
$\mathbf S_{2}^3(\lambda)=\emptyset$ if and only if
$\lambda=4^m(8n+7)$ for any $m, n\in\N$. Then we started
to investigate whether it is possible to perturb the squares in the
spheres $\mathbf S_{2}^3(\lambda)$ by regularly varying functions
$h_1\in \mathcal F_{c_1}, h_2\in \mathcal F_{c_2}$ and
$h_3 \in \mathcal F_{c_3}$, which are close (in terms of the order of
magnitude of their exponents $c_1, c_2$ and $c_3$) to the identity
${\rm id}(x):=x$ function, to make sure that the set
$\mathbf S_{h_1, h_2, h_3}(\lambda)\neq\emptyset$ for all sufficiently
large integers $\lambda\in\Z_+$.

The analogy of Theorem \ref{thm:asymr} to the Waring problem for the $k$-th
powers will be more apparent if we assume that
$h_1(x)=h_2(x)=h_3(x)=x^c$, then 
the asymptotic formula for the number of lattice points
\begin{align}
\label{eq:38}
r_c(\lambda):=\# \mathbf S_{c}^3(\lambda)
\end{align}
in the arithmetic spheres $\mathbf S_{c}^3(\lambda)$, see \eqref{eq:39},
is very much in the spirit of the Hardy and Littlewood asymptotic
formula from the solution of the Waring problem for the $k$-th powers.
We refer to
\cite[Theorem 20.2, p. 456]{IK} and \cite[Theorem 5.7, p. 146]{Nat}
for detailed expositions of the Hardy and Littlewood theorem as well
as to the Kloosterman circle method \cite[Theorem 20.9, p. 472]{IK},
which handles positive definite quadratic forms with integer
coefficients in $d\ge 4$ variables.
We now see that the arithmetic spheres $\mathbf S_{c}^3(\lambda)$ can be thought of
as a perturbation of the classical spheres $\mathbf S_{2}^3(\lambda)$
by a noninteger exponent $c>1$, which is close to $1$, in place of
the squares. 
  Theorem \ref{thm:asymr}  can be easily applied to
deduce the asymptotic formula for $r_c(\lambda)$.
\begin{cor}\label{cor:asym}
If $c \in (1,9/8)$, then for every $\eps > 0$ we have
\begin{align*}  
r_c(\lambda) = \frac{2^3\Gamma(1+1/c)^3 }{\Gamma(3/c)}
\lambda^{3/c - 1}
+ O_{\eps} \big(  \lambda^{3/c - 1 - (9 - 8c)/(5c) + \eps} \big).
\end{align*} 
\end{cor}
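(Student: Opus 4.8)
The plan is to deduce Corollary~\ref{cor:asym} from Theorem~\ref{thm:asymr} applied with $h_1=h_2=h_3=h$, where $h(x):=x^c$, after an elementary combinatorial reduction relating the count over $\Z^3$ to a count over positive integers. One first checks $h\in\mathcal F_c$: here $C_h=1$, $\ell_h\equiv1$ and $\vt\equiv0$, so \eqref{eq1}--\eqref{eq2} are immediate and, with any sufficiently large $x_0$, conditions (i)--(ii) of Definition~\ref{defn} hold. Writing $\g_k:=1/c$, the three inequalities in \eqref{assum} all collapse to $(4+5/2+5/2)(1-1/c)<1$, i.e.\ to $9(1-1/c)<1$, equivalently $c<9/8$, which is exactly the hypothesis; and $9/8<4/3$. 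Thus Theorem~\ref{thm:asymr} is applicable.

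For the reduction, set $a(n):=\#\{x\in\Z:\lfloor|x|^c\rfloor=n\}$, so $r_c(\lambda)=(a*a*a)(\lambda)$. Decomposing each coordinate according to whether $|x|=0$, $1\le|x|\le N_0-1$, or $|x|\ge N_0$, and using that $m\mapsto\lfloor m^c\rfloor$ is injective on $\{m\ge N_0\}$ (see the paragraph preceding \eqref{eq:34}), we obtain $a=2b+g$, where $b=\ind_{\N_h}$ and $g$ is a fixed nonnegative sequence supported in $\{0,1,\ldots,\lfloor(N_0-1)^c\rfloor\}$ with $\sum_n g(n)=2N_0-1$. Expanding $(2b+g)^{*3}$, the leading term is $8\,(b*b*b)(\lambda)=8\,r_{h,h,h}(\lambda)$, and every other term carries at least one factor $g$. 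The terms with exactly one factor $g$ are bounded by a constant times $\sum_n g(n)\,r_{h,h}(\lambda-n)$ with $r_{h,h}(\mu):=\#\{(m_1,m_2):m_i\ge N_0,\ \lfloor m_1^c\rfloor+\lfloor m_2^c\rfloor=\mu\}$; since $m_1<(\mu+1)^{1/c}$ and injectivity then leaves at most one admissible $m_2$ for each $m_1$, we get $r_{h,h}(\mu)\ll\mu^{1/c}$, so these terms are $O(\lambda^{1/c})$. The terms with two or three factors $g$ are $O(1)$ and $0$ respectively once $\lambda$ is large. Therefore
\[
r_c(\lambda)=8\,r_{h,h,h}(\lambda)+O\big(\lambda^{1/c}\big).
\]

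It remains to substitute the asymptotics for $r_{h,h,h}(\lambda)$. The inverse of $h$ is $\vp(x)=x^{1/c}$, so $\vp'(x)=\tfrac1c x^{1/c-1}$ and $\lambda^2\vp'(\lambda)^3=c^{-3}\lambda^{3/c-1}$; using $\G(1/c)=c\,\G(1+1/c)$, the main term of Theorem~\ref{thm:asymr} becomes $\tfrac{\G(1+1/c)^3}{\G(3/c)}\lambda^{3/c-1}$, and multiplying by $8$ gives exactly the constant $\tfrac{2^3\G(1+1/c)^3}{\G(3/c)}$. For the explicit error term one cannot invoke the $o(\cdot)$ of Theorem~\ref{thm:asymr} as a black box; instead one returns to its proof in the special case $h_k(x)=x^c$, where $\vt\equiv0$ removes all slowly varying corrections, and tracks the error through the Hardy--Littlewood major/minor arc decomposition: the main term arises from the major-arc contribution, while the minor arcs are estimated by mean-value and van der Corput bounds for the exponential sums $\sum_{N_0\le m\ll\lambda^{1/c}}e^{2\pi i\xi\lfloor m^c\rfloor}$, the relevant exponents being precisely those appearing in \eqref{assum}; optimizing over the arc parameter produces the power saving $(9-8c)/(5c)$, valid precisely for $c\in(1,9/8)$. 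Since $1/c<3/c-1-(9-8c)/(5c)$ for all $c<2$, the $O(\lambda^{1/c})$ from the reduction is absorbed, and the stated formula follows. The only step requiring genuine care is this last one --- extracting the quantitative rate from the circle-method argument; everything else is bookkeeping.
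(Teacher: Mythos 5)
Your proposal is correct and follows essentially the same strategy as the paper: reduce $r_c(\lambda)$ to the positive-orthant count, identify the inverse $\vp(x)=x^{1/c}$ and simplify the constant via $\Gamma(1/c)=c\,\Gamma(1+1/c)$, and then extract an explicit rate from the proof of Theorem~\ref{thm:asymr} rather than using its $o(\cdot)$ conclusion as a black box. Your combinatorial reduction via $a=2b+g$ is more elaborate than necessary --- the paper just observes $r_c(\lambda)=2^3r_{c,\Z_+}(\lambda)+O(\lambda^{1/c})$ directly, taking $N_0=1$ (which is permissible for $h(x)=x^c$, $c>1$, since $h'\ge 1$ on $[1,\infty)$) --- but your version is sound and produces the same $O(\lambda^{1/c})$.

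Where you are imprecise is the description of the quantitative step. The paper's proof of Theorem~\ref{thm:asymr} is not organized as a Hardy--Littlewood major/minor arc decomposition; rather, it compares $r_{c,\Z_+}(\lambda)=\int_0^1 G_\lambda(t)^3 e(-\lambda t)\,dt$ with $J_{\vp',\vp',\vp'}(\lambda)=\int_0^1 F_\lambda(t)^3 e(-\lambda t)\,dt$ by combining a \emph{uniform} $L^\infty(\T)$ bound on $F_\lambda-G_\lambda$ (Proposition~\ref{prop:FG}, with $\chi=\tfrac{1-4(1-1/c)}{5}-\eps$, valid exactly for $c<9/8$) with Plancherel/$L^2$ bounds $\|F_\lambda\|_{L^2}^2,\|G_\lambda\|_{L^2}^2\lesssim\lambda^{1/c}$; this is what produces the saving $(9-8c)/(5c)$. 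Moreover, and this is a genuine omission in your write-up, one must separately upgrade the asymptotic for $J_{\vp',\vp',\vp'}(\lambda)$ from the $o(\cdot)$ of Proposition~\ref{prop:J3} to the power-saving form: this is exactly Corollary~\ref{cor:J3}, which gives an error $O(\lambda^{3/c-1-1/c})$ and is only available because $\vt\equiv 0$ (your observation that ``$\vt\equiv 0$ removes all slowly varying corrections'' is the right reason, but you do not name the result or check that $1/c>(9-8c)/(5c)$ so that this error is indeed dominated). So the proposal correctly identifies all the moving parts, but leaves the two quantitative inputs --- Proposition~\ref{prop:FG} with the optimal $\chi$, and Corollary~\ref{cor:J3} --- implicit where the paper makes them explicit.
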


We note that the formula for $r_c(\lambda)$ gives us a genuine
information about the asymptotic behavior of the number of lattice
points in $\mathbf S_{c}^3(\lambda)$.  Comparing the formula for
$r_c(\lambda)$ with the formula $r_2(\lambda)$ from \eqref{eq:40} we
see that the leading terms are of the same kinds. The only difference
is the lack of the singular series in the formula for $r_c(\lambda)$.
This contrasts sharply with the Euclidean situation for
$\mathbf S_{2}^3(\lambda)$, where the asymptotic formula for
$r_2(\lambda)$ greatly depends on the behavior of the underlying
singular sequence, which is very irregular, see \cite[Theorem 20.15,
p. 478]{IK} as well as \cite[Remark below formula (20.130),
p. 479]{IK}. The lack of the singular series in our situation can be
easily explained. We work with non-polynomial functions, which is much
simpler than the polynomial situation, and the only major arc that is
expected to be significant in analysis of $r_c(\lambda)$ is the one
centered at the origin. Therefore the singular series does not arise
in the case of arithmetic $c$-spheres or can be thought of as a sum
containing the only one term. This is the reason why the arithmetic
spheres \eqref{eq:39} may be used as toy models to examine various
phenomena in number theory and ergodic theory as we shall see later in
the paper.

Finally, let us mention  that Deshouillers \cite{Des} and later Arkhipov and Zhitkov
\cite{AZ} studied variants of Waring's problem with noninteger
exponents. Specifically, it is known from \cite{AZ} that every sufficiently large
integer $\lambda\in\Z_+$ can be written in the form
\begin{align}
\label{eq:32}
\lfloor x_1^c \rfloor + \ldots + \lfloor x_d^c \rfloor= \lambda 
\end{align}
for some positive integers $x_1,\ldots, x_d\in \Z_+$, whenever $c>12$
and $d> 22c^2(\log c+4)$. Moreover, the number of all $d$-tuples $(x_1,\ldots, x_d)\in\Z_+^d$ that solve \eqref{eq:32} behaves like
\begin{align}
\label{eq:33}
\frac{\Gamma(1+1/c)^d }{\Gamma(d/c)}
\lambda^{d/c - 1}(1+o(1)) \quad \text{as}\quad \lambda\to\infty.
\end{align}
If we were allowed to take $c\in(1, 9/8)$ and $d=3$ in \eqref{eq:33}
we would see that the asymptotic formula for $r_c(\lambda)$ coincides
with \eqref{eq:33} up to the factor $2^3$ that arises from considering
all integer triples in $\mathbf S_{c}^3(\lambda)$ instead of only
positive triples from $\mathbf S_{c}^3(\lambda)\cap \Z_+^3$. Neither
the method from \cite{Des} nor from \cite{AZ} does seem to work in the
case of small exponents $c\in(1, 9/8)$. Here we propose a different
approach to prove Theorem \ref{thm:asymr}, our strategy will be
briefly described later in the paper.  The proofs of Theorem
\ref{thm:asymr} and Corollary \ref{cor:asym} are given in Section
\ref{sec:asym}.

The asymptotic formula from Corollary \ref{cor:asym} was the starting
point to study the main results of this article concerning norm and
pointwise convergence for the ergodic averages over the arithmetic
spheres.

\subsection{Ergodic theorems and corresponding maximal estimates in
$\Z^3$ and $\R^3$}
Let $(X,\nu, T)$ be a measure-preserving system, where $(X,\nu)$ is a
$\sigma$-finite measure space equipped with a family
$T = (T_1, T_2, T_3)$ of commuting invertible and measure-preserving
transformations $T_1, T_2, T_3:X\to X$. We will also write
$T^n:=T_1^{n_1}T_2^{n_2}T_3^{n_3}$ for any $n\in\Z^3$; and for each
$k\in[3]$ we define $T_k^0:={\rm Id}$ to be the identity map on $X$.

We now
fix $c \in (1, 11/10)$, and let $\lambda(c)\in\Z_+$ be the smallest
integer such that $\mathbf{S}^3_c(\lambda)\neq\emptyset$ for all
$\lambda\ge\lambda(c)$. Such an integer exists in view of the
asymptotic formula for $r_c(\lambda)$ from Corollary \ref{cor:asym}.
Now for every $f\in L^0(X)$ (see Section
\ref{sec:not} for a definition of this space) and every integer 
$\lambda\ge \lambda(c)$ we define the ergodic averages over the arithmetic
spheres $\mathbf S_{c}^3(\lambda)$ by
\begin{align}  \label{id:1}
A_{\lambda}^c f(x): =
A_{\lambda}^{c,3} f(x): =
\frac{1}{\#\mathbf{S}^3_c(\lambda)} \sum_{n \in \mathbf{S}^3_c(\lambda)} f (T^n x), 
\qquad x \in X.
\end{align}
We see that the average \eqref{id:1} is well defined for all
$\lambda\ge \lambda(c)$.  This was also our motivation to study the
asymptotic formula for $r_c(\lambda)=\#\mathbf{S}^3_c(\lambda)$ in Corollary
\ref{cor:asym}. Without this asymptotic it is even not clear whether
$\mathbf{S}^3_c(\lambda)\neq\emptyset$. From now on when we consider
averages \eqref{id:1} we always assume that
$\lambda\ge\lambda(c)$. For the further reference, let
$A_{\lambda}^{c, d}$ denote the ergodic average, which is  an analogue of average \eqref{id:1}, over
the $d$-dimensional $c$-sphere $\mathbf{S}^d_c(\lambda)$ as in \eqref{eq:39}.

\begin{ex}[Integer shift system on $\Z^3$]
The integer shift system $(\Z^3,\nu_{\Z^3},T_{\Z^3})$ is the
three-dimensional integer grid $\Z^3$ equipped with counting measure
$\nu_{\Z^3}$ and the family of shifts
$T_{\Z^3}=(T_{\Z^3, 1}, T_{\Z^3, 2}, T_{\Z^3, 3})$, with
$T_{\Z^3, k}(x) := x-e_k$ for $k\in[3]$, where $e_k$ is the $k$-th
basis vector from the standard basis on $\R^3$. In view of the
Calder\'on transference principle, see \cite{B3}, the integer shift
system is thought of as a ``universal'' system for all other
measure-preserving systems.  It will be convenient as we shall see in
a moment to work with this system due to the extensive
Fourier-analytic structure available on $\Z^3$.  The averages
$A_{\lambda}^c$ on $(\Z^3,\nu_{\Z^3},T_{\Z^3})$ will be denoted by
$M_{\lambda}^c$ and we have for any function $f:\Z^3\to \C$ that
\begin{align}
\label{eq:41}
M_{\lambda}^cf(x):=
M_{\lambda}^{c,3}f(x):=
\frac{1}{\#\mathbf{S}^3_c(\lambda)} \sum_{n \in \mathbf{S}^3_c(\lambda)} f (x-n), 
\qquad x \in \Z^3, \qquad \lambda\ge\lambda(c).
\end{align}
Taking $\sigma_{\lambda} (x) := \ind_{\mathbf{S}^3_c(\lambda)} (x)$
for $x\in\Z^3$ we immediately see that the discrete average
$M_{\lambda}^c$ is a convolution operator and we have
\begin{align*}
M_{\lambda}^cf(x)=\frac{1}{r_c(\lambda)}\sigma_{\lambda}*f(x), \qquad x \in \Z^3, \qquad \lambda\ge\lambda(c).
\end{align*}
For the further reference, let
$M_{\lambda}^{c, d}$ denote the discrete average, which is  an analogue of average \eqref{eq:41}, over
the $d$-dimensional $c$-sphere $\mathbf{S}^d_c(\lambda)$ as in \eqref{eq:39}.
\end{ex}

We say that a set $\mathbb D=\{\lambda_n:n\in\Z_+\}\subset (0, \infty)$ is $\lambda_0$-lacunary if 
\begin{align*}
\lambda_0:=\inf_{n\in\Z_+}\frac{\lambda_{n+1}}{\lambda_{n}}>1.
\end{align*}

We shall be concerned with norm and pointwise convergence for the
averages $A_{\lambda}^c$ defined for $\lambda\in\Z_+$ as well as for
$\lambda \in \mathbb D$, where
$\mathbb D=\{\lambda_n:n\in\Z_+\}\subset \Z_+$ is a lacunary set. The
lacunary cases will exhibit different phenomena than the averages
$A_{\lambda}^c$ defined for $\lambda\in\Z_+$.

The main result of the paper is the following ergodic theorem for averages \eqref{id:1}.

\begin{thm} \label{thm:erg} Let $(X,\nu, T)$ be a measure-preserving
system.  Fix $c \in (1, 11/10)$ and let $A_{\lambda}^c$ be the average
from \eqref{id:1} defined for all integers $\lambda\ge \lambda(c)$.
Then for every $f \in L^p(X)$, where $(11-4c)/(11-7c) < p <\8$ we have
that:
\begin{itemize}
\item[(i)] (Mean ergodic theorem). The averages $A_{\lambda}^c f$ converge in $L^p(X)$ norm as $\lambda\to\infty$.
\item[(ii)] (Pointwise ergodic theorem). The averages $A_{\lambda}^c f$ converge pointwise almost everywhere as $\lambda\to\infty$.
\item[(iii)] (Maximal ergodic theorem). The maximal inequality holds
\begin{align}
\label{eq:15}
\big\|\sup_{\lambda\in\Z_+}|A_{\lambda}^c f|\big\|_{L^p(X)}\lesssim_{c, p} \|f\|_{L^p(X)}.
\end{align}
\end{itemize}
Inequality \eqref{eq:15} also holds for $p=\infty$.  Moreover, if
$\lambda\ge \lambda(c)$ in the average $A_{\lambda}^c$ is restricted
to a $\lambda_0$-lacunary set $\mathbb D\subset \Z_+$, then (i), (ii) and
(iii) remain valid for any $f\in L^p(X)$ and any
$p\in(1, \infty)$. However, the implicit constant in \eqref{eq:15}
additionally depends on $\lambda_0>1$.
\end{thm}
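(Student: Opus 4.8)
The plan is to reduce the measure-preserving setting to the integer shift system $(\Z^3,\nu_{\Z^3},T_{\Z^3})$ via the Calder\'on transference principle, so that it suffices to prove the maximal inequality \eqref{eq:15} and the relevant $r$-variational estimates for the discrete convolution operators $M_{\lambda}^c$ from \eqref{eq:41}, and then deduce norm and pointwise convergence in the standard way. First I would record the Fourier-analytic setup: writing $\widehat{\sigma_{\lambda}}(\xi)=\sum_{n\in\mathbf S_c^3(\lambda)}e(-n\cdot\xi)$ for $\xi\in\T^3$, the multiplier of $r_c(\lambda)^{-1}M_{\lambda}^c$ is $r_c(\lambda)^{-1}\widehat{\sigma_{\lambda}}(\xi)$. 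By a Weyl-type / van der Corput analysis of the one-dimensional exponential sums $\sum_{m}e(\lfloor m^c\rfloor t)$ over $m$ in a dyadic block (this is exactly the kind of estimate that underlies the circle method proof of Corollary \ref{cor:asym}), one separates the frequency space into a single major arc near $\xi=0$ and the complementary minor arcs. On the major arc one extracts the main term: $r_c(\lambda)^{-1}\widehat{\sigma_{\lambda}}(\xi)$ is well approximated, after the normalization by $r_c(\lambda)\asymp\lambda^{3/c-1}$ from Corollary \ref{cor:asym}, by a smooth bump $\Phi(\lambda^{1/c}\xi)$ whose profile is (a perturbation of) the Fourier transform of the surface measure on the continuous $c$-sphere $\S_c^2$; this is the discrete analogue of the asymptotics and is where one reuses the machinery developed for Theorem \ref{thm:asymr}. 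On the minor arcs one gets a power-saving bound $r_c(\lambda)^{-1}|\widehat{\sigma_{\lambda}}(\xi)|\lesssim \lambda^{-\delta}$ for some $\delta=\delta(c)>0$ uniformly in $\xi$, valid precisely in the range $c\in(1,11/10)$ (this constraint, together with the requirement that the exceptional-set contribution be summable, is what forces the range of $c$ and, eventually, the exponent $(11-4c)/(11-7c)$).

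Next I would prove the $L^2(\Z^3)$ theory. For the full scale $\lambda\in\Z_+$ the maximal function $\sup_\lambda|M_\lambda^c f|$ and the $r$-variation $V^r(M_\lambda^c f:\lambda\in\Z_+)$ for $r>2$ are controlled on $L^2$ by splitting $M_\lambda^c$ into its major-arc and minor-arc pieces. The minor-arc piece is handled by Plancherel together with the pointwise decay $\lambda^{-\delta}$ and a square-function (or a numerical $\ell^1$-in-$\lambda$ over sparse scales) argument — here one restricts to lacunary $\lambda$ first and then sums, or directly uses that $\sum_\lambda \lambda^{-2\delta}$-type series converge after dyadic organization. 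The major-arc piece is, after the approximation above, essentially a lacunary-type maximal/variational operator modeled on the continuous spherical averages $\{M_t^c\}$ over $\S_c^2\subset\R^3$; here I would invoke the continuous estimates (Theorem \ref{thm:contsph}) transferred to $\Z^3$ via a standard comparison of the discrete multiplier with the dilated continuous multiplier, controlling the error by the minor-arc-type bounds again. This yields the $L^2$ maximal and $r$-variational inequalities for $r>2$. For lacunary $\mathbb D$ the analysis simplifies: both pieces are summable over the lacunary set with only an $O(\lambda_0)$-dependent loss, so one gets the full $L^2$ statement (and then $L^p$ for all $p\in(1,\infty)$ by interpolation with the trivial $L^\infty$ and a Calder\'on–Zygmund / Fefferman–Stein vector-valued argument for the major-arc model operator, which is an honest Hardy–Littlewood-type maximal function in $\R^3$).

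Then I would upgrade from $L^2$ to $L^p$ in the non-lacunary case. The trivial bound gives $L^\infty\to L^\infty$ with constant $1$ for the maximal function. Interpolating this with the $L^2$ maximal/variational estimates would give $\|\sup_\lambda|M_\lambda^c f|\|_p\lesssim\|f\|_p$ for $p\in(2,\infty)$; to reach below $2$, down to the threshold $(11-4c)/(11-7c)$, one needs an $L^{p_0}$ estimate for some $p_0<2$, which is obtained by a refined analysis: a more careful minor-arc / Hardy–Littlewood circle-method decomposition into dyadic arcs $\mathfrak M_{a/q}$ with a Gauss-sum/Ramanujan-type gain in $q$, giving an $\ell^p$-improving or restricted-type bound near the critical exponent, exactly as in the Magyar–Stein–Wainger theory for discrete spheres but in dimension $3$ thanks to the extra smoothness of $x\mapsto\lfloor x^c\rfloor$. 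Combining the endpoint-type bound at $p_0$ with the $L^\infty$ bound by interpolation, and feeding the maximal and variational inequalities into the standard transference-plus-Banach-principle argument (pointwise convergence on a dense class — e.g. finitely supported functions on $\Z^3$, where convergence is immediate once one knows $r_c(\lambda)\to\infty$ and the $\ell^2$-asymptotics of $\widehat{\sigma_\lambda}/r_c(\lambda)$ — extended via the maximal inequality), yields (i), (ii), (iii) for $p\in((11-4c)/(11-7c),\infty)$. The main obstacle is the sub-$L^2$ endpoint: obtaining the sharp minor-arc decay and the attendant circle-method arc analysis uniformly in $\lambda$ with enough saving to push $p$ all the way down to $(11-4c)/(11-7c)$, while keeping the major-arc model operator genuinely controllable by the continuous theory of Theorem \ref{thm:contsph}; everything else (transference, interpolation, density argument, lacunary simplifications) is routine.
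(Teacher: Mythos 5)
Your overall architecture — Calder\'on transference, single major arc at the origin, minor-arc power saving, and a continuous spherical-average model operator on the major arc whose boundedness rests on Fourier decay of $\mu_c$ — matches the paper's. But the step where you push below $L^2$ contains a genuine gap, and it is the crux of the theorem.

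You propose to obtain an $L^{p_0}$ estimate with $p_0<2$ by ``a more careful minor-arc / Hardy--Littlewood circle-method decomposition into dyadic arcs $\mathfrak M_{a/q}$ with a Gauss-sum/Ramanujan-type gain in $q$, exactly as in the Magyar--Stein--Wainger theory.'' This is the wrong mechanism here and would not get off the ground: because $c$ is non-integer, the function $n\mapsto\lfloor n^c\rfloor$ has no arithmetic structure, there are no rational resonances, no Gauss sums, and no Ramanujan sums. The entire point, which the paper emphasizes repeatedly, is that the frequency analysis has exactly \emph{one} major arc (a neighborhood of $\xi=0$) and one minor arc, and the arithmetic dissection of MSW is not available. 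What the paper actually does for $p<2$ is much softer: Theorem \ref{thm:minor} gives an $\ell^2(\Z^3)$ estimate for $\sup_{N\le\lambda\le 2N}|\lambda^{-3/c+1}\sigma_\lambda^{\mathfrak m}*f|$ with a power-saving factor $N^{-(11-10c)/(6c)}\log^2(N+1)$, and the trivial $\ell^1$ bound for the same dyadic block carries a harmless loss $N$. Interpolating these two, the power saving survives precisely when $p>(11-4c)/(11-7c)$; that is where the threshold comes from, and the major-arc operator, which is bounded for all $p>3/2$ by Theorem \ref{thm:major} and is therefore never the bottleneck (the threshold $(11-4c)/(11-7c)\ge 7/4>3/2$ for $c\in(1,11/10)$). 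If you insist on an arc dissection with gains in $q$ you would be trying to import a tool with no traction in this non-polynomial setting, and you would also not recover the specific exponent $(11-4c)/(11-7c)$.

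Two smaller points. First, for pointwise convergence the paper does not argue via a dense class of ``finitely supported functions on $\Z^3$'' — the ergodic statement lives on an abstract $\sigma$-finite system $(X,\nu,T)$, and the paper works with the dense class $L^1(X)\cap L^\infty(X)$ and then proves $r$-variational estimates \eqref{eq:20}, \eqref{eq:21} for the two major-arc pieces $B^1_\lambda$, $B^2_\lambda$ and a vanishing estimate for the minor-arc piece; variation norms, not a convergence-on-a-dense-set argument, give the a.e.\ limit, and this is precisely what allows $\sigma$-finite (rather than finite-measure) systems. Second, in the lacunary case the extension to all $p\in(1,\infty)$ is obtained again through the continuous kernel $K_\lambda$ and the jump/variation theory of \cite{MSZ1} (requiring only \eqref{eq:9}--\eqref{eq:11}), not a Fefferman--Stein vector-valued argument for a Hardy--Littlewood-type operator.
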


We immediately see that (i) is a simple consequence of (ii), (iii) and
the dominated convergence theorem. Thus it suffices to establish
pointwise ergodic theorem (ii) and maximal ergodic theorem (iii). A
detailed proof of Theorem \ref{thm:erg} will be given in Section
\ref{sec:erg}. We first prove maximal inequality \eqref{eq:15} and
then use it to deduce pointwise convergence for $A^c_{\lambda}$.  The
maximal ergodic theorem (iii), in fact, will follow from Theorem \ref{thm:L2} below by
appealing to the Calder{\'o}n transference principle as in
\cite{B3}. We prove Theorem \ref{thm:L2} in Section \ref{sec:el2}.

The main maximal result of this article is the boundedness for the discrete averages $M_{\lambda}^c$. 
\begin{thm} \label{thm:L2}
Fix $c \in (1, 11/10)$ and let $M_{\lambda}^c$ be the discrete  average
from \eqref{eq:41} defined for all integers $\lambda\ge \lambda(c)$. Then the following two maximal inequalities hold.
\begin{itemize}
\item[(i)] (Maximal inequality for full averages). If $(11-4c)/(11-7c) < p \le\8$ then one has
\begin{align}
\label{eq:43}
\big\|\sup_{\lambda\in\Z_+}|M_{\lambda}^c f|\big\|_{\ell^p(\Z^3)}\lesssim_{c, p} \|f\|_{\ell^p(\Z^3)},
\qquad f \in \ell^p(\Z^3).
\end{align} 

\item[(ii)] (Maximal inequality for lacunary averages). If $1 < p \le\8$ then one has
\begin{align}
\label{eq:44}
\big\|\sup_{\lambda\in\mathbb D}|M_{\lambda}^c f|\big\|_{\ell^p(\Z^3)}\lesssim_{c, p, \lambda_0} \|f\|_{\ell^p(\Z^3)},
\qquad f \in \ell^p(\Z^3);
\end{align} 
whenever $\mathbb D\subset \Z_+$ is $\lambda_0$-lacunary set.
\end{itemize}
\end{thm}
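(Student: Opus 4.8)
The plan is to prove the maximal estimates \eqref{eq:43} and \eqref{eq:44} by combining the variant of the circle method that underlies Theorem~\ref{thm:asymr} and Corollary~\ref{cor:asym} with a Magyar--Stein--Wainger type transference of continuous spherical maximal estimates. The decisive simplification compared with the Euclidean spheres $\mathbf S_2^3(\la)$ is that no nontrivial singular series appears: for non-integer $c$ close to $1$ the sequence $\lfloor|m|^c\rfloor$ equidistributes modulo every $q$, so the only relevant major arc is the one centred at $t=0$; in the Euclidean case one instead has to sum Gauss-sum factors over all denominators $q$, which converges only for $d\ge5$, and it is precisely the absence of this sum that makes $d=3$ work here. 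Concretely, write $\sigma_\la:=\ind_{\mathbf S_c^3(\la)}$ and $K_\la:=r_c(\la)^{-1}\sigma_\la$, so that $M_\la^c f=K_\la*f$, and record the orthogonality identity
\[
\widehat{\sigma_\la}(\xi)=\int_{\T} e(-\la t)\prod_{k=1}^{3}G_\la(t,\xi_k)\,dt,
\qquad G_\la(t,\eta):=\sum_{N_0\le|m|\le 2\la^{1/c}} e\bigl(\lfloor|m|^c\rfloor t-m\eta\bigr),
\]
for $\xi=(\xi_1,\xi_2,\xi_3)\in\T^3$ (only $m$ with $|m|\lesssim\la^{1/c}$ contribute). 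Split $\T$ into a major arc $\mathfrak M:=\{|t|\le\la^{-1}\}$ and dyadic annuli $\mathfrak m_n:=\{2^{n-1}\la^{-1}<|t|\le 2^n\la^{-1}\}$ for $1\le n\lesssim\log\la$, and let $K_\la^{0}$, $K_\la^{n}$ denote the corresponding pieces of $K_\la$ obtained by inserting smooth cutoffs in the $t$-integral.

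\textbf{Major arc.} On $\mathfrak M$ one replaces each $G_\la(t,\xi_k)$ by the oscillatory integral $\int_\R e(|s|^c t-s\xi_k)\,ds$, with an error controlled by the van der Corput / stationary-phase estimates underlying Theorem~\ref{thm:asymr}, which rely on $x\mapsto x^c$ belonging to $\mathcal F_c$. Integrating in $t$, rescaling by $\la^{1/c}$, and normalising with the help of $r_c(\la)\sim\frac{2^3\Gamma(1+1/c)^3}{\Gamma(3/c)}\la^{3/c-1}$ from Corollary~\ref{cor:asym}, one identifies $K_\la^{0}$, up to an $\ell^2(\Z^3)$-summable remainder, with a fundamental-domain periodisation of the normalised surface measure on $\la^{1/c}\,\S_c^2$. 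A Littlewood--Paley decomposition in $|\xi|$ then reduces $\sup_\la|K_\la^{0}*f|$ to the Hardy--Littlewood maximal operator on the low frequencies and, via the transference lemma, to the continuous spherical maximal operator over $\S_c^2\subset\R^3$ on the high frequencies, whose $L^p(\R^3)$ boundedness we establish separately in Theorem~\ref{thm:contsph}; this yields the $\ell^p(\Z^3)$ bound for $\sup_\la|K_\la^{0}*f|$.

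\textbf{Minor arcs.} For $t\in\mathfrak m_n$, van der Corput-type bounds for the phase $|s|^c t$ — combined with the splitting $e(\lfloor|m|^c\rfloor t)=e(|m|^c t)e(-\{|m|^c\}t)$ and the $\N_h$-machinery of \cite{M1,M2,Piat} used to dispose of the fractional part — give
\[
\sup_{t\in\mathfrak m_n}\sup_{\eta\in\T}|G_\la(t,\eta)|\lesssim 2^{-n\theta_0}\la^{1/c},
\qquad\text{and}\qquad
\sup_{\eta\in\T}\int_{\T}|G_\la(t,\eta)|^2\,dt\lesssim\la^{1/c},
\]
for some $\theta_0=\theta_0(c)>0$, the second bound being valid since $m\mapsto\lfloor|m|^c\rfloor$ is injective for $c>1$. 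Feeding these into $\widehat{\sigma_\la}$ and using $r_c(\la)\asymp\la^{3/c-1}$ yields geometric-in-$n$ decay of $\|\widehat{K_\la^{n}}\|_{L^\infty(\T^3)}$ together with matching $\ell^2(\Z^3)$ operator bounds for $K_\la^{n}*f$. Summing over $n$, then over dyadic ranges of $\la$ by an $\ell^2$-based maximal-multiplier (Rademacher--Menshov) argument, and interpolating against the trivial bound $\|\sup_\la|K_\la^{n}*f|\|_{\ell^\infty}\le\|f\|_{\ell^\infty}$, one obtains \eqref{eq:43} for $(11-4c)/(11-7c)<p\le\8$. I expect this to be the main obstacle: one must extract cancellation from $G_\la(t,\xi_k)$ \emph{uniformly} in the spatial frequencies $\xi_k\in\T$ and across all arcs, and then sum the resulting pieces without squandering the small power savings, which are available only because $c$ is close to $1$. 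The exponent $(11-4c)/(11-7c)$ comes out of balancing $\theta_0(c)$, the number of dyadic pieces and the position of the $\ell^2$–$\ell^\infty$ interpolation, and $c<11/10$ is exactly the condition that keeps this range nonempty.

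\textbf{Lacunary averages.} When $\la$ runs over a $\la_0$-lacunary set $\mathbb D\subset\Z_+$, the dyadic-in-$\la$ summation above may be upgraded to a genuine square function: along a lacunary sequence the operators $\la\mapsto K_\la^{n}*f$ have almost-disjoint frequency supports, so at most an $O(n)$ overlap is lost (with constant depending on $\la_0$), which the geometric decay absorbs. Hence $\sup_{\la\in\mathbb D}|K_\la^{0}*f|$ and $\sup_{\la\in\mathbb D}|K_\la^{n}*f|$ are bounded on $\ell^2(\Z^3)$ with no arithmetic restriction, and, combining this with the uniform $\ell^\infty$ bound and the boundedness on all $\ell^p$, $p>1$, of the relevant Littlewood--Paley square functions, interpolation gives \eqref{eq:44} for every $p\in(1,\8]$.
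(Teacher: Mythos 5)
Your high-level strategy mirrors the paper's: a circle method with a single major arc centred at $t=0$, transference of the major-arc piece to a continuous spherical maximal function over $\S_c^2$ (this is exactly the role played by the kernel $K_\la$ of \eqref{eq:5}, Lemma~\ref{norm}, and Theorems~\ref{thm:major}--\ref{thm:kl}), and van der Corput plus sawtooth expansions on the minor arcs (Theorem~\ref{thm:minor} via Lemma~\ref{lem:L2}). The main cosmetic differences are your choice of a major arc of width $\la^{-1}$ with a further dyadic decomposition of the complement, versus the paper's single major arc of the carefully tuned width $\la^{\kappa}$ with $\kappa=(3-4c)/(4c)$ and a single minor arc whose supremum over a dyadic block in $\la$ is estimated at once.

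There is, however, a genuine gap in how you propose to reach exponents below $2$. You claim to obtain the range $(11-4c)/(11-7c)<p\le\infty$ by interpolating the $\ell^2$-decaying minor-arc bound against the trivial inequality $\bigl\|\sup_{\la}|K_{\la}^{n}*f|\bigr\|_{\ell^\infty}\le\|f\|_{\ell^\infty}$. Interpolation between an $\ell^2$ estimate and an $\ell^\infty$ estimate only produces the range $p\in[2,\infty]$; it cannot reach any $p<2$. Yet the threshold $(11-4c)/(11-7c)$ is strictly below $2$ for every $c\in(1,11/10)$, so your stated argument leaves precisely the interval $(11-4c)/(11-7c)<p<2$ uncovered. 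What is actually needed is interpolation against the \emph{growing} $\ell^1$ bound
\begin{align*}
\bigl\|\sup_{N\le\la\le 2N}\la^{-3/c+1}|\sigma_{\la}^{\mathfrak m}*f|\bigr\|_{\ell^1(\Z^3)}
\lesssim N\,\|f\|_{\ell^1(\Z^3)},
\end{align*}
which follows from the triangle inequality over the $O(N)$ values of $\la$ in a dyadic block together with the fact that each $\la^{-3/c+1}\sigma_{\la}^{\mathfrak m}*f$ is $\ell^1$-bounded (this is \eqref{eq:3}, a consequence of the averaging property and the $\ell^1$ control of the major-arc kernel). Balancing this $O(N)$ growth against the $\ell^2$ decay $N^{-(11-10c)/(6c)}$ from Theorem~\ref{thm:minor} is exactly what produces the exponent $(11-4c)/(11-7c)$, so this interpolation is not optional. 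The same issue reappears in your lacunary paragraph: interpolating $\ell^2$ bounds with $\ell^\infty$ bounds again gives only $p\ge2$, whereas \eqref{eq:44} asserts boundedness down to $p>1$. The correct mechanism there is that for a single fixed $\la$ (no supremum) the minor-arc piece satisfies an $\ell^p$ operator-norm decay $\lesssim\la^{-\delta}$ for all $1<p<\infty$ (by $\ell^1$--$\ell^2$ interpolation and convolution-operator duality), so that the sum over lacunary radii in \eqref{est42} converges; the square-function/frequency-disjointness heuristic you invoke is not needed and, as stated, is not accurate either, since different radii contribute at overlapping frequencies.

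Aside from this, your dyadic minor-arc decomposition and the exponential-sum bound $\sup_{t\in\mathfrak m_n}\sup_{\eta}|G_\la(t,\eta)|\lesssim 2^{-n\theta_0}\la^{1/c}$ are plausible in spirit, but you should beware that the van der Corput second-derivative bound gives $|t|^{1/2}\la^{1/2}+|t|^{-1/2}\la^{1/c-1/2}$, which is increasing for $|t|$ large, so the claimed geometric decay in $n$ is only usable after checking that even at $|t|\simeq 1$ one still beats $\la^{1/c}$; this is where the smallness of $c-1$ enters, and the paper avoids this bookkeeping entirely by using a single minor arc and taking the supremum over a dyadic block of $\la$ before ever summing.
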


As we mentioned above, inequality \eqref{eq:43} implies \eqref{eq:15}
upon appealing to the Calder{\'o}n transference principle as described
in \cite{B3}. By the same argument we conclude that \eqref{eq:44}
implies a corresponding lacunary variant of inequality
\eqref{eq:15}. Therefore, the conclusion of Theorem
\ref{thm:erg} (iii) is entirely reduced to the conclusion of Theorem
\ref{thm:L2}, which will be proved using Fourier methods in Section
\ref{sec:el2}. An essential part of our approach is the fact that the
averages $M_{\lambda}^c$ from \eqref{eq:41} may be thought of as
discrete analogues of spherical averaging operators in $\R^3$, which
are dilates of the sphere $\S_c^2$.
Let $\mu_{c}$ be a measure on $\S_c^2$ arising in a natural way from
the polar decomposition with respect to the norm $|\cdot|_c$, see
\eqref{polar} for a precise description of $\mu_{c}$.
Then continuous spherical averaging operators are defined by
\begin{align} \label{def:A}  
\mathcal A_t^c f (x) := \int_{\S_c^2} f(x - t \theta) \, d\mu_c(\theta),
\qquad x \in \R^3, \quad t>0, \quad f \in C_c^{\8} (\R^3),
\end{align} 
As before, for the further reference, let
$\mathcal A_t^{c, d}$ denote the average, which is  an analogue of average \eqref{def:A}, over
the sphere $\S^{d-1}_c \subset \R^d$ (see
Section \ref{sec:not} for a definition of this set). 
Our next result subsumes maximal and $r$-variational estimates for the operators $\mathcal A_t^c$, which will be proved in Section \ref{sec:osc}, (see Section \ref{sec:not} for a definition of $r$-variational seminorm $V^r$ and its simple properties).
\begin{thm} \label{thm:contsph} Let $c \in (1, 2)$ be fixed, and let
$\mathcal A_t^c$ for $t>0$ be the spherical averaging operator defined in \eqref{def:A}. Then
for every $p\in (3/2, 4)$ and $r\in(2, \infty)$ we have
\begin{align}
\label{eq:24}
\| V^r(\mathcal A_t^cf: t>0)\|_{L^p(\R^3)}\lesssim_{c, p, r}\| f \|_{L^p(\R^3)}, \qquad f \in L^p(\R^3),
\end{align}
as well as, for all $p\in (3/2, \infty]$, the maximal estimate 
\begin{align}
\label{est17}
\| \sup_{t>0}|\mathcal A_t^cf|\|_{L^p(\R^3)}
\lesssim_{c, p}
\| f \|_{L^p(\R^3)}, \qquad f \in L^p(\R^3).
\end{align}
If we consider only lacunary times the range of $p$ in \eqref{eq:24} and \eqref{est17} can be
extended. Namely, for every $p\in (1, \infty)$ and
$r\in(2, \infty)$ we have
\begin{align}
\label{eq:25}
\| V^r(\mathcal A_{t}^cf: t\in\mathbb D)\|_{L^p(\R^3)}\lesssim_{c, p, r, \lambda_0}\| f \|_{L^p(\R^3)}, \qquad f \in L^p(\R^3),
\end{align}
and for all $p\in (1, \infty]$, the maximal estimate holds
\begin{align}
\label{eq:26}
\| \sup_{t\in\mathbb D}|\mathcal A_{t}^cf|\|_{L^p(\R^3)}
\lesssim_{c, p, \lambda_0}
\| f \|_{L^p(\R^3)}, \qquad f \in L^p(\R^3),
\end{align}
whenever $\mathbb D\subset(0, \infty)$ is $\lambda_0$-lacunary set.
Moreover, the range of $p$ in \eqref{eq:24} and in \eqref{est17} is sharp from below in the sense that
for every $p\in[1, 3/2]$ the estimates  \eqref{eq:24} and  \eqref{est17} do not hold.
\end{thm}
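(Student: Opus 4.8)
The plan is to treat $\mathcal A_t^c$ as a Fourier multiplier operator and exploit the decay of $\widehat{\mu_c}$, exactly as in the classical proof of the Stein spherical maximal theorem. The norm $|\cdot|_c$ is smooth and has nonvanishing Gaussian curvature away from the coordinate planes (for $c\in(1,2)$ the hypersurface $\S_c^2$ is $C^\infty$ on the complement of the points where some coordinate vanishes, and there the principal curvatures are positive), so the method of stationary phase yields $|\widehat{\mu_c}(\xi)|\lesssim (1+|\xi|)^{-1}$, with the expected loss at the finitely many flat/degenerate directions contributing only lower-order terms after a smooth partition of unity on $\S_c^2$. The first step is therefore to record this Fourier decay lemma for $\mu_c$, together with the improved bound $|\nabla\widehat{\mu_c}(\xi)|\lesssim (1+|\xi|)^{-1}$, which is what one needs to run the square-function/variational machinery in $t$.

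Next I would decompose $\mathcal A_t^c$ into a low-frequency piece (a nice compactly Fourier-supported averaging operator, which is trivially bounded on all $L^p$, $1\le p\le\infty$, with maximal and $V^r$ control by Sobolev embedding in $t$) and a Littlewood–Paley family of high-frequency pieces $\mathcal A_t^{c,j}$ with frequency $\sim 2^j$. For each dyadic block one has the two basic estimates: an $L^2\to L^2$ bound of size $2^{-j}$ coming from the curvature decay of $\widehat{\mu_c}$, and an $L^p\to L^p$ bound of size $O(2^{\eps j})$ for all $p\in(1,\infty)$ coming from the fact that the pieces are, up to harmless cutoffs, averages over smooth pieces of a hypersurface (or, more crudely, bounded by a maximal function). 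Interpolating these two, the dyadic maximal operator $\sup_t|\mathcal A_t^{c,j}f|$ obeys $\|\cdot\|_{L^p}\lesssim 2^{-\delta(p)j}\|f\|_{L^p}$ for $p>3/2$, with the endpoint $p=3/2$ exactly the place where the $L^2$ gain $2^{-j}$ is fully consumed; summing in $j$ gives \eqref{est17}, and upgrading $\sup_t$ to $V^r$ via the standard argument (bound the long $r$-variation over dyadic $t$-scales by a square function using the extra derivative in $\widehat{\mu_c}$, and control the short variation within each scale by an $L^2$ Sobolev argument) gives \eqref{eq:24}; the Rubio de Francia / Littlewood–Paley square-function trick handles the passage from $p=2$ to a neighborhood, but here the clean curvature bound makes the direct interpolation sufficient. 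For the lacunary statements \eqref{eq:25} and \eqref{eq:26} one restricts $t$ to $\mathbb D$: the gain $2^{-j}$ on $L^2$ plus the $O(2^{\eps j})$ bound on $L^p$ together with the fact that a lacunary set has only $O(1+|\log(2^j)|)$ relevant scales per dyadic block lets one reach every $p\in(1,\infty]$, and the variational version follows by the same square-function argument now summing over the lacunary times.

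For the sharpness claim — that \eqref{eq:24} and \eqref{est17} fail for $p\in[1,3/2]$ — I would use the standard Knapp-type example adapted to $\S_c^2$: take $f$ to be (a smooth bump adapted to) a $\delta\times\delta\times\delta^2$ slab tangent to the surface at a point of positive curvature, so that $\mathcal A_t^c f$ is $\gtrsim \delta^{-2}\cdot\delta\cdot\delta^2\cdot\delta^{-2}=$ of order one on a set of measure $\gtrsim \delta$ in $x$ for $t$ in an interval of length $\gtrsim\delta$ around a suitable $t_0(x)$, giving $\|\sup_t|\mathcal A_t^c f|\|_{L^p}\gtrsim \delta^{1/p}$ while $\|f\|_{L^p}\sim \delta^{4/p}$ — wait, the correct bookkeeping (which I would carry out carefully) produces the ratio $\delta^{1/p - 3/2}$ up to the normalization of $\mu_c$, which blows up precisely when $p<3/2$; the endpoint $p=3/2$ is ruled out by a logarithmic refinement (superposing dyadic Knapp examples at scales $\delta=2^{-k}$, $k=1,\dots,N$, and noting the maximal function stacks the $N$ contributions). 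This is the one place where the non-Euclidean geometry of $|\cdot|_c$ has to be handled honestly, since one must locate a point of $\S_c^2$ with nondegenerate second fundamental form and verify the focusing of the family $\{t\,\S_c^2\}$ there; but since $c\in(1,2)$ guarantees strict convexity of the unit ball of $|\cdot|_c$ with a nonvanishing Hessian away from the axes, such a point exists and the classical computation goes through verbatim.

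The main obstacle I anticipate is not the maximal/variational machinery — that is by now routine once the Fourier decay of $\widehat{\mu_c}$ with one derivative is in hand — but rather establishing that decay uniformly and with the correct constants near the coordinate hyperplanes, where $\S_c^2$ degenerates (the curvature of $|x|_c=1$ vanishes to infinite order as one approaches an axis when $c<2$, and to the boundary the surface is only finitely smooth when one crosses an axis for $c$ not an even integer). The fix is to isolate a fixed small conical neighborhood of each bad direction, on which $\mu_c$ restricted there is handled by the trivial bound $|\widehat{\mu_c}(\xi)|\le \|\mu_c\|$ together with the observation that these caps have small $\mu_c$-mass and contribute an operator bounded on all $L^p$ by the Hardy–Littlewood maximal function (since a measure supported on a nearly flat piece is dominated by an average over a fattened slab), while on the complementary compact region of $\S_c^2$ the curvature is bounded below and stationary phase applies cleanly; summing the two regimes costs nothing in the final exponents.
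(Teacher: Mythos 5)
Your high-level plan — reduce everything to the Fourier decay $|\widehat{\mu_c}(\xi)|+|\nabla\widehat{\mu_c}(\xi)|\lesssim(1+|\xi|)^{-1}$ and feed it into the Littlewood--Paley/interpolation machinery of the Stein spherical maximal theorem — is the same as the paper's; the paper simply bundles the dyadic square-function and long/short variation arguments into two ready-made theorems from \cite{MSZ1}, where you spell them out. So the architecture is fine. There are, however, two genuine gaps.

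First, you have the geometry near the coordinate axes exactly backwards for $c\in(1,2)$. You write that ``the curvature of $|x|_c=1$ vanishes to infinite order as one approaches an axis when $c<2$''. This is false: since $\partial^2_{x_2}|x_2|^c=c(c-1)|x_2|^{c-2}\to\infty$ as $x_2\to0$ when $c<2$, the Gaussian curvature of $\S_c^2$ \emph{blows up} near the axis points; vanishing curvature occurs only for $c>2$. The paper is explicit about this: for $1<c<2$ the difficulty is the loss of smoothness (the surface is only $C^1$ across the coordinate hyperplanes for generic $c$), \emph{not} degenerating curvature. This error propagates into your proposed fix: you suggest isolating a fixed conical neighborhood of each axis and there using the trivial bound $|\widehat{\mu_c}(\xi)|\le\|\mu_c\|$ plus domination by a ``fattened slab''. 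But a fixed conical neighborhood carries a fixed fraction of $\mu_c$, so the trivial bound gives no $2^{-j}$ decay on those pieces, and your interpolation then no longer sums; the argument breaks exactly at the step where you claim the endpoint $p=3/2$ is ``exactly where the $L^2$ gain is fully consumed''. What the paper actually does (Lemma~\ref{lem:FTest}) is a careful oscillatory-integral estimate that works on the whole surface, using the partition of unity of Lemma~\ref{lem:decom}, a dyadic decomposition in the tangential coordinates near the axes, and a $TT^*$ argument that extracts the $|\xi|^{-1}$ decay despite the non-smoothness. There is no trivial piece to discard.

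Second, for the sharpness claim at $p\in[1,3/2]$ you reach for a Knapp slab tangent to $\S_c^2$, but this is the wrong family of examples for the sharp $p$-range of a spherical \emph{maximal} theorem (it is the right tool for $L^p\to L^q$ improving and restriction estimates), and your own bookkeeping for it is left undone. The paper's counterexample is much simpler and cleaner: take $f$ a fixed bump with $\ind_{|y|_c<1/2}\le f\le\ind_{|y|_c<1}$, observe that $\sup_{t>0}\mathcal A_t^cf$ reduces to a maximal average over annuli, and note that for $|x|_c$ large one gets $\sup_{t>0}\mathcal A_t^cf(x)\gtrsim|x|_c^{-2}$, which fails to lie in $L^p(\R^3)$ precisely when $p\le 3/2$. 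No Knapp geometry is needed, and no superposition over dyadic scales is required for the endpoint $p=3/2$ — a single bump already does it. If you insist on a Knapp-type example you would need to verify carefully what it shows for the \emph{maximal} operator (the exponent that comes out naturally is not $1/p-3/2$), and it is unlikely to give the clean endpoint failure.

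One smaller point: the statement gives $V^r$ boundedness only for $p\in(3/2,4)$ while the maximal bound holds for $p\in(3/2,\infty]$; the paper obtains the larger maximal range by interpolating the maximal bound obtained from $V^r$ (on $(3/2,4)$) with the trivial $L^\infty$ estimate. Your writeup does not acknowledge this discrepancy and seems to run the $V^r$ argument for the full range, which would over-claim on the variational side.
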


Averaging operators over the Euclidean spheres ($\S_c^{d-1}$ with
$c=2$) were extensively studied over the years.  It is very well known
from the results of Stein \cite{[42]} for all $d\ge 3$, and Bourgain
\cite{[7]} for $d=2$ that the maximal spherical function
$\sup_{t>0}|\mathcal A_t^{2, d}f|$ is bounded on $L^p(\R^d)$ if and
only if $\frac{d}{d-1}<p\le \infty$. If $t$ is
restricted to a lacunary set $\mathbb D\subset(0, \infty)$ then the
lacunary spherical maximal function
$\sup_{t\in\mathbb D}|\mathcal A_t^{2, d}f|$ is bounded on
$L^p(\R^d)$ for all $1<p\le \infty$ as it was shown by Calder{\'o}n
\cite{Cal} and independently by Coifman and Weiss \cite{CW}. 

Consider the $c$-sphere $\S_c^{d-1}$ as defined in \eqref{id:307}. If $c$ is an even integer, $>2$, it follows from (\cite{IS}, 
see also \cite{ISS}) that the corresponding maximal operator is bounded for 
$p>\max \left\{\frac{d}{d-1}, \frac{c}{d-1} \right\}$ (sharp). Similar methods yield the same result for any $c \ge 2$. The 
case $1<c<2$ is a bit different due to the lack of smoothness and the fact just as in the case $c=2$, the 
Gaussian curvature does not vanish. We are able to handle this case in Theorem  \ref{thm:contsph} above using a 
suitable Fourier decay estimate (see inequality
\eqref{eq:fourier}) that allows to follow the general outline of Stein's original argument for the sphere in
dimensions three and higher. While Theorem \ref{thm:contsph} is only stated and proved in three dimensions, the maximal 
estimate easily extends to all $d \ge 3$ with $p>\frac{3}{2}$ replaced by $p>\frac{d}{d-1}$. 

Variational estimates for the spherical averages $\mathcal A_t^{2, d}$
were studied by Jones, Seeger and Wright \cite{JSW} who showed that
$V^r(\mathcal A_t^{2, d}f: t>0)$ is bounded on $L^p(\R^d)$ for all
$r>2$ if $\frac{d}{d-1}<p\le 2d$, where the conditions for $r$ and $p$
are both sharp.  If $p>2d$ then $V^r(\mathcal A_t^{2, d}f: t>0)$ is
bounded on $L^p(\R^d)$ as long as $r>p/d$ and this is not true when
$r<p/d$. Recently, also Beltran, Oberlin, Roncal, Seeger and Stovall
\cite{BORSS} showed the endpoint estimate, which states that if $d\ge3$ and $p>2d$ then
$V^{p/d}(\mathcal A_t^{2, d}f: t>0)$ is of restricted weak type $(p,p)$.

Theorem \ref{thm:contsph} gives sharp estimates in \eqref{est17},
\eqref{eq:25} and \eqref{eq:26}. However, the range of $p$ and $r$ in
\eqref{eq:24} is not sharp. The exponent $c\in(1, 2)$
does not affect the maximal estimates \eqref{est17} and \eqref{eq:26},
which coincide with their Euclidean counterparts (with $c=2$ and
$d=3$). This phenomenon (as mentioned above) can be easily explained by examining Fourier
transform estimates of the spherical measure $\mu_c$ on $\S_c^{2}$,
which for all $c\in(1, 2)$ does have precisely the same bounds as the
Fourier transform of the spherical measure on $\S_2^{2}$, see
\eqref{eq:fourier}. We have not found this result in the existing
literature, and since it may be of independent interest we included
the proof in Section \ref{sec:osc}. Moreover, the Fourier
transform estimates \eqref{eq:fourier} of the measure $\mu_c$ will be
essential in the proofs of Theorem \ref{thm:erg} and Theorem
\ref{thm:L2}. 

Discrete analogues $M_{\lambda}^{2, d}$ of the Euclidean spherical
averages $\mathcal A_t^{2, d}$ were investigated by Magyar \cite{Ma},
and Magyar, Stein and Wainger \cite{MSW}. In the latter work a
complete result was proved, which asserts that the discrete maximal
spherical function $\sup_{\lambda\in\Z_+}|M_{\lambda}^{2, d}f|$ is
bounded on $\ell^p(\Z^d)$ if and only if $d\ge 5$ and
$\frac{d}{d-2}<p\le \infty$. It is a remarkable result that
illustrates a difference between discrete and continuous operators.
The restricted weak-type endpoint result was also proved for
$\sup_{\lambda\in\Z_+}|M_{\lambda}^{2, d}f|$ by Ionescu \cite{I1}.

Magyar, Stein and Wainger paper \cite{MSW} was the starting point of
three different interesting lines of investigations in the discrete
harmonic analysis and related areas.
\medskip
\paragraph{\bf 1.}
The first line, initiated by Magyar \cite{Ma2}, extended the result of
Magyar--Stein--Wainger \cite{MSW} to discrete averages defined over a
certain wide class of positive definite hypersurfaces in the spirit of
Birch \cite{Bir} and Davenport \cite{Dav}. Magyar \cite{Ma2} also
initiated investigations of norm and pointwise ergodic theorems for
these kind of averages as well as equidistribution problems \cite{Ma3}
and \cite{Mag} in the context of discrepancy function, see also the
next subsection for more details.  Magyar's results were extended by
Hughes \cite{H3, H2} to $k$-spheres $\mathbf{S}^d_k(\lambda)$ for any
integer $k\ge 2$ and sufficiently large dimensions $d\in\Z_+$. Hughes'
results were recently improved by Anderson, Cook, Hughes and Kumchev 
in \cite{ACHK}. In the latter paper the authors also continued their investigations  of
the so-called ergodic Waring--Goldbach problems \cite{ACHK0}, where averages are taken over $k$-spheres in primes
$\mathbf{S}^d_k(\lambda)\cap\mathbb P^d$ for any integer $k\ge 2$ and
sufficiently large dimensions $d\in\Z_+$.  Discrete maximal averages over Birch forms were recently studied by Cook
\cite{C1}.

\medskip

\paragraph{\bf 2.} The second line of investigation, initiated by Hughes \cite{H1}, on the
one hand, asks about $\ell^p(\Z^d)$ bounds for discrete spherical
maximal operators obtained from restricting the supremum to lacunary
sets. On the other hand, Hughes observed (even though the
Magyar--Stein--Wainger theorem is sharp) that it also makes sense to study
$\sup_{\lambda\in2\N+1}|M_{\lambda}^{2, d}f|$ for $d=4$ upon
restricting the radii $\lambda$ to odd integers. Specifically, the
author \cite{H1} constructed a sophisticated lacunary set of radii
$\mathbb D$ such that
$\sup_{\lambda\in \mathbb D}|M_{\lambda}^{2, d}f|$ is bounded on
$\ell^p(\Z^d)$ for every $\frac{d}{d-2}\le p\le \infty$ and
$d\ge 4$. Hughes' result was recently extended by Kesler, Lacey and
Mena \cite{KLM}, where it was proved that
$\sup_{\lambda\in \mathbb D}|M_{\lambda}^{2, d}f|$ for any lacunary
sequence $\mathbb D\subset \Z_+$ is bounded on $\ell^p(\Z^d)$ for
every $\frac{d-2}{d-3}< p\le \infty$ and $d\ge5$. The case $d=4$ was
recently established by Anderson and Madrid \cite{AM} for
$\frac{d+1}{d-1}< p\le \infty$ and all lacunary sequences
$\mathbb D\subset \Z_+\setminus 4\Z_+$.
Cook and
Hughes \cite{CH} considered similar questions in the context of Birch
forms, and specifically recovered the main result from \cite{KLM}.
They also showed in \cite{CH}  that in
contrast to the continuous case, no such inequalities can hold close
to $\ell^1(\Z^d)$. More precisely, for any $1<p< \frac{d}{d-1}$ there
exists a set of lacunary radii $\mathbb D\subset\Z_+$ such that
$\sup_{\lambda\in \mathbb D}|M_{\lambda}^{2, d}f(x)|$ is unbounded on
$\ell^p(\Z^d)$. This negative result is also true  \cite{CH} for averages over
more general forms  in the spirit of Birch. This is another remarkable
phenomenon that exhibit some peculiar features in the discrete world.
Finally, let us emphasize that the negative result from \cite{CH} does not
exclude positive results for $1<p< \frac{d}{d-1}$. Namely, Cook showed
$\ell^p(\Z^d)$ maximal bounds for all $1<p\le \infty$ by considering
$M_{\lambda}^{2, d}$ for $d\ge 5$ with a very sparse sequence of radii \cite{C3} as
well as by studying averages associated to a certain class of homogeneous
algebraic hypersurfaces in \cite{C1}.

\medskip
\paragraph{\bf 3.} The third line, initiated by Kesler, Lacey and Mena
\cite{KLM}, \cite{KLM2} who proposed to study sparse estimates in the
context of discrete spherical averages. It is a very successful line
of research, which significantly enhanced the field of discrete
harmonic analysis. In \cite{KLM2} Kesler, Lacey and Mena proved
conjecturally sharp sparse bounds for
$\sup_{\lambda\in\Z_+}|M_{\lambda}^{2, d}f|$. Using these bounds the
authors recovered in a fairly unified way the main results from
Magyar--Stein--Wainger paper \cite{MSW} as well as the endpoint result
of Ionescu \cite{I1}. Sparse estimates \cite{K1, K2, KL} turned out to
be very efficient in $\ell^p(\Z^d)$-improving estimates for the
Magyar--Stein--Wainger averages $M_{\lambda}^{2, d}$, which were
initially studied by Hughes \cite{H4}. We also refer to Anderson's
paper \cite{And} which obtained $\ell^p(\Z^d)$-improving bounds for
spherical averages along the primes.

\medskip

Theorem \ref{thm:erg} and Theorem \ref{thm:L2} contribute to the first
and the second line of research. Firstly, it is the first paper that
investigates arithmetic spheres $\mathbf{S}^3_c(\lambda)$ from
\eqref{eq:39} with noninteger exponents $c\not\in\Z_+$ in the context
of norm and pointwise ergodic theorems, see Theorem
\ref{thm:erg}. Secondly, Theorem \ref{thm:L2} establishes full (all
radii $\lambda\in\Z_+$) maximal estimates \eqref{eq:43}; as well as
sharp (for all exponents $p\in(1, \infty]$) lacunary maximal estimates
\eqref{eq:44} for the discrete averages $M_{\lambda}^{c}$ over the
arithmetic spheres $\mathbf{S}^3_c(\lambda)$ in dimension $d=3$. To
the best of our knowledge it is the first result in the discrete
setup that handles three-dimensional case.  Thirdly, since
$\mathbf{S}^3_c(\lambda)$ is induced by non-polynomial functions the
circle method that lies behind of the maximal estimates in Theorem
\ref{thm:erg} and Theorem \ref{thm:L2} is much simpler. The only major
arc that is expected to be significant in analysis of Fourier
multipliers corresponding to the averages $M_{\lambda}^{c}$ is the one
centered at the origin and the arithmeticity that was apparent in all
papers mentioned above does not enter into our proofs at all. 
Therefore, the proofs are conceptually closer
to the situation that arises in the continuous setup, this allows us
to think that averages $A_{\lambda}^{c}$ may be thought of as toy
models to study the  questions of this type in ergodic theory and
discrete harmonic analysis.  This gives strong motivation to
understand the situation more thoroughly.

 \subsection{Equidistribution  problems}
We will discuss equidistribution problems corresponding to
the arithmetic spheres $\mathbf{S}^3_c(\lambda)$.  The last goal of
the article is to study the following projections $\mathbf P_c^3(\lambda)$, see \eqref{eq:45}.
In other words this is the set of all projections of lattice points from
$\mathbf{S}^3_c(\lambda)$ via the dilatation
$x \mapsto \lambda^{-1/c} x$ on a neighborhood of the unit sphere
$\mathbb S^2_c\subset\R^3$. Even though
$\mathbf P_c^3(\lambda)\not\subseteq \mathbb S^2_c$ we can show that
the points from $\mathbf P_c^3(\lambda)$ are asymptotically close to
$\mathbb S^2_c$ since
\begin{align*} 
\sup_{x \in \mathbf P_c^3(\lambda)} ||x|_c^c - 1| \le 3/\lambda  
\xrightarrow[\lambda \to\infty]{} 0,
\end{align*}
and in fact 
the points from $\mathbf P_c^3(\lambda)$ can be interpreted as
equidistributed on the unit sphere $\mathbb S^2_c\subset\R^3$ as
$\lambda\to\infty$, in the sense of the following result.
\begin{thm}
\label{thm:equi}
Let $c \in (1,9/8)$ be fixed. Then for every $\phi \in C^{\8}(\R^3)$ we have
\begin{align}
\label{eq:47}
\frac{1}{r_c(\lambda)}
\sum_{x \in \mathbf P_c^3(\lambda)} \phi (x)
\xrightarrow[\lambda \to\infty]{} 
\int_{\S_c^2} \phi (x) \, d\nu_{c} (x),
\end{align}
where $\nu_{c}$ is a probability measure on $\S_c^2$ obtained by normalization of the measue  $\mu_c$, i.e.
\[
\nu_{c} :=\frac{\mu_c}{\mu_c(\S_c^2)}, \quad \text{ where } \quad \mu_c(\S_c^2)=\frac{8 \Gamma(1/c)^3}{c^2\Gamma(3/c) }.
\]
\end{thm}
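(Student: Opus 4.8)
The plan is to reduce \eqref{eq:47} to the lattice point count of Corollary \ref{cor:asym} by a standard density (Weyl-type) argument, using the fact that $\phi\in C^\infty(\R^3)$ can be separated along the three coordinate axes after a smooth partition of unity. First I would write $\la^{-1/c}x=(\la^{-1/c}x_1,\la^{-1/c}x_2,\la^{-1/c}x_3)$ for $x\in\mathbf S_c^3(\la)$, and note that, since $\lfloor|x_j|^c\rfloor\le\la$, each coordinate satisfies $|x_j|\le(\la+1)^{1/c}$, so all points of $\mathbf P_c^3(\la)$ live in a fixed compact ball; thus we may as well assume $\phi$ is supported in $[-2,2]^3$. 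By symmetry (the sphere $\mathbf S_c^3(\la)$ and its projection are invariant under all sign changes of the coordinates) it suffices to treat the contribution of points with $x_1,x_2,x_3>0$, i.e.\ to understand
\begin{align*}
\frac{1}{r_c(\la)}\sum_{\substack{n\in\mathbf S_c^3(\la)\cap\Z_+^3}}\phi(\la^{-1/c}n_1,\la^{-1/c}n_2,\la^{-1/c}n_3),
\end{align*}
multiply by $2^3$, and match this against the corresponding octant piece of $\int_{\S_c^2}\phi\,d\nu_c$ via the polar decomposition \eqref{polar}.

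The core of the argument is to run the same circle method that proves Theorem \ref{thm:asymr}, but with the weight $\phi(\la^{-1/c}n)$ inserted. Concretely, I would write the weighted count as a Fourier integral
\begin{align*}
\sum_{n\in\mathbf S_c^3(\la)\cap\Z_+^3}\phi(\la^{-1/c}n)
=\int_0^1\Big(\prod_{j=1}^3 S_j(\xi)\Big)e^{-2\pi i\la\xi}\,d\xi,
\qquad
S_j(\xi):=\sum_{\substack{m\ge N_0}} \phi_j^{(\la)}(m)\, e^{2\pi i\xi\lfloor m^c\rfloor},
\end{align*}
after first reducing to the product case $\phi(x)=\phi_1(x_1)\phi_2(x_2)\phi_3(x_3)$ by a Stone--Weierstrass/Fourier-series approximation of $\phi$ on $[-2,2]^3$ (the error being controlled uniformly because the total mass $r_c(\la)^{-1}\#\mathbf P_c^3(\la)=1$), where $\phi_j^{(\la)}(m):=\phi_j(\la^{-1/c}m)$ is a smooth cutoff at scale $\la^{1/c}$. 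The exponential sum estimates behind Theorem \ref{thm:asymr} — the major-arc asymptotics for $S_j$ near $\xi=0$ coming from van der Corput / Euler--Maclaurin applied to $m\mapsto\lfloor h_j(m)\rfloor=\lfloor m^c\rfloor$, together with the minor-arc bounds that force the single major arc at the origin — all go through verbatim with the extra smooth, slowly varying factor $\phi_j^{(\la)}$, since $\phi_j^{(\la)}$ and its derivatives are bounded uniformly with the expected $\la^{-1/c}$ gain per derivative and $\phi_j^{(\la)}$ has support of length $\asymp\la^{1/c}$. On the major arc this produces the main term
\begin{align*}
\la^2\vp_1'(\la)\vp_2'(\la)\vp_3'(\la)\,\frac{1}{\prod_j\Gamma(\g_j)^{-1}}\int_{\substack{u_1+u_2+u_3=1\\ u_j>0}}\prod_{j=1}^3 u_j^{\g_j-1}\phi_j\big(u_j^{\g_j}\big)\,d\sigma(u),
\end{align*}
i.e.\ exactly the Dirichlet/beta-integral from the proof of Theorem \ref{thm:asymr} but with each $u_j^{\g_j-1}$ now carrying a factor $\phi_j(u_j^{\g_j})$; here I have used $\vp_j(\la)=\la^{\g_j}$, $\vp_j'(\la)=\g_j\la^{\g_j-1}$, and the change of variables $n_j=\la u_j$, $\la^{-1/c}n_j=u_j^{1/c}\cdot$(correction $\to 1$). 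Dividing by $r_c(\la)\sim \frac{2^3\Gamma(1+1/c)^3}{\Gamma(3/c)}\la^{3/c-1}=2^3\prod_j\Gamma(\g_j)\cdot\frac{\Gamma(\g_1)\Gamma(\g_2)\Gamma(\g_3)}{\Gamma(\g_1+\g_2+\g_3)}\cdot(\text{the }\la^2\vp_1'\vp_2'\vp_3'\text{ factor})$ and identifying the resulting integral with $\int_{\S_c^2}\phi\,d\nu_c$ via the polar coordinates for $|\cdot|_c$ (this is precisely the computation that also yields the stated value $\mu_c(\S_c^2)=\frac{8\Gamma(1/c)^3}{c^2\Gamma(3/c)}$ when $\phi\equiv 1$) gives \eqref{eq:47}.

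The main obstacle — and the only place real work is needed beyond re-reading the proof of Theorem \ref{thm:asymr} — is verifying that the minor-arc estimates are \emph{uniform} in the smooth weights $\phi_j^{(\la)}$ and, more subtly, that the Fourier-series approximation of a general $\phi\in C^\infty([-2,2]^3)$ by finite sums of products $\prod_j\phi_j$ can be done with an error that is $o(1)$ \emph{after} division by $r_c(\la)$; the latter is automatic once one knows $r_c(\la)^{-1}\#\mathbf P_c^3(\la)=1$ and that the approximation converges uniformly on the compact set where $\mathbf P_c^3(\la)$ lives, but it must be stated carefully. A secondary technical point is that $\la^{-1/c}n_j\neq u_j^{1/c}$ exactly: one has $n_j^c\le \lfloor n_j^c\rfloor+1$ and $\sum_j\lfloor n_j^c\rfloor=\la$, so $|\la^{-1/c}n_j - (\la^{-1}\lfloor n_j^c\rfloor)^{1/c}|\lesssim \la^{-1}$, and since $\phi$ is Lipschitz this perturbation contributes only $O(\la^{-1})$ to each summand, hence $O(\la^{-1})$ after averaging — negligible against the main term. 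With these two points dispatched, the argument is a direct weighted rerun of Section \ref{sec:asym}, so I would present it as such, emphasizing only the modifications to the major-arc main term computation.
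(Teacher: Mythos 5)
Your proposal is a genuinely different route from the paper's, and it is plausible, but it is considerably more labor-intensive than what the paper actually does, and a couple of the technical claims you make in passing are not quite right.

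The paper proves Theorem \ref{thm:equi} without ever separating variables in $\phi$. Instead it writes
\[
\sum_{x\in\mathbf P_c^3(\lambda)}\phi(x)
=\int_{\R^3}\mathcal F_{\R^3}\phi(\xi)\,\mathcal F_{\Z^3}^{-1}\sigma_\lambda\!\Big(\frac{\xi}{\lambda^{1/c}}\Big)\,d\xi
\]
and invokes Lemma \ref{lem:1}, which provides a uniform-in-$\xi$ asymptotic for $\mathcal F_{\Z^3}^{-1}\sigma_\lambda(\xi)$ already packaged with error terms $O_\eps(\lambda^{3/c-1-(5-4c)/(3c)+\eps})$ and $\lambda^{3/c-1}O(|\xi|+\lambda^{-1/(4c)})$. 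Lemma \ref{lem:1} itself is proved using the major/minor arc decomposition $\sigma_\lambda=\sigma_\lambda^{\mathfrak M}+\sigma_\lambda^{\mathfrak m}$ of Section \ref{sec:41} and the one-variable exponential sum estimate Lemma \ref{lem:L2}. Once Lemma \ref{lem:1} is in hand, the equidistribution theorem reduces to a clean computation (change of variable, polar decomposition, $\int\mathcal F_{\R}^{-1}\psi=\psi(0)=1$, dominated convergence). In other words the paper never needs $\phi$ of product form: the oscillation in $\xi$ carried by $\mathcal F_{\R^3}\phi(\xi)$ absorbs the role that your tensor-product approximation was designed to play, and the uniformity in $\xi$ of Lemma \ref{lem:1} replaces the uniformity you would have to establish in the weights $\phi_j^{(\lambda)}$.

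Your approach — directly inserting the weights $\phi_j^{(\lambda)}(m)=\phi_j(\lambda^{-1/c}m)$ into the exponential sums of Section \ref{sec:asym} and rerunning that argument — should work in principle because the weights are smooth, compactly supported, and slowly varying at scale $\lambda^{1/c}$, so partial summation recovers the unweighted bounds. But the claim that the estimates go through ``verbatim'' is optimistic: Proposition \ref{prop:FG} and the asymptotics in Proposition \ref{prop:J3} would all need weighted reproofs (the partial summation introduces derivative terms of $\phi_j^{(\lambda)}$, and the Riemann-sum/Beta-integral computation producing the main term changes), and you also need the Stone--Weierstrass reduction to products to be done with error control uniform in $\lambda$. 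None of this fails, but it duplicates work that the paper avoids by routing through Lemma \ref{lem:1}, which it needs anyway for Theorem \ref{thm:disc}. Two smaller points: your assertion that $|\lambda^{-1/c}n_j-(\lambda^{-1}\lfloor n_j^c\rfloor)^{1/c}|\lesssim\lambda^{-1}$ is not correct for small $n_j$ (for $n_j$ bounded the difference is of size $\lambda^{-1/c}$, not $\lambda^{-1}$, because $t\mapsto t^{1/c}$ is not Lipschitz near $0$); one fixes this by noting that there are few such $n_j$, or more simply by using the already-stated bound $\sup_{x\in\mathbf P_c^3(\lambda)}||x|_c^c-1|\le 3/\lambda$. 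And the $\Gamma$-factor bookkeeping in your main-term/normalization identity has a slip (an extraneous $\prod_j\Gamma(\gamma_j)$), though the orders of magnitude line up.
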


Theorem \ref{thm:equi} can be thought of as a variant of Linnik's
problem \cite{Lin} for the arithmetic spheres
$\mathbf{S}^3_c(\lambda)$, see also \cite{Duke} and \cite{GF} for
unconditional variants of Linnik's result.

In fact, being motivated by Magyar \cite{Ma3} and \cite{Mag}, we obtain
a much stronger result by investigating the rate of equidistribution
of the sets $\mathbf P_c^3(\lambda)$ on the neighborhood of the sphere
$\S_c^2$ as $\lambda \to \infty$. For this purpose we shall
investigate the discrepancy of the sets $\mathbf P_c^3(\lambda)$ with respect
to the following spherical caps
\begin{align*} 
C_{a,\xi} := \{ x \in \S_c^2 : x \cdot \xi \ge a \}, 
\qquad \xi \in \S^2, \quad a > 0.
\end{align*}
Here and later on $\S^2 = \S^2_2$, see \eqref{id:307}.
Due to the technical reason caused by the fact that $\mathbf P_c^3(\lambda) \nsubseteq \S_c^2$, we also need to consider 
\begin{align*} 
\mathbf C_{a,\xi} := \{ x \in \R^3 : 100 \ge x \cdot \xi \ge a \}, 
\qquad \xi \in \S^2, \quad a > 0.
\end{align*}
Observe that $C_{a,\xi} \subseteq \mathbf C_{a,\xi}$ because $|x\cdot \xi| \le 1$ for $x \in \S_c^2$ and $\xi \in \S^2$. 
Then the discrepancy function of the set $\mathbf P_c^3(\lambda)$ with respect to the caps  $C_{a,\xi}$ and $\mathbf C_{a,\xi}$ associated with a given direction $\xi \in \S^2$ is defined by
\begin{align*} 
D_c(\lambda,\xi)  := \sup_{a>0} |D_c(\lambda,\xi,a)|, 
\qquad \lambda \in \Z_+, \quad \xi \in \S^2,
\end{align*}
where
\begin{align*} 
D_c(\lambda,\xi,a)  := \#(\mathbf P_c^3(\lambda) \cap \mathbf C_{a,\xi}) 
- r_c(\lambda) \nu_{c} (C_{a,\xi}),
\qquad \lambda \in \Z_+, \quad \xi \in \S^2, \quad a > 0.
\end{align*}
Using the ideas from \cite{Mag} we are able to prove the following estimate.

\begin{thm} \label{thm:disc}
Let $c \in (1,9/8)$ be fixed. Then for every $\eps > 0$ we have
\begin{align*} 
D_c(\lambda,\xi) \lesssim_{\eps} \lambda^{3/c - 1 - (9-8c)/(5c) + \eps},
\qquad \lambda \in \Z_+, \quad \xi \in \S^2.
\end{align*}
The implicit constant is uniform in $\lambda$ and $\xi$.
\end{thm}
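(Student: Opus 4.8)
The plan is to reduce the discrepancy estimate to a Fourier-analytic comparison between the counting measure on $\mathbf P_c^3(\lambda)$ and the spherical measure $\nu_c$, following the scheme of Magyar \cite{Mag}. First I would record that, after the dilation $x\mapsto \lambda^{-1/c}x$, counting points of $\mathbf P_c^3(\lambda)$ inside a cap $\mathbf C_{a,\xi}$ is the same as counting $n\in\mathbf S_c^3(\lambda)$ with $\lambda^{-1/c}n\cdot\xi\ge a$; I would then smooth the sharp indicator of the cap. The standard device is to sandwich $\ind_{\mathbf C_{a,\xi}}$ between two smooth functions $\psi^{\pm}_{a,\xi}$ that agree with it outside a $\delta$-neighborhood of the boundary hyperplane $\{x\cdot\xi=a\}$, with derivative bounds $\|\partial^\alpha\psi^{\pm}_{a,\xi}\|_\infty\lesssim_\alpha \delta^{-|\alpha|}$, where $\delta>0$ is a parameter to be optimized at the end. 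The boundary-layer error is controlled by the equidistribution input already available (Theorem \ref{thm:equi}, or rather its quantitative form from the circle method of Section \ref{sec:asym}): the number of points of $\mathbf P_c^3(\lambda)$ within distance $\delta$ of a hyperplane is $\lesssim (r_c(\lambda)\delta + \text{error})$, and a parallel bound holds for $r_c(\lambda)\nu_c$ of the corresponding shell.

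Next I would expand $\sum_{n\in\mathbf S_c^3(\lambda)}\psi^{\pm}_{a,\xi}(\lambda^{-1/c}n)$ by the same Hardy–Littlewood circle method used to prove Theorem \ref{thm:asymr} and Corollary \ref{cor:asym}. Writing $\mathbf 1_{\mathbb N_{h}}$ via the exponential sum $S_j(\beta)=\sum_{m}e(\beta\lfloor m^{c}\rfloor)$ and inserting the smooth weight, the main term is a major-arc contribution that reproduces exactly $r_c(\lambda)\int_{\S_c^2}\psi^{\pm}_{a,\xi}\,d\nu_c$ (this is precisely where the polar-coordinate description of $\mu_c$ in \eqref{polar} and the formula for $\mu_c(\S_c^2)$ enter), and the minor-arc contribution is bounded by the same exponential-sum estimates that yield the error term $\lambda^{3/c-1-(9-8c)/(5c)+\eps}$ in Corollary \ref{cor:asym}. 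The smooth weight $\psi^{\pm}_{a,\xi}$ costs only powers of $\delta^{-1}$ through integration by parts in the oscillatory-integral analysis of the major arc and through the trivial $L^1\to L^\infty$ bound on its Fourier transform in the minor arc; being careful, the minor-arc bound degrades by at most a factor $\delta^{-O(1)}$, say $\delta^{-A}$ for an explicit constant $A$.

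Combining the two, $D_c(\lambda,\xi,a)\lesssim r_c(\lambda)\delta + \delta^{-A}\lambda^{3/c-1-(9-8c)/(5c)+\eps}$, uniformly in $a$ and $\xi$, and optimizing in $\delta$ gives a bound of the shape $\lambda^{3/c-1-\kappa(9-8c)/(5c)+\eps}$ for some $\kappa\in(0,1]$ depending on $A$. The point of the theorem as stated is that one can in fact take $\kappa=1$; this is the delicate part, and it requires organizing the argument so that the smoothing parameter is absorbed \emph{without} loss — concretely, by choosing $\delta=\lambda^{-(9-8c)/(5c)+\eps'}$ comparable to the reciprocal of the minor-arc saving, and then checking that the boundary-layer count of lattice points is itself controlled by the circle method with the \emph{same} saving rather than by the crude $r_c(\lambda)\delta$ bound. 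In other words, one applies the asymptotic for $r_c$ not just globally but to the thin shells $\{x\cdot\xi\in[a,a+\delta]\}$, using that these are again cut out by finitely many smooth conditions; the uniformity in $\xi$ follows because all the oscillatory-integral and exponential-sum estimates are uniform in the direction of the linear form $x\mapsto x\cdot\xi$. The main obstacle, then, is bookkeeping the $\delta$-dependence through every major- and minor-arc estimate of Section \ref{sec:asym} tightly enough to land exactly on the exponent $3/c-1-(9-8c)/(5c)+\eps$, and verifying that the implicit constants genuinely do not see $\xi\in\S^2$.
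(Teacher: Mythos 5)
Your outline---smooth the cap indicator, run the circle method, and control the cost of the smoothing---matches the paper's strategy, but there are two substantive gaps that would prevent the proposal from closing as written.

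First, the boundary-layer estimate is genuinely $\delta^{1/2}$, not $\delta$. The paper's Lemma~\ref{lem:sm_disc} shows
\[
\int_{\mathbb S_c^2} \mathbbm{1}_{[\alpha,\alpha+\delta]}(x\cdot\xi)\,d\mu_c(x)\lesssim\delta^{1/2},
\]
and this square root is unavoidable: on the sphere $\S_c^2$ the normal derivative of $x\mapsto x\cdot\xi$ can be as small as $\delta^{1/2}$ on a set of measure $\delta^{1/2}$ (this is the content of the split into $I_b$ and $J_b$ in the proof of Lemma~\ref{lem:sm_disc}). Replacing this by the linear-in-$\delta$ bound you assumed changes every subsequent exponent in the optimization.

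Second, and more importantly, the framework you set up---bounding the circle-method contribution by $\delta^{-A}\lambda^{3/c-1-(9-8c)/(5c)+\eps}$ for some fixed $A>0$, then optimizing $\delta$---provably cannot reach the stated exponent: optimization of $\delta^{1/2}r_c(\lambda)+\delta^{-A}\lambda^{3/c-1-(9-8c)/(5c)+\eps}$ gives a saving strictly less than $(9-8c)/(5c)$ whenever $A>0$. Your proposed remedy (applying the circle method again to the boundary slab) attacks the first term, not the $\delta^{-A}$ degradation of the second, so it does not resolve the problem. The paper avoids this trap by observing that the smoothing costs only a logarithmic factor on the minor-arc side, not a power of $\delta$: the Fourier transform of the smoothed cap $\phi_{a,\delta}^{\pm}$ has $L^1(\R)$ norm $\lesssim\log(1+\delta^{-1})$ (Lemma~\ref{lem:FA}), and the error terms in the key Lemma~\ref{lem:1} are uniform in the frequency $\xi$. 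Consequently the dominant contribution $O_\eps(\lambda^{3/c-1-(9-8c)/(5c)+\eps})$ comes directly from Corollary~\ref{cor:asym} (the error in the asymptotic for $r_c(\lambda)$), essentially independently of $\delta$, while the $\delta^{-1}$ from Lemma~\ref{lem:FA}(b) is always paired with a factor $|\xi|\lesssim|t|/\lambda^{1/c}$ that more than compensates. One can then simply take $\delta=\lambda^{-1/(2c)}$, making the $\delta^{1/2}r_c(\lambda)\simeq\lambda^{3/c-1-1/(4c)}$ boundary term subdominant. The absence of any real $\delta$-tradeoff is what makes the exponent exact, and this is the conceptual point your proposal does not reach.
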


Theorem \ref{thm:disc} is a three-dimensional analogue of Magyar's
result \cite{Ma3} for the Euclidean spheres (as well as some
hypersurfaces corresponding to homogeneous polynomials with integer
coefficients) for all dimensions $d\ge 4$, which establishes bounds
for the discrepancy function with respect to spherical caps  along
diophantine directions $\xi\in\S^{d-1}$, (see \cite{Ma3} and
\cite{Mag} for a definition of diophantine directions).  Our result
controls the discrepancy function $D_c(\lambda,\xi)$ uniformly in
$\xi \in \S^2$ in contrast to Magyar's results \cite{Ma3, Mag}, which
exclude some sets of directions that is of Lebesgue measure zero in
$\R^{d-1}$. The reason is exactly the same as in the previous two
problems undertaken in this paper. The only major arc that is expected
to be significant in analysis of the discrepancy function
$D_c(\lambda,\xi)$ is the one centered at the origin and we do not
need to exclude non-diophantine directions from the picture in our case. We show a
detailed proof of Theorem \ref{thm:disc} in Section
\ref{sec:disc}. Then we easily deduce, proceeding in a similar way as in
the proof of Theorem~\ref{thm:disc}, the convergence in Theorem \ref{thm:equi}.


\subsection{Structure of the paper}
The paper is organized as follows. Theorem \ref{thm:asymr} is proved
in Section \ref{sec:asym}. In our proof we combine the ideas of
Heath--Brown \cite{HB} with Vinogradov's ideas from the ternary
Goldbach problem (see \cite[Section 8, p. 211]{Nat} and also
\cite[Chapter 13, p. 336]{IK}) as it was done in the Balog and
Friedlander paper \cite{BF}, where the ternary Goldbach problem is
solved in the Piatetski--Shapiro primes $\N_{h}\cap\mathbb P$ with
$h(x)=x^c$ for some $c>1$.  The important part of the argument relies
on the estimates contained in Proposition \ref{prop:J3} (an asymptotic
formula giving the leading term for $r_{h_1, h_2,h_3}$ in Theorem
\ref{thm:asymr}) and Proposition \ref{prop:FG} (exponential sum
estimates giving the error term in Theorem \ref{thm:asymr}).

In Section~\ref{sec:el2} we establish maximal inequalities
\eqref{eq:43} and \eqref{eq:44} from Theorem \ref{thm:L2}, which is
the first main result of this article.  Maximal inequalities
\eqref{eq:43} and \eqref{eq:44} corresponding to the averages
$M_{\lambda}^c$ and the Magyar--Stein--Wainger maximal inequality
\cite{MSW} corresponding to the averages $M_{\lambda}^{2,d}$ have the same
starting point --- the Hardy--Littlewood circle method --- however
completely different proofs. In our situation, there is only one major
arc (centered at the origin) and respectively the only one minor
arc. This phenomenon is caused by the non-integer nature of functions
$x^c$, $c \in (1,2)$, which is in marked contrast with the classical
polynomial situation arising in \cite{MSW}, where major arcs consist
of a collection of small neighborhoods of rationals with sufficiently
small denominators. From this point of view our model is much simpler,
but we cannot directly follow the ideas from \cite{MSW} or even the
subsequent papers discussed above. Maximal estimates corresponding to
the averages $M_{\lambda}^c$ on the minor arc (see Theorem
\ref{thm:minor}) are reduced upon analyzing various Fourier expansions
to certain exponential sum estimates.  The latter exponential sums,
following the ideas of Heath--Brown \cite{HB}, can be estimated using
the van der Corput second derivative test for exponential sums in
place of the usual Weyl's inequality like in the polynomial situation.
Maximal estimates corresponding to the averages $M_{\lambda}^c$ on the
major arc (see Theorem \ref{thm:major}) can be controlled by maximal
functions associated to continuous convolution operators with the
kernels $K_{\lambda}$ as in \eqref{eq:5} upon applying a comparison
principle from Lemma \ref{norm}. Boundedness of the latter operators
are obtained by standard arguments as in \cite{MSZ1}, which in turn
are reduced to the Fourier transform estimates of the surface measure
$\mu_c$ on the hypersurface $\S_c^2$, see \eqref{eq:fourier}. Since we
work with noninteger $c\in(1, 2)$ it is easy to see that $\S_c^2$ is
locally nonsmooth and the Fourier transform estimates of $\mu_c$ in
\eqref{eq:fourier} are much more delicate than the Euclidean one.  We
have not found these kind of estimates in the existing literature
therefore the details (which are interesting in their own right) are
provided in Section \ref{sec:osc}, see Lemma \ref{lem:FTest}. The
proof of Theorem \ref{thm:contsph} is also provided in Section
\ref{sec:osc} as a consequence of the Fourier transform estimates
\eqref{eq:fourier} and the techniques from \cite{MSZ1}.

Theorem \ref{thm:erg}, the second main result of this paper, is proved
in Section \ref{sec:erg}. In fact, as mentioned above, we only need to
establish pointwise convergence for the averages $A_{\lambda}^c$. This
is achieved by splitting the average $A_{\lambda}^c$ into two parts
corresponding respectively to major and minor arc. The minor arc part of the
operator $A_{\lambda}^c$  converges to
zero almost-everywhere. To understand the major arc part we study various
$r$-variational estimates, see \eqref{eq:20} and \eqref{eq:21}.  The
proof is also to a large extent based on the Fourier transform
estimates \eqref{eq:fourier} for $\mu_c$ and the techniques from
\cite{MSZ1}. Our approach allows us to handle also $\sigma$-finite
measure-preserving systems in contrast to probability systems usually
studied in this context.

Finally, in Section \ref{sec:disc} we prove Theorem \ref{thm:disc} by
adapting Magyar's approach \cite{Ma3, Mag} to our setup. The key
ingredients in this process are tools that we developed to prove Theorem
\ref{thm:asymr} as well as Theorem \ref{thm:L2}, see especially the proof of
Lemma \ref{lem:1}. The technique from the proof of Lemma \ref{lem:1} is
also used to establish Theorem \ref{thm:equi}.

\subsection{Open problems} We close this section with a discussion of some open problems that arise naturally out of this project. 

\vskip.125in  

\begin{enumerate}[label*={\arabic*}.]
\item In this paper we are only concerned with dimension $3$, however
it makes sense to consider all these problems in higher dimensional
setting. Therefore, do Theorem \ref{thm:asymr}, Theorem \ref{thm:erg}, Theorem
\ref{thm:L2}, Theorem \ref{thm:contsph}, Theorem \ref{thm:equi} and
Theorem \ref{thm:disc} remain valid in higher dimensional setup? We
expect that these theorems should have higher dimensional analogues.
However, then the relations between the exponent $c$ and the dimension
$d$ must be understood and should enter somehow into play.

\vskip.125in 

\item Does Theorem \ref{thm:asymr} remain valid for all $c\in(1, 2)$
or even $c>1$?  In our proof it was important that $c>1$ but very
close to $1$. Our method can be refined to obtain a larger range of
$c$ in Theorem \ref{thm:asymr} though still very far from
$c\in(1, 2)$.

\vskip.125in 

\item In all theorems except Theorem \ref{thm:asymr} we were concerned
with the arithmetic spheres $\mathbf{S}^3_c(\lambda)$, however it is
interesting to know whether they remain true with the generalized
spheres $\mathbf{S}_{h_1, h_2, h_3}(\lambda)$ in place of
$\mathbf{S}^3_c(\lambda)$.  An important ingredient in the proofs is
the polar decomposition \eqref{polar} for $\S_c^2$, which is rather
not available for generalized spheres from \eqref{eq:34}. This was the
obstacle that we could not overcome.

\vskip.125in 

\item In Theorem \ref{thm:erg} and Theorem \ref{thm:L2} we obtained
sharp results it terms of the range $p\in(1, \infty)$ for the averages
taken over $\lambda_0$-lacunary sets. It is interesting to know what
is the sharp range of exponents in Theorem \ref{thm:erg} and Theorem
\ref{thm:L2} when averages are taken over the full set of integers
$\Z_+$. So far we have $(11-4c)/(11-7c) < p <\8$ and $c\in(1, 11/10)$.
We expect that these results should hold for all $p\in (3/2, \infty)$
at least for any $c\in(1, 11/10)$. The question about all $c>1$ may be
very hard, especially taking into account the case of $c\in\Z_+$,
which as we have seen \cite{AM, CH, H1, KLM} is very different. This
question is also very interesting in higher dimensions when averages $A_{\lambda}^{c, d}$
are taken over  $\mathbf{S}^d_c(\lambda)$ for any $d\ge 3$. Then the
range of $p$ should depend on $d$ and also on $c$ especially when $c>2$.
It is most likely that the sharp range should be for the exponents
$p>\frac{d}{d-c}$ if $c>2$ and $d\ge3$. This would correspond to the
conjecture formulated by Cook and Hughes \cite[Conjecture 1]{CH}. 

\vskip.125in 

\item In Theorem \ref{thm:erg} we established pointwise convergence by
considering  $r$-variational estimates corresponding to various
pieces of the underlying averages. However, it would be nice to have
$r$-variational estimates for the averages $A_{\lambda}^c$ themselves.

\vskip.125in 

\item A similar question concerns the continuous averages
$\mathcal A_{\lambda}^{c, d}$. In view of Jones, Seeger and Wright
$r$-variational result \cite{JSW} for the classical spherical averages
and the recent endpoint result from \cite{BORSS} we ask about sharp
$L^p(\R^d)$ estimates of $r$-variations for
$\mathcal A_{\lambda}^{c, d}$ in terms of parameters $r$, $p$, $c$ and
$d$. It is a very intriguing question due to unclear relations of
the underlying parameters $r$, $p$, $c$ and $d$. These relations
are expected to be different for $c\in(1, 2)$ and for $c\ge 2$.  Here one will have
to extend the Fourier transform estimates from \eqref{eq:fourier} to
higher dimensions and understand their interactions with the exponent
$c$ and the dimension $d$.

\end{enumerate}

\section{Notation}\label{sec:not}
We now set up notation  that will be used throughout the paper.
\subsection{Basic notation} The sets $\Z$, $\R$, $\C$ and $\T:=\R/\Z$ have standard
meaning. The set of positive integers and nonnegative integers will be
denoted  respectively by $ \Z_+:=\{1, 2, \ldots\}$ and
$\N:=\{0,1,2,\ldots\}$. The set $\mathbb P\subset\Z_+$ denotes the set of the prime numbers.  For any real number $N>0$ we define
\[
[N]:=\Z_+\cap(0, N].
\]

For any $x\in\R$ we will use the floor function
$\lfloor x \rfloor: = \max\{ n \in \Z : n \le x \}$, the fractional
part $\{x\}:=x-\lfloor x\rfloor$ and the distance to the nearest
integer $\|x\|:={\rm dist}(x, \Z)$. For $x, y\in\R$ we shall also
write $x \vee y := \max\{x,y\}$ and $x \wedge y := \min\{x,y\}$.

For two nonnegative quantities
$A, B$ we write $A \lesssim_{\delta} B$ ($A \gtrsim_{\delta} B$) if
there is an absolute constant $C_{\delta}>0$ (which possibly depends
on $\delta>0$) such that $A\le C_{\delta}B$ ($A\ge C_{\delta}B$).  We
will write $A \simeq_{\delta} B$ when $A \lesssim_{\delta} B$ and
$A\gtrsim_{\delta} B$ hold simultaneously. We will omit the subscript
$\delta$ if irrelevant. For a function $f:X\to \C$ and positive-valued
function $g:X\to (0, \infty)$, write $f = O(g)$ if there exists a
constant $C>0$ such that $|f(x)| \le C g(x)$ for all $x\in X$. We will
also write $f = O_{\delta}(g)$ if the implicit constant depends on
$\delta$.  For two functions $f, g:X\to \C$ such that $g(x)\neq0$ for
all $x\in X$ we write $f = o(g)$ if $\lim_{x\to\infty}f(x)/g(x)=0$.

We use $\ind_A$ to denote the indicator function of a set $A$. If $S$ 
is a statement we write $\ind_S$ to denote its indicator, equal to $1$
if $S$ is true and $0$ if $S$ is false. For instance $\ind_A(x)=\ind_{x\in A}$.

\subsection{Euclidean spaces}

For every $d\in\Z_+$ the set
$\{e_i\in\R^d:i\in[d]\}$ will always denote the standard basis in
$\R^d$. The standard inner product on $\R^d$ is denoted by 
\begin{align*}
x\cdot\xi:=\sum_{k\in [d]}x_k\xi_k 
\end{align*}
for every $x=(x_1,\ldots, x_d)$ and
$\xi=(\xi_1, \ldots, \xi_d)\in\R^d$. The inner product induces the  Euclidean norm 
$|x|_2:=\sqrt{x\cdot x}$, which will be abbreviated to $|x|$. We will also consider $\R^d$ with the following norms
\[
|x|_p:=
\begin{cases}
\big(\sum_{i\in[d]}|x_i|^p\big)^{1/p}& \text{ if } \quad p\in[1, \infty),\\
\ \max_{k\in[d]}|x_k| & \text{ if } \quad p=\infty.
\end{cases}
\]

The $d$-dimensional  torus $\T^d:=\R^d/\Z^d$ is a priori endowed with the   periodic norm
\[
\|\xi\|:=\Big(\sum_{k=1}^d \|\xi_k\|^2\Big)^{1/2}
\qquad \text{for}\qquad
\xi=(\xi_1,\ldots,\xi_d)\in\T^d,
\]
where $\|\xi_k\|={\rm dist}(\xi_k, \Z)$ for all $\xi_k\in\T$ and
$k\in[d]$.  However, identifying $\T^d$ with $[-1/2, 1/2)^d$, we
see that the norm $\|\cdot\|$ coincides with the Euclidean norm
$|\cdot|$ restricted to $[-1/2, 1/2)^d$.

The unit sphere in $\R^d$ induced by the norm $|\cdot|_p$ is defined by
\begin{align}\label{id:307}
\S_p^{d-1}:=\{x\in\R^d: |x|_p=1\}.
\end{align}
If $p=2$ we shall abbreviate $\S_p^{d-1}$ to $\S^{d-1}$, which is the standard Euclidean sphere.

\subsection{The $L^p$ spaces} The triple $(X, \mathcal B(X), \mu)$
is a measure space $X$ with $\sigma$-algebra $\mathcal B(X)$ and
$\sigma$-finite measure $\mu$.  The space of all $\mu$-measurable
complex-valued functions defined on $X$ will be denoted by $L^0(X)$.
The space of all functions in $L^0(X)$ whose modulus is integrable
with $p$-th power is denoted by $L^p(X)$ for $p\in(0, \infty)$,
whereas $L^{\infty}(X)$ denotes the space of all essentially bounded
functions in $L^0(X)$. In our case we will usually have $X=\R^d$ or
$X=\T^d$ equipped with the Lebesgue measure, and $X=\Z^d$ endowed with the
counting measure. If $X$ is endowed with counting measure we will
abbreviate $L^p(X)$ to $\ell^p(X)$.

Let $X$ be a locally compact Hausdorff space. Then $C(X)$ denotes the
space of all continuous functions on $X$. $C_c(X)$ denotes the space
of all continuous and compactly supported functions on $X$. $C_0(X)$
denotes the space of all continuous functions on $X$ that vanish at
infinity. Finally, let $U\subseteq \R^d$ be open, for any $n\in\Z_+$
let $C^{n}(U)$ denote the space of all functions $f$ on $U$ whose
partial derivatives of order $\le n$ all exist and are continuous. We
also set $C^{\infty}(U):=\bigcap_{n\in\Z_+}C^{n}(U)$ and
$C^{\infty}_c(\R^d)$ denotes the set of all compactly supported smooth
functions on $\R^d$.  If $U\subseteq \R^d$ it also make sense to
consider the spaces $C^{n}(U)$. In this case we say that
$f\in C^{n}(U)$ if $f\in C^{n}(V)$ for some open $V\supseteq U$.

The partial derivative of a function $f:\R^d\to\C$ with respect to the
$j$-th variable $x_j$ will be denoted by
$\partial_{x_j}f=\partial_j f$, while the $m$-th partial derivative
with respect to the $j$-th variable will be denoted by
$\partial_{x_j}^mf=\partial_j^m f$. The gradient of a function
$f:\R^d\to\C$ is the following vector
$\nabla f := (\partial_{x_1}f,\ldots, \partial_{x_d}f):= (\partial_1f,\ldots, \partial_df)$.
If $\alpha\in\N^d$ is a multi-index $|\alpha|:=\alpha+\ldots+\alpha_d$
denotes its size, and it will be always clear from the context that
$|\alpha|$ is not the Euclidean norm of $\alpha\in\N^d$. We shall also
write $\alpha!:=\alpha_1!\ldots\alpha_d!$ and
$\binom{\alpha}{\beta}:= \frac{\alpha!}{\beta!(\alpha-\beta)!}$ for
all multi-indices $\alpha=(\alpha_1,\ldots, \alpha_d)\in\N^d$ and
$\beta=(\beta_1,\ldots, \beta_d)\in\N^d$ such that $\alpha\ge\beta$,
where the last relation means that $\alpha_j\ge \beta_j$ for all
$j\in[d]$. For any $\alpha\in\N^d$ let $\partial^{\alpha}f$ denote the
derivative
$\partial^{\alpha_1}_{x_1}\ldots \partial^{\alpha_d}_{x_d}f=\partial^{\alpha_1}_1 \ldots \partial^{\alpha_d}_df$
operator of total order $|\alpha|$. If
$f:\R^{d_1}\times \R^{d_2}\to\C$ for some $d_1, d_2\in\Z_+$, and let
$(x, y)\in \R^{d_1}\times \R^{d_2}$ then $\nabla_{\!x} f(x, y)$ denotes
the gradient of $f$ with respect to the variable $x\in\R^{d_1}$ and we
write
$\nabla_{\!x} f(x, y):=(\partial_{x_1}f(x, y),\ldots, \partial_{x_{d_1}}f(x, y))$
and $\partial_{x}^{\alpha}f(x, y)$ denote the partial differential
operator of order $\alpha\in\N^{d_1}$ with respect to the variable
$x\in\R^{d_1}$ and we write
$\partial_{x}^{\alpha}f(x, y):=\partial^{\alpha_1}_{x_1} \ldots \partial^{\alpha_d}_{x_d}f(x, y)$. We
have analogous definitions for the variable $y\in\R^{d_2}$.

\subsection{$r$-variations}
For any family
 $(\mathfrak a_t: t\in\mathbb I)$ of elements of $\C$ indexed by a totally
 ordered set $\mathbb I$, and any exponent $1 \leq r < \infty$, the
 $r$-variation seminorm is defined to be
 \begin{align*}
 V^r( a_t: t\in\mathbb I):=
 \sup_{J\in\Z_+} \sup_{\substack{t_{0}<\dotsb<t_{J}\\ t_{j}\in\mathbb I}}
\Big(\sum_{j=0}^{J-1}  |a(t_{j+1})-a(t_{j})|^{r} \Big)^{1/r},
 \end{align*}
 where the  supremum is taken over all finite increasing sequences in $\mathbb I$.

 It is easy to see that for every $t_0\in \mathbb I$ one has
 \[
\sup_{t\in \mathbb I}|a_t|\le |a_{t_0}|+  V^r( a_t: t\in\mathbb I).
\]

 We will usually use $r$-variations with $\mathbb I=\Z_+$ or
 $\mathbb I=(0, \infty)$ or $\mathbb I=\mathbb D$, where $\mathbb D$
 is $\lambda_0$-lacunary subset of $(0, \infty)$. Then if
 $ V^r( a_t: t\in\mathbb I)<\infty$ we see that the underlying
 sequence $(a_t: t\in\mathbb I)$ is a Cauchy sequence and consequently in
each case the limit $\lim_{t\to\infty}a_t$ exists, and in
 the second case when $\mathbb I=(0, \infty)$ the limit
 $\lim_{t\to0}a_t$ also exists.
 
\subsection{Fourier transform}  
We will write $e(z):=e^{2\pi i z}$ for every $z\in\C$.
The Fourier transform and the inverse Fourier transform of $f\in L^1(\R^d)$ will be denoted respectively by
\begin{align*}
\mathcal F_{\R^d} f(\xi) &:= \int_{\R^d} f(x) e(-x\cdot \xi)\ dx,\qquad \xi\in\R^d,\\
 \mathcal F_{\R^d}^{-1} f(x) &:= \int_{\R^d} f(\xi) e(x\cdot \xi)\ d\xi,\qquad x\in\R^d.
\end{align*}
The Fourier coefficient of $f\in L^1(\T^d)$,   and the Fourier series of $g\in \ell^1(\Z^d)$ will be denoted respectively by
\begin{align*}
\mathcal F_{\Z^d} f(n) &:= \int_{\T^d} f(x) e(-n\cdot \xi)\ d\xi,\qquad n\in\Z^d,\\
 \mathcal F_{\Z^d}^{-1} g(\xi) &:= \sum_{n\in\Z^d} g(n) e(n\cdot \xi),\qquad \xi\in\T^d.
\end{align*}

\section{Proof of Theorem \ref{thm:asymr}. Asymptotic formula for $r_{h_1, h_2, h_3}(\la)$}\label{sec:asym}

We begin with the observation that the asymptotic behaviour of
$r_{h_1, h_2, h_3}(\la)$ is not affected by removing the restriction that all
coordinates of a given triple are larger than $N_0$.

\begin{rem}\label{rem:1}
We note that the values of functions $h_1, h_2, h_3$ for $n\in[N_0-1]$ do not change
the asymptotic formula of $r_{h_1, h_2, h_3}(\la)$ obtained in Theorem \ref{thm:asymr}. More
precisely, assume for any $k\in[3]$ that $h_k \in \mathcal{F}_{c_k}$, and
they take arbitrary values for $n\in[N_0-1]$. We define
\begin{align*}
\widetilde{r}_{h_1, h_2, h_3}(\la)
:= \# \{ (m_1, m_2, m_3) \in \Z_+^3 : \lfloor h_1(m_1) \rfloor + \lfloor h_2(m_2) \rfloor + \lfloor h_3(m_3) \rfloor = \la \}.
\end{align*} 
Then one can easily show that
\[
0 \le \widetilde{r}_{h_1, h_2, h_3}(\la) - r_{h_1, h_2, h_3}(\la) = o \left(  \la^2 \vp_1'(\la) \vp_2'(\la) \vp_3'(\la) \right). 
\]
\end{rem}

\subsection{Properties of $h$ and $\varphi$}\label{sec:properties}

In this subsection we gather some properties of functions
$h\in\mathcal{F}_c$ and their inverses, which will be used throughout
the paper. We pick a large number  $N_0\in\Z_+$ such that for every $x \ge N_0$ we have
\begin{align}\label{condu1}
h'(x) \ge 1.
\end{align}
Then for every $n \ge h(N_0)$, as in \cite[Lemma 2.12, p. 624]{M1}, one has 
\begin{align}\label{condu2}
\lfloor - \vp (n) \rfloor - \lfloor - \vp (n+1) \rfloor 
= \ind_{ \Nh } (n),
\end{align}
where $\varphi$ is the inverse of $h$. The next lemma shows that the function $\vp$ is $1/c$-regular.

\begin{lem}\label{formfunlem}
Assume that $c\in[1, 2)$, $h\in\mathcal{F}_c$, $\g=1/c$ and let $\vp:[h(x_0), \8)\to[x_0, \8)$ be the inverse of h. Then
\begin{align}\label{funfi}
  \vp(x):=x^{\g}\ell_{\vp}(x),\qquad \mbox{where}\qquad \ell_{\vp}(x):=\exp\bigg(\int_{h(x_0)}^x\frac{\te(t)}{t}dt+D\bigg),
\end{align}
  for every $x\ge h(x_0)$, where $D=\log\big(x_0/h(x_0)^{\g}\big)$ and
  \begin{align*}
  \te(x):=\frac{1}{(c+\vartheta(\vp(x)))}-\g
  =-\frac{\vartheta(\vp(x))}{c(c+\vartheta(\vp(x)))},
\end{align*}
with $\vartheta$ as in Definition \ref{defn}(ii), and
\begin{equation}
\label{eq:limteta}
\lim_{x\to\8}\te(x)=0.
\end{equation}

  If $L(x)=\ell_h(x)$ or $L(x)=\ell_{\vp}(x)$, then for every $\e>0$ we have
  \begin{align}\label{slowhfi}
    \lim_{x\to\8}x^{-\e}L(x)=0,\ \ \ \mbox{and}\ \ \ \lim_{x\to\8}x^{\e}L(x)=\8,
  \end{align}
  and consequently, for every $\e>0$
  \begin{align}\label{ratefi}
    x^{\g-\e}\lesssim_{\e}\vp(x)\ \ \ \mbox{and}\ \ \ \lim_{x\to\8}\frac{\vp(x)}{x}=0.
  \end{align}
  Furthermore, $x\mapsto x\vp(x)^{-\d}$ is increasing for every $\d< c$, (if $c=1$, even $\d\le1$ is allowed) and for every $x\ge h(x_0)$ we have
  \begin{align}\label{compfi}
    \vp(x)\simeq\vp(2x)\ \ \mbox{and}\ \ \vp'(x)\simeq\vp'(2x).
  \end{align}
\end{lem}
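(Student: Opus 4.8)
The plan is to establish each of the five assertions of Lemma \ref{formfunlem} by exploiting the representation $h(x) = C_h x^c \ell_h(x)$ together with the defining relation $h(\vp(x)) = x$.

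First, for the representation \eqref{funfi}, I would differentiate the identity $h(\vp(x)) = x$ to get $h'(\vp(x)) \vp'(x) = 1$, and compute the logarithmic derivative $\frac{\vp'(x)}{\vp(x)} = \frac{1}{\vp(x) h'(\vp(x))}$. Using $h(x) = C_h x^c \ell_h(x)$ with $\ell_h' (x)/\ell_h(x) = \vt(x)/x$, one gets $x h'(x)/h(x) = c + \vt(x)$, so that $\vp(x) h'(\vp(x)) = h(\vp(x))(c + \vt(\vp(x)))/\vp(x) \cdot \vp(x) = x (c + \vt(\vp(x)))$ after substituting $h(\vp(x)) = x$ more carefully. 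Hence $\frac{x\vp'(x)}{\vp(x)} = \frac{1}{c + \vt(\vp(x))} = \g + \te(x)$ with $\te$ as defined; integrating from $h(x_0)$ to $x$ and matching the constant $\vp(h(x_0)) = x_0$ yields \eqref{funfi} with the stated $D$. The limit \eqref{eq:limteta} is immediate from $\vt(\vp(x)) \to 0$, which follows from \eqref{eq2} (when $c>1$) or \eqref{eq4} (when $c=1$), since $\vp(x) \to \infty$.

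Next, \eqref{slowhfi} is the standard statement that a slowly varying function grows/decays slower than any power; it follows by writing $x^{-\e} L(x) = \exp\big(\int_{x_0}^x \frac{\vt(t) - \e}{t}\,dt - \e \log x_0\big)$ (and similarly for $\ell_\vp$ with $\te$) and using that $\vt(t) \to 0$ (resp. $\te(t) \to 0$) to see the integral tends to $-\infty$; the second limit in \eqref{slowhfi} is symmetric with $\e$ replaced by $-\e$. Then \eqref{ratefi} follows by applying the first assertion of \eqref{slowhfi} to $L = \ell_\vp$: writing $\vp(x) = x^\g \ell_\vp(x)$ gives $x^{\g-\e} \lesssim_\e \vp(x)$, and since $\g < 1$ and $\ell_\vp$ grows slower than any power, $\vp(x)/x = x^{\g - 1}\ell_\vp(x) \to 0$. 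For the monotonicity of $x \mapsto x \vp(x)^{-\d}$: compute its logarithmic derivative $\frac{1}{x} - \d \frac{\vp'(x)}{\vp(x)} = \frac{1}{x}\big(1 - \d(\g + \te(x))\big)$, which is positive for large $x$ as soon as $\d \g < 1$, i.e. $\d < c$ (and when $c = 1$, using that $\te(x) < 0$ from its formula together with $\vt > 0$, even $\d = 1$ works since $1 - (\g + \te(x)) = -\te(x) > 0$). Finally \eqref{compfi}: $\vp(2x)/\vp(x) = 2^\g \exp\big(\int_x^{2x} \te(t)/t\,dt\big)$, and the integral is bounded since $\te$ is bounded, giving $\vp(x) \simeq \vp(2x)$; for $\vp'$ one writes $\vp'(x) = (\g + \te(x))\vp(x)/x$ and uses the comparability of $\vp(x)$ with $\vp(2x)$ together with boundedness of $\g + \te$.

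The main obstacle I anticipate is not any single hard estimate but rather the bookkeeping in the case $c = 1$: there $\vt$ does not tend to $0$ fast in a quantitative sense and one only has the weaker hypotheses \eqref{eq3}--\eqref{eq4}, so the sign of $\te$ and the subtle cancellation $1 - (\g + \te(x)) = -\te(x)$ must be tracked carefully to get the endpoint $\d = 1$ in the monotonicity claim; one must also verify that $\vp$ is genuinely defined on all of $[h(x_0),\infty)$ and $C^2$ there, which uses $h \in C^3$, $h' > 0$, $h'' > 0$ from Definition \ref{defn}(i) via the inverse function theorem. All the remaining computations are routine manipulations of the exponential representations.
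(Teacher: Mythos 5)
Your proposal is a direct, self-contained proof, and this is a meaningful contribution because the paper itself does not prove Lemma~\ref{formfunlem}: its ``proof'' is a single line citing \cite[Lemma 2.6]{M1} (together with \cite{M2}, \cite{Piat}). Your derivation of \eqref{funfi} via the logarithmic derivative identity $xh'(x)/h(x)=c+\vt(x)$, substitution $y=\vp(x)$, and integration, correctly recovers $\te$ and $D$, and the argument for \eqref{eq:limteta}, for the monotonicity of $x\mapsto x\vp(x)^{-\d}$ (including the endpoint $\d=1$ at $c=1$ via the sign of $\te$), and for \eqref{compfi} are all sound. Two small corrections are in order.

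First, in deducing $x^{\g-\e}\lesssim_\e\vp(x)$ you invoke ``the first assertion of \eqref{slowhfi},'' but what is actually needed is the \emph{second} limit $x^{\e}\ell_\vp(x)\to\infty$: it gives $\ell_\vp(x)\gtrsim x^{-\e}$ for large $x$, hence $\vp(x)=x^{\g}\ell_\vp(x)\gtrsim x^{\g-\e}$. The first limit $x^{-\e}\ell_\vp(x)\to0$ is the one used for $\vp(x)/x\to0$ when $c>1$.

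Second, your justification of $\vp(x)/x\to0$ — ``since $\g<1$ and $\ell_\vp$ grows slower than any power'' — breaks down when $c=1$, since then $\g=1$ and $\vp(x)/x=\ell_\vp(x)$, which need not obviously tend to $0$ from slow variation alone. You flag the $c=1$ bookkeeping as the anticipated obstacle, and indeed this is the place where a different argument is required: use condition \eqref{eq3}, namely $\lim_{x\to\infty}x/h(x)=0$, and substitute $x=\vp(y)$ (using $h(\vp(y))=y$) to get $\vp(y)/y\to0$ directly. Equivalently, one can show $\int_{h(x_0)}^{x}\te(t)\,dt/t\to-\infty$ by changing variables $t=h(s)$ and comparing with $\int\vt(s)\,ds/s$, which diverges because $\ell_h(x)=h(x)/(C_h x)\to\infty$. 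Incorporating either of these closes the only genuine gap in the proposal.
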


\begin{proof}
The proof of Lemma \ref{formfunlem} can be found in \cite[Lemma 2.6, p. 623]{M1}, we also refer to \cite{M2} as well as to \cite{Piat}. 
\end{proof}

The properties of the derivatives of function $\vp$ are gathered in the lemma below. 

\begin{lem}\label{funlemfi}
Assume that $c\in[1, 2)$, $h\in\mathcal{F}_c$, $\g=1/c$ and let $\vp:[h(x_0), \8)\to[x_0, \8)$ be its inverse. Then
 for every $n\in[3]$ there exists  a function $\theta_n:[h(x_0), \8)\to\R$ such that $\lim_{x\to\8}\te_n(x)=0$ and
 \begin{align}\label{fiequat}
   x\vp^{(n)}(x)=\vp^{(n-1)}(x)(\b_n+\theta_n(x)), \ \ \mbox{for every \  $x\ge h(x_0)$,}
 \end{align}
  where $\b_n=\g-n+1$, $\te_1(x)=\te(x)$ and
	\begin{equation*}
\theta_i(x)=\theta_{i-1}(x)+\frac{x\theta_{i-1}'(x)}{\beta_{i-1}+\theta_{i-1}(x)},
\qquad i = 2, 3.
\end{equation*}

If $c=1$, then there exist a positive function
$\s:[h(x_0), \8)\to(0, \8)$ and a function $\t:[h(x_0), \8)\to \R$
such that \eqref{fiequat} with $n=2$ reduces to
\begin{align}\label{fiequat1}
  x\vp''(x)=\vp'(x)\s(x)\t(x),\ \ \mbox{for every \  $x\ge h(x_0)$.}
\end{align}
Moreover, $\s(x)$  is decreasing, $\lim_{x\to\8}\s(x)=0,$ $\s(2x)\simeq\s(x),$ and  $\s(x)^{-1}\lesssim_{\varepsilon}x^{\varepsilon},$
for every $\varepsilon>0$. Finally, there are constants $0<c_3\le c_4$ such that  and $c_3\le-\t(x)\le c_4$ for every $x\ge h(x_0)$.
\end{lem}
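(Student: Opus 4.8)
The plan is to deduce everything from the defining relation $h(\vp(x)) = x$ by repeated implicit differentiation, and then to translate the hypotheses on $\vartheta$ from Definition \ref{defn}(ii) (or (iii) when $c=1$) through the change of variables $y = \vp(x)$, exactly as was already done for $n=1$ in Lemma \ref{formfunlem} where $\te_1 = \te$. First I would establish \eqref{fiequat} for $n=1$: differentiating $h(\vp(x)) = x$ gives $h'(\vp(x))\vp'(x) = 1$, and since $h'(y) = C_h y^{c-1}\ell_h(y)(c + \vartheta(y))$ one gets $x\vp'(x) = \vp(x)\cdot\frac{1}{c+\vartheta(\vp(x))} = \vp(x)(\g + \te(x))$ with $\te$ as defined, and $\te(x)\to 0$ follows from $\vartheta\to 0$ (using $\vp(x)\to\infty$). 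This is $\beta_1 = \g$.

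Next I would run an induction on $n$. Assume $x\vp^{(n-1)}(x) = \vp^{(n-2)}(x)(\b_{n-1} + \te_{n-1}(x))$. Differentiate both sides: the left becomes $\vp^{(n-1)}(x) + x\vp^{(n)}(x)$, the right becomes $\vp^{(n-1)}(x)(\b_{n-1}+\te_{n-1}(x)) + \vp^{(n-2)}(x)\te_{n-1}'(x)$. Solving for $x\vp^{(n)}(x)$ and using the inductive relation once more to replace $\vp^{(n-2)}(x) = x\vp^{(n-1)}(x)/(\b_{n-1}+\te_{n-1}(x))$, one obtains
\[
x\vp^{(n)}(x) = \vp^{(n-1)}(x)\Bigl(\b_{n-1} - 1 + \te_{n-1}(x) + \frac{x\te_{n-1}'(x)}{\b_{n-1}+\te_{n-1}(x)}\Bigr),
\]
which gives $\b_n = \b_{n-1} - 1 = \g - n + 1$ and the stated recursion $\te_n = \te_{n-1} + \frac{x\te_{n-1}'}{\b_{n-1}+\te_{n-1}}$. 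To close the induction I must check $\te_n(x)\to 0$, which requires $\te_{n-1}(x)\to 0$ (inductive hypothesis, and note $\b_{n-1}$ is a nonzero constant for $c<2$ since then $\g>1/2$, so $n-1\le 2$ keeps $\b_{n-1}>\g-1 > -1/2 \neq 0$) together with $x\te_{n-1}'(x)\to 0$. The latter is where the hypotheses \eqref{eq2} enter: differentiating the formula $\te(x) = -\vartheta(\vp(x))/(c(c+\vartheta(\vp(x))))$ and using $x\vp'(x)\simeq\vp(x)$ shows $x\te'(x)$ is a bounded expression in $\vp(x)\vartheta'(\vp(x))$ which tends to $0$ by the second limit in \eqref{eq2}; similarly $x^2\te''(x)\to 0$ uses the third limit. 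So I would actually carry along the stronger inductive statement that $x\te_n'(x)\to 0$ (and, for the $n=2$ step, that $x^2\te_n''(x)$ is controlled), propagating these through the recursion by the product and chain rules.

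For the $c=1$ addendum I would specialize: with $\g = 1$ we have $\b_2 = 0$, so \eqref{fiequat} for $n=2$ degenerates to $x\vp''(x) = \vp'(x)\te_2(x)$, and the point is to factor $\te_2 = \sigma\cdot\t$ exhibiting the slowly-varying part $\sigma$ and the bounded-away-from-zero part $\t$. Here one uses that when $c=1$, $\te(x) = -\vartheta(\vp(x))/(1+\vartheta(\vp(x)))$ has the same sign and (up to the bounded factor $1/(1+\vartheta)$) the same size as $\vartheta(\vp(x))$, which by Definition \ref{defn}(iii) is positive, decreasing, and $\gtrsim_\eps x^{-\eps}$; and $x\te'(x)$ — which is what appears after the $\b_1 + \te_1 = 1 + \te$ denominator in the recursion — is, by \eqref{eq4}, small relative to $\te$ itself. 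So $\te_2(x) = \te(x)\bigl(1 + \frac{x\te'(x)}{\te(x)(1+\te(x))}\bigr)$ and I would set $\sigma(x) \simeq \vartheta(\vp(x))$ (checking it is decreasing, tends to $0$, satisfies $\sigma(2x)\simeq\sigma(x)$ via \eqref{compfi} and the monotonicity, and $\sigma^{-1}\lesssim_\eps x^\eps$ via \eqref{eq3} and \eqref{ratefi}) and $\t(x)$ to be the remaining bounded factor, which lies between two positive constants because the correction term $\frac{x\te'(x)}{\te(x)(1+\te(x))}\to 0$ by the second limit in \eqref{eq4}. The main obstacle I anticipate is bookkeeping: one must carefully track not just that $\te_n\to 0$ but the auxiliary decay rates of $x\te_n'$ (and $x^2\te_n''$ for the second derivative), and in the $c=1$ case one must verify that dividing by $\vartheta(\vp(x))$ — which can itself decay — does not destroy the $\lesssim_\eps x^\eps$ bounds; this is routine but requires invoking \eqref{slowhfi}, \eqref{ratefi} and the $\eps$-regularity in \eqref{eq3}/\eqref{eq4} at each step. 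Everything else is the chain rule plus the already-established regularity of $\vp$ from Lemma \ref{formfunlem}.
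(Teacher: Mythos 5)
The paper does not actually prove this lemma; it refers to \cite[Lemma 2.14, p.~625]{M1} for the argument. Your proposal therefore supplies content the paper leaves implicit. The route you take --- implicit differentiation of $h(\vp(x))=x$ to obtain the $n=1$ case with $\theta_1 = \theta$, then differentiating the relation $x\vp^{(n-1)} = \vp^{(n-2)}(\beta_{n-1}+\theta_{n-1})$ and eliminating $\vp^{(n-2)}$ to obtain the inductive step --- is the natural derivation, and it produces exactly the recursion in the statement, with $\beta_n = \beta_{n-1}-1$. Your identification of $c=1$ as the degenerate case because $\beta_2 = \gamma - 1 = 0$, and your factorization $\theta_2 = \sigma\tau$ via the explicit formula $\theta(x) = -\vartheta(\vp(x))/(1+\vartheta(\vp(x)))$, are correct in outline, and your observation that the inductive statement must carry $x\theta_n'(x)\to 0$ (and the $x^2\vartheta''\to 0$ input for the next step) alongside $\theta_n\to 0$ is precisely right.

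Two details deserve to be tightened. First, the bound $c_3 \le -\tau(x) \le c_4$ must hold for \emph{every} $x\ge h(x_0)$, not just eventually. Your computation gives $\tau(x)\to -1$ as $x\to\infty$, which yields the two-sided bound only for large $x$; since $\theta_2$ is the sum of a strictly negative term $\theta$ and a nonnegative correction $x\theta'/(1+\theta)$ (recall $\vartheta'\le 0$), it is not \emph{a priori} obvious that $\theta_2<0$ on the whole domain. You need either to verify this directly from the hypotheses or to exploit the freedom in the setting to enlarge $x_0$ (the paper routinely takes $N_0$ ``sufficiently large''). Second, the property $\sigma(2x)\simeq\sigma(x)$ with $\sigma:=\vartheta\circ\vp$ is not immediate from the listed hypotheses: monotonicity of $\vartheta$ gives one inequality, but the reverse requires $\vartheta$ itself to be slowly varying, which is not stated but does follow from the second limit in \eqref{eq4}, namely $x\vartheta'(x)/\vartheta(x)\to 0$, via a Karamata-type integral estimate combined with $\vp(2x)\simeq\vp(x)$ from \eqref{compfi}. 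You name the right ingredients but do not assemble them. Neither point undermines the plan; both are the sort of bookkeeping that can hide sign or range errors if left unexamined.
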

\begin{proof}
The proof of Lemma \ref{funlemfi} can be found in \cite[Lemma 2.14,
p. 625]{M1}, we also refer to \cite{M2} as well as to \cite{Piat}.
\end{proof}

We will also need the following result.

\begin{lem}\label{lem:lim} 
For $k\in[2]$, let $c_k \in [1,2)$, $\g_k=1/c_k$,
$h_k \in \mathcal{F}_{c_k}$, and let $\vp_{k}$ be their inverses,
respectively.  Further, let $L_{\vp}(x) := \vp'(x)/x^{\g-1}$. Then
\begin{align*}
\lim_{\la \to\8}\int_{N_0/\la}^{1-N_0/\la} x^{\g_1-1}(1-x)^{\g_2-1}
\frac{L_{\vp_1}(\la x)}{L_{\vp_1}(\la)}\frac{L_{\vp_2}(\la(1-x))}{L_{\vp_2}(\la)} \, dx
= \frac{\Gamma(\g_1)\Gamma(\g_2)}{\Gamma(\g_1+\g_2)}.
\end{align*}
\end{lem}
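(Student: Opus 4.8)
\textbf{Proof proposal for Lemma \ref{lem:lim}.}

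The plan is to recognize the right-hand side as the Beta integral $B(\g_1,\g_2)=\int_0^1 x^{\g_1-1}(1-x)^{\g_2-1}\,dx$ and to show that the slowly varying perturbations $L_{\vp_1}(\la x)/L_{\vp_1}(\la)$ and $L_{\vp_2}(\la(1-x))/L_{\vp_2}(\la)$ do not affect the limit. First I would record the key facts about $L_\vp$: by Lemma \ref{formfunlem} and Lemma \ref{funlemfi} the function $L_\vp$ is itself slowly varying, in the sense that $L_\vp(2x)\simeq L_\vp(x)$, that $x^{-\e}\lesssim_\e L_\vp(x)\lesssim_\e x^{\e}$ for every $\e>0$, and that $L_\vp(tx)/L_\vp(x)\to 1$ as $x\to\8$ for each fixed $t>0$ (uniform representation theorem for slowly varying functions, which can be read off from the integral representation \eqref{funfi} together with $\te(x)\to 0$). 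I would also note the uniform Potter-type bound: for every $\e>0$ there is $C_\e$ and $x_\e$ so that $L_\vp(u)/L_\vp(v)\le C_\e \max\{(u/v)^\e,(v/u)^\e\}$ for all $u,v\ge x_\e$.

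The main step is a dominated-convergence argument. For the pointwise limit of the integrand: fix $x\in(0,1)$; then $\la x\to\8$ and $\la(1-x)\to\8$, and by the representation of slowly varying functions $L_{\vp_1}(\la x)/L_{\vp_1}(\la)\to 1$ and $L_{\vp_2}(\la(1-x))/L_{\vp_2}(\la)\to 1$, so the integrand (extended by $0$ outside $[N_0/\la,1-N_0/\la]$) converges pointwise a.e.\ on $(0,1)$ to $x^{\g_1-1}(1-x)^{\g_2-1}$. For the domination: applying the Potter bound with a small $\e>0$ chosen so that $\g_1-\e>0$ and $\g_2-\e>0$ (possible since $\g_k=1/c_k\in(1/2,1]$), we get, for $\la$ large,
\begin{align*}
\frac{L_{\vp_1}(\la x)}{L_{\vp_1}(\la)}\frac{L_{\vp_2}(\la(1-x))}{L_{\vp_2}(\la)}
\lesssim_\e \big(x^{-\e}+x^{\e}\big)\big((1-x)^{-\e}+(1-x)^{\e}\big)
\lesssim_\e x^{-\e}(1-x)^{-\e},
\end{align*}
uniformly in $x\in[N_0/\la,1-N_0/\la]$, so the integrand is dominated by the fixed integrable function $x^{\g_1-1-\e}(1-x)^{\g_2-1-\e}$ on $(0,1)$. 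The dominated convergence theorem then yields
\begin{align*}
\lim_{\la\to\8}\int_{N_0/\la}^{1-N_0/\la} x^{\g_1-1}(1-x)^{\g_2-1}\frac{L_{\vp_1}(\la x)}{L_{\vp_1}(\la)}\frac{L_{\vp_2}(\la(1-x))}{L_{\vp_2}(\la)}\,dx
=\int_0^1 x^{\g_1-1}(1-x)^{\g_2-1}\,dx=\frac{\Gamma(\g_1)\Gamma(\g_2)}{\Gamma(\g_1+\g_2)}.
\end{align*}

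I expect the only real obstacle to be establishing the two technical inputs about $L_\vp$ rigorously, namely (a) that $L_{\vp_k}(tx)/L_{\vp_k}(x)\to 1$ locally uniformly in $t$, and (b) the uniform Potter bound, both with constants independent of where in $[N_0/\la, 1-N_0/\la]$ we evaluate. Both follow from the explicit formula $L_{\vp}(x)=\vp'(x)/x^{\g-1}$ combined with \eqref{fiequat} for $n=1$, which gives $\vp'(x)=\vp(x)(\g+\te(x))/x$, hence $L_\vp(x)= x^{-\g}\vp(x)(\g+\te(x)) = \ell_\vp(x)(\g+\te(x))$ by \eqref{funfi}; since $\ell_\vp$ has the integral representation \eqref{funfi} with integrand $\te(t)/t$ and $\te(t)\to 0$, writing $\log\big(\ell_\vp(tx)/\ell_\vp(x)\big)=\int_x^{tx}\te(s)/s\,ds$ and splitting the integral at a point beyond which $|\te|<\e$ gives exactly the locally uniform convergence in (a); integrating the crude bound $|\te(s)|\le \e$ for $s\ge x_\e$ gives $|\log(\ell_\vp(u)/\ell_\vp(v))|\le \e|\log(u/v)|$, which is precisely the Potter bound in (b). With these in hand the dominated convergence argument above goes through routinely. (The statement is only claimed for $k\in[2]$, but the same reasoning would handle any finite number of factors, which is what is actually used for the three-variable counting problem.)
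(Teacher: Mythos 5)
Your proof is correct and follows essentially the same strategy as the paper: dominated convergence, with pointwise convergence $L_{\vp_k}(\la x)/L_{\vp_k}(\la)\to 1$ obtained from $L_\vp=\ell_\vp\cdot(\g+\te)$ together with $\te\to 0$, and domination by $x^{\g_1-1-\e}(1-x)^{\g_2-1-\e}$ via a Potter-type bound derived from $|\int_{\la x}^{\la}\te(t)/t\,dt|\le\e\log(1/x)$. The paper's argument is the same with $\e=\g/100$ made explicit.
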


\begin{proof}
By the definition of the Beta function it suffices to show
\begin{align*}
\lim_{\la\to\8}\int_{N_0/\la}^{1-N_0/\la} x^{\g_1-1}(1-x)^{\g_2-1}
\frac{L_{\vp_1}(\la x)}{L_{\vp_1}(\la)}
\frac{L_{\vp_2}(\la (1-x))}{L_{\vp_2}(\la)} \, dx
= 
\int_0^1 x^{\g_1-1}(1-x)^{\g_2-1} \, dx.
\end{align*}
For this purpose we will use the dominated convergence theorem, which
reduces the matter to proving, with $\vp = \vp_1$ or $\vp = \vp_2$,
that
\begin{align}\label{lim1}
\lim_{\la\to\8} \frac{L_\vp(\la x)}{L_\vp(\la)} = 1,
\qquad x \in (0,1),
\end{align}
and 
\begin{equation}\label{est2}
\frac{L_{\vp}(\la x)}{L_{\vp}(\la)} \lesssim x^{-\g/100}, \qquad x \ge N_0/\la.
\end{equation}

We first deal with \eqref{lim1}. Using \eqref{fiequat} and \eqref{funfi} we conclude
\begin{equation*}
\vp'(x)=\frac{\vp(x)}{x}(\g+\te_1(x))=x^{\g-1}\ell_{\vp}(x)(\g+\te(x)),
\end{equation*}
which yields $L_{\vp}(x) = \ell_{\vp}(x)(\g+\te(x))$. Applying \eqref{funfi} once again we infer that
\begin{align}\label{iden1}
\frac{L_\vp(\la x)}{L_\vp(\la)} 
=
\exp\left(-\int_{\la x}^\la\frac{\te(t)}{t} \, dt \right) 
\frac{\g + \te(\la x)}{\g + \te(\la)},
\end{align}
for sufficiently large $\la\in\Z_+$. 
By \eqref{eq:limteta} the last fraction  converges to $1$ as 
$\la \to \8$.
Therefore, to verify \eqref{lim1} we observe
 \begin{equation}\label{est1}
\Big|\int_{\la x}^\la\frac{\te(t)}{t}dt\Big|
\le 
\int_{\la x}^\la\frac{|\te(t)|}{t}dt
\le 
\max_{t\in[\la x,\la]}|\te(t)|\log(1/x)\xrightarrow[\la\to\8]{} 0. 
\end{equation}
 
We now prove \eqref{est2}. Using again \eqref{eq:limteta} (notice that $\la \ge \la x \ge N_0$) we see
$$
\frac{\g+\te(\la x)}{\g+\te(\la)}<\frac{101\g/100}{99\g/100}=101/99, 
\qquad x \ge N_0/\la.
$$
Applying \eqref{est1} and  \eqref{eq:limteta} we obtain
\begin{align*}
\exp\left(-\int_{\la x}^\la \frac{\te(t)}{t} \, dt \right)
\le x^{-\g/100}, \qquad x \ge N_0/\la.
\end{align*}
These estimates together with \eqref{iden1} lead to \eqref{est2}.
The proof of Lemma \ref{lem:lim} is finished.
\end{proof}

\subsection{Some asymptotic formulas}\label{sec:J3}

For $k\in[3]$, let $c_k \in [1,2)$, $h_k \in \mathcal{F}_{c_k}$ and $\vp_{k}$ be its inverse. 
We will describe an asymptotic behavior of the following function 
$$
J_{\vp_1', \vp_2', \vp_3' }(\la):=
\sum_{\genfrac{}{}{0pt}{}{n_1+n_2+n_3=\la}{n_1, n_2, n_3 \ge N_0}}
\vp_1'(n_1) \vp_2'(n_2) \vp_3'(n_3), \qquad \la \in \Z_+,
$$
We  begin with a simpler object
$$
J_{\vp_1', \vp_2'}(\la):=\sum_{m=N_0}^{\la-N_0} \vp_1'(m) \vp_2'(\la-m), \qquad \la \in \Z_+,
$$
and we prove the following result.

\begin{lem}\label{lem:J2}
For $k\in[2]$, let $c_k \in [1,2)$, $\g_k=1/c_k$,
$h_k \in \mathcal{F}_{c_k}$ and $\vp_{k}$ be its inverse.  Then
\begin{align*}
  J_{\vp_1', \vp_2'}(\la)=\frac{\Gamma(\g_1)\Gamma(\g_2)}{\Gamma(\g_1+\g_2)}\la\vp_1'(\la)\vp_2'(\la)
  +o\left(\la\vp_1'(\la)\vp_2'(\la)\right).
\end{align*}
\end{lem}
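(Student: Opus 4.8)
\textbf{Proof proposal for Lemma \ref{lem:J2}.}

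The plan is to convert the sum defining $J_{\vp_1',\vp_2'}(\la)$ into an integral, and then reduce the integral to the Beta-function asymptotic already isolated in Lemma \ref{lem:lim}. First I would write
\[
J_{\vp_1',\vp_2'}(\la)=\sum_{m=N_0}^{\la-N_0}\vp_1'(m)\vp_2'(\la-m)
\]
and compare it with the integral $\int_{N_0}^{\la-N_0}\vp_1'(t)\vp_2'(\la-t)\,dt$. Since $\vp_k'$ is monotone (decreasing, as $\b_2=\g_k-1<0$ in \eqref{fiequat}, so $\vp_k''<0$), the summand $m\mapsto\vp_1'(m)\vp_2'(\la-m)$ is, on each of the two halves $m\le\la/2$ and $m\ge\la/2$, a product of one monotone factor with a reversely monotone one; splitting at $\la/2$ and using the standard integral comparison for monotone functions on each piece, the difference between the sum and the integral is bounded by a constant multiple of $\sup_{N_0\le m\le\la/2}\vp_1'(m)\vp_2'(\la-m)+\sup_{\la/2\le m\le \la-N_0}\vp_1'(m)\vp_2'(\la-m)$, which in turn is $\lesssim \vp_1'(N_0)\vp_2'(\la/2)+\vp_1'(\la/2)\vp_2'(N_0)\lesssim \vp_2'(\la)+\vp_1'(\la)$ by \eqref{compfi}. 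This error is $o(\la\vp_1'(\la)\vp_2'(\la))$ because $\la\vp_j'(\la)\to\infty$ (indeed $\vp_j'(\la)\simeq\la^{\g_j-1}\ell_{\vp_j}(\la)$ and $\la^{\g_j}\ell_{\vp_j}(\la)=\vp_j(\la)\to\infty$).

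Next I would carry out the substitution $t=\la x$ in the integral, giving
\[
\int_{N_0}^{\la-N_0}\vp_1'(t)\vp_2'(\la-t)\,dt
=\la\int_{N_0/\la}^{1-N_0/\la}\vp_1'(\la x)\vp_2'(\la(1-x))\,dx.
\]
Now I would insert the factorization $\vp_k'(y)=y^{\g_k-1}L_{\vp_k}(y)$ from Lemma \ref{lem:lim}, so that $\vp_1'(\la x)\vp_2'(\la(1-x))=\la^{\g_1+\g_2-2}x^{\g_1-1}(1-x)^{\g_2-1}L_{\vp_1}(\la x)L_{\vp_2}(\la(1-x))$. Pulling out $\la^{\g_1+\g_2-2}L_{\vp_1}(\la)L_{\vp_2}(\la)$ and noting that $\vp_1'(\la)\vp_2'(\la)=\la^{\g_1+\g_2-2}L_{\vp_1}(\la)L_{\vp_2}(\la)$, the integral becomes exactly
\[
\la^2\vp_1'(\la)\vp_2'(\la)\int_{N_0/\la}^{1-N_0/\la}x^{\g_1-1}(1-x)^{\g_2-1}
\frac{L_{\vp_1}(\la x)}{L_{\vp_1}(\la)}\frac{L_{\vp_2}(\la(1-x))}{L_{\vp_2}(\la)}\,dx,
\]
and Lemma \ref{lem:lim} says the last integral tends to $\Gamma(\g_1)\Gamma(\g_2)/\Gamma(\g_1+\g_2)$ as $\la\to\infty$. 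Wait --- there is a bookkeeping discrepancy: the integral carries a prefactor $\la$ from the substitution, not $\la^2$; so $\int_{N_0}^{\la-N_0}\vp_1'\vp_2'(\la-\cdot) = \la\cdot\vp_1'(\la)\vp_2'(\la)\cdot(\text{integral}\to B(\g_1,\g_2))$, which is precisely the claimed main term $\frac{\Gamma(\g_1)\Gamma(\g_2)}{\Gamma(\g_1+\g_2)}\la\vp_1'(\la)\vp_2'(\la)$, up to a $o(\cdot)$ coming from the convergence of the integral. Combining with the sum-versus-integral error from the first step completes the proof.

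The main obstacle, and the only genuinely nontrivial point, is controlling the sum-to-integral error uniformly: one must be careful that the dominating contribution does not come from the endpoints $m\approx N_0$ or $m\approx \la-N_0$, where $\vp_1'$ or $\vp_2'$ is largest. This is exactly why the summation range starts at $N_0$ rather than $1$, and why the estimate $\vp_1'(N_0)\vp_2'(\la/2)\lesssim \vp_2'(\la)=o(\la\vp_1'(\la)\vp_2'(\la))$ saves the day. Everything else is a routine change of variables feeding into the already-proven Lemma \ref{lem:lim}; in particular no further analytic input about $\ell_{\vp_k}$ is needed here since the delicate slowly-varying estimates \eqref{lim1} and \eqref{est2} have been absorbed into that lemma.
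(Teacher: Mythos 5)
Your argument is correct and follows the same broad strategy as the paper: replace $J_{\vp_1',\vp_2'}(\la)$ by the integral $\int_{N_0}^{\la-N_0}\vp_1'(t)\vp_2'(\la-t)\,dt$ with a controlled error, then change variables $t=\la x$ and feed the resulting Beta-type integral into Lemma~\ref{lem:lim}. The one place you depart is in how the sum-to-integral error is controlled: the paper computes $\psi_\la'(x)$ for $\psi_\la(x)=\vp_1'(x)\vp_2'(\la-x)$ explicitly via Lemma~\ref{funlemfi} and integrates the resulting bound, while you invoke the (eventual) monotonicity of $\vp_k'$ and a bounded-variation estimate for the product of a decreasing and an increasing factor. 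Your route is slightly more elementary and gives essentially the same error $\lesssim\vp_1'(\la)+\vp_2'(\la)$ (the paper gets an extra $\log\la$ which it absorbs into $\la^{3/4}$), and both are $o(\la\vp_1'(\la)\vp_2'(\la))$ as needed. Two small remarks on precision, neither fatal: (i) the product-of-monotone comparison yields error $\lesssim(\sup f_1)(\sup f_2)$, not $\lesssim\sup(f_1f_2)$ as you first state, but since you immediately pass to $(\sup f_1)(\sup f_2)$ the final bound $\vp_1'(N_0)\vp_2'(\la/2)+\vp_1'(\la/2)\vp_2'(N_0)$ is right; (ii) $\vp_k''<0$ holds only for $x$ sufficiently large (Lemma~\ref{funlemfi} gives $x\vp''(x)=\vp'(x)(\g-1+\theta_2(x))$ with $\theta_2\to0$), which is taken care of by enlarging $N_0$. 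You caught and corrected your own $\la$-versus-$\la^2$ slip, and the corrected main term matches the claim.
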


\begin{proof}
We first show for sufficiently large $\la\in\Z_+$ that 
\begin{align} \label{est13}
\Big| \sum_{m=N_0}^{\la-N_0}\vp_1'(m)\vp_2'(\la-m)
-
\int_{N_0}^{\la - N_0}\vp_1'(x)\vp_2'(\la-x) \, dx \Big|
\lesssim
\la^{3/4} \vp_1'(\la)\vp_2'(\la).
\end{align}

Using \eqref{fiequat} and \eqref{ratefi} we may write
$\vp_1' (\la-N_0) \vp_2' (N_0) \lesssim \vp_1' (\la) \lesssim \la^{3/4} \vp_1'(\la) \vp_2'(\la)$.
Thus  \eqref{est13} will follow if we show that
\begin{align} \label{est14}
\sum_{m=N_0}^{\la-N_0-1} \int_{m}^{m + 1} 
\big| \vp_1'(m) \vp_2'(\la-m)
-
\vp_1'(x) \vp_2'(\la-x) \big|  \, dx 
\lesssim
\la^{3/4} \vp_1'(\la)\vp_2'(\la).
\end{align}
Let $\psi_\la (x) = \vp_1'(x) \vp_2'(\la-x)$, $N_0 \le x \le \la - N_0$. 
Using \eqref{fiequat} with $n=2$ we have
\begin{align*}
\psi'_\la (x) 
= \vp_1'(x) \vp_2'(\la-x)
\Big( 
\frac{\g_1 - 1 + \theta_{1,2} (x)}{x} - \frac{\g_2 - 1 + \theta_{2,2}(\la - x)}{\la - x}
\Big),
\end{align*}
where $\theta_{k,2}$ is a $\theta_{2}$ function from Lemma~\ref{funlemfi} corresponding to $\vp_k$ for $k\in[2]$.
This together with \eqref{compfi} shows that
\begin{align*}
|\psi_\la (m) 
-
\psi_\la (x)|  
\lesssim
\vp_1'(x) \vp_2'(\la-x) \Big( 
\frac{1}{x} + \frac{1}{\la - x}
\Big), \qquad x \in (m,m+1).
\end{align*}
Using  \eqref{compfi} again, we see that the left-hand side of \eqref{est14} is controlled by
\begin{align*}
\int_{N_0}^{\la - N_0}
\vp_1'(x) &\vp_2'(\la-x) 
\Big( 
\frac{1}{x} + \frac{1}{\la - x}
\Big) \, dx\\
& \lesssim
\int_{N_0}^{\la/2}
\frac{\vp_1'(N_0) \vp_2'(\la) }{x} \, dx
+
\int_{\la/2}^{\la - N_0} 
\frac{\vp_1'(\la) \vp_2'(N_0)}{\la - x} \, dx \\
& \lesssim
\vp_2'(\la) \log \la
+
\vp_1'(\la) \log \la \\
& \lesssim
\la^{3/4} \vp_1'(\la)\vp_2'(\la),
\end{align*}
where in the last estimate we have used Lemma~\ref{funlemfi} and
\eqref{ratefi}.  We are reduced to estimate the integral instead of
the sum.  Changing the variables, using the identity
$\vp_k'(x) = x^{\g_k-1} L_{\vp_k}(x)$ for $k\in[2]$, and applying
Lemma \ref{lem:lim} we conclude
\begin{align*}
\int_{N_0}^{\la-N_0}\vp_1'(x)&\vp_2'(\la-x)dx\\
&=\la\int_{N_0/\la}^{1-N_0/\la}\vp_1'(\la x)\vp_2'(\la(1-x))dx\\
  &=\la^{\g_1+\g_2-1}
\int_{N_0/\la}^{1-N_0/\la} x^{\g_1-1}(1-x)^{\g_2-1}L_{\vp_1}(\la x)L_{\vp_2}(\la(1-x))dx\\
  &=\la \vp_1'(\la) \vp_2'(\la)
\int_{N_0/\la}^{1-N_0/\la} x^{\g_1-1}(1-x)^{\g_2-1} \frac{L_{\vp_1}(\la x)}{L_{\vp_1}(\la)}
  \frac{L_{\vp_2}(\la(1-x))}{L_{\vp_2}(\la)} \, dx\\
  &=\frac{\Gamma(\g_1)\Gamma(\g_2)}{\Gamma(\g_1+\g_2)}\la\vp_1'(\la)\vp_2'(\la)
  +o\left(\la \vp_1'(\la)\vp_2'(\la) \right).
\end{align*}
The proof of Lemma \ref{lem:J2} is finished.
\end{proof}

\begin{rem}
Lemma \ref{lem:J2} is optimal in a sense that the error term
$o\left(\la\vp_1'(\la)\vp_2'(\la)\right)$ cannot be replaced in the
statement of this result by
$O\left(\la^{1 - \eps} \vp_1'(\la) \vp_2'(\la)\right)$ for any
$\eps > 0$. Indeed, a careful inspection of the proof shows that it is
equivalent to a stronger version of Lemma \ref{lem:lim} below. More
precisely, this would imply that there exists $\eps > 0$ such that
\begin{align} \label{est4}
\bigg|
\int_{N_0/\la}^{1-N_0/\la} x^{\g_1-1}(1-x)^{\g_2-1}
\frac{L_{\vp_1}(\la x)}{L_{\vp_1}(\la)}
  \frac{L_{\vp_2}(\la(1-x))}{L_{\vp_2}(\la)} \, dx
- \frac{\Gamma(\g_1)\Gamma(\g_2)}{\Gamma(\g_1+\g_2)}
\bigg|
\lesssim \la^{-\eps}
\end{align}
for large $\la \in \Z_+$. However, taking $\vp_2 (x) = x^{\g_2}$,
$\g_2 = 1/c_{2} \in (1/2,1)$, and $\vp_1 (x) = x^{\g_1} \log x$,
$\g_1 = 1/c_{1} \in (1/2,1)$, we can easily show that the left-hand
side of \eqref{est4} is comparable with $(\log \la)^{-1}$ for
sufficiently large $\la \in \Z_+$, which is in contradiction with
\eqref{est4}.
\end{rem}

\begin{cor} \label{cor:J2}
For $k\in[2]$, let $c_k  \in (1,2)$,  $\g_k=1/c_k$ and $\vp_k(x) = x^{\g_k}$, then we have 
\begin{align*}
  J_{\vp_1', \vp_2'}(\la)= \frac{\Gamma(\g_1) \Gamma(\g_2)}{\Gamma(\g_1+\g_2)} \g_1 \g_2 \la^{\g_1+\g_2 - 1}
  + O\left( \la^{\g_1+\g_2 - 1 - \g_1 \wedge \g_2} \right).
\end{align*}
\end{cor}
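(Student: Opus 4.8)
\textbf{Proof proposal for Corollary \ref{cor:J2}.}

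The plan is to follow the structure of the proof of Lemma \ref{lem:J2} but to keep explicit track of the error, which is possible here because the pure power case $\vp_k(x) = x^{\g_k}$ has $L_{\vp_k}(x) \equiv 1$ and $\vp_k'(x) = \g_k x^{\g_k - 1}$. First I would replace the sum $J_{\vp_1', \vp_2'}(\la) = \sum_{m = N_0}^{\la - N_0} \vp_1'(m) \vp_2'(\la - m)$ by the integral $\int_{N_0}^{\la - N_0} \vp_1'(x) \vp_2'(\la - x) \, dx$, exactly as in \eqref{est13}--\eqref{est14}, but now tracking the exponent: the telescoping argument bounds the difference between sum and integral by a constant times $\int_{N_0}^{\la - N_0} \vp_1'(x)\vp_2'(\la - x)(x^{-1} + (\la - x)^{-1}) \, dx$, which in the pure power case equals a constant times $\int_{N_0}^{\la/2} x^{\g_1 - 2}(\la - x)^{\g_2 - 1}\, dx + \int_{\la/2}^{\la - N_0} x^{\g_1 - 1}(\la - x)^{\g_2 - 2}\, dx$. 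Estimating each piece by freezing the factor that is comparable to $\la^{\g_j - 1}$ and integrating the remaining power gives $O(\la^{\g_1 + \g_2 - 1 - \g_1 \wedge \g_2})$ (the boundary contribution $\vp_1'(\la - N_0)\vp_2'(N_0) \lesssim \la^{\g_1 - 1}$ from \eqref{est13} is of the same or smaller order and causes no trouble since $\g_1 \wedge \g_2 < 1$).

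Next I would evaluate the integral exactly. Substituting $x = \la t$ gives
\begin{align*}
\int_{N_0}^{\la - N_0} \vp_1'(x) \vp_2'(\la - x) \, dx
= \g_1 \g_2 \la^{\g_1 + \g_2 - 1} \int_{N_0/\la}^{1 - N_0/\la} t^{\g_1 - 1}(1 - t)^{\g_2 - 1} \, dt.
\end{align*}
The full Beta integral is $\int_0^1 t^{\g_1 - 1}(1 - t)^{\g_2 - 1}\, dt = \frac{\Gamma(\g_1)\Gamma(\g_2)}{\Gamma(\g_1 + \g_2)}$, so it remains to bound the two tail integrals $\int_0^{N_0/\la} t^{\g_1 - 1}(1 - t)^{\g_2 - 1}\, dt$ and $\int_{1 - N_0/\la}^1 t^{\g_1 - 1}(1 - t)^{\g_2 - 1}\, dt$. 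On the first, $1 - t$ is comparable to $1$, so the integral is comparable to $\int_0^{N_0/\la} t^{\g_1 - 1}\, dt \simeq \la^{-\g_1}$; symmetrically the second is $\simeq \la^{-\g_2}$. Hence the integral equals $\g_1 \g_2 \la^{\g_1 + \g_2 - 1}\big(\frac{\Gamma(\g_1)\Gamma(\g_2)}{\Gamma(\g_1 + \g_2)} + O(\la^{-\g_1} + \la^{-\g_2})\big)$, and combining with the sum-to-integral error from the previous paragraph yields the claimed formula, since $\la^{-\g_1} + \la^{-\g_2} \simeq \la^{-\g_1 \wedge \g_2}$.

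There is essentially no serious obstacle here: both error sources are elementary power integrals and the main term is a textbook Beta function identity. The one point that needs a little care is making sure the sum-to-integral comparison error is genuinely $O(\la^{\g_1 + \g_2 - 1 - \g_1 \wedge \g_2})$ rather than merely $o$ of the main term — this forces one to redo the estimate \eqref{est14} with the explicit powers $x^{\g_1 - 2}$ and $(\la - x)^{\g_2 - 2}$ in hand rather than invoking Lemma \ref{funlemfi} abstractly, but the computation is routine. One should also note that the hypothesis $c_k \in (1,2)$, i.e. $\g_k \in (1/2, 1)$, guarantees $\g_1 + \g_2 - 1 > 0$ and $\g_1 \wedge \g_2 < 1$, so all exponents appearing are in the expected ranges and the error term is genuinely smaller than the main term.
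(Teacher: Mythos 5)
Your proposal is correct and follows precisely the route the paper intends: the paper's own "proof" of Corollary \ref{cor:J2} consists of the single sentence "Repeating the arguments used in the proof of Lemma~\ref{lem:J2} the claim follows," and what you have done is carry out that repetition with the explicit power-law exponents in hand, verifying that both the sum-to-integral error and the Beta-function tail truncation are $O(\la^{\g_1 \vee \g_2 - 1}) = O(\la^{\g_1 + \g_2 - 1 - \g_1\wedge\g_2})$.
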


\begin{proof}
Repeating the arguments used in the proof of Lemma~\ref{lem:J2} the claim follows. We omit the details. 
\end{proof}

Using Lemma \ref{lem:J2} we will describe an asymptotic behavior of 
$J_{\vp_1', \vp_2', \vp_3' }(\la)$.

\begin{prop}\label{prop:J3}
For $k\in[3]$, let $c_k \in [1,2)$, $\g_k=1/c_k$,
$h_k \in \mathcal{F}_{c_k}$ and $\vp_{k}$ be its inverse.  Then
\begin{align*}
  J_{\vp_1', \vp_2', \vp_3'}(\la)
	=
	\frac{\Gamma(\g_1) \Gamma(\g_2) \Gamma(\g_3) }{\Gamma(\g_1 + \g_2 + \g_3 )}
	\la^2 \vp_1'(\la) \vp_2'(\la) \vp_3'(\la)
  +o\left( \la^2 \vp_1'(\la) \vp_2'(\la) \vp_3'(\la) \right).
\end{align*}
\end{prop}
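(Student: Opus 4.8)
The plan is to reduce the triple sum to a double sum by summing out one variable, feed the inner sum into Lemma~\ref{lem:J2}, and then recognise the remaining sum as one more instance of the estimate behind Lemma~\ref{lem:J2}, with $m\mapsto m\,\vp_1'(m)\vp_2'(m)$ playing the role of a single regularly varying derivative. First I would reindex,
\[
J_{\vp_1', \vp_2', \vp_3'}(\la)=\sum_{n=N_0}^{\la-2N_0}\vp_3'(n)\,J_{\vp_1', \vp_2'}(\la-n),
\]
fix $\e>0$, and use Lemma~\ref{lem:J2} in the qualitative form: there is $M_\e$ with $|J_{\vp_1', \vp_2'}(\mu)-c_{12}\,\mu\,\vp_1'(\mu)\vp_2'(\mu)|\le\e\,\mu\,\vp_1'(\mu)\vp_2'(\mu)$ for all $\mu\ge M_\e$, where $c_{12}:=\Gamma(\g_1)\Gamma(\g_2)/\Gamma(\g_1+\g_2)$, while $J_{\vp_1', \vp_2'}(\mu)\lesssim_\e 1$ on $N_0\le\mu\le M_\e$. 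Splitting the $n$-sum at $n=\la-M_\e$, the range $n>\la-M_\e$ contributes $\lesssim_\e\vp_3'(\la)$ by \eqref{compfi}, which is $o\!\big(\la^2\vp_1'(\la)\vp_2'(\la)\vp_3'(\la)\big)$ since $\la^2\vp_1'(\la)\vp_2'(\la)\to\8$ because $\g_1+\g_2>1$ (recall $\g_k\in(1/2,1]$); the same bound handles the finitely many terms $N_0\le m<M_\e$ below. Writing $m=\la-n$, this yields
\[
J_{\vp_1', \vp_2', \vp_3'}(\la)=c_{12}\,A(\la)+O\!\big(\e\,A(\la)\big)+o\!\big(\la^2\vp_1'(\la)\vp_2'(\la)\vp_3'(\la)\big),\qquad A(\la):=\sum_{m=N_0}^{\la-2N_0}\big(m\,\vp_1'(m)\vp_2'(m)\big)\,\vp_3'(\la-m).
\]

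The heart of the matter is then
\[
A(\la)=\frac{\Gamma(\g_1+\g_2)\,\Gamma(\g_3)}{\Gamma(\g_1+\g_2+\g_3)}\,\la^2\,\vp_1'(\la)\vp_2'(\la)\vp_3'(\la)+o\!\big(\la^2\vp_1'(\la)\vp_2'(\la)\vp_3'(\la)\big),
\]
which I would prove by repeating essentially verbatim the proof of Lemma~\ref{lem:J2} with $g(x):=x\,\vp_1'(x)\vp_2'(x)$ in place of $\vp_1'$ and $\vp_3'$ in place of $\vp_2'$. The point is that $g$ has exactly the structural features used there: by \eqref{funfi} one has $g(x)=x^{\g-1}L(x)$ with $\g:=\g_1+\g_2$ and $L:=L_{\vp_1}L_{\vp_2}$, and $L$ obeys \eqref{slowhfi}--\eqref{ratefi} as a product of two such functions; moreover, differentiating and invoking \eqref{fiequat} with $n=2$ gives $x\,g'(x)=g(x)\big(\g-1+\widetilde\te(x)\big)$ with $\widetilde\te(x)=\te_{1,2}(x)+\te_{2,2}(x)\to0$, which is the only derivative information the argument of Lemma~\ref{lem:J2} uses to pass from the sum to the integral $\int_{N_0}^{\la-N_0}g(x)\vp_3'(\la-x)\,dx$, with an error $\lesssim\la^{3/4}g(\la)\vp_3'(\la)=\la^{3/4}\cdot\la\vp_1'(\la)\vp_2'(\la)\vp_3'(\la)=o\!\big(\la^2\vp_1'(\la)\vp_2'(\la)\vp_3'(\la)\big)$, cf.\ \eqref{est13}. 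The integral is evaluated exactly as in Lemma~\ref{lem:J2}, substituting $x=\la y$ and using $g(x)=x^{\g-1}L(x)$, $\vp_3'(x)=x^{\g_3-1}L_{\vp_3}(x)$, and the analogue of Lemma~\ref{lem:lim} with $(\g_1,\g_2,L_{\vp_1},L_{\vp_2})$ replaced by $(\g,\g_3,L,L_{\vp_3})$; its proof goes through since $L(\la y)/L(\la)\to1$ and $L(\la y)/L(\la)\lesssim y^{-\g/100}$, which follow by multiplying the estimates \eqref{lim1} and \eqref{est2} for $L_{\vp_1}$ and $L_{\vp_2}$, and this power is harmless for integrability because $\g>1$. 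This produces the constant $\tfrac{\Gamma(\g)\Gamma(\g_3)}{\Gamma(\g+\g_3)}=\tfrac{\Gamma(\g_1+\g_2)\Gamma(\g_3)}{\Gamma(\g_1+\g_2+\g_3)}$ together with the factor $\la\,g(\la)\vp_3'(\la)=\la^2\vp_1'(\la)\vp_2'(\la)\vp_3'(\la)$.

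Combining the two displays and using $c_{12}\cdot\tfrac{\Gamma(\g_1+\g_2)\Gamma(\g_3)}{\Gamma(\g_1+\g_2+\g_3)}=\tfrac{\Gamma(\g_1)\Gamma(\g_2)\Gamma(\g_3)}{\Gamma(\g_1+\g_2+\g_3)}$, we get
\[
J_{\vp_1', \vp_2', \vp_3'}(\la)=\Big(\frac{\Gamma(\g_1)\Gamma(\g_2)\Gamma(\g_3)}{\Gamma(\g_1+\g_2+\g_3)}+O(\e)\Big)\la^2\vp_1'(\la)\vp_2'(\la)\vp_3'(\la)+o\!\big(\la^2\vp_1'(\la)\vp_2'(\la)\vp_3'(\la)\big),
\]
and letting $\e\to0$ (a standard three-$\e$ argument, legitimate because $A(\la)\simeq\la^2\vp_1'(\la)\vp_2'(\la)\vp_3'(\la)$) finishes the proof. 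The one genuine subtlety is the non-uniformity of the $o(\cdot)$-term in Lemma~\ref{lem:J2}, which is what forces both the $\e$-argument and the separate crude treatment of the range where $\la-n$ stays bounded; everything else is a transcription of the proof of Lemma~\ref{lem:J2}. One could alternatively avoid the iteration altogether and prove the proposition directly by comparing the triple sum with the two-dimensional integral of $\vp_1'(x_1)\vp_2'(x_2)\vp_3'(\la-x_1-x_2)$ over $\{x_1,x_2\ge N_0,\ x_1+x_2\le\la-N_0\}$ and evaluating the latter via the Dirichlet integral $\int y_1^{\g_1-1}y_2^{\g_2-1}(1-y_1-y_2)^{\g_3-1}\,dy=\tfrac{\Gamma(\g_1)\Gamma(\g_2)\Gamma(\g_3)}{\Gamma(\g_1+\g_2+\g_3)}$; this mirrors the proof of Lemma~\ref{lem:J2} even more closely but requires a two-dimensional summation-by-parts estimate in place of \eqref{est13}.
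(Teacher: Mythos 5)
Your argument is correct and follows the same strategy as the paper's: reduce to a double sum, apply Lemma~\ref{lem:J2} to the inner sum, and rerun the Lemma~\ref{lem:J2} machinery on the resulting weighted convolution, using the observation that the compound weight (your $g(x)=x\vp_1'(x)\vp_2'(x)$, the paper's $\psi(x)=x\vp_2'(x)\vp_3'(x)$) has the form $x^{\g-1}L(x)$ with $L$ a product of slowly varying factors, so that \eqref{fiequat}, the sum-to-integral comparison, and the Beta-function evaluation all carry over. The only differences are cosmetic: you fix $n_3$ rather than $n_1$, and you run the error control through an explicit $\e/M_\e$ split where the paper introduces the abstract remainder $h(\la)=o(\psi(\la))$ and handles it via the proven estimate \eqref{red1}.
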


\begin{proof}
The proof will be in some sense inductive. Observe that 
\begin{align*}
  J_{\vp_1', \vp_2', \vp_3'}(\la)
	& =
\sum_{n_1 = N_0}^{\la-N_0} \vp_1'(n_1)
\sum_{\genfrac{}{}{0pt}{}{n_2+n_3 = \la - n_1}{n_2, n_3 \ge N_0}}
\vp_2'(n_2) \vp_3'(n_3)
 = 
\sum_{n_1 = N_0}^{\la-N_0} \vp_1'(n_1)
J_{\vp_2', \vp_3'}(\la- n_1).
\end{align*}
Let $\psi(x) := x \vp_2'(x) \vp_3'(x)$. Using Lemma \ref{lem:J2} we see that
\begin{align}\label{iden2}
  J_{\vp_1', \vp_2', \vp_3'}(\la)
=
\frac{ \Gamma(\g_2) \Gamma(\g_3) }{\Gamma(\g_2 + \g_3 )}
\sum_{n = N_0}^{\la-N_0} \vp_1'(n) \psi (\la-n)
+
\sum_{n = N_0}^{\la-N_0} \vp_1'(n) h (\la-n),
\end{align}
where $h$ is a function satisfying $h(\la)=o(\psi(\la))$.

We claim that 
\begin{align}\label{red1}
 \sum_{n = N_0}^{\la-N_0} \vp_1'(n) \psi (\la-n)
=
\frac{ \Gamma(\g_1) \Gamma(\g_2 + \g_3) }{\Gamma(\g_1 + \g_2 + \g_3 )}
\la \vp_1'(\la) \psi(\la)
+ 
o( \la \vp_1'(\la) \psi(\la) ).
\end{align}
Assume for a moment that we can verify \eqref{red1}. Then
applying this to the first term on the right-hand side of
\eqref{iden2} it suffices to show that
\begin{align}\label{red2}
 \sum_{n = N_0}^{\la-N_0} \vp_1'(\la - n) h (n)
=
o( \la \vp_1'(\la) \psi(\la) ).
\end{align}
To prove \eqref{red2} we split the summation in \eqref{red2} into two
parts $ \sum_{n = N_0}^{N_1}$ and $\sum_{n = N_1+1}^{\la-N_0}$, where
$N_1\in\Z_+$ is a number such that for all $n> N_1$ the asymptotic
$h(n)=o(\psi(n))$ will get efficient. For $n\in [N_0, N_1]$ we apply
\eqref{fiequat} (for the first derivative) together with \eqref{funfi}
and \eqref{slowhfi} to conclude
$\vp_1'(\la - n) h (n) = o( \la \vp_1'(\la) \psi(\la) )$, since
$h(n)=O(1)$ for $n \in [N_0, N_1]$. Further, for the remaining $n>N_1$ we use the fact that
$h(\la)=o(\psi(\la))$ and once again \eqref{red1} to obtain
\eqref{red2}.
The proof of Proposition
\ref{prop:J3} will be completed if we verify \eqref{red1}.

To prove \eqref{red1} we shall proceed much the same way as in the
proof of Lemma \ref{lem:J2}. For large $\la\in\Z_+$ we show that
\begin{align*}
\Big| \sum_{n = N_0}^{\la-N_0} \vp_1'(n) \psi (\la - n)
-
\int_{N_0}^{\la-N_0} \vp_1'(x) \psi (\la - x) \, dx \Big|
\lesssim
\la^{3/4} \vp_1'(\la) \psi(\la).
\end{align*}

Observe that $\vp_1'(\la - N_0) \psi (N_0) \lesssim \vp_1'(\la) \lesssim \la^{3/4} \vp_1'(\la) \psi(\la)$. It suffices to show that
\begin{align} \label{est15}
\sum_{n = N_0}^{\la-N_0 - 1} 
\int_{n}^{n+1} \big| g_\la (n) - g_\la(x) \big| \, dx 
\lesssim
\la^{3/4} \vp_1'(\la) \psi(\la),
\end{align}
where $g_\la (x) := \vp_1'(x)\psi(\la-x)$, $N_0 \le x \le \la - N_0$.
Taking Lemma~\ref{funlemfi} into account we have
\begin{align*}
g_\la'(x)
= g_\la(x) \left( \frac{\g_1 - 1 + \te_{1,2} (x) }{x} 
  -  \frac{\g_2 + \g_3 -1 + \te_{2,2} (\la-x) + \te_{3,2} (\la-x)}{\la-x}  \right),
\end{align*}
where $\te_{k,2}$ is a $\theta_{2}$ function from Lemma~\ref{funlemfi} corresponding  to $\vp_k$ for $k\in[3]$.
This shows that
\begin{align*}
| g_\la'(x) |
\lesssim
\frac{g_\la(x)}{x\wedge (\la-x)}, \qquad N_0 \le x \le \la - N_0.
\end{align*}
Therefore combining this with \eqref{compfi} we see that the left-hand side of \eqref{est15} is dominated by
\begin{align*}
\int_{N_0}^{\la - N_0}
\frac{g_\la(x)}{x\wedge (\la-x)} \, dx
& \lesssim
\int_{N_0}^{\la/2} \vp_1'(N_0) \psi (\la) \frac{dx}{x}
+
\int_{\la/2}^{\la - N_0} \vp_1'(\la) \psi (N_0) \frac{dx}{\la - x} \\
& \lesssim
\psi (\la) \log \la + \vp_1'(\la) \log \la \\
& \lesssim
\la^{3/4} \vp_1'(\la) \psi(\la),
\end{align*}
where in the last estimate we have used Lemma~\ref{funlemfi} and
\eqref{ratefi}.  To justify \eqref{red1}, it suffices to verify
\eqref{red1} with $\int_{N_0}^{\la-N_0} g_\la (x) \, dx$ in place of
$ \sum_{n = N_0}^{\la-N_0} \vp_1'(n) \psi (\la-n)$.

Changing the variable $x \mapsto \la x$ and using the identity $\vp_k'(x) = x^{\g_k-1} L_{\vp_k}(x)$ for $k\in[3]$, we infer that
\begin{align*}
\int_{N_0}^{\la-N_0} g_\la (x) \, dx
& =
\la^2 \vp_1'(\la) \vp_2'(\la) \vp_3'(\la) 
\int_{N_0/\la}^{1-N_0/\la} x^{\g_1 - 1}(1-x)^{\g_2 + \g_3 - 1} L_{\vp_1, \vp_2, \vp_3}(\la, x) \, dx,
\end{align*}
where
\[
L_{\vp_1, \vp_2, \vp_3}(\la, x):= \frac{L_{\vp_1}(\la x)}{L_{\vp_1}(\la)}
  \frac{L_{\vp_2}(\la(1-x))}{L_{\vp_2}(\la)}
\frac{L_{\vp_3}(\la(1-x))}{L_{\vp_3}(\la)}.
\]
Now it suffices to show that
\[
\lim_{\la \to \8} 
\int_{N_0/\la}^{1-N_0/\la} x^{\g_1 - 1}(1-x)^{\g_2 + \g_3 - 1} L_{\vp_1, \vp_2, \vp_3}(\la, x) \, dx
=
\frac{ \Gamma(\g_1) \Gamma(\g_2 + \g_3) }{\Gamma(\g_1 + \g_2 + \g_3 )}.
\]
This, however, follows from the dominated convergence theorem upon appealing to  \eqref{lim1} and \eqref{est2} as in Lemma \ref{lem:lim}.
The proof of Proposition \ref{prop:J3} is complete.
\end{proof}

\begin{cor} \label{cor:J3}
For $k\in[3]$, let $c_k \in (1,2)$, $\g_k=1/c_k$ and $\vp_k(x) = x^{\g_k}$, then we have 
\begin{align*}
  J_{\vp_1', \vp_2', \vp_3'}(\la)= 
	\frac{\Gamma(\g_1) \Gamma(\g_2) \Gamma(\g_3)}{\Gamma(\g_1+\g_2+\g_3)} \g_1 \g_2 \g_3 \la^{\g_1+\g_2+\g_3 - 1}
  + O\left( \la^{\g_1+\g_2+\g_3 - 1 - \g_1 \wedge \g_2 \wedge \g_3} \right).
\end{align*}
\end{cor}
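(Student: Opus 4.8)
The plan is to follow the proof of Proposition~\ref{prop:J3}, but to replace the soft applications of the dominated convergence theorem by explicit error estimates; this is possible because for $\vp_k(x)=x^{\g_k}$ the slowly varying factor $L_{\vp_k}(x)=\vp_k'(x)/x^{\g_k-1}$ is simply the constant $\g_k$, so Lemma~\ref{lem:lim} becomes an exact Beta-integral identity up to quantifiable tail errors. First I would peel off the first variable: since $J_{\vp_2', \vp_3'}(\mu)$ is an empty sum unless $\mu\ge 2N_0$, we have
\[
J_{\vp_1', \vp_2', \vp_3'}(\la)=\sum_{n=N_0}^{\la-2N_0}\vp_1'(n)\,J_{\vp_2', \vp_3'}(\la-n).
\]
Into the inner factor I would insert Corollary~\ref{cor:J2}, writing $J_{\vp_2', \vp_3'}(\mu)=c_{23}\,\mu^{\g_2+\g_3-1}+R(\mu)$ with $c_{23}:=\frac{\Gamma(\g_2)\Gamma(\g_3)}{\Gamma(\g_2+\g_3)}\g_2\g_3$ and $|R(\mu)|\lesssim\mu^{\g_2+\g_3-1-\g_2\wedge\g_3}$ for $\mu\ge 2N_0$ (for $\mu$ in any fixed bounded range we just use the crude bound $J_{\vp_2', \vp_3'}(\mu)=O(1)$, which over the $O(1)$ corresponding values of $n$, all near $\la$, contributes $O(\la^{\g_1-1})$, absorbed below). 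This reduces the proof to evaluating the main sum $c_{23}\g_1\sum_n n^{\g_1-1}(\la-n)^{\g_2+\g_3-1}$ and to bounding the remainder sum $\sum_n\vp_1'(n)R(\la-n)$.

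For the remainder sum I would split at $n=\la/2$. When $n\le\la/2$ one has $\la-n\simeq\la$, so $|R(\la-n)|\lesssim\la^{\g_2+\g_3-1-\g_2\wedge\g_3}$, while $\sum_{n\le\la/2}n^{\g_1-1}\lesssim\la^{\g_1}$ because $\g_1>0$; when $n>\la/2$ one has $n^{\g_1-1}\lesssim\la^{\g_1-1}$ because $\g_1<1$, while $\sum_{2N_0\le m\le\la/2}m^{\g_2+\g_3-1-\g_2\wedge\g_3}\lesssim\la^{\g_2\vee\g_3}$ since the exponent $\g_2\vee\g_3-1$ lies in $(-1,0)$. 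In both cases the contribution is $O(\la^{\g_1+\g_2+\g_3-1-\g_2\wedge\g_3})$.

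It remains to evaluate $\sum_{n=N_0}^{\la-N_0}n^{\g_1-1}(\la-n)^{\g_2+\g_3-1}$; the shift of the upper summation limit from $\la-2N_0$ to $\la-N_0$ changes this by $O(\la^{\g_1-1})$ and may be ignored. Writing $s:=\g_2+\g_3\in(1,2)$ and $f(x):=x^{\g_1-1}(\la-x)^{s-1}$, the pointwise bound $|f'(x)|\lesssim f(x)\bigl(x^{-1}+(\la-x)^{-1}\bigr)$ allows me to compare this sum with $\int_{N_0}^{\la-N_0}f(x)\,dx$, the difference being $\lesssim\int_{N_0}^{\la-N_0}f(x)\bigl(x^{-1}+(\la-x)^{-1}\bigr)\,dx$, which, upon splitting at $\la/2$ and using $\g_1<1$ and $s-1\in(0,1)$, is $\lesssim\la^{s-1}=\la^{\g_1+\g_2+\g_3-1-\g_1}$. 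Then the substitution $x=\la t$ gives $\int_{N_0}^{\la-N_0}f(x)\,dx=\la^{\g_1+s-1}\int_{N_0/\la}^{1-N_0/\la}t^{\g_1-1}(1-t)^{s-1}\,dt$, and removing the two tails of the Beta integral (which are $\lesssim(N_0/\la)^{\g_1}$ and $\lesssim(N_0/\la)^{s}$) produces $\frac{\Gamma(\g_1)\Gamma(s)}{\Gamma(\g_1+s)}\la^{\g_1+s-1}+O(\la^{s-1})$. Multiplying by $c_{23}\g_1$ and using $\Gamma(s)=\Gamma(\g_2+\g_3)$ together with $\Gamma(\g_1+s)=\Gamma(\g_1+\g_2+\g_3)$ collapses the product of Gamma factors to exactly $\frac{\Gamma(\g_1)\Gamma(\g_2)\Gamma(\g_3)}{\Gamma(\g_1+\g_2+\g_3)}\g_1\g_2\g_3$. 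Finally I would collect the two sources of error — $O(\la^{\g_1+\g_2+\g_3-1-\g_2\wedge\g_3})$ from the remainder sum, and $O(\la^{\g_1+\g_2+\g_3-1-\g_1})$ from the sum-to-integral passage and from the Beta-integral tails — and note that $\g_1\wedge(\g_2\wedge\g_3)=\g_1\wedge\g_2\wedge\g_3$, which gives the stated bound. I do not expect any genuine obstacle here; the one point that needs care is exactly this bookkeeping, namely that the peeled-off variable contributes the saving $\la^{-\g_1}$ and the inner pair the saving $\la^{-\g_2\wedge\g_3}$, so that the weaker of the two is $\la^{-\g_1\wedge\g_2\wedge\g_3}$ no matter which of the three $\g_k$ is smallest.
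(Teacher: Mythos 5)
Your proof is correct and follows essentially the route the paper indicates: peel off the first variable, insert the asymptotic of Corollary~\ref{cor:J2} for the inner two-variable sum, and then compare the resulting weighted sum with a Beta integral, now tracking the error terms quantitatively where Proposition~\ref{prop:J3} only needed $o(\cdot)$ via dominated convergence. The bookkeeping at the end — that the first variable saves $\la^{-\g_1}$, the inner pair saves $\la^{-\g_2\wedge\g_3}$, and the weaker of the two is exactly $\la^{-\g_1\wedge\g_2\wedge\g_3}$ — is the one genuinely new point relative to Proposition~\ref{prop:J3}, and you handle it correctly.
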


\begin{proof}
It suffices to proceed as in the proof of Proposition~\ref{prop:J3} and use Corollary~\ref{cor:J2} instead of Lemma~\ref{lem:J2}. The details are left to the reader. 
\end{proof}

\subsection{Exponential sum estimates} \label{sec:FG}
To prove Theorem \ref{thm:asymr} we need some auxiliary results.
We will need the discrete version of the Van der Corput lemma.

\begin{lem}[Van der Corput, {\cite[Corollary 8.13]{IK}}]\label{lem:VdC}
Assume that $a,b \in \R$ are such that $b - a \ge 1$. Let
$F \in C^2([a,b])$ be a real valued function and let $I$ be a
subinterval of $[a,b]$ such that $|I| \ge 1$. If there exist
$\eta > 0$ and $r \ge 1$ such that
\[
\eta \lesssim |F''(x)| \lesssim r\eta, \qquad x \in I,
\]
then
\[
\Big| \sum_{k \in I} e(F(k))
\Big|
\lesssim
r |I| \eta^{1/2} + \eta^{-1/2}.
\]
\end{lem}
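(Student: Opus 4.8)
Lemma \ref{lem:VdC} is the classical van der Corput second derivative test, so I would only assemble the standard pieces: an estimate for oscillatory integrals, combined with a comparison of the sum to integrals via truncated Poisson summation. First I would normalize. Since $\eta\lesssim|F''(x)|$ on $I$, the function $F''$ does not vanish on $I$ and hence, by continuity, has a fixed sign there; replacing $F$ by $-F$ if necessary, assume $F''>0$, so that $F'$ is strictly increasing on $I$. I may also assume $\eta\le1$, since otherwise $r|I|\eta^{1/2}\ge|I|$, which already dominates (up to a constant) the trivial bound $\#(I\cap\Z)\le 2|I|$ for the sum, and there is nothing to prove.

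The first ingredient is the oscillatory integral bound: for every subinterval $J\subseteq I$ one has $\big|\int_J e(F(x))\,dx\big|\lesssim\eta^{-1/2}$. To prove it I would pick $x_*\in J$ minimizing $|F'|$, observe that $|F'(x)|\gtrsim\eta^{1/2}$ whenever $|x-x_*|\ge\eta^{-1/2}$ (because $|F''|\gtrsim\eta$ forces $|F'(x)-F'(x_*)|\ge\eta|x-x_*|$ and $F'$ is monotone), and integrate by parts on the two pieces of $J$ away from $x_*$, writing $e(F)=(2\pi i F')^{-1}(e(F))'$ and using that $F''/(F')^2$ has a fixed sign so that its integral telescopes; the middle piece has length $\le 2\eta^{-1/2}$ and is bounded trivially. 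This is the standard van der Corput estimate for exponential integrals, which I would simply quote from \cite{IK}.

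The second ingredient is the passage from the sum to integrals. Since $F'$ is increasing, it maps $I$ onto an interval of length $\int_I F''(x)\,dx\lesssim r\eta|I|$. Truncated Poisson summation (the van der Corput $B$-process, applied with a cutoff adapted to $I$) gives
\[
\sum_{k\in I}e(F(k))=\sum_{m}\int_I e\big(F(x)-mx\big)\,dx+O(1),
\]
where $m$ ranges over the $O(1+r\eta|I|)$ integers within distance $1$ of the image interval $F'(I)$, and the $O(1)$ absorbs the boundary corrections. Each phase $F(x)-mx$ has second derivative $F''$ with $|F''|\gtrsim\eta$, so the first ingredient bounds each of these integrals by $O(\eta^{-1/2})$; summing over $m$ yields $\big|\sum_{k\in I}e(F(k))\big|\lesssim(1+r\eta|I|)\eta^{-1/2}+1\lesssim\eta^{-1/2}+r|I|\eta^{1/2}$, using $\eta\le1$ in the last step, which is the asserted bound.

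There is no genuine obstacle here; the only point requiring care is the bookkeeping in the $B$-process, namely ensuring the $O(1)$ boundary error is truly uniform in the cutoff, which is routine and carried out in \cite[Chapter 8]{IK}. In practice one would simply invoke the lemma from that reference.
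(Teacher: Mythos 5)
This lemma is cited in the paper from \cite[Corollary~8.13]{IK} with no internal proof, so there is no argument in the paper to compare against; I am evaluating your sketch on its own terms. Your route is one of the two standard ones: normalize so that $F''>0$ and $\eta\le1$, prove the continuous van der Corput estimate $\big|\int_J e(F(x))\,dx\big|\lesssim\eta^{-1/2}$, and then pass from the sum to finitely many such integrals via truncated Poisson summation (the $B$-process). The first ingredient is correctly argued: picking $x_*$ minimizing $|F'|$ on $J$, using monotonicity of $F'$ to get $|F'(x)|\gtrsim\eta|x-x_*|$, and integrating by parts once on the two outer pieces (with the sign condition on $F''/(F')^2$ letting the variation of $1/F'$ telescope) is exactly the standard proof.

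The step that is not quite justified as written is the $O(1)$ error after truncating the Poisson sum. With the sharp cutoff $\ind_I$, the tail integral for an integer $m$ at distance $d\ge1$ from $F'(I)$ is only $O(d^{-1})$ by the first--derivative test, and this rate is sharp, so one cannot simply sum the tail and claim $O(1)$. The textbook statement of truncated Poisson summation carries an error of order $\log\big(2+|F'(I)|\big)=O\big(\log(2+r\eta|I|)\big)$, not $O(1)$. This does not invalidate the conclusion: since $\log(2+y)\lesssim 1+y^{1/2}$ for $y\ge0$, and $r,|I|\ge1$, $\eta\le1$, one checks $\log(2+r\eta|I|)\lesssim\eta^{-1/2}+r|I|\eta^{1/2}$, so the logarithm is absorbed into the target bound. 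But you need to either make this absorption explicit or replace $\ind_I$ by a smooth cutoff (where the tail genuinely is $O(1)$, at the cost of then comparing the smoothed sum to the sharp one); as written, the $O(1)$ is asserted without justification.

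For the record, the more elementary textbook proof avoids Poisson summation entirely: split $I$ according to whether $\|F'(x)\|<\theta$ or $\|F'(x)\|\ge\theta$; since $F'$ is monotone and $F'(I)$ has length $\lesssim r\eta|I|$, there are $O(1+r\eta|I|)$ maximal pieces of each type; on the short pieces (each of length $\lesssim\theta/\eta$ because $|F''|\gtrsim\eta$) bound the sum trivially by $1+\theta/\eta$, on the rest use the Kusmin--Landau first--derivative test to get $\lesssim\theta^{-1}$, and optimize $\theta=\eta^{1/2}$. This needs no oscillatory integral estimate and no truncation bookkeeping. Your approach is equally valid, but the error accounting requires the extra care noted above.
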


We shall also need the following variant of the summation by parts formula.

\begin{lem}[Summation by parts, {\cite[Theorem A.4, p.\,304]{Nat}}]\label{lem:SBP}
Let $u(n)$ and $g(n)$ be arithmetic functions and let $x,y \in \R$ be
such that $0 \le y < x$. Then, for $g \in C^1([y,x])$ we have
\[
\sum_{y < n \le x} u(n) g(n) = U(x) g(x) - \int_y^x U(z) g'(z) \, dz,
\] 
where $U(z) = \sum_{\lfloor y \rfloor + 1 \le n \le \lfloor z \rfloor} u(n)$.
\end{lem}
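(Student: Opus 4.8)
\textbf{Proof plan for Lemma \ref{lem:SBP}.} This is the classical Abel (partial) summation identity, and the statement already records that it is taken from \cite[Theorem A.4, p.\,304]{Nat}; so in the paper it suffices to cite that reference. Nevertheless, here is the argument I would give if a self-contained proof were wanted. The key structural observation is that the partial-sum function $U(z)=\sum_{\lfloor y\rfloor+1\le n\le\lfloor z\rfloor}u(n)$ depends on $z$ only through $\lfloor z\rfloor$, hence is a step function: it is constant, equal to $U(n)$, on each interval $[n,n+1)$ with $n\in\Z$; it vanishes on $[y,\lfloor y\rfloor+1)$ (empty sum); it equals $U(x)$ on $[\lfloor x\rfloor,x]$; and for every integer $n$ one has the telescoping relation $U(n)-U(n-1)=u(n)$.

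Writing $a:=\lfloor y\rfloor+1$ and $b:=\lfloor x\rfloor$, the left-hand side is $\sum_{n=a}^{b}u(n)g(n)=\sum_{n=a}^{b}\bigl(U(n)-U(n-1)\bigr)g(n)$. I would reindex the $U(n-1)$ part by $n\mapsto n+1$ and use $U(a-1)=U(\lfloor y\rfloor)=0$ to rearrange this into
\[
\sum_{n=a}^{b}u(n)g(n)=U(b)g(b)-\sum_{n=a}^{b-1}U(n)\bigl(g(n+1)-g(n)\bigr).
\]
Next, since $g\in C^1([y,x])$, for each $n$ we have $g(n+1)-g(n)=\int_n^{n+1}g'(z)\,dz$, and because $U(z)=U(n)$ on $[n,n+1)$ this is exactly $\int_n^{n+1}U(z)g'(z)\,dz$. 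Summing over $n=a,\dots,b-1$ and adjoining the two leftover pieces $\int_y^{a}U(z)g'(z)\,dz=0$ and $\int_b^{x}U(z)g'(z)\,dz=U(x)\bigl(g(x)-g(b)\bigr)$, one recognizes
\[
\sum_{n=a}^{b-1}U(n)\bigl(g(n+1)-g(n)\bigr)=\int_y^x U(z)g'(z)\,dz-U(x)\bigl(g(x)-g(b)\bigr).
\]
Substituting this back and using $U(b)=U(x)$ together with $g(b)=g(\lfloor x\rfloor)$, the boundary contributions telescope to $U(x)g(x)$, and the stated identity follows.

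The only point requiring care is the endpoint bookkeeping: checking that the half-open interval $[y,\lfloor y\rfloor+1)$ contributes nothing, that the tail $[\lfloor x\rfloor,x]$ is correctly absorbed into $U(x)g(x)$, and that the degenerate case $\lfloor y\rfloor=\lfloor x\rfloor$ (i.e. no integers in $(y,x]$) is consistent — there both sides vanish since $U\equiv 0$ on $[y,x]$. There is no genuine analytic difficulty; the argument is entirely elementary, which is why invoking \cite[Theorem A.4]{Nat} is the cleanest route in the paper.
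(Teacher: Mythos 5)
Your proof is correct, and it is the standard Abel summation argument. Note that the paper itself does not prove Lemma~\ref{lem:SBP} at all: it is stated with the citation to \cite[Theorem A.4, p.\,304]{Nat} and used as a black box, so there is no in-paper argument to compare against. Your reconstruction — writing $u(n)=U(n)-U(n-1)$, summing by parts discretely to get $U(b)g(b)-\sum_{n=a}^{b-1}U(n)\bigl(g(n+1)-g(n)\bigr)$ with $a=\lfloor y\rfloor+1$, $b=\lfloor x\rfloor$, converting each difference $g(n+1)-g(n)$ to $\int_n^{n+1}g'$ and exploiting that $U$ is constant on $[n,n+1)$, then accounting for the edge pieces $[y,a)$ (where $U\equiv 0$) and $[b,x]$ (which contributes $U(x)(g(x)-g(b))$) — is exactly the argument one finds behind Nathanson's Theorem~A.4, and your treatment of the degenerate case $\lfloor y\rfloor=\lfloor x\rfloor$ is also right. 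No gaps.
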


Let $c \in [1,2)$, $h \in \mathcal{F}_{c}$ and let $\vp$ be the inverse of $h$. Then, we define 
\begin{align} \label{def:F}
F_\la (t) & := \sum_{N_0 \le n \le \la} \vp'(n) e( nt), 
\qquad t \in \T, \quad \la \ge N_0, \\ \label{def:G}
G_\la (t) & := \sum_{n \in \Nh\cap[\lambda]} 
e(nt), 
\qquad t \in \T, \quad \la \ge N_0.
\end{align}

Proposition \ref{prop:FG} will be essential in the proof of Theorem \ref{thm:asymr}.

\begin{prop}\label{prop:FG}
Let $c \in [1,4/3)$, $\g = 1/c$, $h \in \mathcal{F}_{c}$ and $\vp$ be its inverse. Further, assume that $\chi > 0$ is such that 
$4(1-\g) + 5 \chi < 1$. Then,
\[
\| F_\la - G_\la \|_{L^{\infty}(\T)} \lesssim \vp(\la) \la^{-\chi}, \qquad \la \ge N_0.
\]
\end{prop}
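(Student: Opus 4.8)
The plan is to compare the two sums $F_\la(t)$ and $G_\la(t)$ term by term using the identity \eqref{condu2}, which says that for $n \ge h(N_0)$,
\[
\ind_{\Nh}(n) = \lfloor -\vp(n)\rfloor - \lfloor -\vp(n+1)\rfloor.
\]
Writing $\lfloor -\vp(n)\rfloor = -\vp(n) - \{-\vp(n)\} = -\vp(n) + \psi(\vp(n)) - \tfrac12$ where $\psi(x) = \tfrac12 - \{x\}$ is the sawtooth function (using the macro $\p$ for this), summing by parts (Lemma \ref{lem:SBP}) in $G_\la(t)$ and comparing with $F_\la(t)$, the ``main term'' $-\vp(n) + \vp(n+1) \approx \vp'(n)$ should match $F_\la$ up to acceptable error, while the remaining contribution is an exponential sum weighted by values of the sawtooth function $\psi(\vp(n))$. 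Thus, after a partial summation to remove the smooth weight $e(nt)$ (bounding $|U(\la)|$ and the integral of $U$ against the derivative of $t\mapsto e(\cdot t)$, which is $O(\la)$ in the worst case but compensated by the decay of $\vp'$), the heart of the matter reduces to estimating, uniformly in $t\in\T$ and in an auxiliary frequency, sums of the form
\[
\sum_{n \le \la} \psi(\vp(n))\, e(nt).
\]

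The next step is to expand the sawtooth $\psi(\vp(n))$ into its Fourier series $\psi(y) = \sum_{0 < |h| \le H} \frac{1}{2\pi i h} e(hy) + O\big(\min(1, \tfrac{1}{H\|y\|})\big)$ (Vaaler's or the classical truncated Fourier expansion), with $H$ a parameter to be optimized as a power of $\la$. The error term involving $\min(1, \tfrac{1}{H\|\vp(n)\|})$ is handled by a standard counting argument: since $\vp$ is $c$-regular with $\vp'$ essentially monotone (Lemma \ref{funlemfi}), the number of $n \le \la$ with $\|\vp(n)\|$ small is controlled, giving a contribution of size roughly $\vp(\la)/H$ plus lower order. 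For the main term, after expansion we are left with a double sum over $0 < |h| \le H$ of exponential sums
\[
\frac{1}{h}\sum_{N_0 \le n \le \la} e\big(h\vp(n) + nt\big),
\]
and this is exactly where the van der Corput second derivative test (Lemma \ref{lem:VdC}) enters: the phase $F(x) = h\vp(x) + xt$ has second derivative $F''(x) = h\vp''(x)$, and by \eqref{fiequat} with $n=2$ together with \eqref{ratefi} we have $\vp''(x) \simeq \vp(x)/x^2$ (up to slowly varying factors and the sign/magnitude control of $\theta_2$, resp.\ $\sigma(x)\tau(x)$ when $c=1$), so $|F''(x)| \simeq h\,\vp(\la)/\la^2$ on dyadic subintervals. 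Applying Lemma \ref{lem:VdC} on a dyadic decomposition of $[N_0,\la]$ (so that $\vp$ and $\vp''$ are roughly constant on each piece, using $\vp(x)\simeq\vp(2x)$ from \eqref{compfi}) yields a bound of order $\la (h\vp(\la)/\la^2)^{1/2} + (h\vp(\la)/\la^2)^{-1/2}$ for each $h$; summing the weighted contributions $\tfrac1h$ over $0 < |h|\le H$ and inserting the partial-summation loss gives a total bound of the shape $\vp(\la)\big(\la^{-1/2}\vp(\la)^{1/2}H^{1/2} + \la\,\vp(\la)^{-1/2}H^{-1/2} + H^{-1}\big)$ up to slowly varying factors.

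Finally, one optimizes the free parameter $H$ (a power of $\la$) to balance the competing error terms. A short computation shows that the optimal choice produces a saving of the form $\vp(\la)\la^{-\chi}$ with $\chi$ constrained by the exponent relation; tracking the powers of $\la$ — using $\vp(\la)\simeq \la^{\g}$ up to $\la^{\eps}$ via \eqref{slowhfi} and \eqref{ratefi} — one finds that the required condition is precisely $4(1-\g) + 5\chi < 1$, which is the hypothesis, the factor $5$ coming from the $H^{1/2}$ in the main van der Corput term against the $H^{-1/2}$ in the off-diagonal term plus the partial-summation loss of size $\la$, and the factor $4(1-\g)$ measuring how far $\vp$ is from linear. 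The slowly varying factors $\ell_\vp$ and the functions $\theta_j$, $\sigma$, $\tau$ are absorbed throughout using \eqref{slowhfi}, \eqref{ratefi} and the boundedness statements in Lemmas \ref{formfunlem}--\ref{funlemfi}, and the extra case $c=1$ is handled by using \eqref{fiequat1} in place of \eqref{fiequat} for the second derivative. I expect the main obstacle to be the bookkeeping in the dyadic van der Corput estimate: one must be careful that the error terms $\eta^{-1/2}$ summed over dyadic blocks and over $h$ do not overwhelm the main saving, and that the slowly varying corrections do not eat into the (already tight) exponent budget imposed by \eqref{assum}.
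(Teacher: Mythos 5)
Your overall framework — reduce $G_\la-F_\la$ via \eqref{condu2} and $\lfloor y\rfloor=y-\{y\}$, split off the Taylor remainder $\vp(n+1)-\vp(n)-\vp'(n)$, Fourier--expand the sawtooth discrepancy with a truncation parameter, control the resulting exponential sums by the van der Corput second--derivative test on dyadic blocks, and optimize the truncation — is indeed the paper's strategy. But the step at the heart of your plan is misdescribed, and the exponent bookkeeping you offer does not close. You claim that a Lemma~\ref{lem:SBP} partial summation reduces $\sum_n e(nt)\big(\Phi(-\vp(n+1))-\Phi(-\vp(n))\big)$ to the un-differenced $\sum_n \Phi(\vp(n))e(nt)$ with an error ``$O(\la)$ in the worst case but compensated by the decay of $\vp'$''. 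Neither half is right: there is no $\vp'$-factor anywhere in that sum to compensate with, and with $u(n)=\Phi(-\vp(n+1))-\Phi(-\vp(n))$, $g(n)=e(nt)$ one really does pick up an uncancelled $\int|U||g'|=O(|t|\la)$. The valid elementary reduction, if you want one, is Abel's summation, which converts the difference into the \emph{bounded} factor $|1-e(-t)|\le 2$ at no cost. Moreover your claimed post-optimization bound $\vp(\la)\bigl(\la^{-1/2}\vp(\la)^{1/2}H^{1/2}+\la\,\vp(\la)^{-1/2}H^{-1/2}+H^{-1}\bigr)$ cannot be right: its middle term $\la\,\vp(\la)^{1/2}H^{-1/2}$ exceeds the target $\vp(\la)\la^{-\chi}$ for \emph{every} admissible $H$, since requiring both the first and second terms to be acceptable forces $\la^{2+2\chi}\lesssim H\vp(\la)\lesssim \la^{1-2\chi}$, which is impossible. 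Also, the Fourier-error count in the truncated expansion should be of size $\simeq\la/H$ (via \eqref{F1}, \eqref{F1co} and a second van der Corput), not $\vp(\la)/H$.

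The paper takes a genuinely different route that explains where the $\vp'$ and the exponent $5$ really come from. It does \emph{not} un-difference the sawtooth: it expands $\Phi$ first and then applies Lemma~\ref{lem:SBP} to $\sum_n e(nt+m\vp(n))\big(e(m(\vp(n+1)-\vp(n)))-1\big)$, taking $u(n)=e(nt+m\vp(n))$ and $g(z)=e(m(\vp(z+1)-\vp(z)))-1$, so that $|g(z)|\lesssim|m|\vp'(z)\simeq|m|\vp(P)/P$ and $|g'(z)|\lesssim|m||\vp''(z)|$ are genuinely small. This is the real ``compensation by $\vp'$'' and it produces the factor $|m|^{3/2}$, hence $S_1\lesssim M^{3/2}(\vp/\sigma)^{1/2}$; balancing against the truncation error $P/M$ gives $M^{5/2}\simeq P\sigma^{1/2}/\vp^{1/2}$, which is the actual origin of the $5\chi$ in $4(1-\g)+5\chi<1$ — not the heuristic you give about ``$H^{1/2}$ against $H^{-1/2}$ plus an $O(\la)$ partial-summation loss.'' Finally, your plan omits the small but necessary buffer of proving the dyadic-block bound with an auxiliary exponent $\chi'>\chi$ (still satisfying $4(1-\g)+5\chi'<1$), which is what lets the geometric sum over the $\simeq\log_2\la$ blocks in $I_2$ collapse to $\lesssim\vp(\la)\la^{-\chi}$.
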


Noting that $\chi < \g$ and using \eqref{slowhfi} we have
\begin{align}\label{est3}
1 \lesssim \vp(\la) \la^{-\chi}, \qquad \la \ge N_0,
\end{align}
hence the summation in
$F_N$ can be extended to all $n\in[\la]$ no matter how $\vp$ is defined
for $n\in[N_0-1]$ and Proposition \ref{prop:FG} remains valid.

For $x\in \R$ define a sawtooth function by $\Phi(x):= \{x\} - 1/2$. Expanding $\Phi$ in a Fourier series, see \cite[Section 2]{HB}, we obtain
\begin{align}\label{F2}
\Phi(x) =
\sum_{0 < |m| \le M} \frac{1}{\dpi m} e(m x) + O\left( \min\left\{ 1, \frac{1}{M\| x \|} \right\} \right),
\end{align}
for every $M \in \Z_+$, recall that
$\| x \| = \min\{ |x-n| : n \in \Z\}$ is the distance of $x \in \R$
to the nearest integer. Moreover, for any $M \in \Z_+$ we have
\begin{align}\label{F1}
\min\left\{ 1, \frac{1}{M\| x \|} \right\}
=
\sum_{m \in \Z} b_m e(mx), \qquad x \in \R,
\end{align}
where 
\begin{align}\label{F1co}
| b_m | \lesssim \min \left\{ \frac{1+\log M}{M}, \frac{M}{|m|^2} \right\}, \qquad M \in \Z_+, \quad m \in \Z.
\end{align}
Note that $b_m$ depends also on $M$, but we are not going to emphasize this fact in the sequel.

\begin{proof}[Proof of Proposition \ref{prop:FG}]
Using \eqref{condu2} and the identity $\lfloor x \rfloor = x - \{ x \}$ we may write
\begin{align*}
G_\la (t) =& O(1) + 
\sum_{h(N_0) \le n \le \la}
e(nt) \big( \lfloor - \vp (n) \rfloor - \lfloor - \vp (n+1) \rfloor  \big) \\
=&  O(1) +
\sum_{N_0 \le n \le \la} e(nt) \big( \vp (n + 1) - \vp (n)  \big)\\
&+
\sum_{N_0 \le n \le \la} e(nt) 
\big( \Phi ( - \vp (n + 1) ) - \Phi ( - \vp (n) )  \big).
\end{align*}
Further, we have 
\begin{align*}
G_\la (t) &= O(1) + F_\la (t) + I_1+ I_2,
\end{align*}
where
\begin{align*}
I_1:=&\sum_{N_0 \le n \le \la} e(nt) \big( \vp (n + 1) - \vp (n) - \vp' (n) \big),\\
I_2:=&\sum_{N_0 \le n \le \la} e(nt) 
\big( \Phi ( - \vp (n + 1) ) - \Phi ( - \vp (n) )  \big).
\end{align*}
Now, it suffices to show that
\begin{align*}
|I_1| + |I_2| \lesssim \vp(\la) \la^{-\chi}, \qquad \la \ge N_0, \quad t \in \T.
\end{align*}

We first deal with $I_1$. Using the fact that
\begin{align}\label{fibis}
-\vp'' (2x) \simeq -\vp'' (x), \qquad x \ge N_0,
\end{align}
which easily follows from \eqref{compfi} and Lemma \ref{funlemfi}, we obtain
\begin{align*}
\big| \vp (n + 1) - \vp (n) - \vp' (n) \big|
&=
\Big| \int_n^{n+1} \int_n^x \vp'' (y) \, dy \, dx \Big|\\
&\lesssim
- \int_n^{n+1} \vp'' (x) \, dx= \vp' (n) - \vp' (n + 1).
\end{align*}
Using this estimate we conclude
\[
|I_1| \lesssim 
\sum_{N_0 \le n \le \la} \big( \vp' (n) - \vp' (n + 1) \big)
= \vp' (N_0) - \vp' (\la + 1) \lesssim 1,
\]
which in view of \eqref{est3} gives us the desired estimate for $I_1$.

Next, we focus on $I_2$. Decomposing the sum defining $I_2$ into dyadic pieces we see that
\begin{align*}
I_2 = \sum_{0 \le l \le \log_2 \la} 
\sum_{\genfrac{}{}{0pt}{}{ 2^l \le n < 2^{l+1} }{ N_0 \le n \le \la }}
e(nt) \Big( \Phi \big( - \vp (n + 1) \big) - \Phi \big( - \vp (n) \big)  \Big).
\end{align*}
For $1 \le P < P' \le 2P$ and $t \in \T$, define the auxiliary functions
\[
S_{P,P'} (t)
:=
\Big|
\sum_{N_0 \vee P \le n < P'} 
e(nt) \big( \Phi ( - \vp (n + 1) ) - \Phi ( - \vp (n) )  \big)
\Big|.
\]
Now we fix $\chi' > \chi$ such that $4(1-\g) + 5 \chi' < 1$. Our aim is to show that
\begin{align}\label{estS}
S_{P,P'} (t) 
\lesssim
\vp (P) P^{-\chi'}, \qquad N_0 \le P < P' \le 2P, \quad t \in \T.
\end{align}
Let $\s$ be the function from \eqref{fiequat1} if $c=1$, or $\s \equiv 1$ if $c>1$. Further, let
\begin{align*}
M :=  \frac{P^{1 + \chi' + \eps}}{\vp (P) \s (P)},
\end{align*}
where $\eps > 0$ is fixed and such that $4(1-\g) + 5 \chi' + 7 \eps < 1$. 
Now applying \eqref{F2} with $M$ as above we infer that
\begin{align*}
S_{P,P'} (t) 
 \lesssim S_1 +S_2, \qquad N_0 \le P < P' \le 2P,
\end{align*}
where
\begin{align*}
S_1&:=\Big|
\sum_{ P \le n < P'} 
e(nt) 
\sum_{0 < |m| \le M} \frac{1}{\dpi m} 
\big( e(m \vp (n + 1)) - e(m \vp (n)) \big)
\Big|,\\
S_2&:=\sum_{ P \le n \le P'}
\min\left\{ 1, \frac{1}{M\| \vp (n) \|} \right\}.
\end{align*}

In order to show \eqref{estS} it is enough to prove that
\begin{align*}
S_1 + S_2
\lesssim
\vp (P) P^{-\chi'}, \qquad N_0 \le P < P' \le 2P, \quad t \in \T.
\end{align*}
We first analyze $S_2$. By \eqref{F1} we may write
\begin{align}
\label{eq:1}
S_2 = \sum_{ P \le n \le P'} \sum_{m \in \Z} b_m e(m \vp (n))
\le 
\sum_{m \in \Z} |b_m| 
\Big| \sum_{ P \le n \le P'} e(m \vp (n)) \Big|.
\end{align}
Let us denote
\begin{align*}
T_{P,P'} (t,m) := 
\Big| \sum_{ P \le n \le P'} e( nt +  m \vp (n) ) \Big|,
\qquad N_0 \le P \le P' \le 2P, \quad t \in \T, \quad m \in \Z,
\end{align*}
and observe that $T_{P,P'} (0,m)$ is exactly the inner exponential sum in \eqref{eq:1}. We shall prove 
\begin{align}\label{estT}
T_{P,P'} (t,m) \lesssim
\begin{cases}
	|m|^{1/2}  \frac{P}{(\vp (P) \s (P))^{1/2}}, & \quad m \ne 0,\\
	 P, & \quad m = 0,
\end{cases} 
\qquad N_0 \le P \le P' \le 2P, \quad t \in \T.
\end{align}
The case $m = 0$ is trivial so we focus on $m \ne 0$. In order to prove \eqref{estT} we use  Lemma \ref{lem:VdC}.
For $x \in [P,2P]$ let $F(x) := tx + m \vp (x)$.
Using \eqref{fibis} and Lemma \ref{funlemfi} we see that
\[
| F''(x) | = - |m| \vp'' (x) \simeq |m| \frac{\vp (P) \s (P)}{P^2},
\qquad x \in [P,2P].
\]
Therefore an application of Lemma \ref{lem:VdC} with
$I=[P,P'] \subseteq [P,2P]$, $\eta = |m| \frac{\vp (P) \s (P)}{P^2}$ and
$r=1$ leads us to
\begin{align*}
T_{P,P'} (t,m) 
\lesssim
P |m|^{1/2} \frac{ (\vp (P) \s (P))^{1/2} }{P}
+ 
|m|^{-1/2} \frac{ P }{(\vp (P) \s (P))^{1/2}}
\lesssim
|m|^{1/2} \frac{ P }{(\vp (P) \s (P))^{1/2}}.
\end{align*}
The last estimate follows from inequality
$\vp (x) \s(x) \lesssim x$ for $x \ge N_0$, which is a simple
consequence of \eqref{slowhfi} if $c > 1$ and \eqref{ratefi} if
$c = 1$. Therefore we have justified \eqref{estT}. Now combining this
estimate with \eqref{F1co} we obtain
\begin{align*}
S_2
& \lesssim
\frac{1+\log M}{M} P + 
\sum_{0 < |m| \le M} 
\frac{1+\log M}{M}  \frac{ |m|^{1/2}P }{(\vp (P) \s (P))^{1/2}}
+
\sum_{ |m| \ge M+1} \frac{M}{|m|^2}  \frac{ |m|^{1/2}P }{(\vp (P) \s (P))^{1/2}} \\
& \lesssim
\frac{1+\log M}{M} P + (1+\log M)  \frac{ M^{1/2}P }{(\vp (P) \s (P))^{1/2}}. 
\end{align*}
Since  $\s(x)^{-1} \lesssim_\delta x^\delta$ for every $\delta > 0$ (see Lemma \ref{funlemfi}), we have $\log M \lesssim \log P$ and consequently
\begin{align*}
S_2 & \lesssim
\frac{\log P + 1}{P^{\chi' + \eps}} \vp (P) \s (P) 
+ 
(\log P + 1)  
\frac{ P^{(3 + \chi' + \eps)/2} }{\vp (P) \s (P)} \\
& =
\vp (P) P^{ - \chi'} 
\Big(  \frac{\log P + 1}{P^{\eps}} \s (P) 
+ 
(\log P + 1)  
\frac{ P^{(3 + 3\chi' + \eps)/2} }{\vp (P)^2 \s (P)}
\Big).
\end{align*}
The required bound for $S_2$ follows because $\log P + 1 \lesssim P^\eps$, $\s(P) \lesssim 1$ and $(3 + 3\chi' + \eps)/2 < 2\g$.

Now we analyze $S_1$. We see that 
\begin{align} \label{estS1} 
S_1 \lesssim
\sum_{0 < |m| \le M} \frac{1}{|m|} 
\Big|
\sum_{ P \le n < P'} 
e(nt + m \vp (n) )
\big( e(m ( \vp (n + 1) - \vp (n) ))  - 1 \big)
\Big|.
\end{align} 
Let $u(n) = e(nt + m \vp (n) )$ and 
$g(z) = e(m ( \vp (z + 1) - \vp (z) ))  - 1$. Clearly $g \in C^1 ([P-1,2P])$ and using \eqref{compfi}, Lemma \ref{funlemfi} and \eqref{fibis} we obtain
\begin{align*} 
|g (z)| & \lesssim |m| \vp' (P) \simeq |m| \frac{\vp (P)}{P}, 
\qquad z \in [P-1,2P], \\
|g' (z)| & \lesssim |m| \big| \vp' (z+1) - \vp' (z) \big| 
\simeq - |m| \vp'' (P) \simeq |m| \frac{\vp (P) \s (P)}{P^2}, 
\qquad z \in [P-1,2P].
\end{align*} 
Further, using \eqref{estT} we have
\[
|U(z)|= \Big| \sum_{ P \le n \le \lfloor z \rfloor} u(n)  \Big| 
= T_{P, \lfloor z \rfloor } (t, m) 
\lesssim 
|m|^{1/2}  \frac{P}{(\vp (P) \s (P))^{1/2}},
\qquad z \in [P-1,2P].
\]
Therefore applying Lemma \ref{lem:SBP} to the inner summation in \eqref{estS1} and then using the above estimates we arrive at 
\begin{align*} 
S_1 & \lesssim
\sum_{0 < |m| \le M} \frac{1}{|m|} 
\Big(
|m|^{1/2}  \frac{P|m|}{(\vp (P) \s (P))^{1/2}} 
 \frac{\vp (P)}{P}
+
|m|^{1/2}  \frac{P|m|}{(\vp (P) \s (P))^{1/2}}
 \frac{\vp (P) \s (P)}{P^2} P
\Big) \\
& \lesssim
\sum_{0 < |m| \le M}  |m|^{1/2} \bigg( \frac{\vp (P)}{\s (P)} \bigg)^{1/2}
\lesssim M^{3/2} \bigg( \frac{\vp (P)}{\s (P)} \bigg)^{1/2}.
\end{align*} 
By $\s (P)^{-1} \lesssim P^\eps$, we further obtain
\begin{align*} 
S_1 \lesssim
 \frac{P^{(3 + 3\chi' + 3\eps)/2}}{\vp (P) \s (P)^2}
\lesssim 
\vp (P) P^{- \chi'} 
\frac{P^{(3 + 5\chi' + 7\eps)/2}}{\vp (P)^2}
\end{align*} 
and the desired estimate for $S_1$ follows since $(3 + 5\chi' + 7\eps)/2 < 2\g$. 

The proof of \eqref{estS} is completed, we will show that
$|I_2| \lesssim \vp(\la) \la^{-\chi}$. Let
$\delta < (\chi' - \chi)/2$. Using \eqref{estS} (with $P=2^l$,
$P'= 2^{l+1} \wedge (\la+1)$), \eqref{slowhfi} and \eqref{ratefi} we
conclude
\begin{align*} 
|I_2| & \lesssim
1 + \sum_{\log_2 N_0 \le l \le \log_2 \la} \vp(2^l) 2^{-l \chi'}
\lesssim
\sum_{0 \le l \le \log_2 \la} 2^{l ( \g - \chi' + \delta )}\\
&\lesssim 
\la^{\g - \chi' + \delta}
\lesssim 
\vp (\la) \la^{ - \chi' + 2\delta}  \le
\vp (\la) \la^{ - \chi }.
\end{align*} 
This yields the required estimate for $I_2$ and completes the proof of Proposition \ref{prop:FG}.
\end{proof}

\subsection{Proof of Theorem \ref{thm:asymr}}

Here we will proceed as in the proof of \cite[Theorem 1, p. 46]{BF}
or \cite[Theorem 1.6, p. 30]{Piat}.  For $k\in[3]$, let $F_\la^k$ and
$G_\la^k$ be the functions defined in \eqref{def:F} and \eqref{def:G}
respectively, that correspond to $h_k$.
We first observe that
\begin{align*} 
r_{h_1, h_2, h_3}(\la) 
= \sum_{\genfrac{}{}{0pt}{}{ n_1, n_2, n_3 \le \la }{ n_k \in \N_{h_k} }} 
\int_0^1 e(t (n_1 + n_2 + n_3 - \la)) \, dt 
=
\int_0^1 G_\la^1 (t) G_\la^2 (t) G_\la^3 (t) e(- t \la) \, dt.
\end{align*} 
Proceeding in a similar way we obtain 
\begin{align*} 
\int_0^1 F_\la^1 (t) F_\la^2 (t) F_\la^3 (t) e(- t \la) \, dt
= \sum_{\genfrac{}{}{0pt}{}{ n_1 + n_2 + n_3 = \la }{ n_1, n_2, n_3 \ge N_0 }}
\vp'_1 (n_1) \vp'_2 (n_2) \vp'_3 (n_3)
= J_{\vp'_1, \vp'_2, \vp'_3} (\la).
\end{align*} 
Therefore, using Proposition \ref{prop:J3} and \eqref{fiequat} we reduce our problem to checking that 
\begin{align} \label{red3}
\int_0^1 
\big| G_\la^1 (t) G_\la^2 (t) G_\la^3 (t) - F_\la^1 (t) F_\la^2 (t) F_\la^3 (t) \big| \, dt
=
o \left( \frac{ \vp_1(\la) \vp_2(\la) \vp_3(\la) }{\la} \right).
\end{align} 
Using  H\"older's inequality we see that the left-hand side of \eqref{red3} is controlled by 
\begin{align}
\label{eq:2}
\begin{gathered}
\big\| F_\la^1 - G_\la^1 
\big\|_{L^{\infty}(\T)} \big\| F_\la^2 \big\|_{L^{2}(\T)}
\big\| F_\la^3 \big\|_{L^{2}(\T)}  
+
\big\| G_\la^1 \big\|_{L^{2}(\T)}
\big\| F_\la^2 - G_N^2 \big\|_{L^{\infty}(\T)}
\big\| F_\la^3 \big\|_{L^{2}(\T)}\\
+
\big\| G_\la^1 \big\|_{L^{2}(\T)} 
\big\| G_\la^2 \big\|_{L^{2}(\T)} 
\big\| F_\la^3 - G_\la^3 \big\|_{L^{\infty}(\T)}.
\end{gathered}
\end{align}
By the Plancherel theorem and the fact that $\vp'_k (n)\lesssim 1$ and
$\vp'_k (n) \simeq \vp_k (n+1) - \vp_k (n)$ for $n \ge N_0$ and
$k\in[3]$ (see \eqref{compfi}), we obtain
\begin{align}\label{estF}
\big\| F_\la^k \big\|_{L^{2}(\T)}^2
=
\sum_{N_0 \le n \le \la} \vp'_k (n)^2 
\lesssim 
\vp_k (\la+1) - \vp_k (N_0) 
\lesssim
\vp_k (\la), \qquad \la \ge N_0,
\end{align}
and
\begin{align}\label{estG}
\big\| G_\la^k \big\|_{L^{2}(\T)}^2
=
\sum_{n \in \N_{h_k}\cap[\la] } 1 
\le \vp_k (\la+1)
\simeq
\vp_k (\la), \qquad \la \ge N_0.
\end{align}
Let 
\[
\chi_k: = 
\sum_{\genfrac{}{}{0pt}{}{ j=1 }{ j \ne k }}^3 (1-\g_j)/2 + \eps,
\qquad k \in [3].
\]
Using the assumption \eqref{assum} we can choose $\eps > 0$ such that 
$4(1-\g_k) + 5 \chi_k < 1$ for $k \in [3]$. By Proposition \ref{prop:FG} we have
\[
\big\| F_\la^k - G_\la^k \big\|_{L^{\infty}(\T)} \lesssim \vp_k (\la) \la^{-\chi_k}, \qquad \la \ge N_0.
\]
Combining this with \eqref{estF} and \eqref{estG} we see that \eqref{eq:2} is bounded by
\begin{align*}
& \vp_1 (\la) \la^{-\chi_1} (\vp_2 (\la) \vp_3 (\la))^{1/2}
+
\vp_2 (\la) \la^{-\chi_2} (\vp_1 (\la) \vp_3 (\la))^{1/2}
+
\vp_3 (\la) \la^{-\chi_3} (\vp_1 (\la) \vp_2 (\la))^{1/2} \\
& \qquad =
\frac{\vp_1 (\la) \vp_2 (\la) \vp_3 (\la)}{\la^{1 + \eps/2}}
\bigg(
\frac{\la^{1 - \chi_1 + \eps/2}}{(\vp_2 (\la) \vp_3 (\la))^{1/2}}
+
\frac{\la^{1 - \chi_2 + \eps/2}}{(\vp_1 (\la) \vp_3 (\la))^{1/2}}
+
\frac{\la^{1 - \chi_3 + \eps/2}}{(\vp_1 (\la) \vp_2 (\la))^{1/2}} 
\bigg) \\
& \qquad \lesssim
\frac{\vp_1 (\la) \vp_2 (\la) \vp_3 (\la)}{\la^{1 + \eps/2}}.
\end{align*}
The last estimate above is a straightforward consequence of \eqref{ratefi} and the fact that
\[
1 - \chi_k + \eps/2 < 
\sum_{\genfrac{}{}{0pt}{}{ j=1 }{ j \ne k }}^3 \g_j /2,
\qquad k \in [3].
\] 
The estimate \eqref{red3} follows and the proof of Theorem \ref{thm:asymr} is completed.
\qed

\medskip

We finally establish the asymptotic formula for the number of lattice points in the arithmetic spheres $\mathbf S^3_{c}(\lambda)$ from \eqref{eq:39}.

\begin{proof}[Proof of Corollary \ref{cor:asym}]
Let $r_{c, \Z_+}(\la):= \# \{x \in \Z_+^3 : \lfloor x_1^c \rfloor + \lfloor x_2^c \rfloor + \lfloor x_3^c \rfloor = \lambda \}$ for any $\lambda \in \Z_+$, and let $\g = 1/c$. 
Since  $r_c(\lambda) = 2^3 r_{c,\Z_+}(\la) + O(\lambda^{\g})$, it is enough to prove that for every $\eps > 0$ one has
\begin{align*}  
r_{c,\Z_+}(\la) = \frac{\g^{3} \Gamma(\g)^3 }{\Gamma(3\g)}
 \lambda^{3\g - 1}
+ O_{\eps} \big(  \lambda^{3\g - 1 - (9 - 8c)/(5c) + \eps} \big).
\end{align*}  
This, however, follows by a careful repetition of the arguments from the proof of Theorem~\ref{thm:asymr}.

We  provide some details. Let $F_N$ and $G_N$ be the functions defined in \eqref{def:F} and \eqref{def:G} respectively, that correspond to $h(x) = x^c$ and $N_0=1$.
Let $\vp(x) = x^\g$ and observe that  
\begin{align*}  
r_{c,\Z_+}(\la) = \int_0^1 G_\lambda(t)^3 e^{-\dpi t \lambda} \, dt, \qquad
J_{\vp',\vp',\vp'} (\lambda) = \int_0^1 F_\lambda(t)^3 e^{-\dpi t \lambda} \, dt, \qquad \lambda \in \Z_+.
\end{align*} 
We take  sufficiently small $\eps>0$ and apply Proposition~\ref{prop:FG} with $\chi = \frac{1-4(1-\g)}{5} - \eps > 0$ (notice that $4(1-\g) + 5 \chi < 1$) together with the estimate 
$\| F_\lambda \|_{L^{2}(\T)}^2 + \| G_\lambda \|_{L^{2}(\T)}^2 \lesssim \lambda^{\g}$ for $\lambda \in \Z_+$, and conclude that
\begin{align*}  
| r_{c,\Z_+}(\la) - J_{\vp',\vp',\vp'} (\lambda) | 
\lesssim_\eps \lambda^{3\g - 1 - (9\g - 8)/5 + \eps},  
\qquad \lambda \in \Z_+.
\end{align*} 
Now the claim immediately follows  from Corollary~\ref{cor:J3}.
\end{proof}

\section{Proof of Theorem \ref{thm:L2}. $\ell^p(\mathbb{Z}^3)$ bounds for maximal functions}
\label{sec:el2}

\subsection{Preliminary reductions} \label{sec:41}

Throughout this section $c \in (1, 11/10)$ is fixed and we define
\begin{align}
\label{eq:46}
\kappa:=\kappa_c:=\frac{3-4c}{4c}=\frac{3}{4c}-1.
\end{align}
We also fix a $\lambda_0$-lacunary set $\mathbb D:=\{\lambda_n: n\in\Z_+\}\subset \Z_+$ with $\lambda_0=\inf_{n\in\Z_+}\frac{\lambda_{n+1}}{\lambda_n}>1$.

Let
$\eta \in C_c^{\8}(\R^3)$ be such that $0\le \eta(x)\le 1$ for all $x\in\R^3$,  
$\supp \eta \subseteq [-10, 10]^3$
and $\eta(x) = 1$ whenever $|x|_c^c \le 4$.  We also assume that
$\eta$ is of product type, i.e.
$\eta(x) := \prod_{j=1}^3\eta_j (x_j)$ for some even functions
$\eta_j \in C_c^{\8}(\R)$ with $j \in [3]$. We employ the ideas from
the circle method and for every $\la\in\Z_+$  we split the unit interval $[-1/2, 1/2]$ into  major 
$\mathfrak M_{\lambda}$ and  minor  $\mathfrak m_{\lambda}$ arcs, which are defined respectively by
\begin{align*}  
\mathfrak M_{\lambda} & := 
\Big(-\frac{\lambda^{\kappa}}{2c} , \frac{\lambda^{\kappa}}{2c}  \Big), \\
\mathfrak m_{\lambda} & := [-1/2, 1/2] \setminus \mathfrak M_{\lambda}.
\end{align*} 
Take a smooth partition of unity $\psi (t) + \widetilde{\psi} (t) = 1$ for any $t \in \R$, where $\psi$ is supported in $(-1/2, 1/2)$ and $\psi (t) = 1$ for 
$t \in (-1/(2c), 1/(2c) )$. We consider
\begin{align*}  
\psi_\lambda (t) = \psi \Big( \frac{t}{\lambda^{\kappa}} \Big), \qquad
\widetilde{\psi}_\lambda (t) = \widetilde{\psi} \Big( \frac{t}{\lambda^{\kappa}} \Big), 
\qquad t \in \R, \quad \lambda \ge 1. 
\end{align*} 
It is easy to see that $\widetilde{\psi}_\lambda (t) \le \mathbbm{1}_{\mathfrak m_{\lambda}} (t)$ for any $t \in [-1/2, 1/2]$.
Simple computations show that
\begin{align}
\label{iden31}
\sigma_\lambda (x) =
\eta \Big( \frac{x}{\lambda^{1/c}} \Big) 
\int_{-1/2}^{1/2} e((Q (x) - \lambda) t) 
\big( \psi_\lambda (t) +  \widetilde{\psi}_\lambda (t) \big) \, dt 
=\sigma_{\lambda}^{\mathfrak M} (x) + \sigma_{\lambda}^{\mathfrak m} (x),
\end{align}
where 
\begin{align}
\label{eq:18}
\sigma_{\lambda}^{\mathfrak M} (x)&:=
\lambda^{\kappa} \eta \Big( \frac{x}{\lambda^{1/c}} \Big) 
\mathcal{F}^{-1}_{\R} \psi \big( \lambda^{\kappa} (Q (x) - \lambda) \big),\\
\label{eq:19}
\sigma_{\lambda}^{\mathfrak m} (x)&:=
\eta \Big( \frac{x}{\lambda^{1/c}} \Big)
\int_{-1/2}^{1/2} e((Q(x) - \lambda) t) 
 \widetilde{\psi}_\lambda (t)  \, dt
\end{align}
and
$$
Q(x)=\lfloor |x_1|^c \rfloor + \lfloor |x_2|^c \rfloor + \lfloor |x_3|^c \rfloor,\qquad x\in \R^3.
$$
Consequently we obtain
\begin{align}  \label{est41}
\sup_{\lambda \in\Z_+}\frac{1}{r_c(\lambda)}|\sigma_{\lambda} \ast f (x)| & \lesssim
\sup_{\lambda \in\Z_+} \frac{1}{\lambda^{3/c - 1}} | \sigma_{\lambda}^{\mathfrak M} \ast f (x)|
+ 
\sum_{n=0}^{\8} \sup_{2^n \le \lambda < 2^{n+1}} 
\frac{1}{\lambda^{3/c - 1}} |\sigma_{\lambda}^{\mathfrak m} \ast f (x)|,
\end{align}
and
\begin{align}
\label{est42}
\sup_{n \in\Z_+}\frac{1}{r_c(\lambda_n)}|\sigma_{\lambda_n} \ast f (x)|
& \lesssim
\sup_{n \in\Z_+}\frac{1}{\lambda_n^{3/c - 1}} | \sigma_{\lambda_n}^{\mathfrak M} \ast f (x)|
+ 
\sum_{n=0}^{\8} 
\frac{1}{\lambda_n^{3/c - 1}} |\sigma_{\lambda_n}^{\mathfrak m} \ast f (x)|.
\end{align}

\subsection{Minor arc estimate}

In this section we show the following result.
\begin{thm} \label{thm:minor}
Let $c \in (1, 11/10)$. Then for every  $N \ge 1$ and  $f \in \ell^2(\Z^3)$ we have
\begin{align*}
\big\| \sup_{N \le \lambda \le 2N } \lambda^{-3/c + 1}
|\sigma_{\lambda}^{\mathfrak m} \ast f| \big\|_{\ell^2(\Z^3)}
\lesssim
N^{-(11 - 10c)/(6c)} \log^2 (N + 1) 
\|f \|_{\ell^2(\Z^3)}.
\end{align*}
\end{thm}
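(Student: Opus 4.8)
The plan is to reduce Theorem~\ref{thm:minor} to uniform bounds on the Fourier multiplier $\widehat{\sigma_\lambda^{\mathfrak m}}$ and then to establish those by a circle–method argument in which the van der Corput second derivative test (Lemma~\ref{lem:VdC}) plays the role that Weyl's inequality plays in the polynomial situation, exactly as in the proof of Proposition~\ref{prop:FG}.

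\emph{Reduction to multiplier estimates.} Put $g_\lambda:=\lambda^{1-3/c}\sigma_\lambda^{\mathfrak m}$ and $a_\lambda:=g_\lambda\ast f$. For $\lambda$ ranging over the integers in $[N,2N]$ one has the pointwise telescoping bound
$\sup_{N\le\lambda\le2N}|a_\lambda|^2\le|a_N|^2+\sum_{N\le\lambda<2N}|a_{\lambda+1}-a_\lambda|\,(|a_{\lambda+1}|+|a_\lambda|)$;
taking $\ell^2(\Z^3)$ norms, the Cauchy--Schwarz inequality in $\Z^3$, and Plancherel's theorem reduces everything to two estimates: (i) $\sup_{N\le\lambda\le2N}\|\widehat{\sigma_\lambda^{\mathfrak m}}\|_{L^\infty(\T^3)}\lesssim\log(N+1)\,N^{\alpha}$ for a value $\alpha<3/c-1$, and (ii) the matching gain $\sup_{N\le\lambda<2N}\|\widehat{\sigma_{\lambda+1}^{\mathfrak m}}-\widehat{\sigma_\lambda^{\mathfrak m}}\|_{L^\infty(\T^3)}\lesssim\log(N+1)\,N^{\alpha-1}$. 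Granting these, $\|\sup_\lambda|a_\lambda|\|_{\ell^2}\lesssim\log(N+1)\,N^{1-3/c+\alpha}\|f\|_{\ell^2}$, and the critical admissible exponent $\alpha=(7+4c)/(6c)$ — which satisfies $\alpha<3/c-1$ precisely because $c<11/10$ — makes $1-3/c+\alpha=-(11-10c)/(6c)$, which is the claimed decay.

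\emph{The multiplier and the one-dimensional exponential sum.} Since $Q$ is separable and $\widetilde\psi_\lambda\ind_{[-1/2,1/2]}$ is supported on the minor arc, $\widehat{\sigma_\lambda^{\mathfrak m}}(\xi)=\int_{\mathfrak m_\lambda}\widetilde\psi_\lambda(t)\,e(-\lambda t)\prod_{j=1}^3 E_j(t,\xi_j)\,dt$, where $E_j(t,\xi):=\sum_{m\in\Z}\eta_j(m/\lambda^{1/c})\,e(\lfloor|m|^c\rfloor t-m\xi)$. After a dyadic splitting in $t$, the crux is a bound for $E_j$ uniform in $\xi$: one removes the floor $\lfloor|m|^c\rfloor$ inside the phase by means of the sawtooth Fourier expansions \eqref{F2}--\eqref{F1co} together with summation by parts (Lemmas~\ref{lem:SBP} and \ref{lem:VdC}), precisely the mechanism of the proof of Proposition~\ref{prop:FG}, which reduces matters to exponential sums with smooth phase $\simeq m^c t$ whose second $m$-derivative is $\simeq\lambda^{(c-2)/c}|t|$; van der Corput then yields $|E_j(t,\xi)|\lesssim\log(\lambda+1)\,\big(\lambda^{1/2}|t|^{1/2}+\lambda^{1/c-1/2}|t|^{-1/2}\big)$. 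Separately, the orthogonality coming from injectivity of $m\mapsto\lfloor|m|^c\rfloor$ for $m\ge N_0$ gives the sharp bound $\|E_j(\cdot,\xi)\|_{L^2(\T)}\lesssim\lambda^{1/(2c)}$. Estimating one factor of the product by its $L^\infty_t$-norm over the minor arc — where $|t|\ge\lambda^{\kappa}/(2c)$, so $|t|^{-1/2}\lesssim\lambda^{-\kappa/2}$ — and the remaining two factors by the $L^2_t$-bound, followed by the Cauchy--Schwarz inequality in $t$, furnishes (i) with an $\alpha$ comfortably below $3/c-1$; the positive power $\lambda^{-\kappa}$ supplied by the width of the minor arc is exactly what converts the $|t|^{-1/2}$ singularity into a genuine saving relative to the normalisation $\lambda^{3/c-1}$.

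\emph{The difference estimate and the main obstacle.} For (ii) I would write $\widehat{\sigma_{\lambda+1}^{\mathfrak m}}-\widehat{\sigma_\lambda^{\mathfrak m}}=\int_\lambda^{\lambda+1}\partial_\mu\widehat{\sigma_\mu^{\mathfrak m}}\,d\mu$; differentiating the scaling parameters in $\widetilde\psi_\mu$ and in $\eta_j(\cdot/\mu^{1/c})$ produces the factor $\mu^{-1}$ and leaves sums already controlled above, while the term in which $\partial_\mu$ falls on $e(-\mu t)$ is treated by re-expanding it in $t$ — equivalently, by exploiting the rapid decay of $\widehat\psi$ and the Fourier decay \eqref{eq:fourier} of $\mu_c$ on the major arc — rather than by taking absolute values, so that the cancellation between the major- and minor-arc contributions survives. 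I expect the genuine difficulties to be exactly these two uniform-in-$\xi$ points: carrying the floor function through the sawtooth expansion while running the van der Corput test (the ``various Fourier expansions'' of the outline), and extracting the full $\mu^{-1}$ gain in the difference estimate, which is delicate precisely because it hinges on preserving the circle-method cancellation that makes the minor-arc piece small in the first place.
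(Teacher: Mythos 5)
Your reduction to (i) a uniform-in-$\xi$ bound on $\widehat{\sigma_\lambda^{\mathfrak m}}$ and (ii) a difference bound $\|\widehat{\sigma_{\lambda+1}^{\mathfrak m}}-\widehat{\sigma_\lambda^{\mathfrak m}}\|_{L^\infty}\lesssim N^{\alpha-1}\log N$ is a genuinely different route from the paper's: the paper never forms a difference over $\lambda$. Instead, it writes $\eta(x/\lambda^{1/c})=\eta(x/N^{1/c})-c^{-1}\int_N^\lambda\Upsilon(x/s^{1/c})\,ds/s$ with $\Upsilon=x\cdot\nabla\eta$, takes absolute values inside the $t$-integral so that $e(-\lambda t)$ disappears, and then majorizes $\widetilde\psi_\lambda(t)\le\ind_{N_c\le|t|\le 1/2}(t)$ uniformly in $\lambda\in[N,2N]$; after this the supremum in $\lambda$ is trapped inside a $\lambda$-independent double integral in $(t,s)$, and the theorem follows from $\sup_{t,\xi,s}|\Pi^{g_j}_{t,s}|\lesssim N^{1/3+1/(3c)}\log N$ (Lemma~\ref{lem:L2}) applied to two factors and $\int_\T|\Pi^{g_3}_{t,s}|^2\,dt\lesssim N^{1/c}$ (Plancherel) for the third. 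Your step (i) is essentially the same content as Lemma~\ref{lem:L2} plus Plancherel, and your arithmetic there is correct (you in fact land on $\alpha_0=\frac{1}{3}+\frac{4}{3c}$, slightly better than needed).

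The gap is step (ii), and it appears to be unfixable as proposed. Writing $\widehat{\sigma_{\lambda+1}^{\mathfrak m}}-\widehat{\sigma_\lambda^{\mathfrak m}}=\int_\lambda^{\lambda+1}\partial_\mu\widehat{\sigma_\mu^{\mathfrak m}}\,d\mu$, the differentiation of the cutoffs $\widetilde\psi_\mu$ and $\eta(\cdot/\mu^{1/c})$ does produce $\mu^{-1}$, but the term where $\partial_\mu$ hits $e(-\mu t)$ produces a factor $-2\pi i\, t$ with $|t|$ ranging over the whole minor arc up to $1/2$; since $\widetilde\psi_\mu\equiv 1$ for $|t|\ge\mu^\kappa/2$, the weight $t\widetilde\psi_\mu(t)$ is not $O(\mu^{-1})$, nor even $O(\mu^\kappa)$ near $|t|\sim 1$. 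Moreover, after the sawtooth expansion \eqref{F4}--\eqref{F1co} the bound on $\Pi_{t,\lambda}^{\eta_j}(\xi_j)$ from Lemma~\ref{lem:L2} is essentially $t$-uniform across $\mathfrak m_\lambda$ (the dominant terms $N^{1/2}M^{1/2}$ and $N^{1/c-1/2}N_c^{-1/2}$ both arise from the sawtooth corrections $|m|\le M$ rather than from the behavior of $|t|$ itself), so inserting the weight $|t|$ yields at most a bounded gain, far short of the $N^{-(c+1)/(3c)}$ that your arithmetic demands (with $\alpha_0$ as above one needs $\beta\le 1/c$, i.e. a gain of order $N^{-2/3}$ near $c=1$). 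The appeal to ``re-expanding in $t$'' and to the major-arc Fourier decay \eqref{eq:fourier} does not help here: those mechanisms live near $t=0$ and are excised from $\mathfrak m_\lambda$ by construction. The paper's trick of sacrificing the $e(-\lambda t)$ oscillation and replacing the $\lambda$-supremum by a $\lambda$-free $(t,s)$-integral is precisely what sidesteps this obstruction, and some version of it (or an entirely different idea) is needed to close your argument.
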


To prove Theorem~\ref{thm:minor} we need some preparatory results.
We note that
\begin{align} \label{F4}
e(-x \{ y \}) 
= \sum_{|m| \le M} c_m(x) e(my) + O\Big(\min \Big\{1, \frac{1}{M\|y\|} \Big\} \Big).
\end{align}
This expansion is uniform in $0 < |x| \le 1/2$, $M \in \Z_+$ and $y \in \R$. Moreover,  one has
\begin{align*}
c_m (x) = 
\begin{cases}
	\frac{1-e(- x) }{\dpi (x+m)}, & \quad x+m \ne 0,\\
	 1, & \quad x+m = 0.
\end{cases} 
\end{align*}

\begin{rem}
Note that \eqref{F4} is not true uniformly for all $x \in \R$. To see this, take $y=0$, $x=M+1/2$  and let $M \to \8$. 
\end{rem}

For a fixed $c \in (1,2)$ let us define
\begin{align} \label{def:T}
U_{P,P'} (t,\xi) := \Big| \sum_{P \le n \le P'} e(n^c t + n\xi) \Big|,
\qquad 1 \le P \le P' \le 2P, \quad t \in \R, \quad \xi \in \T,
\end{align}
and
\begin{align} \label{def:S}
V_{P,P'} (M) := \sum_{P \le n \le P'} \min \Big\{1, \frac{1}{M\|n^c\|} \Big\},
\qquad M \in \Z_+, \quad 1 \le P \le P' \le 2P.
\end{align}

\begin{lem} \label{lem:F6}
Let $c \in (1,2)$ be fixed. Then for every $1 \le P \le P' \le 2P$, $t \ne 0$ and  $\xi \in \T$ one has
\begin{align*} 
U_{P,P'} (t,\xi)
\lesssim 
P^{c/2} |t|^{1/2} + P^{1-c/2} |t|^{-1/2}.
\end{align*}
\end{lem}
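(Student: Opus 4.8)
The plan is to obtain Lemma \ref{lem:F6} as a direct application of the second-derivative van der Corput estimate, Lemma \ref{lem:VdC}, to the phase $F(x) := x^c t + x\xi$ on the interval $[P, 2P]$.

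First I would dispose of the degenerate case $P' - P < 1$, in which the sum defining $U_{P,P'}(t,\xi)$ has at most two terms, so that $U_{P,P'}(t,\xi) \lesssim 1$. Since by the AM--GM inequality and $P \ge 1$ one has
\[
P^{c/2}|t|^{1/2} + P^{1-c/2}|t|^{-1/2} \ge 2\big(P^{c/2}|t|^{1/2}\cdot P^{1-c/2}|t|^{-1/2}\big)^{1/2} = 2P^{1/2} \ge 2,
\]
the claimed bound holds trivially in this range, and from now on I may assume $|I| \ge 1$, where $I := [P,P']$.

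Next I would compute $F''(x) = c(c-1)\,t\,x^{c-2}$. Since $c \in (1,2)$ we have $c-2 \in (-1,0)$, so for $x \in [P,2P]$ the factor $x^{c-2}$ lies between $(2P)^{c-2}$ and $P^{c-2}$, both comparable to $P^{c-2}$ with constants depending only on $c$; because $t \ne 0$ this gives $|F''(x)| \simeq_c P^{c-2}|t|$ on $[P,2P] \supseteq I$. Invoking Lemma \ref{lem:VdC} with $[a,b] = [P,2P]$ (note $b-a = P \ge 1$), $\eta \simeq_c P^{c-2}|t|$ and $r = 1$ yields
\[
U_{P,P'}(t,\xi) \lesssim_c |I|\,\eta^{1/2} + \eta^{-1/2} \lesssim_c P\,(P^{c-2}|t|)^{1/2} + (P^{c-2}|t|)^{-1/2} = P^{c/2}|t|^{1/2} + P^{1-c/2}|t|^{-1/2},
\]
which is exactly the asserted estimate.

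There is no real obstacle here, as the lemma is essentially a textbook consequence of van der Corput's second-derivative test; the only points that warrant a little care are the uniformity of the implied constants (they depend on $c$ but on nothing else, since $t$ ranges over all of $\R \setminus \{0\}$ and $\xi$ enters only the linear term of $F$, which is annihilated upon differentiating twice) and the bookkeeping for the short-interval case, both of which are immediate.
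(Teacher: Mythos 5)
Your proof is correct and follows exactly the same route as the paper, which simply invokes Lemma~\ref{lem:VdC} with $F(x)=x^c t + x\xi$, $\eta = P^{c-2}|t|$, $r=1$; your extra care in disposing of the short-interval case $P'-P<1$ (which the hypothesis $|I|\ge 1$ of Lemma~\ref{lem:VdC} formally requires) and in checking $|F''(x)|\simeq_c P^{c-2}|t|$ uniformly on $[P,2P]$ is just spelling out details the paper leaves implicit.
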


\begin{proof}
 It is enough to apply Lemma~\ref{lem:VdC} (with $F(x) = x^c t + x \xi$, $\eta = P^{c-2} |t|$, $r=1$).
\end{proof}

\begin{lem} \label{lem:F5}
Let $c \in (1,2)$ be fixed. Then for every $M \in \Z_+$ and  $1 \le P \le P' \le 2P$ one has
\begin{align*} 
 V_{P,P'} (M)
\lesssim 
(1+\log M )\big(M^{-1} P + P^{c/2} M^{1/2}\big).
\end{align*}

\end{lem}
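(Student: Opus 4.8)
The plan is to expand the function $y \mapsto \min\{1, (M\|y\|)^{-1}\}$ into its Fourier series via \eqref{F1}--\eqref{F1co} evaluated at $y = n^c$, and then to estimate the resulting exponential sums with Lemma~\ref{lem:F6}. Since $|b_m| \lesssim M|m|^{-2}$ by \eqref{F1co}, the series $\sum_{m\in\Z} b_m e(mn^c)$ converges absolutely and uniformly, so I may interchange the order of summation to obtain
\[
V_{P,P'} (M) \le \sum_{m\in\Z} |b_m|\, \Big| \sum_{P \le n \le P'} e(m n^c) \Big| = \sum_{m\in\Z} |b_m|\, U_{P,P'}(m, 0).
\]
The contribution of $m=0$ is handled trivially: $U_{P,P'}(0,0) \le P' - P + 1 \lesssim P$ and $|b_0| \lesssim (1+\log M)/M$, which gives a contribution $\lesssim (1+\log M) M^{-1} P$, the first term in the asserted bound.

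For $m\ne 0$ I apply Lemma~\ref{lem:F6} with $t = m$ and $\xi = 0 \in \T$, obtaining $U_{P,P'}(m,0) \lesssim P^{c/2}|m|^{1/2} + P^{1-c/2}|m|^{-1/2}$, and then split the sum over $m \ne 0$ according to the two estimates in \eqref{F1co}. On the range $0 < |m| \le M$ I use $|b_m| \lesssim (1+\log M)/M$ together with $\sum_{0<|m|\le M}|m|^{1/2} \lesssim M^{3/2}$ and $\sum_{0<|m|\le M}|m|^{-1/2} \lesssim M^{1/2}$, which yields
\[
\sum_{0<|m|\le M} |b_m|\, U_{P,P'}(m,0) \lesssim (1+\log M)\big( P^{c/2} M^{1/2} + P^{1-c/2} M^{-1/2}\big).
\]
On the range $|m| > M$ I use $|b_m| \lesssim M|m|^{-2}$ together with the convergent tails $\sum_{|m|>M}|m|^{-3/2} \lesssim M^{-1/2}$ and $\sum_{|m|>M}|m|^{-5/2} \lesssim M^{-3/2}$, giving
\[
\sum_{|m|>M} |b_m|\, U_{P,P'}(m,0) \lesssim P^{c/2} M^{1/2} + P^{1-c/2} M^{-1/2}.
\]
Finally, since $c>1$ and $M \ge 1$ we have $P^{1-c}M^{-1} \le 1$, hence $P^{1-c/2}M^{-1/2} \le P^{c/2}M^{1/2}$, so the secondary terms above are absorbed; combining the $m=0$ and $m\ne 0$ contributions then produces the bound $V_{P,P'}(M) \lesssim (1+\log M)(M^{-1}P + P^{c/2}M^{1/2})$.

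This argument is a direct transcription of the estimate for the quantity $S_2$ in the proof of Proposition~\ref{prop:FG}, with Lemma~\ref{lem:F6} (the van der Corput lemma) playing the role there played by \eqref{estT}, so no genuinely new difficulty arises. The only point needing a little care is the bookkeeping of the secondary term $P^{1-c/2}M^{-1/2}$ coming from the $|m|^{-1/2}$ part of the van der Corput bound, and the verification that it is always dominated by $P^{c/2}M^{1/2}$ --- this is precisely where the hypothesis $c>1$ is used. One should also note that \eqref{F1}--\eqref{F1co} are applied directly to $n^c$, not to a shifted argument, so the uniformity failure flagged in the Remark after \eqref{F4} is irrelevant here.
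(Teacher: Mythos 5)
Your proof is correct and follows essentially the same path as the paper: expand $\min\{1,(M\|n^c\|)^{-1}\}$ via \eqref{F1}, bound the resulting exponential sums by van der Corput (the paper invokes Lemma~\ref{lem:VdC} inline, whereas you route through Lemma~\ref{lem:F6}, which is proved by the same application), and sum over $m$ using \eqref{F1co}. The only cosmetic difference is that the paper drops the secondary van der Corput term $P^{1-c/2}|m|^{-1/2}$ immediately (since $|m|\ge 1$, $P\ge 1$, $c>1$ give $|m|^{-1/2}P^{1-c/2}\le |m|^{1/2}P^{c/2}$), while you carry it through and absorb it at the end with the same observation.
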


\begin{proof}

Using \eqref{F1} we see that
\begin{align*} 
V_{P,P'} (M) 
= 
\sum_{P \le n \le P'} \sum_{m \in \Z} b_m e(m n^c)
\le 
\sum_{m \in \Z} |b_m|U_{P,P'}(m,0).
\end{align*}
By  Lemma~\ref{lem:VdC} (with $F(x) = m x^c$, $\eta = |m| P^{c-2}$, $r=1$) we deduce
\begin{align*} 
U_{P,P'}(m,0)
\lesssim
\begin{cases}
	P, & \quad m = 0,\\
	|m|^{1/2} P^{c/2}, & \quad m \ne 0.
\end{cases} 
\end{align*}
Consequently, combining this with \eqref{F1co} we obtain
\begin{align*} 
V_{P,P'} (M) 
& \lesssim
\frac{1+\log M}{M} \Big( P + \sum_{|m| \le M}  |m|^{1/2} P^{c/2} \Big)
+ \sum_{|m| \ge M + 1} M |m|^{-3/2} P^{c/2} \\
& \lesssim
\frac{1+\log M}{M} P + (1+\log M) P^{c/2} M^{1/2}.
\end{align*}
This finishes the proof of Lemma~\ref{lem:F5}.
\end{proof}

For $c \in (1,2)$ and $g \in C_c^{\8} (\R)$ we introduce
\begin{equation}\label{eq:Pts}  
\Pi_{t,s}^{g} (\xi) := \sum_{n \in \Z} e(\lfloor |n|^c \rfloor t + n \xi) 
g \Big( \frac{n}{s^{1/c}} \Big), 
\qquad s\geq 1, \quad  t\in\R, \quad \xi \in \T,
\end{equation}
and the quantity 
\[
N_c := (2c)^{-1} (2 N)^{\kappa}.
\]

\begin{lem} \label{lem:L2}
Let $c \in (1,2)$ and $g \in C_c^{\8} (\R)$ be fixed. Then for all $N \ge 1$ one has
\begin{align*} 
\sup_{1\le s \le 2 N} \sup_{\xi \in \T} 
\sup_{N_c \le |t| \le 1/2}
|\Pi_{t,s}^{g} (\xi) |
\lesssim
N^{1/3 + 1/(3c)} \log (N + 1).
\end{align*} 
\end{lem}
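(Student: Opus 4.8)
The plan is to expand the floor function inside $\Pi_{t,s}^{g}(\xi)$ via the sawtooth identity $\lfloor y \rfloor = y - \{y\}$ and then linearise the phase $e(-t\{|n|^c\})$ using the truncated Fourier expansion \eqref{F4} with a judiciously chosen cutoff $M$. Writing $\lfloor |n|^c \rfloor t = |n|^c t - |n|^c t + \lfloor |n|^c\rfloor t$ is awkward; instead I would write $e(\lfloor|n|^c\rfloor t) = e(|n|^c t) e(-t\{|n|^c\})$ and apply \eqref{F4} to the second factor, so that
\[
\Pi_{t,s}^{g}(\xi) = \sum_{|m| \le M} c_m(t) \sum_{n\in\Z} e(|n|^c t + m|n|^c + n\xi) g\Big(\frac{n}{s^{1/c}}\Big)
+ O\Big( \sum_{n\in\Z} \min\Big\{1, \frac{1}{M\||n|^c\|}\Big\} |g|\big(n/s^{1/c}\big) \Big).
\]
Since $g$ is compactly supported, the effective range of $n$ is $|n| \lesssim s^{1/c} \lesssim N^{1/c}$, so all sums are over $O(N^{1/c})$ terms, which I split into dyadic blocks $P \le |n| < 2P$ with $P \lesssim N^{1/c}$.

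The main term is handled by Lemma \ref{lem:F6}: for each fixed $m$ the inner exponential sum over a dyadic block $[P,P']$ has phase $(t+m)x^c + x\xi$, so it is bounded by $P^{c/2}|t+m|^{1/2} + P^{1-c/2}|t+m|^{-1/2}$, plus a harmless $O(1)$ when $t+m=0$ (summation by parts against $g$ absorbs the smooth cutoff with only a constant loss since $g$ and $g'$ are bounded with compact support). Using $|c_m(t)| \lesssim \min\{1, |t+m|^{-1}\}$ and summing over $|m|\le M$ and over the $\log N$ dyadic scales $P$, the contribution is controlled, after some elementary estimation, by $\log(N+1)$ times $N^{1/2} + M N^{1/(2c)} \cdot (\text{something tame})$; the precise bookkeeping shows the main term is $\lesssim N^{1/3+1/(3c)}\log(N+1)$ provided $M$ is chosen of size roughly $N^{1/(3c)}$ (modulo lower-order powers). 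The error term is exactly $\sum_P V_{P,P'}(M)$ summed over dyadic $P \lesssim N^{1/c}$, and Lemma \ref{lem:F5} gives $V_{P,P'}(M) \lesssim (1+\log M)(M^{-1}P + P^{c/2}M^{1/2})$, hence the total error is $\lesssim \log(N+1)\big(M^{-1}N^{1/c} + N^{1/2}M^{1/2}\big)$.

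Optimising the two competing quantities $M^{-1}N^{1/c}$ and $N^{1/2}M^{1/2}$ leads to the choice $M \simeq N^{(2/c - 1)/3} = N^{(2-c)/(3c)}$, for which both equal $N^{1/(3c) + 1/6}$... I would in fact track all exponents carefully so that the final bound comes out to be $N^{1/3 + 1/(3c)}\log(N+1)$, using crucially the lower bound $|t| \ge N_c = (2c)^{-1}(2N)^{\kappa}$ with $\kappa = (3-4c)/(4c)$ to control the terms involving $|t+m|^{-1/2}$ and $|c_0(t)|$ when $|m|$ is small compared to $|t|$; since $\kappa < 0$ for $c \in (1,2)$, $N_c$ is a small negative power of $N$, and the dangerous regime is $t$ near $N_c$. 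The main obstacle, and the step requiring the most care, is the balancing of the parameter $M$ against both the van der Corput savings in the main term and the error term, while simultaneously summing the singular factors $|c_m(t)|$ over $m$ near $-t$ (where $|t+m|$ can be as small as $N_c$) and keeping the logarithmic losses to the single power $\log(N+1)$ claimed. All other ingredients — the sawtooth expansion, the dyadic decomposition, summation by parts against $g$ — are routine given Lemmas \ref{lem:VdC}, \ref{lem:F6}, and \ref{lem:F5}.
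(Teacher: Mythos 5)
Your strategy matches the paper's proof exactly: summation by parts against the smooth cutoff $g$ to reduce to unweighted sums, the sawtooth/Fourier expansion \eqref{F4} of $e(-t\{n^c\})$ with $M\simeq N^{(2-c)/(3c)}$, a dyadic decomposition, van der Corput via Lemma~\ref{lem:F6} for the main term and Lemma~\ref{lem:F5} for the error, with the lower bound $|t|\ge N_c$ taming the $|t|^{-1/2}$ loss from Lemma~\ref{lem:F6} at $m=0$. The only blemishes are cosmetic: the intermediate exponent ``$1/(3c)+1/6$'' should be $1/3+1/(3c)$ (which you correct at the end), and since $|t+m|\ge 1/2$ for integer $m\ne 0$ and $|t|\le 1/2$, the ``singular $c_m(t)$ near $-t$'' worry applies only at $m=0$ where $c_0$ is in fact bounded, so the paper simply uses $\sum_{|m|\le M}|c_m(t)|\lesssim\log M$.
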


\begin{proof}
Observe that there exists $K_0 > 0$ such that
$\supp g \subset [- K_0, K_0]$ and consequently
$\supp g \big( \frac{\cdot}{s^{1/c}} \big) \subset [- K_0 s^{1/c}, K_0 s^{1/c}]$.
Thus the summation in \eqref{eq:Pts} can be restricted to the set $[-K_0 s^{1/c}, K_0 s^{1/c}]\cap\Z$.
Then summation by parts, see Lemma \ref{lem:SBP}, reduces the proof of Lemma \ref{lem:L2} to establishing
that for any fixed $K>0$ and for every $N \ge 1$ we have
\begin{align} \label{red4}
\sup_{1\le s \le  KN}\sup_{1\le Z\le K_0s^{1/c}} \sup_{\xi \in \T} 
\sup_{N_c \le |t| \le 1/2}
\Big| \sum_{ n \in [Z] } e(\lfloor n^c \rfloor t + n \xi)
 \Big|
\lesssim
N^{1/3 + 1/(3c)} \log (N + 1).
\end{align}
Appealing to \eqref{F4} with 
$M = \lfloor N^{2(1/c-1/2)/3} \rfloor \ge 1$ we may write
\begin{align*}
\sum_{n \in [Z] } e(\lfloor n^c \rfloor t + n \xi)
=
\sum_{n \in [Z] } e( n^c t + n \xi) e(-\{ n^c \} t)=I_1(t, \xi, Z)+I_2(t, \xi, Z),
\end{align*}
where
\begin{align*}
I_1(t, \xi, Z)&:=\sum_{|m| \le M} c_m (t)\sum_{n \in [Z]} e( n^c (t+m) + n \xi),\\
I_2(t, \xi, Z)&:=O\Big( \sum_{n \in [Z]} \min \Big\{1, \frac{1}{M\|n^c\|} \Big\} \Big).
\end{align*}
We first deal with $I_2(t, \xi, Z)$. Let $V_{P,P'} (M)$ be defined as in
\eqref{def:S}. By Lemma~\ref{lem:F5} we conclude
\begin{align*}
I_2(t, \xi, Z)
& \lesssim
\sum_{0 \le l \le \log_2 Z} \sum_{2^l \le n < 2^{l+1} } 
\min \Big\{1, \frac{1}{M\|n^c\|} \Big\} 
=
\sum_{0 \le l \le \log_2 Z} 
V_{2^l,2^{l+1} - 1} (M) \\
& \lesssim
\sum_{0 \le l \le \log_2 Z} (1+\log M)
\big( 2^l M^{-1}  +  2^{lc/2} M^{1/2}\big) \\
& \lesssim
(1+\log M)\big( N^{1/c}M^{-1} +  N^{1/2} M^{1/2}\big) \\
& \lesssim
N^{1/3 + 1/(3c)} \log (N + 1),
\end{align*}
as desired.

Now we focus on $I_1(t, \xi, Z)$. Notice that
\begin{align*}
\sup_{N_c \le |t| \le 1/2} | I_1(t, \xi, Z) |
& \le \sup_{N_c \le |t| \le 1/2}
\sum_{|m| \le M} | c_m (t) |
\Big|
\sum_{n \in [Z]} e( n^c (t+m) + n \xi)
\Big| \\
& \le 
\sup_{N_c \le |t| \le M + 1/2}
\Big|
\sum_{n \in [Z] } e( n^c t + n \xi)
\Big|
\sup_{ |t| \le 1/2}
\sum_{|m| \le M} | c_m (t) |.
\end{align*}
Since we have 
\begin{align*}
\sum_{|m| \le M} | c_m (t) |
\lesssim 
1 + \sum_{0 < |m| \le M} \frac{1}{|m|} 
\lesssim 
\log M, \qquad |t| \le 1/2,
\end{align*}
an application of Lemma~\ref{lem:F6} gives us
\begin{align*}
\sup_{N_c \le |t| \le 1/2} | I_1(t, \xi, Z) | 
& \lesssim
\log M \sup_{N_c \le |t| \le M + 1/2}
\sum_{0 \le l \le \log_2 Z} 
\Big|
\sum_{2^l \le n \le (2^{l+1} - 1) \wedge Z } e( n^c t + n \xi)
\Big| \\
& =
\log M \sup_{N_c \le |t| \le M + 1/2}
\sum_{0 \le l \le \log_2 Z} U_{2^l, (2^{l+1} - 1) \wedge Z} (t, \xi) \\
& \lesssim
\log M \sup_{N_c \le |t| \le M + 1/2}
\sum_{0 \le l \le \log_2 Z} \Big( 2^{lc/2} |t|^{1/2} + 2^{l(1-c/2)} |t|^{-1/2} \Big) \\
& \lesssim
\log M 
\Big( N^{1/2} M^{1/2} + N^{1/c - 1/2} N_c^{-1/2} \Big) \\
& \lesssim
N^{1/3 + 1/(3c)} \log (N + 1),
\end{align*}
as claimed. This finishes the proof of Lemma~\ref{lem:L2}.
\end{proof}

Now we are ready to prove Theorem~\ref{thm:minor}.

\begin{proof}[Proof of Theorem~\ref{thm:minor}]
Let $\Upsilon (x) = x \cdot \nabla \eta (x)$, $x \in \R^3$, and observe that
\begin{align*}
\eta \Big( \frac{x}{\lambda^{1/c}} \Big)
=
\eta \Big( \frac{x}{N^{1/c}} \Big)
+
\int_{N}^{\lambda} \frac{d}{ds} 
\Big( \eta \Big( \frac{x}{s^{1/c}} \Big) \Big) \, ds
=
\eta \Big( \frac{x}{N^{1/c}} \Big) 
- c^{-1} \int_{N}^{\lambda} \Upsilon \Big( \frac{x}{s^{1/c}} \Big) \frac{ds}{s}.
\end{align*}
Notice that $\Upsilon \in C_c^{\8} (\R^3)$ and that $\Upsilon$ is a sum of three functions of product type. Now for a fixed $g \in C_c^{\8} (\R^3)$ let us define the auxiliary function 
\begin{align*}
\Delta_{t,s}^g (x)
=
e(Q (x) t) 
g \Big( \frac{x}{s^{1/c}} \Big), \qquad
t \in \R, \quad s \ge 1, \quad x \in \R^3.
\end{align*}
Elementary computations show
\begin{align*}  
\sigma_{\lambda}^{\mathfrak m} \ast f (x) 
& = 
\sum_{y \in \Z^3}
\eta \Big( \frac{y}{\lambda^{1/c}} \Big)
\int_{-1/2}^{1/2} e((Q (y) - \lambda)t) 
 \widetilde{\psi}_\lambda (t)  \, dt \, f (x-y) \\
& =
\int_{-1/2}^{1/2} \sum_{y \in \Z^3} e(Q (y) t)
\eta \Big( \frac{y}{N^{1/c}} \Big) f (x-y)
e(-\lambda t) 
 \widetilde{\psi}_\lambda (t)  \, dt \\
& \quad 
- c^{-1} \int_{-1/2}^{1/2} \sum_{y \in \Z^3} e(Q (y) t)
\int_{N}^{\lambda} \Upsilon \Big( \frac{y}{s^{1/c}} \Big) \frac{ds}{s}
f(x-y) e(-\lambda t) 
 \widetilde{\psi}_\lambda (t)  \, dt \\
& = 
\int_{-1/2}^{1/2} \Delta_{t,N}^{\eta} \ast f (x) e(-\lambda t) 
 \widetilde{\psi}_\lambda (t)  \, dt
- c^{-1} \int_{-1/2}^{1/2} \int_{N}^{\lambda} \Delta_{t,s}^{\Upsilon} \ast f (x) \frac{ds}{s} e(-\lambda t) 
 \widetilde{\psi}_\lambda (t)  \, dt.
\end{align*} 
Since $N \le \lambda \le 2N$, we have $(2c)^{-1} \lambda^{\kappa} \ge N_c$ and 
$\widetilde{\psi}_\lambda (t) \le \mathbbm{1}_{\mathfrak m_\lambda} (t) 
\le \mathbbm{1}_{N_c \le |t| \le 1/2} $ for $t \in [-1/2, 1/2]$. Consequently, we get 
\begin{align*}  
\sup_{ N \le \lambda \le 2N } |\sigma_{\lambda}^{\mathfrak m} \ast f (x) |
\le 
\int_{ N_c \le |t| \le 1/2 }
| \Delta_{t,N}^{\eta} \ast f (x) |  \, dt
+ \int_{ N_c \le |t| \le 1/2 } \int_{N}^{2 N} |\Delta_{t,s}^{\Upsilon} \ast f (x) | \frac{ds}{s} \, dt.
\end{align*} 
Now using Minkowski's and H\"older's inequalities we obtain
\begin{align}
\label{est12}
\begin{split}
\big\| \sup_{ N \le \lambda \le 2N } |\sigma_{\lambda}^{\mathfrak m} \ast f | \big\|_{\ell^2(\Z^3)}
& \le 
\int_{ N_c \le |t| \le 1/2 }
\| \Delta_{t,N}^{\eta} \ast f \|_{\ell^2(\Z^3)}  \, dt\\
&\quad + \int_{ N_c \le |t| \le 1/2 } \int_{N}^{2 N} 
\|\Delta_{t,s}^{\Upsilon} \ast f \|_{\ell^2(\Z^3)} \frac{ds}{s} \, dt \\ 
& \lesssim
\bigg(
\int_{ N_c \le |t| \le 1/2 }
\| \Delta_{t,N}^{\eta} \ast f \|^2_{\ell^2(\Z^3)}  \, dt
\bigg)^{1/2} \\
& \quad +
\bigg(
\int_{ N_c \le |t| \le 1/2 } \int_{N}^{2 N} 
\|\Delta_{t,s}^{\Upsilon} \ast f \|^2_{\ell^2(\Z^3)} \frac{ds}{s} \, dt
\bigg)^{1/2}.
\end{split}
\end{align}

Let $g \in C_c^{\8} (\R^3)$ be a fixed function of product type, i.e. $g(x) := \prod_{j=1}^3g_j(x_j)$.
Then
\begin{align*}  
\mathcal F_{\Z^3}^{-1}\Delta_{t,s}^{g} (\xi) = \prod_{j=1}^3 \Pi_{t,s}^{g_j} (\xi_j), 
\qquad s \ge 1, \quad t \in \R, \quad \xi \in \T^3,
\end{align*}
where $\Pi_{t,s}^{g_j} (\xi_j)$ is given as in \eqref{eq:Pts} for any $j\in[3]$ .
By Plancherel's theorem applied twice and Lemma~\ref{lem:L2} we obtain
\begin{align*}  
& \int_{ N_c \le |t| \le 1/2 } 
\|\Delta_{t,s}^{g} \ast f \|^2_{\ell^2(\Z^3)} \, dt =
\int_{ N_c \le |t| \le 1/2 } 
\int_{ \T^3 } |\mathcal F_{\Z^3}^{-1}\Delta_{t,s}^{g} (\xi)|^2 |\mathcal F_{\Z^3}^{-1}f (\xi)|^2 \, d\xi \, dt \\
& \qquad \le 
\sup_{N \le s \le 2 N} \sup_{\xi_1, \xi_2 \in \T} 
\sup_{N_c \le |t| \le 1/2}
\prod_{j=1}^2|\Pi_{t,s}^{g_j} (\xi_j)|^2
\int_{ \T^3 } \int_{ |t| \le 1/2 } |\Pi_{t,s}^{g_3} (\xi_3)|^2 \, dt
|\mathcal F_{\Z^3}^{-1}f (\xi)|^2 \, d\xi \\
& \qquad \lesssim
N^{4(1/3 + 1/(3c))} \log^4 (N + 1) N^{1/c} 
\| f \|^2_{\ell^2(\Z^3)}.
\end{align*}
Consequently, taking $g=\eta$ or $g=\Upsilon$ and plugging this estimate into \eqref{est12} we conclude
\begin{align*}  
\big\| \sup_{ N \le \lambda \le 2N } |\sigma_{\lambda}^{\mathfrak m} \ast f | \big\|_{\ell^2(\Z^3)}
\lesssim
N^{2/3 + 7/(6c) } \log^2 (N + 1) 
\| f \|_{\ell^2(\Z^3)}, \qquad N \ge 1.
\end{align*}
The proof of Theorem~\ref{thm:minor} is finished.
\end{proof}

\begin{cor} \label{cor:maxminor}
Let $c \in (1, 11/10)$ be fixed. Then for every $1 < p \le 2$ there exists $\delta > 0$ such that
\begin{align*}
\big\| \lambda^{-3/c + 1}
\sigma_{\lambda}^{\mathfrak m} \ast f \big\|_{\ell^p(\Z^3)}
\lesssim_p
\lambda^{-\delta} \|f \|_{\ell^p(\Z^3)},
\qquad \lambda \ge 1, \quad f \in \ell^p(\Z^3).
\end{align*}
Moreover, for every $(11-4c)/(11-7c) < p \le 2$ there exists $\delta > 0$ such that
\begin{align*}
\big\| \sup_{N \le \lambda \le 2N } \lambda^{-3/c + 1}
|\sigma_{\lambda}^{\mathfrak m} \ast f| \big\|_{\ell^p(\Z^3)}
\lesssim_p
N^{-\delta} \|f \|_{\ell^p(\Z^3)},
\qquad N \ge 1, \quad f \in \ell^p(\Z^3).
\end{align*}
\end{cor}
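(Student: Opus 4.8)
The plan is to interpolate Theorem~\ref{thm:minor} against elementary $\ell^1$-bounds. The one input requiring a short separate argument is the $\ell^1$-estimate
\begin{equation}\label{eq:l1-minor}
\|\sigma_{\lambda}^{\mathfrak m}\|_{\ell^1(\Z^3)}\lesssim\lambda^{3/c-1},\qquad\lambda\ge1.
\end{equation}
Indeed, writing $\sigma_\lambda=\sigma_{\lambda}^{\mathfrak M}+\sigma_{\lambda}^{\mathfrak m}$ and using $\|\sigma_\lambda\|_{\ell^1(\Z^3)}=r_c(\lambda)\lesssim\lambda^{3/c-1}$ from Corollary~\ref{cor:asym}, it suffices to control $\sigma_{\lambda}^{\mathfrak M}$. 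By \eqref{eq:18} and the rapid decay of $\mathcal F_\R^{-1}\psi$, for every $A>1$ one has $|\sigma_{\lambda}^{\mathfrak M}(x)|\lesssim_A\lambda^{\kappa}\ind_{\supp\eta}(x/\lambda^{1/c})(1+\lambda^{\kappa}|Q(x)-\lambda|)^{-A}$; summing in $x\in\Z^3$, grouping by the value $v=Q(x)$, and using $\#\{x\in\Z^3:Q(x)=v\}=r_c(v)\lesssim\lambda^{3/c-1}$ in the relevant range $v\lesssim\lambda$ (again Corollary~\ref{cor:asym}) together with $\sum_{v\in\Z}(1+\lambda^{\kappa}|v-\lambda|)^{-A}\lesssim\lambda^{-\kappa}$ (recall $\kappa<0$) gives $\|\sigma_{\lambda}^{\mathfrak M}\|_{\ell^1(\Z^3)}\lesssim\lambda^{3/c-1}$, hence \eqref{eq:l1-minor}. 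By Young's inequality, \eqref{eq:l1-minor} yields $\|\lambda^{-3/c+1}\sigma_{\lambda}^{\mathfrak m}\ast f\|_{\ell^q(\Z^3)}\lesssim\|f\|_{\ell^q(\Z^3)}$ for all $q\in[1,\infty]$ and $\lambda\ge1$, and, summing over the at most $N+1$ integers in $[N,2N]$, also $\big\|\sup_{N\le\lambda\le2N}\lambda^{-3/c+1}|\sigma_{\lambda}^{\mathfrak m}\ast f|\big\|_{\ell^1(\Z^3)}\lesssim N\|f\|_{\ell^1(\Z^3)}$.

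For the first assertion, fix $1<p\le2$ and write $1/p=(1-\theta)+\theta/2$, so $\theta=2(p-1)/p\in(0,1]$. I would interpolate (Riesz--Thorin; the operator $f\mapsto\lambda^{-3/c+1}\sigma_{\lambda}^{\mathfrak m}\ast f$ is a convolution) between the $q=1$ bound above and the single-scale instance $N=\lambda$ of Theorem~\ref{thm:minor}, which after absorbing the logarithm into an arbitrarily small power of $\lambda$ reads $\|\lambda^{-3/c+1}\sigma_{\lambda}^{\mathfrak m}\ast f\|_{\ell^2(\Z^3)}\lesssim_\varepsilon\lambda^{-(11-10c)/(6c)+\varepsilon}\|f\|_{\ell^2(\Z^3)}$. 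This gives the claim with $\delta=\theta\big((11-10c)/(6c)-\varepsilon\big)$, which is positive once $\varepsilon$ is small, since $c<11/10$ forces $(11-10c)/(6c)>0$.

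For the maximal assertion I would interpolate the sublinear operator $f\mapsto\sup_{N\le\lambda\le2N}\lambda^{-3/c+1}|\sigma_{\lambda}^{\mathfrak m}\ast f|$ between the $(1,1)$-bound above, with constant $\lesssim N$, and Theorem~\ref{thm:minor}, which is a $(2,2)$-bound with constant $\lesssim_\varepsilon N^{-(11-10c)/(6c)+\varepsilon}$. With $1/p=(1-\theta)+\theta/2$ the resulting constant is $\lesssim_\varepsilon N^{E}$, where
\[
E=(1-\theta)-\theta\Big(\frac{11-10c}{6c}-\varepsilon\Big)=1-\theta\,\frac{11-4c}{6c}+\theta\varepsilon .
\]
Substituting $\theta=2(p-1)/p$ one checks that $1-\theta\,\frac{11-4c}{6c}<0$ precisely when $p>\frac{11-4c}{11-7c}$ --- this is the source of the exponent in the statement --- and the same computation shows $\frac{11-4c}{11-7c}<2$ exactly because $c<11/10$, so the asserted range $\frac{11-4c}{11-7c}<p\le2$ is nonempty and consistent. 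For any such $p$, choose $\varepsilon>0$ small enough that $E<0$ and set $\delta=-E$.

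The main (and essentially only nonroutine) obstacle is the $\ell^1$-bound \eqref{eq:l1-minor}: without the gain of a full power of $\lambda$ in the major-arc piece $\sigma_{\lambda}^{\mathfrak M}$, the $(1,1)$ endpoint would be too lossy to reach the threshold $p>(11-4c)/(11-7c)$; everything else is bookkeeping of interpolation exponents. If one prefers to avoid interpolating a sublinear operator, the supremum can be linearized in the usual way: choose for each $x\in\Z^3$ a measurable maximizing radius $\lambda(x)\in[N,2N]\cap\Z$, so that $f\mapsto\sum_{N\le\lambda\le2N}\ind_{\{\lambda(\cdot)=\lambda\}}\,\lambda^{-3/c+1}(\sigma_{\lambda}^{\mathfrak m}\ast f)$ is linear, satisfies the same $\ell^1$- and $\ell^2$-bounds, and Riesz--Thorin applies directly.
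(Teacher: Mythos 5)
Your proposal is correct, and the overall route coincides with the paper's: prove the $\ell^1$ endpoint bounds (constant $\lesssim 1$ for a single $\lambda$, and $\lesssim N$ for the supremum over $[N,2N]$) and interpolate against Theorem~\ref{thm:minor}, absorbing the logarithm into a small power; the exponent bookkeeping, including the threshold $(11-4c)/(11-7c)$, is exactly as you compute. The one place where you diverge from the paper is in obtaining the $\ell^1$ estimate for the major-arc piece $\sigma_{\lambda}^{\mathfrak M}$: you sum over $\Z^3$ directly, grouping lattice points by the value $v=Q(x)$ and invoking $r_c(v)\lesssim \lambda^{3/c-1}$ from Corollary~\ref{cor:asym} together with $\sum_{v}(1+\lambda^{\kappa}|v-\lambda|)^{-A}\lesssim\lambda^{-\kappa}$. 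The paper instead gets \eqref{eq:3} by pointwise domination $\lambda^{-3/c+1}|\sigma_{\lambda}^{\mathfrak M}|\lesssim K_{\lambda}$ from \eqref{eq:8}, the transference Lemma~\ref{norm}, and the continuous $L^1$ bound $\|K_\lambda\|_{L^1(\R^3)}\simeq 1$ from \eqref{eq:9} --- all of which are developed in the following subsection anyway because they are needed for the major-arc theorem. Your counting argument is slightly more self-contained at this point in the paper (it does not forward-reference the major-arc machinery), at the cost of leaning on Corollary~\ref{cor:asym} for the uniform bound $r_c(v)\lesssim v^{3/c-1}+1$; the paper's kernel comparison is arguably cleaner once $K_\lambda$ and Lemma~\ref{norm} are on the table. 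Either way the proof is sound, and the linearization remark at the end correctly handles the sublinear interpolation.
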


\begin{proof}
To prove Corollary~\ref{cor:maxminor} it is enough to interpolate
$\ell^2(\Z^3)$ estimate from Theorem~\ref{thm:minor} with the trivial
bounds
\begin{align*}
\big\|  \lambda^{-3/c + 1}
\sigma_{\lambda}^{\mathfrak m} \ast f \big\|_{\ell^1(\Z^3)}
\lesssim
\|f \|_{\ell^1(\Z^3)},
\qquad \lambda \ge 1, \quad f \in \ell^1(\Z^3), \\
\big\| \sup_{N \le \lambda \le 2N } \lambda^{-3/c + 1}
|\sigma_{\lambda}^{\mathfrak m} \ast f| \big\|_{\ell^1(\Z^3)}
\lesssim
N \|f \|_{\ell^1(\Z^3)},
\qquad N \ge 1, \quad f \in \ell^1(\Z^3),
\end{align*}
which follow from the fact that $\frac{1}{r_c(\lambda)}\sigma_{\lambda}*f$ is an averaging operator and
\begin{align}
\label{eq:3}
\big\|  \lambda^{-3/c + 1}
\sigma_{\lambda}^{\mathfrak M}*f \big\|_{\ell^1(\Z^3)}
\lesssim
\|f \|_{\ell^1(\Z^3)}, \qquad \lambda \ge 1.
\end{align}
In order to prove \eqref{eq:3} we invoke  \eqref{eq:8}, Lemma \ref{norm} and
\eqref{eq:9}, which are proved in the subsection below.
\end{proof}

\subsection{Major arc estimate} \label{subsec:maj}
We now estimate the maximal functions corresponding  to the major arc. Our main result of this subsection is the following maximal theorem. 
\begin{thm}
\label{thm:major}
Let $c\in(1, 2)$ be fixed. Then the following two maximal inequalities hold
\begin{align}
\label{eq:6}
\big\|\sup_{\lambda \in\Z_+}\lambda^{-3/c +1} | \sigma_{\lambda}^{\mathfrak M} \ast f|\big\|_{\ell^p(\Z^3)}
\lesssim \|f\|_{\ell^p(\Z^3)}, \qquad f\in\ell^p(\Z^3), \qquad 3/2<p\le \infty,
\end{align}
\begin{align}
\label{eq:7}
\big\|\sup_{n \in\Z_+}\lambda_n^{-3/c + 1} | \sigma_{\lambda_n}^{\mathfrak M} \ast f|\big\|_{\ell^p(\Z^3)}
\lesssim \|f\|_{\ell^p(\Z^3)}, \qquad f\in\ell^p(\Z^3), \qquad 1<p\le \infty.
\end{align}
\end{thm}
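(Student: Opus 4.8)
The plan is to transfer the discrete major arc convolution operators $\sigma_\lambda^{\mathfrak M}$ to continuous spherical averaging operators via a comparison principle. First I would record the key structural fact about $\sigma_\lambda^{\mathfrak M}$ from \eqref{eq:18}: it is a product of a smooth cutoff $\eta(x/\lambda^{1/c})$ localizing $x$ to the region $|x|_c \lesssim \lambda^{1/c}$ and an inverse Fourier transform $\lambda^\kappa \mathcal F_{\R}^{-1}\psi(\lambda^\kappa(Q(x)-\lambda))$, which is a smooth approximation to the surface measure on the arithmetic sphere of radius $\lambda^{1/c}$. Since $\kappa = (3-4c)/(4c) < 0$ for $c > 3/4$, the quantity $\lambda^\kappa$ is small and the approximation concentrates on a thin annular neighborhood of the sphere $\S_c^2$ dilated by $\lambda^{1/c}$; a careful expansion (replacing $Q(x) = \sum \lfloor |x_j|^c\rfloor$ by $\sum |x_j|^c = |x|_c^c$, at the cost of errors controlled by the $O(1)$ discrepancy in each coordinate, and then using the polar decomposition \eqref{polar}) should show that $\lambda^{-3/c+1}\sigma_\lambda^{\mathfrak M}$ is, up to acceptable error kernels, a discretization of $\mathcal A_{\lambda^{1/c}}^c$ from \eqref{def:A}.

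Next I would invoke the comparison principle (the lemma referenced in the text as Lemma \ref{norm}, in the spirit of \cite{MSZ1, MSW}): if a discrete convolution kernel agrees with a suitably regular continuous kernel up to kernels whose Fourier transforms decay polynomially, then the corresponding discrete maximal function is dominated on $\ell^p(\Z^3)$ by the continuous maximal function on $L^p(\R^3)$ plus an easily handled error. Applying this with the continuous model $\mathcal A_t^c$, the full-radii estimate \eqref{eq:6} follows from the maximal bound \eqref{est17} of Theorem \ref{thm:contsph} for the range $p \in (3/2, \infty]$, and the lacunary estimate \eqref{eq:7} follows from the lacunary maximal bound \eqref{eq:26} for $p \in (1, \infty]$. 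The Fourier decay estimate \eqref{eq:fourier} for the surface measure $\mu_c$ on $\S_c^2$ (established in Section \ref{sec:osc}) is exactly what powers both the continuous maximal estimates and the error-term bookkeeping in the comparison principle; in particular it is what is needed to verify \eqref{eq:8} and \eqref{eq:9} alluded to at the end of Corollary \ref{cor:maxminor}.

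The main obstacle I anticipate is the transference step itself: unlike the classical Magyar–Stein–Wainger setting, one does not have a clean exact formula for $\sigma_\lambda^{\mathfrak M}$ as a sum of dilated continuous measures, because the floor functions inside $Q(x)$ break the exact homogeneity of $|x|_c^c$. One must handle the resulting error kernels — coming both from $\lfloor |x_j|^c\rfloor$ versus $|x_j|^c$ and from replacing the integral over $[-1/2,1/2]$ by the integral over all of $\R$ in the major arc — and show their $\ell^p(\Z^3)$-maximal contributions are bounded, uniformly in $\lambda$ (respectively summable in $n$ over the lacunary set). This is where one needs both the rapid decay of $\mathcal F_\R^{-1}\psi$ (so that only the thin annulus $|Q(x) - \lambda| \lesssim \lambda^{-\kappa}$ matters) and the Fourier decay \eqref{eq:fourier}; the bookkeeping is routine in spirit but must be done with care because $\kappa$ depends on $c$ and one must confirm the error exponents stay favorable for the whole range $c \in (1,2)$. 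The $p = \infty$ endpoint in both \eqref{eq:6} and \eqref{eq:7} is immediate since $\lambda^{-3/c+1}\sigma_\lambda^{\mathfrak M}$ has uniformly bounded $\ell^1(\Z^3)$ norm (this is \eqref{eq:3}), so interpolation with the $L^p$-transferred bounds closes the stated ranges.
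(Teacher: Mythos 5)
Your high-level strategy (transfer to the continuum via the comparison principle, powered by the Fourier decay \eqref{eq:fourier} of $\mu_c$) matches the paper's, but the key reduction is different and your version is harder than it needs to be. You propose to approximate $\lambda^{-3/c+1}\sigma_\lambda^{\mathfrak M}$ by a discretization of the singular spherical average $\mathcal A_{\lambda^{1/c}}^c$ and control the error kernels; this runs into the problem that $\mathcal A_t^c$ averages against a singular measure, and making "discretization plus acceptable error" precise is delicate (the errors are not obviously summable or pointwise-small in a form compatible with Lemma~\ref{norm}, which requires nonnegativity of the comparison kernel). The paper sidesteps this entirely: using the rapid decay of $\mathcal F_{\R}^{-1}\psi$ and the crude observation $|Q(x)-|x|_c^c|\le 3$, it \emph{pointwise dominates} $\lambda^{-3/c+1}|\sigma_\lambda^{\mathfrak M}(x)|$ by the smooth nonnegative thickened kernel $K_\lambda(x)=\lambda^{-9/(4c)}\omega_\lambda(x)$ from \eqref{eq:5} (this is \eqref{eq:8}), so no approximation-error bookkeeping is needed at all. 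Lemma~\ref{norm} (whose proof uses the doubling estimate of Lemma~\ref{kernel}, itself requiring $K_\lambda\ge 0$) then transfers the discrete maximal function to the continuous one for $K_\lambda$. Finally, rather than invoking Theorem~\ref{thm:contsph} for $\mathcal A_t^c$, the paper proves a dedicated maximal theorem for the thickened kernels (Theorem~\ref{thm:kl}), establishing the four multiplier estimates \eqref{eq:9}--\eqref{eq:12} directly from \eqref{eq:fourier} and feeding them into the abstract results of \cite{MSZ1}. Both routes ultimately rest on the same Fourier decay, but the paper's domination-by-$K_\lambda$ plus a standalone Theorem~\ref{thm:kl} is cleaner and avoids the uncontrolled error kernels your plan would require. (Your remark about the $p=\infty$ endpoint via \eqref{eq:3} is fine, though interpolation is unnecessary once one has the $p>3/2$ range and $p=\infty$ separately.)
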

Since $\psi \in C_c^{\8} (\R)$ thus for any $x \in \R^3$ and $\lambda \ge 1$ we may write, see \eqref{eq:18},
\begin{align*}  
| \sigma_{\lambda}^{\mathfrak M} (x) | 
 \lesssim
\lambda^{\kappa}  
\frac{1}{1 + \big( \lambda^{\kappa} |Q(x) - \lambda| \big)^{10}} 
\lesssim \lambda^{\kappa} \omega_{\lambda}(x),
\end{align*}
where
\begin{align} \label{def:omega}
\omega_{\lambda}(x):=
\frac{1}{1 + \big( \lambda^{\kappa} | |x|_c^c - \lambda| \big)^{10}},
\qquad x \in \R^3, \quad \lambda \ge 1.
\end{align}

\begin{lem}\label{kernel}
Let $c \ge 1$ and $a>0$ be fixed. Then, 
\begin{align}\label{ker1}
    \omega_{\lambda}(t) \simeq \omega_{\lambda}(t+\g),
\end{align}
uniformly in $\lambda \ge 1$ and $t, \g \in \R^3$ satisfying $|\g|_{\infty}\le a$.
\end{lem}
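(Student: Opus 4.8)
The plan is to reduce the two-sided comparison \eqref{ker1} to a single scalar inequality and then verify that inequality with elementary estimates on $x\mapsto|x|_c^c$. Writing $A_\lambda(x):=\lambda^{\kappa}\big||x|_c^c-\lambda\big|\ge 0$, so that $\omega_\lambda(x)=\big(1+A_\lambda(x)^{10}\big)^{-1}$ by \eqref{def:omega}, and using that $1+s^{10}\simeq\max(1,s)^{10}$ for $s\ge 0$, I would first note that \eqref{ker1} is equivalent to $\max(1,A_\lambda(t))\simeq\max(1,A_\lambda(t+\g))$. Since $\kappa=\kappa_c=\tfrac{3}{4c}-1<0$ by \eqref{eq:46}, we have $|\kappa|=1-\tfrac{3}{4c}$ and $0<\lambda^{\kappa}\le 1\le\lambda^{|\kappa|}$ for $\lambda\ge 1$, whence the identity $\max(1,\lambda^{\kappa}s)=\lambda^{\kappa}\max(\lambda^{|\kappa|},s)$ lets me cancel the common factor $\lambda^{\kappa}$ and rewrite the target as
\[
\max\!\big(\lambda^{|\kappa|},\big||t|_c^c-\lambda\big|\big)\ \simeq\ \max\!\big(\lambda^{|\kappa|},\big||t+\g|_c^c-\lambda\big|\big),\qquad \lambda\ge 1,\ |\g|_{\infty}\le a.
\]
By symmetry in $t\leftrightarrow t+\g$ (replace $\g$ by $-\g$, still admissible since $|{-\g}|_{\infty}\le a$) it suffices to prove ``$\lesssim$'', and by $\big||t+\g|_c^c-\lambda\big|\le\big||t|_c^c-\lambda\big|+\big||t+\g|_c^c-|t|_c^c\big|$ this reduces to the single estimate
\[
\big||t+\g|_c^c-|t|_c^c\big|\ \lesssim\ \max\!\big(\lambda^{|\kappa|},\big||t|_c^c-\lambda\big|\big),\qquad \lambda\ge 1,\ |\g|_{\infty}\le a.
\]

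Next I would record the elementary increment bound $\big||t+\g|_c^c-|t|_c^c\big|\lesssim_{a,c}\big(|t|_c^c\big)^{1-1/c}+1$: apply the mean value theorem in each coordinate to get $\big||t_j+\g_j|^c-|t_j|^c\big|\le c\,a\,(|t_j|+a)^{c-1}$ for $j\in[3]$, sum over $j$, and bound $\sum_{j}(|t_j|+a)^{c-1}$ using the subadditivity of $s\mapsto s^{c-1}$ on $[0,\infty)$ (valid since $c-1\in[0,1)$) together with Hölder's inequality $\sum_{j=1}^{3}|t_j|^{c-1}\le 3^{1/c}\big(\sum_{j=1}^{3}|t_j|^{c}\big)^{(c-1)/c}$.

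With this in hand I would prove the reduced estimate by splitting on the size of $|t|_c^c$. If $|t|_c^c\le 2\lambda$, the increment bound gives $\big||t+\g|_c^c-|t|_c^c\big|\lesssim\lambda^{1-1/c}+1\lesssim\lambda^{|\kappa|}$, where the last step uses $\lambda\ge 1$ and the arithmetic inequality $1-\tfrac1c\le|\kappa|=1-\tfrac{3}{4c}$ (equivalently $\tfrac34\le 1$); this is dominated by the right-hand side. If $|t|_c^c>2\lambda$, then $\big||t|_c^c-\lambda\big|=|t|_c^c-\lambda\ge\tfrac12|t|_c^c$, while the increment bound (together with $|t|_c^c>2$, which lets us absorb both terms into $|t|_c^c$) gives $\big||t+\g|_c^c-|t|_c^c\big|\lesssim\big(|t|_c^c\big)^{1-1/c}+1\lesssim|t|_c^c\lesssim\big||t|_c^c-\lambda\big|$. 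Either way the reduced estimate holds, and unwinding the reductions proves \eqref{ker1}.

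The only step that is not pure bookkeeping — the main point to get right — is the matching of exponents in the case $|t|_c^c\le 2\lambda$: one needs exactly $1-\tfrac1c\le|\kappa|$, which is the structural reason the particular choice $\kappa=\tfrac{3-4c}{4c}$ fixed in \eqref{eq:46} is admissible here; this holds for every $c\ge 1$ (with a strict margin), and everything else is a routine application of the mean value theorem and Hölder's inequality.
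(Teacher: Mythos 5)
Your proof is correct and follows essentially the same route as the paper's: reduce by symmetry to the one-sided bound, apply the triangle inequality together with a mean-value increment estimate for $x\mapsto|x|_c^c$, and split into the cases $|t|_c^c\lesssim\lambda$ and $|t|_c^c\gtrsim\lambda$, with the exponent inequality $1-\tfrac1c\le 1-\tfrac{3}{4c}$ doing the work in the first case. The $\max(1,\cdot)$ reformulation that absorbs the tenth power and the $\lambda^{\kappa}$ factor into a single scalar comparison is a tidy bookkeeping device (and lets you skip the paper's preliminary split on $|t|_\infty\le 2a$ and its constant $C_a$), but it does not change the substance of the argument.
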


\begin{proof}
By symmetry it suffices to justify only the estimate $(\lesssim)$. Equivalently we show that
\begin{align} \label{red5}
\Big( \lambda^{\kappa} | |t+\g|_c^c - \lambda| \Big)^{10}
\lesssim
1+\Big( \lambda^{\kappa} | |t|_c^c - \lambda| \Big)^{10},
\end{align} 
uniformly in $\lambda \ge 1$ and $t, \g \in \R^3$ satisfying $|\g|_{\infty}\le a$.
Since the case $|t|_\8 \le 2a$ is obvious, from now on we may assume that $|t|_\8>2a$. 
This implies that $|t + \g |_\8 \simeq |t|_\8$ and consequently there exists a constant $C_a\ge 1$ such that 
\[
C_a^{-1} |t|_c^c \le |t+\g|_c^c \le C_a |t|_c^c, \qquad |t|_\8>2a, \quad |\g|_{\infty}\le a.
\]
Moreover, for every $j\in[3]$, we have
$\big| |t_j+\g_j|^c-|t_j|^c \big| \lesssim |t_j|^{c-1}+1$,
which implies
\begin{align}
\label{eq:4}
\big| |t+\g|_c^c - |t|_c^c \big| \le \sum_{j=1}^3 \big| |t_j+\g_j|^c-|t_j|^c \big| 
\lesssim  
|t|_\8^{c-1} +1
\le  |t|_c^{c - 1}+1.
\end{align}

We now distinguish  two cases.

\paragraph{\bf Case 1.} Assume that $|t|_c^c > 2C_a\lambda$. Then we have $|t|_c^c\ge1$ and 
$|t|_c^c - \lambda \simeq |t|_c^c$. Using \eqref{eq:4} we see that  \eqref{red5} follows, since
\[
|t+\g|_c^c - \lambda\le
(|t|_c^c - \lambda) +\big||t+\g|_c^c - |t|_c^c\big|
\lesssim
(|t|_c^c - \lambda) + |t|_c^{c - 1}
\lesssim |t|_c^c - \lambda.
\]
\paragraph{\bf Case 2.}  Assume that $|t|_c^c \le2C_a\lambda$.
Using \eqref{eq:4}, as in the previous case, we may write
\[
\big| |t+\g|_c^c - \lambda \big| 
\le
\big| |t|_c^c - \lambda \big| + |t|_c^{c - 1}+1
\lesssim 
\big| |t|_c^c - \lambda \big| + \lambda^{1-1/c}+1.
\]
Thus the left-hand side of \eqref{red5} is controlled by
\begin{align*}
 \left( \lambda^{\kappa} | |t|_c^c - \lambda| \right)^{10} + 
\left( \lambda^{\kappa} (\lambda^{1-1/c}+1)  \right)^{10} 
  \lesssim 
\left(\lambda^{\kappa} | |t|_c^c - \lambda| \right)^{10} + 1,
\end{align*}
as required.
\end{proof}

We define
\begin{align}
\label{eq:5}
K_\lambda (x): =\lambda^{- 9/(4c)} \frac{1}{1 + \big( \lambda^{\kappa} | |x|_c^c - \lambda| \big)^{10}},
\qquad x \in \R^3, \quad \lambda \ge 1,
\end{align} 
and observe that 
\begin{align}
\label{eq:8}
\lambda^{- 3/c + 1} |\sigma_{\la}^{\mathfrak M} (x)| \lesssim K_\lambda (x), 
\qquad x \in \Z^3, \quad \lambda \ge 1.
\end{align}

For any function $f:\Z^3\to\mathbb{C}$ let us consider its extension $F_f:\R^3\to\mathbb{C}$ by
$$
F_f(x):=\sum_{y\in\Z^3} f(y)\ind_{[-1/2,1/2)^3 }(x-y),\qquad  x\in\R^3.
$$
If $f\in\ell^p(\Z^3)$ with $p\in[1, \infty]$ it is easy to see that
$F_f\in L^p(\R^3)$ and $\|f\|_{\ell^p(\Z^3)}=\|F_f\|_{L^p(\R^3)}$.
The next lemma is variant of a comparison principle that was recently
used in \cite[Theorem 1, p. 859]{BMSW1}, see also \cite{BMSW}. Lemma
\ref{norm} will transfer the problem from the set of integers $\Z^3$
to the continuous setting $\R^3$, where the properties of the maximal
 functions corresponding to the kernels $K_\lambda$ will be investigated.
\begin{lem}\label{norm}
Let $c \ge 1$ and $p\in[1, \infty]$ be fixed. Then for any $\mathbb{I}\subseteq [1,\infty)$ one has
\begin{align*}
    \big\| {\sup_{\la\in\mathbb{I}} \left|\Kl\ast f\r|} \big\|_{\ell^p(\Z^3)}
		\lesssim
		\big\|{\sup_{\la\in\mathbb{I}}  \Kl\ast |F_f |} \big\|_{L^p(\R^3)}.
\end{align*} 
\end{lem}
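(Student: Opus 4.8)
The plan is to exploit the product structure of the averaging-type kernel $K_\lambda$ together with the regularity estimate \eqref{ker1} from Lemma \ref{kernel}. First I would fix $\mathbb I\subseteq[1,\infty)$, a function $f:\Z^3\to\mathbb C$, and its step-function extension $F_f$. For each $x\in\Z^3$ I want to bound $\sup_{\lambda\in\mathbb I}|K_\lambda\ast f(x)|$ pointwise by $\sup_{\lambda\in\mathbb I}K_\lambda\ast|F_f|(z)$ for \emph{every} $z$ in the unit cube $x+[-1/2,1/2)^3$; once that is established, raising to the $p$-th power, integrating over the cube (which has measure $1$), and summing over $x\in\Z^3$ gives exactly
\[
\big\|\sup_{\lambda\in\mathbb I}|K_\lambda\ast f|\big\|_{\ell^p(\Z^3)}^p
\lesssim\big\|\sup_{\lambda\in\mathbb I}K_\lambda\ast|F_f|\big\|_{L^p(\R^3)}^p,
\]
with the obvious modification for $p=\infty$.

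To get the pointwise comparison, fix $x\in\Z^3$ and $z\in x+[-1/2,1/2)^3$. Write $K_\lambda\ast f(x)=\sum_{y\in\Z^3}K_\lambda(x-y)f(y)$ and, since $z-y$ differs from $x-y$ by the vector $z-x$ with $|z-x|_\infty\le 1/2$, apply Lemma \ref{kernel} with $a=1/2$ to obtain $K_\lambda(x-y)\lesssim K_\lambda(z-y+w)$ for any $w$ with $|w|_\infty\le 1/2$; in particular, averaging over $w\in[-1/2,1/2)^3$ (again a set of measure one) gives
\[
|K_\lambda\ast f(x)|\le\sum_{y\in\Z^3}K_\lambda(x-y)\,|f(y)|
\lesssim\sum_{y\in\Z^3}|f(y)|\int_{[-1/2,1/2)^3}K_\lambda(z-y-u)\,du
=\int_{\R^3}K_\lambda(z-v)\,|F_f(v)|\,dv,
\]
where in the last step I used the definition of $F_f$ and Fubini. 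The right-hand side is $K_\lambda\ast|F_f|(z)$, so taking $\sup_{\lambda\in\mathbb I}$ on both sides yields the desired pointwise bound, uniformly in $z$ in the cube. I should be slightly careful that the implied constant in Lemma \ref{kernel} is independent of $\lambda$ and of the shift, which is exactly what that lemma asserts with $a=1/2$; also, monotone convergence justifies interchanging the sum over $y$ with the integral.

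The only genuine subtlety, and the step I would single out as the ``hard part,'' is making sure the comparison \eqref{ker1} is applied with a shift that covers \emph{all} the discrepancy between the discrete convolution at $x$ and the continuous convolution at a generic point $z$ of the cube: the point $z-v$ for $v$ in the cube around $y$ can differ from $x-y$ by as much as $1$ in sup-norm, not $1/2$, so one should invoke Lemma \ref{kernel} with $a=1$ (the lemma is stated for arbitrary fixed $a>0$, so this is harmless). Everything else — the measure-one normalizations, Fubini/Tonelli, and passing from the pointwise estimate to the $\ell^p$–$L^p$ inequality — is routine. For $p=\infty$ the argument is the same, replacing integration by taking essential suprema. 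This completes the proof of Lemma \ref{norm}.
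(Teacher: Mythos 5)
Your proposal is correct and follows essentially the same route as the paper: reduce to the pointwise bound $\Kl\ast|f|(x)\lesssim \Kl\ast|F_f|(z)$ for $z\in x+[-1/2,1/2)^3$, prove it by using Lemma \ref{kernel} to shift the kernel argument and then averaging over the unit cube around each lattice point, and conclude by integrating the $p$-th power over each cube. You correctly flag at the end that the total shift is up to $1$ in sup-norm, so $a=1$ (or two successive applications with $a=1/2$) is what is actually needed, which matches the paper's choice of $a=1$.
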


\begin{proof}
We assume that $p\in[1, \infty)$; the case of $p = \8$ can be handled in a similar way and thus is omitted.
Since $\Kl$ is nonnegative, it is clear that it suffices to prove that
\begin{equation*}
\Big(\sup_{\la\in\mathbb{I}} \Kl\ast |f|(x)\Big)^p
\lesssim
\int_{x + [-1/2,1/2)^3} \Big(\sup_{\la\in\mathbb{I}} \Kl\ast |F_f|(y)\Big)^p \, dy,
\qquad x\in\Z^3.
\end{equation*}
This, in turn, will follow once we show the estimate
\begin{align} \label{red6}
\Kl \ast |f| (x) \lesssim \Kl \ast |F_f| (y), 
\qquad x \in \Z^3, \quad y \in x + [-1/2,1/2)^3, \quad \lambda \ge 1.
\end{align}
Indeed, using Lemma~\ref{kernel} (applied with $t=x-z$, $\g=y-x+z-t$ and $a=1$) we obtain
for all $x,z \in \Z^3$ and  $y \in x + [-1/2,1/2)^3$ and  $t \in z + [-1/2,1/2)^3$ that
\begin{align*}
\Kl (x-z) \lesssim \Kl (y-t), \qquad  \lambda \ge 1.
\end{align*}
Consequently, uniformly in $y \in x + [-1/2,1/2)^3$ we obtain
\begin{align*}
\Kl\ast|f|(x)
=
\sum_{z\in\Z^3} |f(z)|\Kl(x-z)
\lesssim
\sum_{z\in\Z^3}  |f(z)| \int_{ z + [-1/2,1/2)^3 } \Kl(y-t) \, dt
= 
\Kl\ast|F_f|(y).
\end{align*}
This finishes the proof of Lemma \ref{norm}.
\end{proof}

In order to study $L^p(\R^3)$ bounds of the maximal functions
corresponding to the kernels $K_\lambda$ efficiently,  we will need a polar
decomposition with respect to the norm $|\cdot|_{c}$.  Proceeding as
in \cite[Theorem 2.49, p.  78]{folland} we can easily show that there
exists the unique Borel measure on $\mathbb S_c^2$ denoted by $\mu_c$
such that for every $f \ge 0$ or $f \in L^1(\R^3)$ we have the
identity
\begin{align} \label{polar}
\int_{\R^3} f(x) \,dx = \int_0^\infty r^2 \int_{\mathbb S_c^2} f(ry) \, d\mu_c(y) \, dr.
\end{align}
Further, using this formula one can easily show that for $f \ge 0$ or $f \in L^1(\mathbb S_c^2, d\mu_c)$ we have
\begin{align} \label{intS} 
 \int_{\mathbb S_c^2} f(x) \, d\mu_c(x)  
 =
\int_{\atop{y \in \R^2}{ |y|_c \le 1}} 
\Big( f\big(y , (1 - |y|_c^c)^{1/c} \big) + f\big(y , - (1 - |y|_c^c)^{1/c} \big)
\Big)
(1 - |y|_c^c)^{1/c - 1} \, dy.
\end{align}

\begin{rem}
\label{rem:10}
Using the identity \eqref{intS} with $f = 1$ and the well-known
relation between the Beta and Gamma functions we can easily compute
that $\mu_c(\mathbb S_c^2) = \frac{8 \Gamma(1/c)^3}{c^2\Gamma(3/c) }$.
\end{rem}

Now is not difficult to see that inequality \eqref{eq:8} and Lemma
\ref{norm} reduce the proof of Theorem \ref{thm:major} to Theorem
\ref{thm:kl} below. The key ingredient in the proof of Theorem \ref{thm:kl}
will be the Fourier transform estimates for the measures $\mu_c$
associated with the spheres $\mathbb S^2_c$. Namely, for every fixed
$c\in(1, 2)$ one has
\begin{align}\label{eq:fourier}
|\mathcal{F}_{\R^3} \mu_c(\xi)|+|\nabla \mathcal{F}_{\R^3}\mu_c (\xi)|\lesssim (1+|\xi|)^{-1}, \qquad \xi \in \R^3.
\end{align}
The proof of \eqref{eq:fourier} is the most technical part of this
paper and hence has been postponed to Section~\ref{sec:osc}.  We
assume momentarily that inequality \eqref{eq:fourier} has been proven and we use
it to establish Theorem \ref{thm:kl}.

\begin{thm}
\label{thm:kl}
Let $c \in (1, 2)$ be fixed, and $K_\lambda$ be the kernel defined in \eqref{eq:5}.
Then the following two inequalities hold
\begin{align} \label{eq:est16}
\big\| \sup_{1 \le \lambda < \8} | K_\lambda \ast f| \big\|_{L^p(\R^3)}
&\lesssim_p
\| f \|_{L^p(\R^3)}, \qquad f \in L^p(\R^3), \quad p > 3/2, \\ \label{eq:est16dyad}
\big\| \sup_{n\in\Z_+} | K_{\lambda_n} \ast f| \big\|_{L^p(\R^3)}
&\lesssim_p
\| f \|_{L^p(\R^3)}, \qquad f \in L^p(\R^3), \quad p > 1.
\end{align} 
\end{thm}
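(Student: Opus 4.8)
The plan is to treat $K_\lambda$ as a regularized, dilated version of the $c$-spherical average and reduce the maximal bounds to the corresponding estimates for the operators $\mathcal A_t^c$ from \eqref{def:A}, exactly as in the approach of \cite{MSZ1}. First I would write $K_\lambda$ in polar coordinates with respect to $|\cdot|_c$ using \eqref{polar}: since $|ry|_c^c = r^c$ for $y \in \S_c^2$, we have
\[
K_\lambda(x) = \lambda^{-9/(4c)} \int_0^\infty \frac{r^2}{1 + (\lambda^{\kappa}|r^c - \lambda|)^{10}} \int_{\S_c^2} \delta\text{-like convolution in } ry \, d\mu_c(y)\, dr,
\]
more precisely $K_\lambda \ast f(x) = \lambda^{-9/(4c)}\int_0^\infty \frac{r^2}{1+(\lambda^\kappa|r^c-\lambda|)^{10}} \mathcal A_r^c f(x)\, dr$ where here I am using $\mathcal A_r^c f(x) = \int_{\S_c^2} f(x-ry)\,d\mu_c(y)$. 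The mass factor: substituting $u = r^c$, so $r = u^{1/c}$, $dr = c^{-1}u^{1/c-1}du$, and $r^2 dr = c^{-1} u^{3/c-1}du$, the total integral of the scalar weight is $\lambda^{-9/(4c)} c^{-1}\int_0^\infty \frac{u^{3/c-1}}{1+(\lambda^\kappa|u-\lambda|)^{10}}du \simeq \lambda^{-9/(4c)}\lambda^{3/c-1}\lambda^{-\kappa} = \lambda^{-9/(4c)}\lambda^{3/c-1}\lambda^{-(3-4c)/(4c)} = \lambda^{3/c-1}\cdot\lambda^{-9/(4c)-(3-4c)/(4c)} $, and $9/(4c)+(3-4c)/(4c) = (12-4c)/(4c) = 3/c - 1$, so the scalar weight has total mass $\simeq 1$, uniformly in $\lambda$. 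This is the normalization that makes $K_\lambda \ast f$ behave like an average, and it is consistent with \eqref{eq:8}.

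The next step is a Fourier-analytic decomposition of the multiplier $\widehat{K_\lambda}$ into Littlewood--Paley pieces in frequency, following \cite{MSZ1}. Write $K_\lambda = \sum_{j \ge 0} K_\lambda^j$ where $\widehat{K_\lambda^j}(\xi) = \widehat{K_\lambda}(\xi)\rho_j(\xi)$ for a standard dyadic partition $\sum_j \rho_j = 1$ with $\rho_j$ supported in $|\xi| \simeq 2^j$ (and $\rho_0$ near the origin). For the low-frequency piece $K_\lambda^0$ one gets an $L^p$-bounded maximal function for all $p>1$ by domination by the Hardy--Littlewood maximal operator, since $\sup_\lambda |\widehat{K_\lambda}(\xi)| \lesssim 1$ near $\xi = 0$ and the kernels are essentially $L^1$-normalized bump functions at unit scale. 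For the high-frequency pieces, the key input is the Fourier decay \eqref{eq:fourier}: since $\widehat{K_\lambda}(\xi)$ is (up to the scalar weight and a Schwartz factor coming from $\mathcal F^{-1}_\R\psi$) an average of $\widehat{\mu_c}(r\xi)$ over $r \simeq \lambda^{1/c}$, we get $|\widehat{K_\lambda^j}(\xi)| \lesssim 2^{-j}$ together with a comparable bound on the $\xi$-gradient, plus rapid decay in $\lambda^\kappa$ away from the resonance $r^c \approx \lambda$. Then a standard square-function / Sobolev-embedding argument (interpolating the trivial $L^\infty$ bound $\sup_\lambda|K_\lambda^j\ast f| \lesssim 2^{-j/2}\|f\|_\infty$-type estimate against an $L^2$ estimate obtained from $g$-functions and the decay $|\widehat{K_\lambda^j}| \lesssim 2^{-j}$) gives, for the full-range case,
\[
\big\|\sup_{\lambda\ge1}|K_\lambda^j\ast f|\big\|_{L^p(\R^3)} \lesssim 2^{-j\eps(p)}\|f\|_{L^p(\R^3)}
\]
for $p>3/2$ and some $\eps(p)>0$, and summing in $j$ yields \eqref{eq:est16}. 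The range $p>3/2 = 3/(3-1)$ is dictated, via the usual Stein-type argument in dimension $3$, by the decay rate $|\xi|^{-1}$ in \eqref{eq:fourier}, which is the same as for the Euclidean sphere $\S^2$; this is exactly why the result coincides with the classical one. For the lacunary case \eqref{eq:est16dyad} one instead uses the square function $\big(\sum_n |K_{\lambda_n}^j \ast f|^2\big)^{1/2}$ and the lacunarity to sum the geometric series in $n$, which removes the loss near $L^1$ and extends the range to all $p>1$; the gradient bound in \eqref{eq:fourier} is what provides the needed $L^2$ control of the square function uniformly over lacunary $\lambda_n$.

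The main obstacle I anticipate is organizing the frequency decomposition so that the scalar weight $\lambda^{-9/(4c)}(1+(\lambda^\kappa|u-\lambda|)^{10})^{-1}$ is handled cleanly: one must confirm that after the substitution $u = r^c$ the resulting average over $r$ of $\widehat{\mu_c}(r\xi)$ really does inherit both the decay $|\widehat{K_\lambda^j}(\xi)|\lesssim 2^{-j}$ and the $C^1$-type control needed for the $g$-function estimates, uniformly in $\lambda$, including the contribution of the factor $\mathcal F^{-1}_\R\psi(\lambda^\kappa(\cdot))$ and the cutoff $\eta(x/\lambda^{1/c})$ that I have suppressed above (the latter only costs a harmless Schwartz tail after convolving with $\widehat\eta$, by another application of Lemma \ref{kernel}-type comparison). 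Once this bookkeeping is in place, the rest is the by-now-standard machinery from \cite{MSZ1}, so I would cite that reference for the square-function and interpolation steps rather than reproducing them, and concentrate the writing on verifying the hypotheses: the mass normalization, the decay $|\widehat{K_\lambda^j}|\lesssim 2^{-j}(1 + \lambda^\kappa\,\mathrm{dist})^{-N}$, and the gradient analogue, all of which follow from \eqref{eq:fourier} and \eqref{polar}.
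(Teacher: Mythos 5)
Your proposal is correct and follows essentially the same route as the paper: compute $\mathcal F_{\R^3}K_\lambda$ via the polar decomposition \eqref{polar}, verify the $L^1$-normalization, reduce the needed multiplier estimates to the Fourier decay \eqref{eq:fourier} for $\mu_c$, and then invoke the Littlewood--Paley / square-function / interpolation machinery of \cite{MSZ1}. One small misattribution: you assign the gradient bound $|\nabla\mathcal F_{\R^3}\mu_c(\xi)|\lesssim(1+|\xi|)^{-1}$ to the lacunary $L^2$ control, but in fact the lacunary estimate \eqref{eq:est16dyad} needs only \eqref{eq:9}--\eqref{eq:11} (mass, Lipschitz at the origin, decay $|\lambda^{1/c}\xi|^{-1}$) together with \cite[Theorem 2.14, p.~537]{MSZ1}; the gradient bound is what produces the $\partial_\lambda$ estimate \eqref{eq:12}, which is the additional ingredient required for the full supremum \eqref{eq:est16} via \cite[Theorem 2.39, p.~543]{MSZ1}. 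This does not affect the validity of the plan, only the bookkeeping of which hypothesis is used where.
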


\begin{proof}
Our aim is to prove that for every $\lambda\ge 1$ the following estimates hold
\begin{align}
\label{eq:9}
\|K_\lambda\|_{L^1(\R^3)}&\simeq 1,\\
\label{eq:10}
\big|\mathcal{F}_{\R^3} K_\lambda (\xi)-\mathcal{F}_{\R^3} K_\lambda (0)\big|&\lesssim |\lambda^{1/c}\xi|,\\
\label{eq:11}
\big|\mathcal{F}_{\R^3} K_\lambda (\xi)\big|&\lesssim |\lambda^{1/c}\xi|^{-1},\\
\label{eq:12}
\Big|\frac{d}{dt}\mathcal{F}_{\R^3} K_t (\xi)\Big|\bigg|_{t=\lambda}&\lesssim \lambda^{-1},
\end{align}
with the implicit constants independent of $\lambda\ge1$ and $\xi\in\R^3$.

Once \eqref{eq:9}, \eqref{eq:10}, \eqref{eq:11} and \eqref{eq:12} are
established we may proceed much the same way as in \cite{MSZ1} to
deduce \eqref{eq:est16} and \eqref{eq:est16dyad}, we refer also to
\cite[(4.7) and (4.8), pp. 16--19]{BMSW} or \cite{BMSW1}. More
precisely, to prove \eqref{eq:est16dyad} it suffices to use
\eqref{eq:9}, \eqref{eq:10}, \eqref{eq:11} with the standard
Littlewood--Paley theory and appeal to \cite[Theorem 2.14,
p. 537]{MSZ1}. To prove \eqref{eq:est16} we use estimates
\eqref{eq:9}, \eqref{eq:10}, \eqref{eq:11}, \eqref{eq:12} with the
standard Littlewood--Paley theory and invoke \cite[Theorem 2.39,
p. 543]{MSZ1}.  This reduces the matter to proving \eqref{eq:9},
\eqref{eq:10}, \eqref{eq:11} and \eqref{eq:12}. We first show
\eqref{eq:9}. Using \eqref{polar} and changing variables $r=\lambda^{1/c}t$ we note
\begin{align*}  
\|K_\lambda\|_{L^1(\R^3)} &=
\lambda^{- 9/(4c)} \int_0^\infty \frac{\mu_c(\mathbb S_c^2)r^2 dr}{1 +  \big(\lambda^{\kappa} | r^c - \lambda|\big)^{10}}  \\
&=\lambda^{\kappa+1} \int_0^\infty \frac{\mu_c(\mathbb S_c^2)t^2 dt}{1 +  \big(\lambda^{\kappa+1} | t^c - 1|\big)^{10}}.
\end{align*} 
For every $c\ge1$ and $t\ge0$ we have $|t^c-1|\geq |t-1|$, thus \eqref{eq:9} follows, since
\begin{align}
\label{eq:13}
 \int_0^\infty \frac{\lambda^{\kappa+1}t^2 dt}{1 +  \big(\lambda^{\kappa+1} | t^c - 1|\big)^{10}}
 \lesssim  \int_0^\infty \frac{\lambda^{\kappa+1}dt}{1 +  \big(\lambda^{\kappa+1} | t - 1|\big)^{8}}
\lesssim  \int_{\R} \frac{dt}{1 +  t^{8}}
 \lesssim 1.
\end{align} 
Invoking \eqref{polar} and changing variable $r=\lambda^{1/c}t$, we obtain
\begin{align}
\label{eq:14}
\begin{split}
\mathcal{F}_{\R^3} K_\lambda (\xi) &=
\lambda^{- 9/(4c)} \int_0^\infty\frac{1}{1 +  \big(\lambda^{\kappa} | r^c - \lambda|\big)^{10}}
\int_{\mathbb S_c^2}  e(- rx \cdot\xi) \, d\mu_c(x) r^2 dr\\
&=\lambda^{\kappa+1} \int_0^\infty\frac{t^2}{1 +  \big(\lambda^{\kappa+1} | t^c - 1|\big)^{10}}\mathcal{F}_{\R^3} \mu_c(\la^{1/c}t\xi)  dt,
\end{split}
\end{align} 
since $\mathcal{F}_{\R^3} \mu_c(\xi)=\int_{\mathbb S_c^2}  e(- x \cdot\xi) \, d\mu_c(x)$.

We now prove \eqref{eq:10}. By \eqref{eq:14} we may conclude
\begin{align*}
\big|\mathcal{F}_{\R^3} K_\lambda (\xi)-\mathcal{F}_{\R^3} K_\lambda (0)\big|
\lesssim  |\lambda^{1/c}\xi|\int_0^\infty\frac{\lambda^{\kappa+1}t^3dt}{1 +  \big(\lambda^{\kappa+1} | t^c - 1|\big)^{10}},
\end{align*}
since
$|\mathcal{F}_{\R^3} \mu_c(\xi)-\mathcal{F}_{\R^3} \mu_c(0)|\lesssim |\xi|$,
hence arguing in a similar way as in \eqref{eq:13} we obtain the
claim. To prove \eqref{eq:11} we use \eqref{eq:14} and the estimate
$|\mathcal{F}_{\R^3} \mu_c(\xi)|\lesssim |\xi|^{-1}$ from
\eqref{eq:fourier} and we are done. Finally, to prove \eqref{eq:12} we
differentiate \eqref{eq:14} (this produces a factor $\lambda^{-1}$)
and then we use $|\nabla\mathcal{F}_{\R^3} \mu_c(\xi)|\lesssim |\xi|^{-1}$ from
\eqref{eq:fourier} and we are also done.
\end{proof}

\section{Proof of inequality \eqref{eq:fourier}. Fourier transform estimates }\label{sec:osc}

We start by introducing a (non-smooth) decomposition of the unity on $\mathbb S_c^2$.
It is a very useful construction which will allow us to estimate the oscillatory integrals from \eqref{eq:fourier}.
We define for any $j \in [3]$ the homeomorphisms
\begin{align*}
&\vp_j^{\pm} \colon \{\xi \in \mathbb S_c^2 \colon \pm\xi_j > 0 \} \to \{ (x_1,x_2)\in\R^2 : |x_1|^c + |x_2|^c < 1 \}, 
\end{align*}
where
\begin{align*}
\vp_j^{\pm} (\xi) := (\xi_1, \ldots, \xi_{j-1}, \xi_{j+1}, \ldots, \xi_3 ).
\end{align*}
\begin{lem} \label{lem:decom}
Let $c \in (1,2)$ be fixed. Then, for each $j \in [3]$ there exist  continuous functions $g_j^{\pm} \colon \mathbb S_c^2 \to [0,1]$ such that 
$\supp g_j^{\pm} = \big\{ \xi \in \mathbb S_c^2 : \pm\xi_j \ge  (14/48)^{1/c} \big\}$
and satisfy
\begin{align} \label{est44}
\sum_{j=1}^3 \big( g_j^+(\xi) + g_j^-(\xi)  \big) = 1, 
\qquad \xi \in \mathbb S_c^2.
\end{align}
Moreover, for each $j \in [3]$ there exist  smooth functions $M_j^{\pm}$ on
\begin{align*}
\mathbb{A}:= \Big\{ (x_1,x_2,y) \in \R^3 : |y| < 1 - \frac{14}{48} \,\, \textrm{or} \,\, \frac{14}{48} < |x_1|^c < 2 - \frac{14}{48} \,\, \textrm{or} \,\, \frac{14}{48} < |x_2|^c < 2 - \frac{14}{48} \Big\},
\end{align*}
such that the functions 
\begin{align*}
\{ (x_1,x_2) \in \R^2 : |x_1|^c + |x_2|^c < 1 \}
\ni (x_1,x_2) \mapsto M_j^{\pm} (x_1,x_2,|x_1|^c + |x_2|^c)
\end{align*}
are supported in 
$\{(x_1, x_2)\in\R^2: |x_1|^c + |x_2|^c \le 1 - 14/48 \}$ and we have
\begin{align} \label{iden11}
g_j^{\pm} \circ (\vp_j^{\pm})^{-1} (x_1,x_2)
=
M_j^{\pm} (x_1,x_2, |x_1|^c + |x_2|^c ), \qquad \textrm{for \quad $|x_1|^c + |x_2|^c < 1$}.
\end{align}
\end{lem}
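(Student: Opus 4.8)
The plan is to produce all six functions $g_j^{\pm}$ by normalising six non-negative bumps on $\S_c^2$, each built from a single one-dimensional cut-off, and then to read off $M_j^{\pm}$ from the resulting formula in the charts $\vp_j^{\pm}$. First I would fix $\rho\in C^{\8}(\R)$ with $\rho(t)=\exp\!\big(-1/(t-14/48)\big)$ for $t>14/48$ and $\rho(t)=0$ for $t\le 14/48$; thus $0\le\rho<1$, $\rho$ vanishes on a neighbourhood of $(-\8,14/48]$, and $\rho(t)>0$ precisely for $t>14/48$. For $j\in[3]$ set
\[
G_j^{\pm}(\xi):=\rho\big(|\xi_j|^c\big)\,\ind_{\pm\xi_j>0},\qquad \xi\in\S_c^2.
\]
Since $\rho(|\xi_j|^c)=0$ whenever $|\xi_j|^c\le 14/48$, and in particular for $\xi_j$ near $0$, each $G_j^{\pm}$ is continuous on $\S_c^2$; moreover $\{G_j^{\pm}\ne 0\}=\{\pm\xi_j>(14/48)^{1/c}\}$, whose closure in $\S_c^2$ is $\{\pm\xi_j\ge(14/48)^{1/c}\}$ (any boundary point can be pushed into the cap by shrinking one of the remaining coordinates). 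Putting $S:=\sum_{j=1}^3(G_j^++G_j^-)$, only one sign of each $\xi_j$ occurs at a given point, so $S(\xi)=\sum_{k=1}^3\rho(|\xi_k|^c)$; and since $\max_k|\xi_k|^c\ge 1/3=16/48>14/48$ on $\S_c^2$ we get $S(\xi)\ge\rho\big(\max_k|\xi_k|^c\big)>0$ everywhere. Then I would set $g_j^{\pm}:=G_j^{\pm}/S$: these are continuous, valued in $[0,1]$ (as $0\le G_j^{\pm}\le S$), satisfy \eqref{est44} by construction, and, because $S$ has no zeros, have the asserted supports.

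For the chart formula, fix $j$ and let $\{j',j''\}=[3]\setminus\{j\}$ with $j'<j''$, so that $(\vp_j^{\pm})^{-1}(x_1,x_2)\in\S_c^2$ carries $x_1$ in slot $j'$, $x_2$ in slot $j''$, and $\pm(1-|x_1|^c-|x_2|^c)^{1/c}$ in slot $j$. Evaluating the $G$'s and $S$ there, for $|x_1|^c+|x_2|^c<1$ one obtains
\[
g_j^{\pm}\circ(\vp_j^{\pm})^{-1}(x_1,x_2)=\frac{\rho\big(1-|x_1|^c-|x_2|^c\big)}{\rho\big(1-|x_1|^c-|x_2|^c\big)+\rho(|x_1|^c)+\rho(|x_2|^c)},
\]
which suggests defining
\[
M_j^{\pm}(x_1,x_2,y):=\frac{\rho(1-y)}{\rho(1-y)+\rho(|x_1|^c)+\rho(|x_2|^c)}.
\]
Setting $y=|x_1|^c+|x_2|^c$ reproduces \eqref{iden11}, and since the numerator $\rho(1-y)$ vanishes as soon as $1-y\le 14/48$, the restricted function is supported in $\{|x_1|^c+|x_2|^c\le 1-14/48\}$, as required.

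It remains to verify that $M_j^{\pm}\in C^{\8}(\mathbb{A})$, which I expect to be the only genuinely delicate point, because $x\mapsto|x|^c$ is merely $C^1$ when $c\in(1,2)$. The key observation is that $x\mapsto\rho(|x|^c)$ nevertheless lies in $C^{\8}(\R)$: away from $0$ it is a composition of smooth maps, and on the neighbourhood $\{|x|^c\le 14/48\}$ of $0$ it is identically zero --- this is exactly why $\rho$ was chosen to vanish near the origin. The remaining factor $y\mapsto\rho(1-y)$ is obviously smooth, and the denominator of $M_j^{\pm}$ is strictly positive throughout $\mathbb{A}$: the three alternatives defining $\mathbb{A}$ force, respectively, $\rho(1-y)>0$ (when $|y|<1-14/48$), $\rho(|x_1|^c)>0$ (when $14/48<|x_1|^c$), and $\rho(|x_2|^c)>0$ (when $14/48<|x_2|^c$). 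Hence $M_j^{\pm}$ is smooth on $\mathbb{A}$. Finally one checks that every $(x_1,x_2)$ with $|x_1|^c+|x_2|^c<1$ yields a point $(x_1,x_2,|x_1|^c+|x_2|^c)\in\mathbb{A}$ --- either $y:=|x_1|^c+|x_2|^c<1-14/48$, or $y\ge 1-14/48$, in which case $\max(|x_1|^c,|x_2|^c)>14/48$ while also $\max(|x_1|^c,|x_2|^c)\le y<1<2-14/48$ --- so \eqref{iden11} indeed holds on the whole chart domain, which completes the argument.
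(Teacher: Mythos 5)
Your proof is correct and follows essentially the same approach as the paper: normalize six non-negative bumps on $\S_c^2$ to obtain the partition of unity, then read off $M_j^{\pm}$ from the chart formula, using vanishing of the one-dimensional cutoff near $0$ to kill the non-smoothness of $|x|^c$ at the origin. The only (cosmetic) difference is that the paper applies a compactly supported two-sided bump $H$ to $|x_1|^c+|x_2|^c$ whereas you apply a one-sided bump $\rho$ directly to $|\xi_j|^c$; the two are linked by $\rho(t)\leftrightarrow H(1-t)$ via the constraint $|\xi_j|^c=1-|x_1|^c-|x_2|^c$ on $\S_c^2$, and your choice lets you avoid the paper's piecewise (two-threshold) definition of $h_j^{\pm}$.
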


\begin{proof}
Let $H \colon \R \to [0,1]$ be a compactly supported smooth  function such that $H(x) = 1$ for 
$x \in [-1 + 15/48, 1 - 15/48]$, $\supp H \subset [-1 + 14/48, 1 - 14/48]$ and 
$H(x) > 0$ for $x \in (-1 + 14/48, 1 - 14/48)$. Further, we let 
\[
h (x_1,x_2) := H (|x_1|^c + |x_2|^c), \qquad x_1,x_2 \in \R.
\]
Observe that $h$ is smooth everywhere except possibly at the points $(x_1,x_2)$ such that $x_1 = 0$ or $x_2 = 0$.
Furthermore, we consider 
\begin{align*}
h_j^{\pm} (\xi) 
:=
\begin{cases}
	h \circ \vp_j^{\pm} (\xi) , & \quad \pm\xi_j > (13/48)^{1/c},\\
	 0, & \quad \pm\xi_j < (14/48)^{1/c},
\end{cases} 
\quad \xi \in \mathbb S_c^2.
\end{align*}
Notice that $h_j^{\pm}$ are well-defined and continuous because $\supp H \subset [-1 + 14/48, 1 - 14/48]$. Moreover, we have
\begin{align*}
\supp h_j^{\pm} = \big\{ \xi \in \mathbb S_c^2 : \pm\xi_j \ge  (14/48)^{1/c} \big\},
\qquad j \in [3].
\end{align*}

Let us consider 
\[
g (\xi) := \sum_{j=1}^3 \big( h_j^+(\xi) + h_j^-(\xi)  \big), 
\qquad \xi \in \mathbb S_c^2,
\]
and observe that $g(\xi) > 0$ on $\mathbb S_c^2$. Finally, we define
\begin{align*}
g_j^{\pm} (\xi) := \frac{h_j^{\pm} (\xi) }{ g(\xi) },
\qquad \xi \in \mathbb S_c^2, \quad j \in [3].
\end{align*}
Notice that \eqref{est44} holds, thus it remains to prove the
existence of the functions $M_j^{\pm}$.  By symmetry it is enough to
focus on $M_3^+$.  Observe  that for
$|x_1|^c + |x_2|^c < 1$ we obtain
\begin{align*}
h_3^{+} \circ (\vp_3^{+})^{-1} (x_1,x_2) & = H( |x_1|^c + |x_2|^c ), \qquad
h_3^{-} \circ (\vp_3^{+})^{-1} (x_1,x_2) = 0, \\
(h_1^{+} + h_1^{-} ) \circ (\vp_3^{+})^{-1} (x_1,x_2) 
& = H( 1 - |x_1|^c ), \\
(h_2^{+} + h_2^{-} ) \circ (\vp_3^{+})^{-1} (x_1,x_2) 
& = H( 1 - |x_2|^c ),
\end{align*}
and consequently we can take 
\begin{align*}
M_3^+(x_1,x_2,y) := \frac{H(y)}{H(y) + H(1- |x_1|^c ) + H(1- |x_2|^c )},
\qquad (x_1,x_2,y) \in \mathbb{A}.
\end{align*}
Now the mapping properties of $M_3^+$ and \eqref{iden11} follow easily from the mapping properties of $H$ and the inclusion
\begin{align*} 
\big\{ (x_1,x_2, |x_1|^c + |x_2|^c ) \in \R^3 : |x_1|^c + |x_2|^c < 1 \big\}
\subseteq
\mathbb{A}.
\end{align*}
This finishes the proof of Lemma~\ref{lem:decom}.
\end{proof}

In order to prove \eqref{eq:fourier} we establish a slightly more general result, which will be used in the proof of  Theorem \ref{thm:erg}.
\begin{lem} \label{lem:FTest} Assume that $c \in (1,2)$ is fixed and
let
$f \in C(\mathbb{R}^3) \cap C^\infty((\mathbb{R}\setminus\{0\})^3)$ be
such that for every fixed $\beta \in \N^3$ we have the estimate
\begin{align} \label{35.2}
|\partial^{\beta} f(x)|
\lesssim
\prod_{j=1}^{3} |x_j|^{-\beta_j + (c-1) \ind_{\beta_j \ge 1} }, \qquad x \in (\mathbb{R}\setminus\{0\})^3.
\end{align}
Then the following estimate holds
\begin{align} \label{35.1}
\Big| \int_{\mathbb S_c^2} f(w) e(-w \cdot \xi) \, d\mu_c(w)  \Big|
\lesssim
(1 + |\xi|)^{-1}, \qquad \xi \in \R^3.
\end{align}
\end{lem}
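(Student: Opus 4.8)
The plan is to use the (non-smooth) partition of unity from Lemma~\ref{lem:decom}, localize the oscillatory integral to each coordinate patch, and then reduce the estimate on each patch to a two-dimensional stationary phase / van der Corput argument with a careful treatment of the non-smooth weight $(1-|y|_c^c)^{1/c-1}$ coming from \eqref{intS}. First I would write $f = \sum_{j=1}^3 (f g_j^+ + f g_j^-)$ and treat one summand, say $f g_3^+$; by \eqref{iden11} and \eqref{intS} the corresponding piece of the integral becomes
\[
\int_{\{|x_1|^c+|x_2|^c<1\}} f\big(x_1,x_2,(1-|x_1|^c-|x_2|^c)^{1/c}\big)\,M_3^+(x_1,x_2,|x_1|^c+|x_2|^c)\,e\big(-x_1\xi_1-x_2\xi_2-(1-|x_1|^c-|x_2|^c)^{1/c}\xi_3\big)(1-|x_1|^c-|x_2|^c)^{1/c-1}\,dx_1\,dx_2.
\]
On the support of $M_3^+$ we have $|x_1|^c+|x_2|^c\le 1-14/48$, so the weight $(1-|x_1|^c-|x_2|^c)^{1/c-1}$ is smooth and bounded there, as is the map $(x_1,x_2)\mapsto(1-|x_1|^c-|x_2|^c)^{1/c}$; moreover by \eqref{35.2} the factor $f$ contributes only mild singularities along $\{x_1=0\}\cup\{x_2=0\}$, integrable and of a form (powers $|x_j|^{-\beta_j+(c-1)}$ after differentiation) that can be absorbed. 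The $|\xi|\lesssim 1$ case is trivial since the total mass is finite, so we may assume $|\xi|\ge 1$.

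The core is then a two-dimensional oscillatory integral $\int_{\R^2} A(x)\,e(-\Psi_\xi(x))\,dx$ with amplitude $A$ smooth away from the axes and satisfying the bounds above, and phase $\Psi_\xi(x)=x\cdot\xi'+\xi_3\,(1-|x_1|^c-|x_2|^c)^{1/c}$ where $\xi'=(\xi_1,\xi_2)$. One computes $\nabla_x\Psi_\xi$ and finds that the Hessian of $x\mapsto(1-|x_1|^c-|x_2|^c)^{1/c}$ has nonvanishing determinant on the (closure of the) support region — this is exactly the statement that $\mathbb S_c^2$ has nonvanishing Gaussian curvature away from the "equator" singularities, which holds for $c\in(1,2)$ just as for $c=2$, and this is where the $14/48$-cutoffs are used to stay away from the flat/singular directions. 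A standard non-stationary phase integration by parts handles the region where $|\nabla_x\Psi_\xi|\gtrsim|\xi|$, gaining arbitrarily many powers of $|\xi|^{-1}$; near a stationary point (which exists only when $|\xi'|\lesssim|\xi_3|\simeq|\xi|$) the nondegenerate Hessian gives the decay $|\xi|^{-1}$ (i.e.\ $|\xi|^{-2/2}$ in two dimensions). To handle the amplitude's singularities at $\{x_1=0\}$ and $\{x_2=0\}$ I would further decompose dyadically, $|x_j|\simeq 2^{-k_j}$, apply the stationary/non-stationary analysis on each dyadic block with the scaled amplitude bounds from \eqref{35.2}, and sum the resulting geometric series; the condition $c>1$ is what makes $(c-1)\ind_{\beta_j\ge1}>0$ so that each differentiation of the amplitude costs only $|x_j|^{-1+(c-1)}=|x_j|^{c-2}$, keeping the sums over $k_j$ convergent (the singularity is integrable and subcritical).

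The main obstacle I expect is bookkeeping the interplay between the dyadic decomposition near the coordinate axes and the stationary phase estimate: near the axes the phase $\Psi_\xi$ is still smooth (the singularities are only in the amplitude $f$, since $|x_1|^c+|x_2|^c$ is bounded away from $1$ there), so the Hessian bounds and the integration-by-parts gains are uniform, but one must verify that the number of integrations by parts needed to beat the number of dyadic scales ($O(\log|\xi|)$ of them) is finite and $\xi$-independent — this follows because each IBP gains a full power of $|\xi|^{-1}$ while the scales only contribute at worst a polynomial-in-$\log$ loss, which is then killed by an extra $\e$-power. A secondary technical point is the gradient estimate $|\nabla_\xi \mathcal F_{\R^3}\mu_c(\xi)|\lesssim(1+|\xi|)^{-1}$ (the $f\equiv 1$ case of \eqref{35.1} with the extra $\nabla_\xi$), but differentiating in $\xi$ simply brings down a factor $w_k$ bounded on $\mathbb S_c^2$, i.e.\ it replaces $f$ by $w_k f$ which still satisfies \eqref{35.2} with the same exponents (possibly with $c-1$ improved to $c$), so it is covered by the same argument; this is precisely why the lemma is phrased for general $f$ rather than just for $\mu_c$.
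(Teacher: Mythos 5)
Your overall skeleton matches the paper's: partition of unity from Lemma~\ref{lem:decom}, reduction to a two-dimensional oscillatory integral, and a case split into $|\xi_3|\lesssim|\xi'|$ (non-stationary, handled by integration by parts with a $\delta=|\xi|^{-1}$ excision of the axes) and $|\xi_3|\gtrsim|\xi'|$ (stationary, handled via a dyadic decomposition near the axes). The remark at the end about $\nabla_\xi\mathcal F_{\R^3}\mu_c$ also matches what the paper does in Corollary~\ref{cor:FTest}.

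There is, however, a genuine gap in the heart of Case~2, and it stems from a misconception stated explicitly in your last paragraph: you claim that ``near the axes the phase $\Psi_\xi$ is still smooth (the singularities are only in the amplitude $f$).'' This is false. The phase contains the term $\xi_3\phi(x)$ with $\phi(x)=(1-|x_1|^c-|x_2|^c)^{1/c}$, and for $1<c<2$ the map $x_1\mapsto|x_1|^c$ is only $C^1$ at $x_1=0$: one has $\partial_1^2\phi(x)\simeq|x_1|^{c-2}\to\infty$ as $x_1\to 0$. The $14/48$-cutoffs in Lemma~\ref{lem:decom} only keep one away from the equator $|x|_c^c=1$; they do not avoid the coordinate axes, which remain inside the support. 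Consequently the Hessian of the phase (and, dually, the Gaussian curvature of $\mathbb S_c^2$) is \emph{unbounded} near the axes rather than merely bounded away from zero. After your dyadic rescaling $x\mapsto 2^{-j}\circ x$ the rescaled phase is uniformly smooth on $[1/2,4]^2$, but its Hessian is essentially $\mathrm{diag}(2^{-j_1c},2^{-j_2c})$, with determinant $\simeq 2^{-c(j_1+j_2)}\to 0$ as $j\to\infty$; moreover it is anisotropic whenever $j_1\neq j_2$, so there is no single renormalization that puts it into uniformly nondegenerate form. Standard stationary phase with uniform constants therefore does not directly apply on the dyadic blocks, and ``sum the geometric series'' hides exactly the hard part. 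The paper resolves this by a $TT^*$ (operator van der Corput) argument: it writes $|I_j|^2=\int J_j(u)\,du$, obtains the two-sided estimate $|\nabla\phi_j(x+u)-\nabla\phi_j(x)|\simeq|2^{-jc}\circ u|$ uniformly on the block (this is the delicate step, requiring a large-constant threshold $j_1\vee j_2\ge C_c$), integrates by parts in $x$ three times, and then integrates out $u$. That step has no counterpart in your proposal and is not merely bookkeeping; it is the substitute for the failed uniform stationary phase estimate.
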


It is clear that only the values of $f$ on $\mathbb S_c^2$ are
essential. Therefore we may think that $f$ has compact support and in
practice it suffices to verify the assumption \eqref{35.2} only for
$|x| \le C$, where $C>0$ is a large absolute constant.

\begin{proof}
By symmetry, taking into account Lemma~\ref{lem:decom} and \eqref{intS}, we see that our task 
is reduced to showing that
\begin{align} \label{red8}
\Big|
\int_{\R^2} e(- (x_1 \xi_1 + x_2 \xi_2 + \phi(x) \xi_3))
f(x, \phi(x)) M(x,|x|_c^c) \, dx
\Big|
\lesssim
(1 + |\xi|)^{-1}, \qquad \xi \in \R^3,
\end{align}
where $x=(x_1,x_2)$, $|x|_c^c = |x_1|^c + |x_2|^c$, 
$\phi(x) := (1 - |x_1|^c - |x_2|^c)^{1/c}$ and $M$ is a fixed smooth function on $\mathbb{A}$ such that 
$\supp M(\cdot, |\cdot|_c^c) \subset \{ x \in \R^2 : |x|_c^c \le 1 - 14/48 \}$.
Let us define
\begin{align*} 
F(x)
:=
f(x, \phi(x)) M(x,|x|_c^c), \qquad x \in \mathbb{R}^2.
\end{align*}
It is clear that
$F \in C_c(\R^2) \cap C^\infty((\mathbb{R}\setminus\{0\})^2)$ and
$\supp F \subseteq \{x \in \R^2 : |x|_c^c \le 1 - 14/48 \}$. Further,
one can show that for every fixed $\alpha \in \N^2$ we have
\begin{align} \label{38.1}
|\partial^{\alpha}_x F(x)|
\lesssim
\prod_{j=1}^{2} |x_j|^{-\alpha_j + (c-1) \ind_{\alpha_j \ge 1} }, \qquad x \in (\mathbb{R}\setminus\{0\})^2.
\end{align}
Indeed, taking into account the fact that the assumptions imposed
on $f$ are symmetric it suffices to check \eqref{38.1} for
$x_1, x_2 > 0$ satisfying $|x|_c^c \le 1 - 14/48$. In view of
the Leibniz rule it is  reduced to proving
\begin{align} \label{38.2}
\big|\partial^{\alpha}_x [ f(\cdot, \phi (\cdot) ) ] (x) \big|
& \lesssim
\prod_{j=1}^{2} x_j^{-\alpha_j + (c-1) \ind_{\alpha_j \ge 1} }, 
\qquad x_1, x_2 > 0, \quad |x|_c^c \le 1 - 14/48, \\ \nonumber
\big|\partial^{\alpha}_x [ M(\cdot, |\cdot|_c^c ) ] (x) \big|
& \lesssim
\prod_{j=1}^{2} x_j^{-\alpha_j + (c-1) \ind_{\alpha_j \ge 1} }, \qquad x_1, x_2 > 0, \quad |x|_c^c \le 1 - 14/48.
\end{align}
By a simple induction we obtain the formula
\begin{align*} 
& \partial^{\alpha}_x [ f(\cdot, \phi (\cdot) ) ] (x) \\
& \quad =
\sum_{\beta_1 + \gamma_1 + \delta_1 \le \alpha_1}
\sum_{\beta_2 + \gamma_2 + \delta_2 \le \alpha_2}
\sum_{\theta \le \alpha_1 + \alpha_2}
\partial^{(\beta_1 , \beta_2, \gamma_1 + \gamma_2)} f(x, \phi (x) )
P_{\alpha, \beta, \gamma, \delta, \theta} (x_1, x_1^c, x_2, x_2^c, \phi (x) ) \\
& \qquad \qquad \times
\Big( \prod_{j=1}^{2} x_j^{-\gamma_j - \delta_j + (c-1) \ind_{\gamma_j + \delta_j \ge 1} } \Big)
(1 - |x|_c^c)^{- \theta},
\end{align*}
for some polynomials $P_{\alpha, \beta, \gamma, \delta, \theta}$. This
immediately implies \eqref{38.2}, the second bound can be obtained
similarly.  Now we come back to the proof of \eqref{red8}.  Observe
that this estimate is trivial for $|\xi| \le 1$, therefore we may
assume that $|\xi| \ge 1$. We shall distinguish two cases.

\paragraph{\bf Case 1.} Assume that $|\xi_3| \le \tau |\xi'|$, where
$\xi' = (\xi_1,\xi_2)$ and $\tau>0$ is a constant such that
$2\tau|\nabla \phi (x)| \le 1$ for all $|x|_c^c < 3/4$.   Let
$\rho \colon \R \to [0,1]$ be a smooth function such that
$\rho(x) = 1$ for $x \in [-1,1]$ and $\supp \rho \subset [-2,2]$. To prove \eqref{red8} we take $\delta := |\xi|^{-1} \le 1$ and show that
\begin{align} \label{red9}
\Big|
\int_{\R^2} e\big( - (x \xi' + \phi(x) \xi_3) \big) 
\big(1 - \rho(x_1/\delta) \big)  \big(1 - \rho(x_2/\delta) \big)
F(x) \, dx
\Big|
\lesssim
|\xi'|^{-1}.
\end{align}
Let us define $\widetilde{\phi}(x,\xi) := \frac{x\xi'}{|\xi'|} + \frac{\xi_3}{|\xi'|} \phi(x)$ and observe that
$\nabla_{\!x} \widetilde{\phi}(x,\xi) = \frac{\xi'}{|\xi'|} + \frac{\xi_3}{|\xi'|} \nabla \phi(x)$. Consequently, this gives us 
$|\nabla_{\!x} \widetilde{\phi}(x,\xi) | \ge 1/2$.
Now integrating by parts 
with respect to the first order differential operator 
\begin{align*} 
D_x := \sum_{j=1}^2 \big( \partial_{x_j} \widetilde{\phi}(x,\xi) \big) \partial_{x_j}
\end{align*}
and using \eqref{38.1}
we see that the left-hand side of \eqref{red9} is controlled by
\begin{align*} 
& |\xi'|^{-1}
\int_{\R^2} \bigg( \sup_{1\le k,l \le 2} 
\frac{| \partial_{x_k} \partial_{x_l} \widetilde{\phi}(x,\xi) | }
{|\nabla_{\!x} \widetilde{\phi}(x,\xi) |^2}
\big(1 - \rho(x_1/\delta) \big)  \big(1 - \rho(x_2/\delta) \big)
|F(x)| \\
& \qquad \qquad \qquad +
\frac{1}{ |\nabla_{\!x} \widetilde{\phi}(x,\xi) | }
\Big| \nabla_{\!x} \Big( \big(1 - \rho(x_1/\delta) \big)  \big(1 - \rho(x_2/\delta) \big)
F(x) \Big) \Big|
\bigg) \, dx \\
& \quad \lesssim
|\xi'|^{-1}
\int_{ |x|_c^c < 3/4 } \Big( |x_1|^{c-2} + |x_2|^{c-2} 
+ \delta^{-1} (\ind_{\delta \le |x_1| \le 2 \delta} 
+ \ind_{\delta \le |x_2| \le 2 \delta})
\Big) \, dx \\
& \quad \lesssim
|\xi'|^{-1}.
\end{align*}
This proves \eqref{red9}.

\paragraph{\bf Case 2.} We now assume that $|\xi_3| > \tau |\xi'|$. To prove \eqref{red8} we need to show the estimate 
\begin{align} \label{red10}
\Big|
\int_{\R^2} e\big( - (x \xi' + \phi(x) \xi_3) \big) 
F(x) \, dx
\Big|
\lesssim
|\xi_3|^{-1}.
\end{align}
Using  $\psi_0 \in C_c^{\8}([-4,-1/2] \cup [1/2,4])^2$ we introduce a dyadic partition of unity
such that for
$0 < |x_1|, |x_2| \le 1$ we have 
\begin{align*} 
\sum_{j_1,j_2 \ge 0} \psi_0(2^j\circ x) = 1,
\end{align*}
where 
$2^j\circ x := (2^{j_1} x_1, 2^{j_2} x_2)$ for all $j\in\Z^2$. Now the left-hand
side of \eqref{red10} is dominated by
\begin{align*} 
& \sum_{j_1,j_2 \ge 0}
\Big|
\int_{\R^2} e\big( - (x \xi' + \phi(x) \xi_3) \big) 
F(x) \psi_0(2^j\circ  x) \, dx
\Big| \\
& \quad \le
\sum_{j_1,j_2 \ge 0} 2^{-j_1 - j_2}
\Big|
\int_{\R^2} 
e\big( - \big(x (2^{-j}\circ \xi') + \phi(2^{-j}\circ x) \xi_3 \big) \big) 
F(2^{-j} \circ x) \psi_0 (x) \, dx
\Big|.
\end{align*}
Since $|2^{-j}\circ \xi'| \le |\xi'|$, it is enough to prove for all $j=(j_1, j_2)\in\N^2$ that 
\begin{align} \label{red11}
\Big|
\int_{\R^2} 
e\big( - \big(x \xi' + \phi(2^{-j}\circ x) \xi_3 \big) \big)  
F(2^{-j} \circ x) \psi_0 (x) \, dx
\Big|
\lesssim
|\xi_3|^{-1} 2^{j_1 c/2 + j_2 c/2}.
\end{align}
It is enough to prove \eqref{red11} only for $j\in\N^2$ satisfying
$j_1 \vee j_2 \ge C_c$, where $C_c>0$ is a large absolute constant
depending only on $c\in(1, 2)$. A more precise value of $C_c$ will be specified later. In
the opposite case we have $j_1, j_2 \le C_c$, this means that we only
need to handle finitely many terms in which both the phase and the
amplitude functions are smooth and the determinant of the Hessian of
the phase is non-zero on the support of the amplitude. Therefore, for
 $j_1,j_2\le C_c$ (in fact with an arbitrary $C_c>0$) the estimate \eqref{red11} follows from the general
theory, see \cite[Lemma 4.15, p. 87]{MusSch}. So from now on we
assume that $j_1 \vee j_2 \ge C_c$. By the symmetry
we may assume that $\supp \psi_0 \subset [1/2, 4]^2$.
Let us define
\begin{align*} 
I_j(\xi) 
:= \int_{\R^2} e( - \xi_3 \Phi_j(x,\xi)) F(2^{-j} \circ x)
\psi_0 (x) \, dx,
\end{align*}
where
\begin{align*} 
\Phi_j(x,\xi) := \frac{x \xi'}{\xi_3} + \phi_j(x),\quad \textrm{with}\quad \phi_j(x) := \phi (2^{-j}\circ x).
\end{align*}
Observe that 
\begin{align*} 
|I_j(\xi)|^2 
= \int_{\R^2} J_j(u,\xi) \, du,
\end{align*}
where
\begin{align*} 
J_j(u,\xi) &:= 
\int_{\R^2} 
e\big( \xi_3 ( \Phi_j(x+u,\xi) - \Phi_j(x,\xi) ) \big)
\Psi_j (x,u) \, dx, \\
\Psi_j (x,u) &:= 
\psi_0 (x+u) \overline{F\big(2^{-j}\circ (x+u) \big)}
\psi_0 (x) F(2^{-j}\circ x).
\end{align*}
Our aim is to show that 
\begin{align} \label{red12}
|J_j(u,\xi)| 
\lesssim 
|\xi_3|^{-3} |2^{-jc}\circ u|^{-3}, \qquad j_1 \vee j_2 \ge C_c.
\end{align}
Notice that combining \eqref{red12} with the trivial bound 
$|J_j(u,\xi)| \lesssim 1$, we see that the square of the left-hand side in \eqref{red11} is controlled by
\begin{align*} 
|I_j(\xi)|^2 
& 
\lesssim
\int_{\R^2} \big( |\xi_3|^{-3} |2^{-jc}\circ u|^{-3} \big) \wedge 1 \, du\\
&=
|\xi_3|^{-2} 2^{j_1 c + j_2 c } 
\int_{\R^2} |u|^{-3} \wedge 1 \, du \\
& \lesssim
|\xi_3|^{-2} 2^{j_1 c + j_2 c }. 
\end{align*}

Therefore we have reduced the proof of Lemma \ref{lem:FTest} to showing \eqref{red12}.
Let us define
\begin{align*} 
a_j(x,u) := \frac{\nabla_{\!x} \Phi_j(x+u,\xi) - \nabla_{\!x} \Phi_j(x,\xi) }
{|\nabla_{\!x} \Phi_j(x+u,\xi) - \nabla_{\!x} \Phi_j(x,\xi)|^2}
= \frac{b_j(x,u)}{|b_j(x,u)|^2},
\end{align*}
where
\[
b_j(x,u):=\nabla_{\!x} \phi_j(x+u) - \nabla_{\!x} \phi_j(x). 
\]
We shall write  $a_{j}(x,u)=(a_{j,1}(x,u), a_{j,2}(x,u))$, $b_{j}(x,u)=(b_{j,1}(x,u),b_{j,2}(x,u))$.
Further, we consider the following differential operators
\begin{align*} 
L_j f(x) & := \frac{1}{\dpi \xi_3} (a_j(x,u) \nabla_{\!x} ) f(x)
=
\frac{1}{\dpi \xi_3} \sum_{k=1}^2 a_{j,k} (x,u) \partial_{x_k} f(x), \\
L_j^* f(x) & = 
- \frac{1}{\dpi \xi_3} 
\sum_{k=1}^2 \partial_{x_k} \big( a_{j,k} (x,u) f(x) \big).
\end{align*}
We shall write $L_{j,t}$ and $L_{j,t}^*$ to indicate that $L_{j}$ and $L_{j}^*$, respectively, act on a variable $t$.
Notice that $L_j^*$ is an adjoint operator to $L_j$ and we have the identity
\begin{align*} 
L_{j,x} \big( e \big( \xi_3 ( \Phi_j(x+u,\xi) - \Phi_j(x,\xi) ) \big)  \big)
=
e \big( \xi_3 ( \Phi_j(x+u,\xi) - \Phi_j(x,\xi) ) \big). 
\end{align*}
Therefore 
\begin{align} \label{est18}
|J_j(u,\xi)| \le 
\int_{\R^2} 
\big| (L_{j,x}^*)^3
\Psi_j (x,u) \big| \, dx.
\end{align}
We now show that 
\begin{align} \label{est19}
|b_j(x,u)| \simeq 
|2^{-jc}\circ u|, \qquad j_1 \vee j_2 \ge C_c, \quad (x,u) \in \supp \Psi_j.
\end{align}
Letting $\g(t) := 2^{-j}\circ x + t2^{-j}\circ u$ for $t \in [0,1]$, we see that for $k\in[2]$ we have
\begin{align*} 
b_{j,k} (x,u) 
& = 2^{-j_k} \big( (\partial_k \phi) (2^{-j}\circ (x+u)) - (\partial_k \phi) (2^{-j} \circ x) \big) \\
& = 2^{-j_k} \int_0^1 \Big( (\partial_k \partial_1 \phi) ( \g(t) ) \g'_1(t) 
+ (\partial_k \partial_2 \phi) ( \g(t) ) \g'_2(t)
\Big) \, dt.
\end{align*}
Therefore simple computations produce
\begin{align*} 
|b_{j,1} (x,u) |
& \simeq 2^{-j_1} \Big| \int_0^1 \big( 1- |\g(t)|_c^c \big)^{1/c - 2}
\Big( \g_1(t)^{c-2} (1 - \g_2(t)^c) 2^{-j_1} u_1 
+ (\g_1(t) \g_2(t))^{c-1} 2^{-j_2} u_2 
\Big) \, dt \Big|,  \\
|b_{j,2} (x,u) |
& \simeq 2^{-j_2} \Big| \int_0^1 \big( 1- |\g(t)|_c^c \big)^{1/c - 2}
\Big( 
(\g_1(t) \g_2(t))^{c-1} 2^{-j_1} u_1
+ \g_2(t)^{c-2} (1 - \g_1(t)^c) 2^{-j_2} u_2
\Big) \, dt \Big|.
\end{align*}
Since $(x,u) \in \supp \Psi_j$ we see that
$\g(t) \in 2^{-j} \circ [1/2,4]^2$ and $|\g(t)|_c^c < 3/4$, $t\in [0,1]$.
Using this we can easily obtain the estimate $(\lesssim)$ in \eqref{est19}. Indeed, we have
\begin{align*} 
|b_j (x,u) |
\lesssim 
\int_0^1 \Big( 2^{-j_1 c} |u_1| + 2^{-j_1 c} 2^{-j_2 c} |u_2| 
+ 2^{-j_1 c} 2^{-j_2 c} |u_1| + 2^{-j_2 c} |u_2|
\Big) \, dt
\simeq
|2^{-jc} \circ u|.
\end{align*}
Therefore, it suffices to prove $(\gtrsim)$ in \eqref{est19}.  Here is
the place where we adjust the value of the constant $C_c>0$.  Let $\kappa>0$ be a number which
will be chosen in a moment.  If
$2^{-j_1 c} |u_1| \le \kappa 2^{-j_2 c} |u_2|$, then using the fact
that $1 - \g_1(t)^c \ge 1/4$ one can check that
\begin{align}
\label{eq:22}
(\g_1(t) \g_2(t))^{c-1} 2^{-j_1} |u_1|
\le (\kappa C_c^1 2^{-cj_2}) 
\g_2(t)^{c-2} (1 - \g_1(t)^c) 2^{-j_2} |u_2|/2
\end{align}
for some absolute constant $C_c^1>0$. In the opposite case, if 
$2^{-j_1 c} |u_1| > \kappa 2^{-j_2 c} |u_2|$ we have a similar bound 
\begin{align} 
\label{eq:23}
(\g_1(t) \g_2(t))^{c-1} 2^{-j_2} |u_2|
\le(\kappa^{-1} C_c^2 2^{-cj_1}) 
\g_1(t)^{c-2} (1 - \g_2(t)^c) 2^{-j_1} |u_1|/2
\end{align}
for some absolute constant $C_c^2>0$. We now require
$\kappa^{-1} C_c^2 2^{-cj_1}\le 1$ and
$\kappa C_c^1 2^{-cj_2}\le 1$. It suffices to take $C_c>0$ such
that $\max\{C_c^1, C_c^2\}^2\le 2^{c C_c}$; and set
$\kappa:=\max\{C_c^1, C_c^2\}$ if $j_2\ge C_c$ or
$\kappa:=\max\{C_c^1, C_c^2\}^{-1}$ if $j_1\ge C_c$.  With this
choice of $\kappa$ and $C_c$ we see that \eqref{est19} is proved, since by \eqref{eq:22}, we obtain
\begin{align*} 
|b_{j,2} (x,u) |
\simeq 
2^{-j_2} |u_2| \int_0^1 
\g_2(t)^{c-2} (1 - \g_1(t)^c) 2^{-j_2} \, dt 
\simeq 
2^{-j_2 c} |u_2|
\simeq 
|2^{-j c} \circ u|,
\end{align*}
whereas by \eqref{eq:23} we conclude
\begin{align*} 
|b_{j,1} (x,u) |
\simeq 
2^{-j_1} |u_1| \int_0^1 
\g_1(t)^{c-2} (1 - \g_2(t)^c) 2^{-j_1} \, dt 
\simeq 
2^{-j_1 c} |u_1|
\simeq 
|2^{-j c}\circ u|.
\end{align*}

Observe that for any $\a\in \N^2$ and $k\in[2]$ we also have the identity
\begin{align*} 
\partial^{\a}_x b_{j,k} (x,u)  
=
2^{-j_1 \a_1 -j_2 \a_2 - j_k} 
\big( (\partial^{\a+e_k} \phi) (2^{-j}\circ (x+u) ) 
- (\partial^{\a+e_k} \phi) (2^{-j} \circ x )  \big).
\end{align*}
Proceeding analogously and using the estimate 
\begin{align*} 
|\partial_1^k \partial_2^l \phi (x)  |
\lesssim_{k,l}
x_1^{-k+c \ind_{k \ge 1}} x_2^{-l+c \ind_{l \ge 1}},
\qquad x_1,x_2 > 0, \quad x_1^c + x_2^c \le 3/4,
\end{align*}
we can easily show that for every  $\a \in \N^2$ we have
\begin{align} \label{est20}
| \partial^{\a}_x b_{j} (x,u)  |
\lesssim_{\a} |2^{-jc}\circ u|, 
\qquad j_1, j_2 \ge 0, \quad (x,u) \in \supp \Psi_j.
\end{align}
We now show that for any $\a \in \N^2$ the following estimate holds
\begin{align} \label{est21}
| \partial^{\a}_x a_{j} (x,u)  |
\lesssim_{\a} |2^{-jc} \circ u|^{-1}, 
\qquad j_1 \vee j_2 \ge C_c, \quad (x,u) \in \supp \Psi_j.
\end{align}
Using the identity
\begin{align*} 
\partial_x^{\a} a_{j} (x,u)
=
\sum_{\b \le \a} \binom{\a}{\b} \partial^{\a - \b}_x b_{j} (x,u)
\partial^{\b}_x \Big( \frac{1}{|b_{j} (x,u)|^2} \Big) 
\end{align*}
and estimate \eqref{est20} the task is reduced to showing that for a fixed $\a \in \N^2$ we have 
\begin{align*} 
\Big|
\partial^{\a}_x \Big( \frac{1}{|b_{j} (x,u)|^2} \Big) 
\Big|
\lesssim_{\a} |2^{-jc}\circ u|^{-2}, 
\qquad j_1 \vee j_2 \ge C_c, \quad (x,u) \in \supp \Psi_j.
\end{align*}
This, however, follows from estimates \eqref{est19}, \eqref{est20} and the fact that 
$\partial^{\a}_x ( |b_{j} (x,u)|^{-2} )$
is a linear combination of the terms of the form
\begin{align*} 
|b_{j} (x,u)|^{-2(1 + \sum_{\b \le \a } p_{\b} )}  
\prod_{\b \le \a } \big(\partial^{\b}_x |b_{j} (x,u)|^2 \big)^{p_{\b}},
\end{align*}
where the indices run over the set of all non-negative integers
$p_{\b}$, $\b \in \N^2$, $\b \le \a$, satisfying
$\sum_{\b \le \a } p_{\b} \le |\a|$.  The latter can be easily
justified by using the induction argument. Now by \eqref{est18} and
\eqref{est21} we see that \eqref{red12} will be proven if we show that
for every fixed $\a \in \N^2$ one has
\begin{align*} 
|\partial^{\a}_x \Psi_j(x,u)| 
\lesssim_{\a} 1,
\qquad j_1,j_2 \ge 0,
\end{align*}
which is a straightforward consequence of the Leibniz rule and the estimates \eqref{38.1}.
The proof of \eqref{red12} and consequently  Lemma~\ref{lem:FTest} is completed.
\end{proof}

As a simple consequence of the previous lemma we obtain 
Corollary \ref{cor:FTest}, which will be used in the proof of Theorem \ref{thm:erg}.

\begin{cor} \label{cor:FTest}
Assume that $c \in (1,2)$ is fixed and
let $\Upsilon_c \colon \R \to \R$ be defined as follows
\begin{align}  \label{def:Up}
\Upsilon_c (x) := (|x|^c)' := c \, \sgn(x) |x|^{c-1}, \qquad x \in \R.
\end{align} 
Further, for $\alpha \in \{0,1\}^3$ let us define $h_{c,\alpha} \colon \R^3 \to \R$ by setting
\begin{align}  \label{def:hca}
h_{c,\alpha} (x) := \prod_{j=1}^3 \Upsilon_c (x_j)^{\alpha_j}, \qquad x \in \R^3.
\end{align} 
Then for every  $h \in C^\infty (\R^3)$ the function
$\R^3 \ni x \mapsto h(x) h_{c,\alpha} (x)$ satisfies the assumptions
of Lemma~\ref{lem:FTest}, and consequently we have
\begin{align} \label{II.2}
\Big| \int_{\mathbb S_c^2} h(w) h_{c,\alpha} (w) e(-w \cdot \xi) \, d\mu_c(w)  \Big|
\lesssim
(1 + |\xi|)^{-1}, \qquad \xi \in \R^3.
\end{align}
\end{cor}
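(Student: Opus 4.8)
The plan is to reduce the corollary to a direct application of Lemma~\ref{lem:FTest}. Since Lemma~\ref{lem:FTest} gives exactly the conclusion \eqref{II.2} once we verify that $f(x) := h(x) h_{c,\alpha}(x)$ belongs to $C(\mathbb{R}^3) \cap C^\infty((\mathbb{R}\setminus\{0\})^3)$ and satisfies the derivative bound \eqref{35.2}, the whole task is to check these two facts for the explicit function $h_{c,\alpha}$ multiplied by a smooth $h$.

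First I would record the elementary one-variable facts about $\Upsilon_c(x) = c\,\sgn(x)|x|^{c-1}$. Since $c \in (1,2)$, we have $c - 1 \in (0,1)$, so $\Upsilon_c$ is continuous on $\mathbb{R}$ (it vanishes at $0$) and smooth on $\mathbb{R}\setminus\{0\}$. A simple induction shows that for every $k \ge 1$ there is a constant $C_{k,c}$ with $|\Upsilon_c^{(k)}(x)| \le C_{k,c} |x|^{c-1-k}$ for $x \ne 0$; indeed each differentiation pulls down a factor proportional to $(c-1-j)$ and reduces the exponent by one, and the sign factor is locally constant away from the origin. Writing this in the form adapted to \eqref{35.2}: for $k \ge 1$, $|\Upsilon_c^{(k)}(x)| \lesssim_k |x|^{-k + (c-1)}$ with no correction needed for $k = 0$ once we note $|\Upsilon_c(x)| \lesssim |x|^{c-1} \le |x|^{0}$ on the bounded region $|x|\le C$ (recall the remark after Lemma~\ref{lem:FTest} that it suffices to check \eqref{35.2} for $|x| \le C$). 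Thus on $|x_j|\le C$ we obtain for every $\beta_j \in \mathbb{N}$ the bound $|\partial_j^{\beta_j} \Upsilon_c(x_j)^{\alpha_j}| \lesssim |x_j|^{-\beta_j + (c-1)\ind_{\beta_j \ge 1}}$ when $\alpha_j = 1$ (the case $\alpha_j = 0$ being trivial since then the factor is the constant $1$); here one uses the Leibniz rule if $\alpha_j=1$ but since $\alpha_j \in \{0,1\}$ no product of $\Upsilon_c$-factors in a single variable actually arises.

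Next I would assemble these one-variable estimates into the three-variable bound \eqref{35.2}. Because $h_{c,\alpha}(x) = \prod_{j=1}^3 \Upsilon_c(x_j)^{\alpha_j}$ is a product of functions of single variables, $\partial^\beta h_{c,\alpha}(x) = \prod_{j=1}^3 \partial_j^{\beta_j}\big(\Upsilon_c(x_j)^{\alpha_j}\big)$, so the estimate factors over $j$ and yields $|\partial^\beta h_{c,\alpha}(x)| \lesssim \prod_{j=1}^3 |x_j|^{-\beta_j + (c-1)\ind_{\beta_j \ge 1}}$ for $x \in (\mathbb{R}\setminus\{0\})^3$ with $|x| \le C$. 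Multiplying by the smooth $h$: by the Leibniz rule $\partial^\beta(h\, h_{c,\alpha}) = \sum_{\gamma \le \beta} \binom{\beta}{\gamma} \partial^{\beta-\gamma} h \cdot \partial^\gamma h_{c,\alpha}$, and since $\partial^{\beta-\gamma}h$ is bounded on the compact region while $|\partial^\gamma h_{c,\alpha}(x)| \lesssim \prod_j |x_j|^{-\gamma_j + (c-1)\ind_{\gamma_j\ge1}} \le \prod_j |x_j|^{-\beta_j + (c-1)\ind_{\beta_j \ge 1}}$ (using $\gamma_j \le \beta_j$ and $|x_j| \le C$ to absorb the discrepancy in exponents into a constant, noting $c - 1 > 0$), the required bound \eqref{35.2} follows. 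Continuity of $h\,h_{c,\alpha}$ on $\mathbb{R}^3$ and smoothness off the coordinate hyperplanes are clear. Then Lemma~\ref{lem:FTest} applies verbatim and gives \eqref{II.2}.

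I do not expect a serious obstacle here; the only mild subtlety is bookkeeping the exponents when passing from $\gamma$ to $\beta$ in the Leibniz expansion and confirming that restricting to $|x| \le C$ (legitimate since only values on $\mathbb{S}_c^2$ matter) lets one absorb the positive power $|x_j|^{c-1}$ into constants rather than having to track it. In particular one should be slightly careful that the correction term $(c-1)\ind_{\beta_j \ge 1}$ in \eqref{35.2} is present precisely because $\Upsilon_c$ contributes an extra factor $|x_j|^{c-1}$ relative to a pure monomial derivative, and this matches exactly the output of differentiating $\Upsilon_c(x_j)^{\alpha_j}$ once or more; if $\beta_j = 0$ the factor $\Upsilon_c(x_j)^{\alpha_j}$ is itself bounded (it is continuous and we are on a compact set), which is why no correction is needed there. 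This completes the verification of the hypotheses of Lemma~\ref{lem:FTest}, and \eqref{II.2} is immediate.
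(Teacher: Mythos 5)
Your proof is correct and fills in precisely the verification that the paper leaves implicit when it calls Corollary~\ref{cor:FTest} ``a simple consequence of the previous lemma.'' The key points are all in order: $\Upsilon_c$ is continuous at the origin because $c-1>0$; for $k\ge 1$ the derivative bound $|\Upsilon_c^{(k)}(x)|\lesssim |x|^{c-1-k}$ follows by direct differentiation of $c\,\sgn(x)|x|^{c-1}$ off the origin; since $\alpha_j\in\{0,1\}$ each factor $\Upsilon_c(x_j)^{\alpha_j}$ is either $1$ or $\Upsilon_c(x_j)$, so the product structure of $h_{c,\alpha}$ lets the estimate factor over coordinates; and the Leibniz expansion against a smooth $h$ works because, after invoking the remark following Lemma~\ref{lem:FTest} to restrict to a bounded region, the exponent discrepancy $\prod_j|x_j|^{-\gamma_j+(c-1)\ind_{\gamma_j\ge1}}\lesssim\prod_j|x_j|^{-\beta_j+(c-1)\ind_{\beta_j\ge1}}$ for $\gamma\le\beta$ reduces to checking that $\beta_j-\gamma_j+(c-1)(\ind_{\gamma_j\ge1}-\ind_{\beta_j\ge1})\ge 0$ in every case, which you implicitly handle (the only nontrivial case $\gamma_j=0$, $\beta_j\ge1$ gives exponent $\beta_j-(c-1)>0$ since $c<2$). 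Nothing is missing.
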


We now close this section by giving a proof of Theorem
\ref{thm:contsph}, where the $L^p(\R^3)$ bounds for $r$-variations
corresponding to the spherical averages $\mathcal A_t^c$ are studied.

\begin{proof}[Proof of Theorem \ref{thm:contsph}]
If $p=\infty$ there is nothing to prove and the estimates
\eqref{est17} and \eqref{eq:26} follow in this case.  We now prove \eqref{eq:24}
for $p\in (3/2, 4)$, and \eqref{eq:25} for $p\in(1, \infty)$ with any
$r>2$. These in turn will imply maximal estimates \eqref{est17} and \eqref{eq:26} in the same ranges of $p$,
since $r$-variations dominate the maximal functions. In order, to
obtain \eqref{est17} for all $p\in(3/2, \infty]$ we will simply interpolate maximal estimate \eqref{est17}  for $p\in(3/2, 4)$  with
\eqref{est17} at $p=\infty$. To prove \eqref{eq:24} and \eqref{eq:25} we observe that for all $\xi \in \R^3$ we have
\begin{align}
\label{eq:27}
|\mathcal{F}_{\R^3} \mu_c(\xi)-\mathcal{F}_{\R^3} \mu_c(0)|&\lesssim |\xi|,\\
\label{eq:28}
|\mathcal{F}_{\R^3} \mu_c(\xi)|&\lesssim (1+|\xi|)^{-1},\\
\label{eq:29}
|\langle\xi, \nabla \mathcal{F}_{\R^3}\mu_c (\xi)\rangle |&\lesssim 1,
\end{align}
where \eqref{eq:28} and \eqref{eq:29} immediately follow from Lemma
\ref{lem:FTest}, see also \eqref{eq:fourier}.  Now using positivity of
the operators $\mathcal A_t^c$ and combining \eqref{eq:27},
\eqref{eq:28} and \eqref{eq:29} with the standard
Littlewood--Paley theory and appealing to \cite[Theorem 2.14,
p. 537]{MSZ1} and \cite[Theorem 2.39, p. 543]{MSZ1} we obtain
\eqref{eq:24} for $p\in (3/2, 4)$ and $r>2$.  To prove \eqref{eq:25}
for $p\in (1, \infty)$ and $r>2$ we proceed much the same way as
before, but we only use \eqref{eq:27} and
\eqref{eq:28} with the standard
Littlewood--Paley theory and invoke \cite[Theorem 2.14, p. 537]{MSZ1}.

We now assume that $p\in[1, 3/2]$ and we show that estimate
\eqref{est17} does not hold in this range. Observe that 
\begin{equation}
\label{eq:equiv}
\sup_{t>0}\mathcal A_t^c f(x)= \mu_c(\mathbb S_c^2)\mathbf{A}_c f(x), \qquad x\in \R^3,\quad f\in C_c^\infty (\R^3), \quad f\ge0;
\end{equation}
where $\mathbf{A}_c$ is a maximal function over annuli defined as
\begin{equation*}
\mathbf{A}_c f(x):=\sup_{0<a<b}\frac{1}{|\mathbb A_{a,b}|}\Big|\int_{\mathbb A_{a,b}}f(x-y) dy\Big|,
\end{equation*}
with $\mathbb A_{a,b}:=\{y\in\R^3:a<|y|_c<b\}$.
To prove \eqref{eq:equiv} note first that if $f\in C_c^\infty (\R^3)$ and $f\ge0$, then for any $r>0$ and $x\in\R^3$ we have
\begin{align*}
\frac{1}{ \mu_c\left(\mathbb S_c^2\right)} \int_{\mathbb S_c^2} f(x -r y) \, d\mu_c(y)
=\lim_{h\rightarrow 0^+} \frac{1}{|\mathbb A_{r-h,r+h}|}\int_{\mathbb A_{r-h,r+h}}f(x-y) dy\le \mathbf{A}_c f(x),
\end{align*}
therefore taking supremum over $r>0$ we conclude $\frac{1}{ \mu_c\left(\mathbb S_c^2\right)} \sup_{t>0}\mathcal A_t^c f(x)\le \mathbf{A}_c f(x).$ 

On the other hand, using \eqref{polar} we see for any $0<a<b$ and $x\in\R^3$ that
\begin{align*}
\frac{1}{|\mathbb A_{a,b}|}\int_{\mathbb A_{a,b}}f(x-y) dy
&=\frac{1}{|\mathbb A_{a,b}|}\int_a^{b} r^2 \int_{\mathbb S_c^2}f(x - r y) \, d\mu_c(y) \, dr\\
&\le \frac{1}{|\mathbb A_{a,b}|}\int_a^{b} r^2 \, dr\, \sup_{t>0}\mathcal A_t^c f(x)= \frac{1}{ \mu_c\left(\mathbb S_c^2\right)} \sup_{t>0}\mathcal A_t^c f(x),
\end{align*}
and taking supremum over all annuli proves \eqref{eq:equiv}.

Consider $f\in C_c^\infty (\R^3)$ with the property $\mathbbm{1}_{\{y\in\R^3:|y|_c<1/2\}}(x)\le f(x)\le\mathbbm{1}_{\{y\in\R^3:|y|_c<1\}}(x)$. Then for $x\in\R^3$ such that $|x|_c>100$ we have
\begin{align*}
\mathbf{A}_c f(x)\ge\frac{1}{|\mathbb A_{|x|_c-1,|x|_c+1}|}\int_{\mathbb A_{|x|_c-1,|x|_c+1}}f(x-y) dy
\simeq \frac{|\{y\in\R^3: |y|_c<1/2\}|}{|\mathbb A_{|x|_c-1,|x|_c+1}|}\simeq |x|_c^{-2}.
\end{align*}
Therefore using $\eqref{polar}$ we get an estimate
\begin{align*}
\big\| \mathbf{A}_c f\big\|_{L^p(\R^3)}^p
\gtrsim \int_{\{y\in\R^3:|x|_c>100\}}|x|_c^{-2p} dx\simeq \int_{100}^\infty r^{-2p+2} dr
\end{align*}
and the last integral converges if and only if $p>3/2$.
\end{proof}

\section{Proof of Theorem \ref{thm:erg}. Ergodic theorems}\label{sec:erg}

We now establish pointwise convergence (ii) from Theorem
\ref{thm:erg}. We fix $c\in(1, 11/10)$ and we shall abbreviate
$A_{\lambda}^c$ to $A_{\lambda}$. We may assume that $f \in L^1(X) \cap
L^\infty(X)$, since from Theorem \ref{thm:L2} and the Calder{\'o}n transference principle \cite{B3} we deduce that for every $f\in
L^p(X)$ we have
\begin{align}
\label{eq:16}
\big\|\sup_{\lambda\in\Z_+}|A_{\lambda} f|\big\|_{L^p(X)}\lesssim \|f\|_{L^p(X)} \quad\text{ if } \quad (11-4c)/(11-7c) < p \le\8;
\end{align}
and
\begin{align}
\label{eq:17}
\big\|\sup_{n\in\N}|A_{2^n} f|\big\|_{L^p(X)}\lesssim \|f\|_{L^p(X)} \quad\text{ if } \quad 1 < p \le\8.
\end{align}
Using $\sigma_{\lambda}^{\mathfrak M}$ from \eqref{eq:18} and $\sigma_{\lambda}^{\mathfrak m}$ from \eqref{eq:19} we may write
$A_{\lambda}f(x):=A_{\lambda}^{\mathfrak M}f(x)+A_{\lambda}^{\mathfrak m}f(x)$, where
\begin{align*}
A_{\lambda}^{\mathfrak M}f(x):=\frac{1}{r_c(\lambda)}\sum_{n\in\Z^3}\sigma_{\lambda}^{\mathfrak M}(n)f(T^nx),\\
A_{\lambda}^{\mathfrak m}f(x):=\frac{1}{r_c(\lambda)}\sum_{n\in\Z^3}\sigma_{\lambda}^{\mathfrak m}(n)f(T^nx).
\end{align*}
We now show that both limits
$\lim_{\lambda\to\infty}A_{\lambda}^{\mathfrak M}f(x)$ and
$\lim_{\lambda\to\infty}A_{\lambda}^{\mathfrak m}f(x)$ exist almost
everywhere for any $f \in L^1(X) \cap L^\infty(X)$, which is dense in $L^p(X)$ for any $p\in[1, \infty)$. This in turn, in
view of \eqref{eq:16} and \eqref{eq:17}, by a standard density
argument establishes the desired claim in (ii) of Theorem
\ref{thm:erg}. The proof will be split into five steps.

\paragraph{{\bf Step 1.}}
We first show that $\lim_{\lambda\to\infty}A_{\lambda}^{\mathfrak m}f(x) = 0$ almost everywhere on $X$. Indeed, suppose for a contradiction that 
$\limsup_{\lambda \to \infty} |A_{\lambda}^{\mathfrak m} f(x)| > 0$ on a set of positive measure. Then there is $\varepsilon > 0$ such that 
\begin{align*} 
\nu \big(\{x \in X : \limsup_{\lambda \to \infty} |A_{\lambda}^{\mathfrak m} f(x)| > \varepsilon \}\big) > \varepsilon.
\end{align*} 
Since $\nu$ is $\sigma$-finite, by a standard argument, we can find a
rapidly increasing sequence $(\lambda_k)_{k \in \Z_+}$, satisfying
$\lambda_{k+1} \ge 2 \lambda_k$ and such that
\begin{align*} 
\nu \big(\{x \in X : \sup_{\lambda_k \le \lambda \le \lambda_{k+1}}|A_{\lambda}^{\mathfrak m}f (x)|  > \varepsilon \}\big) > \varepsilon, \qquad k \in \Z_+, 
\end{align*}
This implies that 
\begin{align}
\label{id:3}
K^{-1} \sum_{k=1}^K \| \sup_{\lambda_k \le \lambda \le \lambda_{k+1}}|A_{\lambda}^{\mathfrak m}f|\|_{L^2(X)}^2 > \varepsilon^3, 
\qquad K \in \Z_+.
\end{align} 
We now claim that there is $\delta > 0$ such that 
\begin{align} \label{id:2}
\| \sup_{\lambda \ge N} |A_{\lambda}^{\mathfrak m} f | \|_{L^2(X)}
\lesssim
N^{-\delta} \| f \|_{L^2(X)}, \qquad N \in \Z_+,
\end{align} 
which contradicts \eqref{id:3}, since $\lim_{K\to\infty}K^{-1} \sum_{k=1}^K\lambda_k^{-\delta}=0$. In order to justify \eqref{id:2} it suffices to prove that 
\begin{align*} 
\| \sup_{N \le \lambda \le 2N} |A_{\lambda}^{\mathfrak m} f | \|_{L^2(X)}
\lesssim
N^{-\delta} \| f \|_{L^2(X)}, \qquad N \in \Z_+.
\end{align*}
This, however, is a consequence of Theorem~\ref{thm:minor}, the fact that 
$r_c(\lambda) \simeq \lambda^{3/c - 1}$, upon invoking the Calder\'on transference principle \cite{B3}. 

\paragraph{{\bf Step 2.}}
Now we split the operator $A_{\lambda}^{\mathfrak M} f$. Let $\mathfrak n_c:=\big(\frac{c}{2}\big)^3\frac{\Gamma(3/c)}{\Gamma(1/c)^3 }$, and observe that
\begin{align*}
A_{\lambda}^{\mathfrak M} f(x)
:= 
B^1_{\lambda} f (x)
+
\mathfrak n_cB^2_{\lambda} f (x),
\end{align*}
where
\begin{align*}
B^j_{\lambda} f (x)
&:= 
\sum_{n \in \Z^3} K_{\lambda}^j (n)
f (T^n x), \qquad \lambda \in \Z_+, \quad j \in [2], 
\end{align*}
and for $\lambda \in \Z_+$, $n \in \Z^3$, we put
\begin{align*}
K_{\lambda}^1 (n)
&:= 
\lambda^{\kappa} \eta \Big( \frac{n}{\lambda^{1/c}} \Big) 
\bigg[ 
\frac{1}{r_c(\lambda)} \mathcal{F}^{-1}_{\R} \psi \big( \lambda^{\kappa} (Q (n) - \lambda) \big) \\
& \qquad \qquad \qquad \qquad \qquad \qquad 
-
\mathfrak n_c
\lambda^{-3/c + 1}
\mathcal{F}^{-1}_{\R} \psi \big( \lambda^{\kappa} (|n|_c^c - \lambda) \big)
\bigg],\\
K_{\lambda}^2 (n)
&:= 
\lambda^{-3/c + 1}
\lambda^{\kappa} \eta \Big( \frac{n}{\lambda^{1/c}} \Big) 
\mathcal{F}^{-1}_{\R} \psi \big( \lambda^{\kappa} ( |n|_c^c - \lambda) \big).
\end{align*}
We show that for each $j \in [2]$ the limit
$\lim_{\lambda\to\infty}B^j_{\lambda} f (x)$ exists almost everywhere
on $X$. For this purpose, we prove that for sufficiently large $r>2$ we have
\begin{align}
\label{eq:20}
\big\| V^r(B_{\lambda}^1 f : \lambda \in \Z_+) \big\|_{L^r(X)}\lesssim \|f\|_{L^r(X)},
\end{align}
and we also prove that for every $r>2$ we have
\begin{align}
\label{eq:21}
\big\| V^r(B_{\lambda}^2 f : \lambda \in \Z_+) \big\|_{L^2(X)}\lesssim \|f\|_{L^2(X)}.
\end{align}
Once \eqref{eq:20} and \eqref{eq:21} are established the proof of (ii)
in Theorem \ref{thm:erg} follows, since variational seminorms imply
pointwise almost everywhere convergence for the underlying sequences.
These estimates for \eqref{eq:20} and \eqref{eq:21} will be proved in
the next two steps.

\paragraph{{\bf Step 3.}}
To prove \eqref{eq:20}, by the Calder\'on transference principle \cite{B3}, it suffices to show that 
for sufficiently large $r$ we have 
\begin{align*} 
\big\| V^r(K_{\lambda}^1 * g : \lambda \in \Z_+) 
\big\|_{\ell^r(\Z^3)}
\lesssim
\| g \|_{\ell^r(\Z^3)}, \qquad g \in \ell^r(\Z^3).
\end{align*}
Since we have $V^r(K_{\lambda}^1 * g (m) : \lambda \in \Z_+) \lesssim
\| K_{\lambda}^1 * g (m) \|_{\ell^r(\lambda)}$, $m \in \Z^3$, we conclude
\begin{align*} 
\big\| V^r(K_{\lambda}^1 * g : \lambda \in \Z_+) 
\big\|_{\ell^r(\Z^3)}
& \lesssim
\Big(
\sum_{\lambda \in \Z_+}  \| K_{\lambda}^1 * g \|_{\ell^r(\Z^3)}^r
\Big)^{1/r} \\
& \lesssim 
\Big(
\sum_{\lambda \in \Z_+}  \| K_{\lambda}^1 \|_{\ell^1(\Z^3)}^r
\Big)^{1/r}
\| g \|_{\ell^r(\Z^3)}, \qquad g \in \ell^r(\Z^3).
\end{align*}
Therefore the task is reduced to showing that there is $\delta> 0$ such that
\begin{align} \label{id:5}
\| K_{\lambda}^1 \|_{\ell^1(\Z^3)}
\lesssim
\lambda^{-\delta}, \qquad \lambda \in \Z_+.
\end{align}
Using Corollary~\ref{cor:asym} and the mean value theorem  we may write
\begin{align} \label{id:6}
|K_{\lambda}^1 (n)|
\lesssim_{\eps}
\lambda^{\kappa}
\lambda^{-3/c + 1 - (9 - 8c)/(5c) + \eps}
\omega_{\lambda}(n)
+
\lambda^{2\kappa}
\lambda^{-3/c + 1}
\omega_{\lambda}(n),
\end{align}
for every $\eps > 0$, 
where $\omega_{\lambda}$ is defined in \eqref{def:omega}. In the above estimate we use the fact that $|Q(n) - |n|_c^c| \le 3$ and the estimate 
\begin{align*} 
1 + \lambda^{\kappa}|\theta - \lambda|
\simeq
1 + \lambda^{\kappa}||n|_c^c - \lambda|,
\end{align*}
uniformly in $\lambda \ge 1$, $n \in \Z^3$ for $\theta$ being a convex combination of $Q(n)$ and $|n|_c^c$.
Further, using Lemma~\ref{kernel}, polar decomposition \eqref{polar} and then changing the variable $r \mapsto \lambda^{1/c} t$ we obtain uniformly in $\lambda \ge 1$ that
\begin{align} \nonumber
\sum_{n \in \Z^3} \omega_{\lambda}(n)
& \lesssim
\int_{\R^3} \omega_{\lambda}(x) \, dx
\lesssim 
\int_0^\infty \frac{r^2 dr}{1 +  \big(\lambda^{\kappa} | r^c - \lambda|\big)^{10}} \\ \nonumber
& =
\lambda^{3/c} \int_0^\infty \frac{t^2 dt}{1 +  \big(\lambda^{\kappa+1} | t^c - 1|\big)^{10}} \\ \nonumber
& \lesssim
\lambda^{3/c} \int_0^\infty \frac{dt}{1 +  \big(\lambda^{\kappa+1} | t - 1|\big)^{8}} \\ \label{id:14}
& \lesssim 
\lambda^{3/c - \kappa-1}, 
\end{align}
Combining this with \eqref{id:6} we obtain \eqref{id:5} and the pointwise convergence for $B^1_{\lambda} f$ is justified.

\paragraph{{\bf Step 4.}}
As in the previous step, to prove \eqref{eq:21}, by the Calder\'on transference principle \cite{B3}, it suffices to show that for every $r>2$ we have 
\begin{align} \label{id:11}
\big\| V^r(K_{\lambda}^2 * g : \lambda \in \Z_+) 
\big\|_{\ell^2(\Z^3)}
\lesssim
\| g \|_{\ell^2(\Z^3)}, \qquad g \in \ell^2(\Z^3).
\end{align}
Let us define 
\begin{align*} 
K_{\lambda}^3 (x)
:= 
\lambda^{-3/c + 1}
\lambda^{\kappa} 
\mathcal{F}^{-1}_{\R} \psi \big( \lambda^{\kappa} ( |x|_c^c - \lambda) \big), \qquad x \in \R^3, \quad \lambda \ge 1,
\end{align*} 
and observe that to prove \eqref{id:11} it suffices to show that 
\begin{align} \label{id:12} 
\sum_{\lambda \in \Z_+}
\| K_{\lambda}^2 - K_{\lambda}^3 \|_{\ell^1(\Z^3)}
& < \infty, \\ \label{id:13}
\big\| V^r(K_{\lambda}^3 * g : \lambda \in \Z_+) 
\big\|_{\ell^2(\Z^3)}
& \lesssim
\| g \|_{\ell^2(\Z^3)}, \qquad g \in \ell^2(\Z^3).
\end{align} 
For  \eqref{id:12} notice that for every fixed  $N \in \Z_+$ we have uniformly in $\lambda \ge 2$ that
\begin{align*} 
\| K_{\lambda}^2 - K_{\lambda}^3 \|_{\ell^1(\Z^3)}
& \lesssim_N
\sum_{|n|_c \ge 4^{1/c}  \lambda^{1/c} } 
\frac{\lambda^{-3/c + 1}
\lambda^{\kappa} }{1 + \big( \lambda^{\kappa} | |n|_c^c - \lambda| \big)^{N}} \\
& \lesssim_N
\sum_{|n|_c \ge 4^{1/c}  \lambda^{1/c} } 
\frac{\lambda^{-3/c + 1}
\lambda^{\kappa} }{1 + \big( \lambda^{\kappa}  |n|_c^c \big)^{N}} \\
& \lesssim_N
\lambda^{-3/c + 1}
\lambda^{\kappa} \lambda^{-3N/(8c)}  \lambda^{-3\kappa/c}.
\end{align*}
Taking sufficiently large $N$ we obtain \eqref{id:12}. 
To prove \eqref{id:13} we will proceed in a similar way as in \cite{MSZ1}. 
For the reader convenience we will give the details. 
For this purpose, in this step we show that the following
estimates hold 
\begin{align} \label{id:7}
\| K_{\lambda}^3 \|_{\ell^1(\Z^3)}
& \lesssim 1, \qquad \lambda \in \Z_+, \\ \label{id:8}
\big| \mathcal{F}_{\Z^3}^{-1} K_{\lambda}^3 (\xi) \big|
& \lesssim (\lambda^{1/c} \| \xi \|)^{-1}, 
\qquad \lambda \in \Z_+, \quad \xi \in \T^3, \\ \label{id:9}
\big| \mathcal{F}_{\Z^3}^{-1} K_{\lambda}^3 (\xi) 
- \mathcal{F}_{\Z^3}^{-1} K_{\lambda}^3 (0)\big|
& \lesssim \lambda^{1/c} \| \xi \|, 
\qquad \lambda \in \Z_+, \quad \xi \in \T^3, \\ \label{id:10}
\big| \mathcal{F}_{\Z^3}^{-1} K_{\lambda + s}^3 (\xi) 
- \mathcal{F}_{\Z^3}^{-1} K_{\lambda}^3 (\xi) \big|
& \lesssim (s/\lambda)^{3/4}, 
\qquad 0 \le s \le \lambda, 
\quad \lambda \in \Z_+, \quad \xi \in \T^3, \\ \label{id:16}
V^2(\mathcal{F}_{\Z^3}^{-1} K_{\lambda}^3 (0) : \lambda \in \Z_+) 
& < \infty.
\end{align}
In fact the exponent $3/4$ in \eqref{id:10} can be replaced with any
$\delta\in(1/2, 1)$. 
The estimate
\eqref{id:7} is a direct consequence of \eqref{id:14}, since
\[
|\mathcal{F}^{-1}_{\R} \psi \big( \lambda^{\kappa} ( |n|_c^c - \lambda) \big)| \lesssim \omega_{\lambda}(n).
\]
In order to prove the remaining estimates  we may assume, without any loss of generality,
that $\T^3 = [-1/2,1/2)^3$. Further, we will use the
Poisson summation formula (its application is possible thanks to the
rapid decay of $K_{\lambda}^3 (x)$ in $x\in \R^3$ and the estimate
\eqref{I.1} below), which in our case says that
\begin{align} \label{I.0}
\mathcal{F}_{\Z^3}^{-1} K_{\lambda}^3 (\xi)
=
\sum_{n \in \Z^3} I_{\lambda} (n, \xi), 
\qquad \xi \in \T^3, \quad \lambda \ge 1,
\end{align} 
where
\begin{align*} 
I_{\lambda} (n, \xi) 
:=
\int_{\R^3} K_{\lambda}^3 (x) e\big( x \cdot (\xi - n) \big) \, dx, 
\qquad n \in \Z^3, \quad \xi \in \T^3, \quad \lambda \ge 1.
\end{align*} 
\paragraph{{\bf Estimate \eqref{id:8}}}
By \eqref{I.0} inequality \eqref{id:8} is a simple consequence of the following bound
\begin{align} \label{I.1}
|I_{\lambda} (n, \xi) |
\lesssim 
\Big( \prod_{j=1}^3 \frac{1}{1 + |n_j|}  \Big) 
\frac{1}{1 + \lambda^{1/c} |\xi - n|},
\qquad \lambda \ge 1, \quad \xi \in \T^3, \quad n \in \Z^3.
\end{align} 
To justify \eqref{I.1} we need to introduce the functions
$h_{c,\alpha}$ with $\alpha \in \{0,1\}^3$ as in \eqref{def:hca}. Notice
that $h_{c,\alpha}$ are homogeneous of order $(c-1)|\a|$, where
$|\alpha| : = \alpha_1 + \alpha_2 + \alpha_3$. More precisely, we have
\begin{align*} 
h_{c,\alpha} (rw) = r^{(c-1)|\a|} h_{c,\alpha} (w), 
\qquad r > 0, \quad w \in \R^3.
\end{align*} 
We define $\a = \a(n) \in \{0,1\}^3$ in a such way that for every
$j \in [3]$ we put $\a_j = 0$ if $n_j = 0$ and $\a_j = 1$ if
$n_j \ne 0$.  Further, for $\kappa$ defined in \eqref{eq:46} we notice
that $-1< \kappa < 0$ and $\kappa + (c-1)/c < 0$, which will be
frequently used below.

We now prove \eqref{I.1}. Integrating by parts we obtain
\begin{align*} 
I_{\lambda} (n, \xi)
=
 \frac{C_{\alpha}}{(\xi - n)^{\a}}
\int_{\R^3} \partial_x^{\a} K_{\lambda}^3 (x) e\big( x \cdot (\xi - n) \big) \, dx
\end{align*} 
for some constants $C_{\alpha}\in\C$. 
Further, taking $\psi_{\a} := (\mathcal{F}^{-1}_{\R} \psi)^{(|\a|)}$ observe that 
\begin{align} \label{II.1}
\partial_x^{\a} K_{\lambda}^3 (x) 
=
\lambda^{-3/c + 1 + \kappa + \kappa |\a|} 
\psi_{\a} \big( \lambda^{\kappa} ( |x|_c^c - \lambda) \big)
h_{c,\alpha} (x).
\end{align} 
Therefore using \eqref{polar} and changing the variable $r \mapsto \lambda^{1/c} t$ we obtain 
\begin{align} \nonumber
I_{\lambda} (n, \xi)
& =
 \frac{C_{\alpha}}{(\xi - n)^{\a}} \lambda^{\kappa + 1 + |\a| (c-1)/c  + \kappa |\a|}
\int_{0}^{\infty} t^{2 + (c-1)|\a|} \psi_{\a} \big( \lambda^{\kappa+1} ( t^c - 1) \big) \\ \label{II.3}
& \qquad \qquad \qquad \times
\int_{\mathbb S_c^2}
h_{c,\alpha} (w)
e\big( \lambda^{1/c} t w \cdot (\xi - n) \big) \, d\mu_c(w) \, dt.
\end{align} 
Therefore using \eqref{II.2} and the estimate
$|t - 1| \lesssim |t^c - 1|$ for $t > 0$, we see that for every
fixed $N \in \N$ we have
\begin{align*} 
|I_{\lambda} (n, \xi)|
& \lesssim_N
\Big( \prod_{j=1}^3 \frac{1}{1 + |n_j|}  \Big)
\lambda^{\kappa + 1}
\int_{0}^{\infty} 
\frac{t^{2 + (c-1)|\a|} }{1 + \big( \lambda^{\kappa+1} | t - 1| \big)^{N}} 
\frac{dt}{1 + \lambda^{1/c} t |\xi - n|}\\
& \lesssim_N
\Big( \prod_{j=1}^3 \frac{1}{1 + |n_j|}  \Big)
\lambda^{\kappa + 1}
\int_{0}^{\infty}
\frac{1}{1 + \big( \lambda^{\kappa+1} | t - 1| \big)^{N - 1 - (c-1)|\a|}} 
\frac{t\, dt}{1 + \lambda^{1/c} t |\xi - n|}.
\end{align*} 
Now splitting the above integral into the intervals $(0,1/2)$ and $(1/2,\infty)$ and using the relation
\begin{align*} 
\int_{0}^{1/2} \frac{t\, dt}{1 + A t } 
\simeq
\frac{1}{1 + A }, \qquad A > 0,
\end{align*} 
we obtain \eqref{I.1} and consequently \eqref{id:8}.

\paragraph{{\bf Estimate \eqref{id:9}}} Let
$J_{\lambda} (n, \xi) := I_{\lambda} (n, \xi) - I_{\lambda} (n, 0)$ and note
that \eqref{id:9} is reduced to proving
\begin{align} \label{V.1}
|J_{\lambda} (n, \xi)|
\lesssim
\Big( \prod_{j=1}^3 \frac{1}{1 + |n_j|}  \Big) 
\frac{\lambda^{1/c} \|\xi\|}{1 + | n|},
\qquad \lambda \ge 1, \quad \xi \in \T^3, \quad n \in \Z^3.
\end{align} 
Integrating by parts, using the Leibniz rule, \eqref{II.1}, \eqref{polar} and finally changing the variable $r \mapsto \lambda^{1/c} t$ we obtain
for some constants $C_{\alpha}, C_{\alpha, \beta}\in\C$ that
\begin{align*} 
J_{\lambda} (n, \xi)
& =
\frac{C_{\alpha} }{n^{\a}}
\int_{\R^3} e( - x \cdot n ) \partial_x^{\a} 
\big[ K_{\lambda} (x) \big( e\big( x \cdot \xi  \big) - 1 \big) \big] \, dx \\
& =
 \sum_{0 \le \beta \le \a} \frac{C_{\alpha, \beta}}{n^{\a}}
\int_{\R^3} e( - x \cdot n )
\partial_x^{\beta} K_{\lambda} (x) 
\partial_x^{\a - \beta} \big( e\big( x \cdot \xi  \big) - 1 \big) \, dx \\
& =
\sum_{0 \le \beta \le \a} \frac{C_{\alpha, \beta}}{n^{\a}} \lambda^{-3/c + 1 + \kappa + \kappa |\beta|} 
\int_{\R^3} e( - x \cdot n )
\psi_{\beta} \big( \lambda^{\kappa} ( |x|_c^c - \lambda) \big)
h_{c,\beta} (x) \xi^{\a - \beta} \\
& \qquad \qquad \qquad \qquad \qquad \times
\big( e( x \cdot \xi ) - \ind_{\a = \beta} \big) \, dx \\
& =
\sum_{0 \le \beta \le \a} \frac{C_{\alpha, \beta}}{n^{\a}}
\lambda^{1 + \kappa + \kappa |\beta| + |\beta| (c-1)/c} 
\int_0^{\infty} t^{2+ (c-1)|\beta|}
\psi_{\beta} \big( \lambda^{\kappa+1} ( t^c - 1) \big)
J_{\lambda, \a, \beta} (n, \xi, t) \, dt,
\end{align*} 
where
\begin{align*} 
J_{\lambda, \a, \beta} (n, \xi, t)
=
\xi^{\a - \beta}
\int_{\mathbb S_c^2}
h_{c,\beta} (w)
e( - \lambda^{1/c} t w \cdot n ) 
\big( e\big( \lambda^{1/c} t w \cdot \xi \big)  - \ind_{\a = \beta} \big) \, d\mu_c(w).
\end{align*}  
Next, we show that 
\begin{align} \label{id:15}
|J_{\lambda, \a, \beta} (n, \xi, t)|
\lesssim
\frac{(t+1)^2}{t(|n| + 1)} \lambda^{1/c} \|\xi\|, 
\qquad t > 0, \quad \lambda \ge 1, \quad n \in \Z^3, \quad \xi \in \T^3.
\end{align} 
Indeed, if $\a \ne \beta$ then using \eqref{II.2} we obtain
\begin{align*} 
|J_{\lambda, \a, \beta} (n, \xi, t)|
\lesssim
\frac{\|\xi\|}{1 + \lambda^{1/c} t |\xi - n|}
\lesssim
\frac{(t+1)\|\xi\|}{t(|n| + 1)} 
\end{align*} 
and \eqref{id:15} follows. In the opposite case when $\a = \beta$ we combine the identity 
\begin{align*} 
e(w \cdot \xi) - 1 = 2\pi i \int_0^1 (w \cdot \xi) e(u w \cdot \xi) \, du, 
\qquad w,\xi \in \R^3, 
\end{align*} 
with \eqref{II.2} and we get
\begin{align*} 
|J_{\lambda, \a, \beta} (n, \xi, t)|
& \lesssim
\Big| 
\int_0^1 \int_{\mathbb S_c^2} \Big( \sum_{j=1}^3 \lambda^{1/c} t w_j \xi_j \Big)
h_{c,\beta} (w)
e\big( \lambda^{1/c} t w \cdot (u\xi - n) \big) \, d\mu_c(w) \, du \Big| \\
& \lesssim
\int_0^1 
\frac{\lambda^{1/c} t \|\xi\|}{1 + \lambda^{1/c} t |u\xi - n|} \, du
\lesssim
\frac{t+1}{|n| + 1} \lambda^{1/c} \|\xi\|.
\end{align*} 
This directly leads to \eqref{id:15}. Next, applying \eqref{id:15} for every fixed $N \in \N$  we arrive at
\begin{align*} 
|J_{\lambda} (n, \xi)|
& \lesssim
\Big( \prod_{j=1}^3 \frac{1}{1 + |n_j|}  \Big) 
\sum_{0 \le \beta \le \a} 
\lambda^{1 + \kappa } 
\int_0^{\infty}
\frac{t^{2+ (c-1)|\beta|}}{1 + \big( \lambda^{\kappa+1} | t - 1| \big)^{N}} 
\frac{(t+1)^2}{t(|n| + 1)} \lambda^{1/c} \|\xi\| \, dt \\
& \lesssim
\Big( \prod_{j=1}^3 \frac{1}{1 + |n_j|}  \Big) 
\frac{1}{|n| + 1}  \lambda^{1/c} \|\xi\|
\sum_{0 \le \beta \le \a} 
\lambda^{1 + \kappa } 
\int_0^{\infty}
\frac{dt}{1 + \big( \lambda^{\kappa+1} | t - 1| \big)^{N- 4 - (c-1)|\beta|}},
\end{align*} 
which gives \eqref{V.1} and consequently \eqref{id:9} is justified.

\paragraph{{\bf Estimate \eqref{id:10}}} We begin with showing that
\begin{align} \label{V.2}
|\partial_{\lambda} I_{\lambda} (n, \xi) |
\lesssim
\lambda^{-1}
\Big( \prod_{j=1}^3 \frac{1}{1 + |n_j|}  \Big),
\qquad \lambda \ge 1, \quad \xi \in \T^3, \quad n \in \Z^3.
\end{align} 
Taking into account \eqref{II.3} we obtain
\begin{align*} 
\partial_{\lambda} I_{\lambda} (n, \xi)
& =
\frac{1}{(\xi - n)^{\a}} \lambda^{\kappa + 1 + |\a| (c-1)/c  + \kappa |\a|}
\int_{0}^{\infty} t^{2 + (c-1)|\a|} \int_{\mathbb S_c^2}
h_{c,\alpha} (w) e\big( \lambda^{1/c} t w \cdot (\xi - n) \big) \\ 
& \qquad \qquad \qquad \times
\Big[
C_{\alpha,1} \lambda^{-1} \psi_{\a} \big( \lambda^{\kappa+1} ( t^c - 1) \big)
+
C_{\alpha,2}
\psi'_{\a} \big( \lambda^{\kappa+1} ( t^c - 1) \big) \lambda^{\kappa} ( t^c - 1) \\
& \qquad \qquad \qquad \qquad \qquad +
C_{\alpha,3} \psi_{\a} \big( \lambda^{\kappa+1} ( t^c - 1) \big) 
\lambda^{1/c - 1} t w \cdot (\xi - n)
\Big]
 \, d\mu_c(w) \, dt,
\end{align*} 
for some constants $C_{\alpha,j}\in \C$ for $j \in [3]$.
Using \eqref{II.2} for every fixed $N \in \N$ we obtain
\begin{align*} 
|\partial_{\lambda} I_{\lambda} (n, \xi) |
& \lesssim
\Big( \prod_{j=1}^3 \frac{1}{1 + |n_j|}  \Big) \lambda^{\kappa + 1}
\int_{0}^{\infty} t^{2 + (c-1)|\a|} \\ 
& \qquad \qquad \qquad \times
\bigg(
\frac{\lambda^{-1}}{1 + \big( \lambda^{\kappa+1} | t - 1| \big)^{N}} 
+
\frac{\lambda^{\kappa} | t^c - 1|}{1 + \big( \lambda^{\kappa+1} | t^c - 1| \big)^{N}}  
\bigg) \, dt \\
& \lesssim
\Big( \prod_{j=1}^3 \frac{1}{1 + |n_j|}  \Big) \lambda^{\kappa}
\int_{0}^{\infty} 
\frac{t^{2 + (c-1)|\a|} }{1 + \big( \lambda^{\kappa+1} | t - 1| \big)^{N-1}} \, dt \\
& \lesssim
\Big( \prod_{j=1}^3 \frac{1}{1 + |n_j|}  \Big) \lambda^{\kappa}
\int_{0}^{\infty} 
\frac{dt}{1 + \big( \lambda^{\kappa+1} | t - 1| \big)^{N-3 - (c-1)|\a| }}.
\end{align*} 
Therefore we obtain \eqref{V.2} as claimed. Consequently, using \eqref{V.2} we infer that
\begin{align*} 
|I_{\lambda+s} (n, \xi) - I_{\lambda} (n, \xi)|
& =
\Big| \int_{\lambda}^{\lambda + s} \partial_{t} I_{t} (n, \xi) \, dt \Big| \\
& \lesssim
\Big( \prod_{j=1}^3 \frac{1}{1 + |n_j|}  \Big) s/\lambda, 
\qquad 0 \le s \le \lambda, 
\quad \lambda \ge 1, \quad \xi \in \T^3, \quad n \in \Z^3.
\end{align*} 
On the other hand, from \eqref{I.1} we conclude
\begin{align*}
|I_{\lambda+s} (n, \xi) - I_{\lambda} (n, \xi)|
\lesssim 
\Big( \prod_{j=1}^3 \frac{1}{1 + |n_j|}  \Big) 
\frac{1}{1 + |n|},
\qquad 0 \le s \le \lambda, \quad \lambda \ge 1, \quad \xi \in \T^3, \quad n \in \Z^3.
\end{align*} 
Therefore fixing $\delta \in [0,1]$ and combining these estimates  we have
\begin{align*}
|I_{\lambda+s} (n, \xi) - I_{\lambda} (n, \xi)|
\lesssim_{\delta}
\Big( \prod_{j=1}^3 \frac{1}{1 + |n_j|}  \Big) 
\Big(\frac{1}{1 + |n|} \Big)^{1-\delta} (s/\lambda)^\delta,
\end{align*} 
uniformly in $0 \le s \le \lambda$, $\lambda \ge 1$, $\xi \in \T^3$
and $n \in \Z^3$. This, in view of \eqref{I.0}, proves
\eqref{id:10}. 

\paragraph{{\bf Estimate \eqref{id:16}}}
Since $\sum_{\lambda \in \Z_+} \lambda^{-2(\kappa + 1)} < \infty$,
it is enough to prove that
\begin{align} \label{VIII.23}
\mathcal{F}_{\Z^3}^{-1} K_{\lambda}^3 (0)
=
\mu_c(\mathbb S_c^2)/c + O(\lambda^{-\kappa - 1}), \qquad \lambda \ge 1.
\end{align} 
Taking into account \eqref{I.0} and \eqref{I.1} our aim reduces to proving that
\begin{align} \label{id:17} 
I_{\lambda} (0,0) = \mu_c(\mathbb S_c^2)/c + O(\lambda^{-\kappa - 1}), \qquad \lambda \ge 1.
\end{align} 
Using \eqref{II.3} and changing the variable $t^c - 1 \mapsto s$ we get 
\begin{align*} 
I_{\lambda} (0, 0)
& =
\mu_c(\mathbb S_c^2) c^{-1}
\lambda^{\kappa + 1}
\int_{-1}^{\infty} (s+1)^{3/c - 1} 
\mathcal{F}^{-1}_{\R} \psi ( \lambda^{\kappa+1} s) 
\, ds \\ 
& =
\mu_c(\mathbb S_c^2) c^{-1}
\lambda^{\kappa + 1}
\int_{-1/2}^{1/2} (s+1)^{3/c - 1} 
\mathcal{F}^{-1}_{\R} \psi ( \lambda^{\kappa+1} s) 
\, ds + O(\lambda^{-100}) \\
& =
\mu_c(\mathbb S_c^2) c^{-1}
\lambda^{\kappa + 1}
\int_{-1/2}^{1/2}
\mathcal{F}^{-1}_{\R} \psi ( \lambda^{\kappa+1} s) 
\, ds 
+ O(\lambda^{-\kappa - 1}).
\end{align*} 
Since $\int_{\R} \mathcal{F}^{-1}_{\R} \psi(s) ds = \psi(0) = 1$, we conclude \eqref{id:17}.
This finishes the Step 4.

\paragraph{{\bf Step 5.}} Here using \eqref{id:7}, \eqref{id:8}, \eqref{id:9}, \eqref{id:10} and \eqref{id:16} we finally justify \eqref{id:13}. To proceed, let $(P_t)_{t>0}$ be the discrete Poisson semigroup defined by 
\begin{align*} 
\mathcal F_{\Z^3}^{-1} (P_t f) (\xi) := p_t (\xi) \mathcal F_{\Z^3}^{-1} f (\xi),
\qquad t>0, \quad \xi \in \T^3,
\end{align*} 
where 
\begin{align*} 
p_t (\xi): 
=
e^{-2\pi t |\xi|_{\rm sin}}, 
\qquad 
|\xi|_{\rm sin} := \Big( \sum_{j=1}^3 (\sin(\pi \xi_j) )^2 \Big)^{1/2},
\qquad t>0, \quad \xi \in \T^3.
\end{align*} 
Notice that $|\xi|_{\rm sin} \simeq \|\xi\|$. It was recently proved in \cite[Section 5.1, p. 893]{BMSW1} that
for every $1<p<\infty$ and $r>2$ we have the variational estimate
\begin{align*} 
\big\| V^r( P_t f : t > 0) 
\big\|_{\ell^p(\Z^3)}
& \lesssim
\| f \|_{\ell^p(\Z^3)}, \qquad f \in \ell^p(\Z^3).
\end{align*}
Using this, \eqref{id:16} and the fact that 
\begin{align*} 
V^r( a_N b_N  : N \in \Z_+)
\le 
\big( \sup_{N \in \Z_+} |a_N| \big) V^r( b_N  : N \in \Z_+)
+
\big( \sup_{N \in \Z_+} |b_N| \big) V^r( a_N  : N \in \Z_+),
\end{align*}
we see that our task reduces to showing that
\begin{align*} 
\big\| V^r(T_{\lambda} f : \lambda \in \Z_+) 
\big\|_{\ell^2(\Z^3)}
\lesssim_{r}
\| f \|_{\ell^2(\Z^3)}, \qquad f \in \ell^2(\Z^3), 
\qquad r > 2,
\end{align*}
where
\begin{align*} 
T_{\lambda} f 
:=
K_{\lambda}^3 * f - \mathcal{F}_{\Z^3}^{-1} K_{\lambda}^3 (0) P_{\lambda^{1/c}} f, 
\qquad \lambda \in \Z_+.
\end{align*}
Splitting into long and short variations we see that it suffices to prove that
\begin{align} \label{id:18}
\Big\| \Big( \sum_{k \in \Z_+} |T_{2^k} f|^2 \Big)^{1/2} 
\Big\|_{\ell^2(\Z^3)}
\lesssim
\| f \|_{\ell^2(\Z^3)}, \qquad f \in \ell^2(\Z^3), \\ \label{id:19}
\Big\| \Big( \sum_{k \in \Z_+} 
V^2(T_{\lambda} f : \lambda \in [2^k, 2^{k+1}])^{2} \Big)^{1/2}
\Big\|_{\ell^2(\Z^3)}
\lesssim
\| f \|_{\ell^2(\Z^3)}, \qquad f \in \ell^2(\Z^3).
\end{align}
Let $\mathcal{F}_{\Z^3}^{-1} T_\lambda$ denote the multiplier of the operator $T_\lambda$. 
Using \eqref{id:8} and \eqref{id:9} we see that
\begin{align} \label{id:20}
| \mathcal{F}_{\Z^3}^{-1} T_{\lambda} (\xi) |
\lesssim
(\lambda^{1/c} \| \xi \|)^{-1} \wedge (\lambda^{1/c} \| \xi \|),
\qquad \xi \in \T^3, \quad \lambda \in \Z_+,
\end{align}
and applying the Plancherel theorem we see that \eqref{id:18} follows. 
In order to justify \eqref{id:19} we use the following version of the Rademacher--Menshov inequality, see \cite[Lemma 2.5]{MSZ1},
\begin{align*} 
V^2(T_{\lambda} f : \lambda \in [2^k, 2^{k+1}])
\lesssim
\sum_{l=0}^k \Big( \sum_{m=0}^{2^l -1} 
| T_{2^k + 2^{k-l} (m+1)} f - T_{2^k + 2^{k-l} m} f |^2 \Big)^{1/2},
\end{align*}
which together with the triangle inequality shows that
\begin{align*} 
\text{LHS of } \eqref{id:19}
\lesssim
\sum_{l \ge 0} \Big( \sum_{k \ge l \vee 1 } \sum_{m=0}^{2^l -1} 
\| T_{2^k + 2^{k-l} (m+1)} f - T_{2^k + 2^{k-l} m} f \|_{\ell^2(\Z^3)}^2
\Big)^{1/2}.
\end{align*}
Combining \eqref{id:10} with \eqref{id:20} we see that uniformly in 
$0 \le l \le k$, $0 \le m \le 2^l -1$, $\xi \in \T^3$, we have
\begin{align*} 
\big| \mathcal{F}_{\Z^3}^{-1} T_{2^k + 2^{k-l} (m+1)} (\xi) 
- \mathcal{F}_{\Z^3}^{-1} T_{2^k + 2^{k-l} m} (\xi) \big|
\lesssim
(2^{k/c} \| \xi \|)^{-1} \wedge (2^{k/c} \| \xi \|) \wedge 2^{-3l/4 }, 
\end{align*}
and consequently using the Plancherel theorem we get \eqref{id:19}. 
This concludes the proof of \eqref{id:13}.
This finishes the Step 5 and the proof of
Theorem~\ref{thm:erg} is completed.
\section{Proof of Theorem \ref{thm:disc}. The upper bound for the discrepancy}\label{sec:disc}

To prove Theorem~\ref{thm:disc} we need some preparatory results, which will be gathered below. We first observe that 
\begin{align*}
D_c(\lambda,\xi,a)=\mathbf D_c(\lambda,\xi; \ind_{[a,100]}), \qquad \lambda \in \Z_+, \quad \xi \in \S^2, \quad a > 0,
\end{align*}
where for any function $f:\R\to\R$ we have
\begin{align*} 
\mathbf D_c(\lambda,\xi; f) := \sum_{m\in\mathbf S_c^3(\lambda)} 
f\Big( \frac{m \cdot \xi}{\lambda^{1/c}} \Big)
- r_c(\lambda) \int_{\mathbb S_c^2} 
f(x \cdot\xi) \, d\nu_{c} (x),
\end{align*}
and $\nu_{c}$ is a normalized measure $\mu_{c}$ to have mass one.

Proceeding as in \cite[Lemma 9]{Mag} we show that the function
$\mathbbm{1}_{[a,100]}$ in $\mathbf D_c(\lambda,\xi; \ind_{[a,100]})$ can be replaced by smooth
functions $\phi_{a,\delta}^{\pm}$. Indeed, let $\phi \in C^{\8}(\R)$
be such that $0 \le \phi (t) \le 1$, $\supp \phi \subset [-1,1]$ and
$\int \phi(x)dx = 1$. Further, let
$\phi_{a,\delta}^{\pm}:= \mathbbm{1}_{[a \pm \delta,100]} \ast \phi_{\delta}$,
where $\phi_{\delta}$ is an $L^1$ dilatation of $\phi$, i.e.
$\phi_{\delta} (t):= \delta^{-1} \phi (\delta^{-1} t)$. 
We now prove an analogue of \cite[Lemma 9]{Mag}.

\begin{lem} \label{lem:sm_disc}
Let $c \in (1,2)$ be fixed. Then the following bound holds
\begin{align*} 
D_c(\lambda,\xi) 
\le 
\sup_{0 < a \le 100} |\mathbf D_c(\lambda,\xi; \phi_{a,\delta}^{+})| 
\vee
\sup_{0 < a \le 100} |\mathbf D_c(\lambda,\xi; \phi_{a,\delta}^{-})|  
+ O \big(\delta^{1/2} r_c(\lambda) \big),
\end{align*}
uniformly in $\lambda \in\Z_+$, $\xi \in \S^2$ and $0 < \delta \le 1$.
\end{lem}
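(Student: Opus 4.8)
The plan is to carry out the standard smoothing argument of \cite[Lemma~9]{Mag}. Recall the identity $D_c(\la,\xi,a)=\mathbf D_c(\la,\xi;\ind_{[a,100]})$ recorded above, and note that $\mathbf C_{a,\xi}=C_{a,\xi}=\emptyset$ whenever $a>100$, so $D_c(\la,\xi)=\sup_{0<a\le 100}|\mathbf D_c(\la,\xi;\ind_{[a,100]})|$. First I would record a crude localization of the arguments that occur in $\mathbf D_c(\la,\xi;\cdot)$: every $m\in\mathbf S_c^3(\la)$ satisfies $\la\le|m|_c^c\le\la+3$, since $Q(m)=\la$ and $0\le|m_j|^c-\lfloor|m_j|^c\rfloor<1$; hence, using $|\cdot|_2\le|\cdot|_c$ for $c\in(1,2)$, one gets $|\la^{-1/c}m\cdot\xi|\le\la^{-1/c}|m|_c\le(1+3/\la)^{1/c}\le 4=:C_0$ for every $\la\in\Z_+$, and also $|x\cdot\xi|\le|x|_c=1\le C_0$ for $x\in\mathbb{S}_c^2$. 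Thus only the restrictions of the test functions to $[-C_0,C_0]$ matter, and on that interval one checks directly from $\supp\phi_\delta\subset[-\delta,\delta]$ that, for $0<\delta\le 1$,
\begin{align*}
\phi_{a,\delta}^{+}(t)&\le\ind_{[a,100]}(t)\le\phi_{a,\delta}^{-}(t),\\
\big|\phi_{a,\delta}^{\pm}(t)-\ind_{[a,100]}(t)\big|&\le\ind_{[a-2\delta,\,a+2\delta]}(t),\qquad |t|\le C_0.
\end{align*}

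Given these pointwise bounds, I would exploit the positivity of the counting sum $\sum_{m\in\mathbf S_c^3(\la)}$ and of the measure $\nu_c$ to obtain, for every $0<a\le 100$,
\begin{align*}
\mathbf D_c(\la,\xi;\phi_{a,\delta}^{+})-r_c(\la)\,\Theta(\xi,\delta)
\;\le\;\mathbf D_c(\la,\xi;\ind_{[a,100]})\;\le\;
\mathbf D_c(\la,\xi;\phi_{a,\delta}^{-})+r_c(\la)\,\Theta(\xi,\delta),
\end{align*}
where $\Theta(\xi,\delta):=\sup_{b\in\R}\nu_c\big(\{x\in\mathbb{S}_c^2:b\le x\cdot\xi\le b+2\delta\}\big)$ bounds each of the error integrals $\int_{\mathbb{S}_c^2}\big|\phi_{a,\delta}^{\pm}-\ind_{[a,100]}\big|(x\cdot\xi)\,d\nu_c(x)$ by the second line of the previous display. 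Taking the supremum over $a\in(0,100]$ then reduces the lemma to the slab estimate $\Theta(\xi,\delta)\lesssim\delta^{1/2}$, uniformly in $\xi\in\S^2$ and $0<\delta\le 1$.

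For this slab estimate I would use the Fourier decay \eqref{eq:fourier}. Fix a nonnegative $\chi\in C_c^\infty(\R)$ with $\chi\equiv1$ on $[0,1]$, and for $b\in\R$ put $\chi_\delta(t):=\chi\big((t-b)/(2\delta)\big)$, so $\ind_{[b,b+2\delta]}\le\chi_\delta$. By Fourier inversion and Fubini (valid since $\mathcal{F}_{\R}\chi$ is Schwartz and $\nu_c$ is a finite measure), together with $\int_{\mathbb{S}_c^2} e(\rho\,x\cdot\xi)\,d\nu_c(x)=\mathcal{F}_{\R^3}\nu_c(-\rho\xi)$, we have
\begin{align*}
\nu_c\big(\{x\in\mathbb{S}_c^2:b\le x\cdot\xi\le b+2\delta\}\big)
\le\int_{\mathbb{S}_c^2}\chi_\delta(x\cdot\xi)\,d\nu_c(x)
=\int_{\R}\mathcal{F}_{\R}\chi_\delta(\rho)\,\mathcal{F}_{\R^3}\nu_c(-\rho\xi)\,d\rho.
\end{align*}
Using $|\mathcal{F}_{\R}\chi_\delta(\rho)|=2\delta\,|\mathcal{F}_{\R}\chi(2\delta\rho)|$, the bound $|\mathcal{F}_{\R^3}\nu_c(\rho\xi)|\lesssim(1+|\rho|)^{-1}$ (which follows from \eqref{eq:fourier} since $|\xi|=1$ and $\nu_c$ is a fixed scalar multiple of $\mu_c$), and the substitution $u=2\delta\rho$, the last integral is dominated by $\int_{\R}|\mathcal{F}_{\R}\chi(u)|\,\frac{\delta}{\delta+|u|}\,du\lesssim\delta+\delta\log(2/\delta)\lesssim\delta^{1/2}$ for $0<\delta\le 1$, as desired. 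The only genuinely analytic input is this slab estimate, and inside it the decay \eqref{eq:fourier}; once \eqref{eq:fourier} is granted the remainder is routine, and the one point deserving care is the bookkeeping that keeps all arguments of the test functions in the fixed interval $[-C_0,C_0]$, so that the one-sided comparisons between $\ind_{[a,100]}$ and $\phi_{a,\delta}^{\pm}$ actually hold.
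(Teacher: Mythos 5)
Your proof is correct, and the overall skeleton (sandwiching $\ind_{[a,100]}$ between $\phi_{a,\delta}^{+}$ and $\phi_{a,\delta}^{-}$ after localizing the arguments to a fixed bounded interval, then reducing to a uniform slab estimate for $\nu_c$) matches the paper's. The genuine difference is how you prove the slab bound $\nu_c(\{x\in\S_c^2 : b\le x\cdot\xi\le b+O(\delta)\})\lesssim\delta^{1/2}$. The paper does this geometrically: it passes to the parametrization \eqref{intS}, splits into the two regimes $\xi_3\le 1/100$ and $\xi_3\ge 1/100$, and in the harder case runs a sublevel-set argument on $y\mapsto y\xi_2+(1-b^c-y^c)^{1/c}\xi_3$, using the non-degeneracy \eqref{est35} of $g_b'$ and splitting at the threshold $|f'|\simeq\delta^{1/2}$; this is where $\delta^{1/2}$ appears. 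You instead dominate the slab indicator by a smooth bump $\chi_\delta$, apply Fourier inversion and Fubini, and invoke the already-established decay $|\mathcal F_{\R^3}\mu_c(\zeta)|\lesssim(1+|\zeta|)^{-1}$ from \eqref{eq:fourier}. This is shorter, re-uses a key estimate from Section~\ref{sec:osc} rather than deriving a fresh geometric fact, and in fact yields the sharper bound $O(\delta\log(1/\delta))$, which of course still implies $O(\delta^{1/2})$ as the lemma requires. Two cosmetic points: your definition of $\Theta(\xi,\delta)$ uses slabs of width $2\delta$, but the error integrals you bound live on $[a-2\delta,a+2\delta]$, a slab of width $4\delta$ --- write $\Theta(\xi,2\delta)$ or redefine $\Theta$ with the matching width; and the intermediate display should involve $I(\phi^-)-I(\phi^+)\le\int|\phi^--\ind|\,d\nu_c+\int|\phi^+-\ind|\,d\nu_c$, so the correct constant is $2\Theta$ rather than $\Theta$. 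Neither affects the conclusion.
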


\begin{proof}
Since $D_c(\lambda,\xi,a) = 0$ for $a > 100$, we may assume that $0 < a \le 100$. Observe that 
\begin{align*}
\phi_{a,\delta}^{+} (t) 
\le \mathbbm{1}_{[a,100] } (t) \le \phi_{a,\delta}^{-} (t),
\qquad |t| \le 99, \quad 0 < a \le 100, \quad 0 < \delta \le 1.
\end{align*}
Consequently, we have
\begin{align*} 
\sum_{m\in\mathbf S_c^3(\lambda)}  
\phi_{a,\delta}^{+} \Big( \frac{m\cdot \xi}{\lambda^{1/c}} \Big)
&\le 
\sum_{m\in\mathbf S_c^3(\lambda)} 
\mathbbm{1}_{[a,100]} \Big( \frac{m\cdot \xi}{\lambda^{1/c}} \Big)
\le 
\sum_{m\in\mathbf S_c^3(\lambda)} 
\phi_{a,\delta}^{-} \Big( \frac{m\cdot \xi}{\lambda^{1/c}} \Big), \\
r_c(\lambda) \int_{\mathbb S_c^2} 
\phi_{a,\delta}^{-} (x\cdot \xi) \, d\nu_{c} (x) 
& \ge
r_c(\lambda) \int_{\mathbb S_c^2} 
\mathbbm{1}_{[a,100]} (x \cdot\xi) \, d\nu_{c} (x)
\ge 
r_c(\lambda) \int_{\mathbb S_c^2} 
\phi_{a,\delta}^{+} (x \cdot\xi) \, d\nu_{c} (x),
\end{align*}
for $\lambda \in\Z_+$, $\xi \in \S^2$, $0 < a \le 100$ and $0 < \delta \le 1$.
Subtracting the above inequalities we see that
\begin{align*} 
|D_c(\lambda,\xi,a)| 
\le 
|\mathbf D_c(\lambda,\xi; \phi_{a,\delta}^{+})| 
\vee
|\mathbf D_c(\lambda,\xi; \phi_{a,\delta}^{-})| 
+ 
r_c(\lambda) \int_{\mathbb S_c^2} 
| \phi_{a,\delta}^{-} (x\cdot \xi) - \phi_{a,\delta}^{+} (x\cdot \xi) | \, d\nu_{c} (x).
\end{align*} 
Since we have 
\begin{align*} 
\int_{\mathbb S_c^2} 
| \phi_{a,\delta}^{-} (x\cdot \xi) - \phi_{a,\delta}^{+} (x\cdot \xi) | \, d\nu_{c} (x)
\lesssim
\int_{\R} \phi_{\delta} (y) \Big( 
\int_{\mathbb S_c^2} \mathbbm{1}_{[a - \delta + y , a + \delta + y]} (x \cdot\xi)
\, d\mu_{c} (x)
\Big) \, dy,
\end{align*} 
the proof of Lemma~\ref{lem:sm_disc} will be completed if we show
\begin{align*} 
\int_{\mathbb S_c^2} \mathbbm{1}_{[\a , \a + \delta]} (x\cdot \xi)
\, d\mu_{c} (x)
\lesssim
\delta^{1/2}, \qquad \xi \in \S^2, \quad \a \in \R, \quad \delta > 0.
\end{align*} 

Using the decomposition of the
unity on $\mathbb S_c^2$ and \eqref{intS} we see that our problem
reduces to showing that
\begin{align} \label{est33}
\int_{|x_1|^c + |x_2|^c < 3/4} \mathbbm{1}_{[\a , \a + \delta]} 
\big(x_1 \xi_1 + x_2 \xi_2 + \big( 1 - |x_1|^c - |x_2|^c \big)^{1/c} \xi_3 \big)
\, dx
\lesssim
\delta^{1/2}, 
\end{align} 
uniformly in $\xi \in \S^2$, $\a \in \R$, $0 < \delta \le 1$.
By symmetry we may assume that 
$x_1, x_2 > 0$ and $\xi_3 \ge 0$. In what follows we distinguish two cases.

\paragraph{\bf Case 1} $\xi_3 \le 1/100$. Here we have
$|\xi_1| \ge 1/4$ or $|\xi_2| \ge 1/4$ and without any loss of
generality we may assume that $|\xi_2| \ge 1/4$. Then, we see that
\begin{align*} 
\partial_{x_2} \big(x_1 \xi_1 + x_2 \xi_2 + \big( 1 - x_1^c - x_2^c \big)^{1/c} \xi_3 \big)
=
\xi_2 - \xi_3 \big(g_{x_1}(x_2) \big)^{c-1}, 
\qquad x_1^c + x_2^c < 3/4,
\end{align*} 
where $g_b \colon \big( 0, (3/4 - b^c)^{1/c} \big) \to (0,\8)$, and $0 < b < (3/4)^{1/c}$, are defined by
\begin{align*} 
g_b (y) := \frac{y}{ \big( 1 - b^c - y^c \big)^{1/c} },
\qquad b^c + y^c < 3/4, \quad b,y > 0.
\end{align*} 
Since $g_{x_1} (x_2) \le 4$ for $x_1^c + x_2^c < 3/4$ we infer that
\begin{align*} 
\big| \partial_{x_2} \big(x_1 \xi_1 + x_2 \xi_2 + \big( 1 - x_1^c - x_2^c \big)^{1/c} \xi_3 \big) \big|
\simeq 1.
\end{align*} 
This forces that uniformly in $0 < x_1 < (3/4)^{1/c}$ we have
\begin{align*} 
\big| \big\{ 0 < x_2 < (3/4 - x_1^c)^{1/c} : x_1 \xi_1 + x_2 \xi_2 + \big( 1 - x_1^c - x_2^c \big)^{1/c} \xi_3 \in [\a, \a + \delta] \big\} \big|
\lesssim 
\delta
\end{align*} 
and the conclusion follows.

\paragraph{\bf Case 2} $\xi_3 \ge 1/100$. We see that the left-hand side of \eqref{est33} is equal to 
\begin{align*}
\int_{0 < x_1 < (3/4)^{1/c}}
\int_{ x_2^c < 3/4 - x_1^c} 
\mathbbm{1}_{[\a - x_1 \xi_1 , \a + \delta - x_1 \xi_1 ]} 
\big(x_2 \xi_2 + \big( 1 - x_1^c - x_2^c \big)^{1/c} \xi_3 \big)
\, dx_2 \, dx_1. 
\end{align*} 
It suffices to verify that 
\begin{align} \label{est34}
\int_{0 < y < (3/4 - b^c )^{1/c}}
\mathbbm{1}_{[\a , \a + \delta ]} 
\big(y \xi_2 + \big( 1 - b^c - y^c \big)^{1/c} \xi_3 \big)
\, dy
\lesssim
\delta^{1/2}, 
\end{align} 
uniformly in $0 < b < (3/4)^{1/c}$, $\a \in \R$, $0 < \delta \le 1$, $\xi_3 \ge 1/100$ such that $|\xi_2|^2 + |\xi_3|^2 \le 1$. 
Let
\begin{align*}
f(y) := y \xi_2 + \big( 1 - b^c - y^c \big)^{1/c} \xi_3, 
\qquad 0 < y < (3/4 - b^c)^{1/c},
\end{align*} 
and observe that
\begin{align*}
f'(y) = \xi_2 -  \xi_3 ( g_b (y) )^{c-1}, 
\qquad 0 < y < (3/4 - b^c)^{1/c}.
\end{align*} 
We consider two sets
\begin{align*}
I_b &:= \big\{
0 < y < (3/4 - b^c)^{1/c} : |f'(y)| \ge \delta^{1/2}
\big\},\\
J_b &:= \big\{
0 < y < (3/4 - b^c)^{1/c} : |f'(y)| \le \delta^{1/2}
\big\}.
\end{align*} 
Note that $J_b$ is an interval and $I_b$ is a sum of at most two
intervals, which follow from the fact that the function
$y \mapsto g_b (y)$ is increasing. More precisely, we have
\begin{align} \label{est35}
g_b' (y) =
\frac{1 - b^c}{ ( 1 - b^c - y^c )^{1/c + 1} } \simeq 1, 
\qquad 0 < y < (3/4 - b^c)^{1/c}, \quad 0 < b < (3/4)^{1/c}.
\end{align}
By the mean value theorem we conclude
\begin{align*}
| \{ y \in I_b : f(y) \in [\a, \a + \delta] \} | 
\lesssim
\delta^{1/2}, \qquad \a \in \R, \quad 0 < b < (3/4)^{1/c}, \quad 0 < \delta \le 1. 
\end{align*} 
Thus to prove \eqref{est34} it is enough to check that 
\begin{align} \label{est36}
| J_b  | 
\lesssim
\delta^{1/2}, \quad 0 < b < (3/4)^{1/c}, \quad 0 < \delta \le 1. 
\end{align} 
Observe that
\begin{align*}
J_b
=
\Big\{
0 < y < (3/4 - b^c)^{1/c} : 
0 \vee \Big( \frac{\xi_2 - \delta^{1/2} }{\xi_3} \Big)^{1/(c-1)}
\le g_b (y)
\le \Big( \frac{\xi_2 + \delta^{1/2} }{\xi_3} \Big)^{1/(c-1)}
\Big\}.
\end{align*} 
Using \eqref{est35} we see that for all $0 \le A \le B < \8$ and
$0 < b < (3/4)^{1/c}$ one has
\begin{align*}
\big|
\big\{
0 < y < (3/4 - b^c)^{1/c} : 
g_b (y) \in [A,B]
\big\} \big|
\lesssim
B-A,
\end{align*} 
which shows that
\begin{align*}
|J_b|
\lesssim
\big( \xi_2 + \delta^{1/2}  \big)^{1/(c-1)}
-
\big( \xi_2 - \delta^{1/2}  \big)^{1/(c-1)} \vee 0
\lesssim
\delta^{1/2}.
\end{align*} 
This gives \eqref{est36} and consequently leads to \eqref{est33}.
The proof of Lemma~\ref{lem:sm_disc} is completed.
\end{proof}

\begin{lem} \label{lem:FB}
Let $c \ge 1$, $C > 0$ and $M > 1$ be fixed. Then
\begin{align*} 
\int_{\R^3}
\mathbbm{1}_{|x| \le C \lambda^{1/c} } 
\frac{dx}{\big( 1 + A ||x|_c^c - \lambda| \big)^M}
\lesssim
\lambda^{3/c - 1} A^{-1},
\qquad A,\lambda > 0.
\end{align*}
\end{lem}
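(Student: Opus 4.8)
The plan is to reduce this three-dimensional integral to a one-dimensional radial integral by means of the polar decomposition \eqref{polar} for the norm $|\cdot|_c$, and then to estimate the resulting integral by an elementary change of variables, exactly in the spirit of \eqref{eq:13} and \eqref{id:14}.

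First I would note that, since all norms on $\R^3$ are equivalent, there is a constant $C' = C'(c,C) > 0$ such that $|x| \le C\lambda^{1/c}$ implies $|x|_c \le C'\lambda^{1/c}$; as the integrand is nonnegative it therefore suffices to bound
\[
\int_{\R^3} \mathbbm{1}_{\{|x|_c \le C' \lambda^{1/c}\}} \big( 1 + A \,| |x|_c^c - \lambda| \big)^{-M}\,dx.
\]
Applying \eqref{polar} and using that $|ry|_c = r$ for every $y \in \S_c^2$ and $r > 0$, this equals
\[
\mu_c(\S_c^2) \int_0^{C' \lambda^{1/c}} \frac{r^2\,dr}{\big(1 + A|r^c - \lambda|\big)^M},
\]
and the substitution $r = \lambda^{1/c} t$ turns it into
\[
\mu_c(\S_c^2)\, \lambda^{3/c} \int_0^{C'} \frac{t^2\,dt}{\big(1 + A\lambda\,|t^c - 1|\big)^M}.
\]

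To conclude I would invoke the elementary inequality $|t^c - 1| \ge |t - 1|$, valid for all $t \ge 0$ and $c \ge 1$ (the same fact already used in the proof of Theorem \ref{thm:kl}), together with the trivial bound $t^2 \le (C')^2$ on $(0,C')$, to dominate the last display by
\[
\lesssim \lambda^{3/c} \int_{\R} \frac{dt}{\big(1 + A\lambda\,|t - 1|\big)^M}
= \frac{\lambda^{3/c}}{A\lambda} \int_{\R} \frac{ds}{(1 + |s|)^M}
\lesssim_M \lambda^{3/c - 1} A^{-1},
\]
where the last integral is finite precisely because $M > 1$. There is no genuine obstacle in this argument: the only points requiring a moment's care are the passage from the Euclidean constraint $|x| \le C\lambda^{1/c}$ to a constraint on $|x|_c$ (handled by norm equivalence) and the inequality $|t^c-1|\ge|t-1|$, after which the estimate collapses to the standard one-dimensional kernel bound.
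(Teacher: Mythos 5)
Your proof is correct and follows essentially the same route as the paper: restrict to the ball $|x|_c \le C'\lambda^{1/c}$, apply the polar decomposition \eqref{polar}, rescale $r = \lambda^{1/c}t$, and use the elementary inequality $|t^c-1|\ge|t-1|$ together with the boundedness of $t^2$ on $(0,C')$ to reduce to $\int_{\R}(1+A\lambda|t|)^{-M}\,dt \simeq (A\lambda)^{-1}$. The paper's own proof compresses these steps into a single displayed chain of inequalities, but the underlying reasoning is identical.
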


\begin{proof}
Let $C' > 0$ be a constant such that 
$\{ x\in\R^3 : |x| \le C \lambda^{1/c} \} \subseteq
\{ x\in\R^3 : |x|_c \le C' \lambda^{1/c} \}$. Then, using polar coordinates, see \eqref{polar}, and  changing the variable 
$r \mapsto \lambda^{1/c} t$ we see that the left-hand side in question is controlled by
\begin{align*} 
\int_{0}^{ C' \lambda^{1/c} } 
\frac{r^2 \, dr}{\big( 1 + A |r^c - \lambda| \big)^M}
&\lesssim
\lambda^{3/c}
\int_{0}^{ C' } 
\frac{ dt}{\big( 1 + A \lambda |t^c - 1| \big)^M} \\
&\lesssim
\lambda^{3/c}
\int_{\R} 
\frac{ dt}{\big( 1 + A \lambda |t| \big)^M} \\
&\lesssim
\lambda^{3/c - 1} A^{-1},
\end{align*}
uniformly in $A,\lambda > 0$, and we are done.
\end{proof}

\begin{lem} \label{lem:1}
Let $c \in (1,5/4)$ be fixed. Then for every $\eps > 0$ we have
\begin{align*} 
\mathcal F_{\Z^3}^{-1}\sigma_{\lambda} (\xi)
& =
\lambda^{\kappa} \int_{\R^3} e(x\cdot\xi) \eta \Big( \frac{x}{\lambda^{1/c}} \Big) 
\mathcal{F}^{-1}_{\R} \psi \big( \lambda^{\kappa} ( |x|_c^c - \lambda) \big) \, dx \\
& \quad +
O_\eps \big(\lambda^{3/c - 1 - (5-4c)/(3c) + \eps} \big)
+ \lambda^{3/c - 1} O\big(|\xi| + \lambda^{- 1/(4c)} \big),
\end{align*}
uniformly in $\lambda \ge 1$ and $\xi \in \R^3$.
\end{lem}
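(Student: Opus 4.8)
The plan is to compare $\mathcal F_{\Z^3}^{-1}\sigma_\lambda$ with the Fourier transform of the ``de--floored'' major arc kernel
\[
\widetilde{\sigma}_\lambda^{\mathfrak M}(x):=\lambda^{\kappa}\eta\big(x/\lambda^{1/c}\big)\,\mathcal F^{-1}_{\R}\psi\big(\lambda^{\kappa}(|x|_c^c-\lambda)\big)=\lambda^{3/c-1}\,\eta\big(x/\lambda^{1/c}\big)K_\lambda^3(x),
\]
so that the asserted main term is exactly $\int_{\R^3}e(x\cdot\xi)\widetilde{\sigma}_\lambda^{\mathfrak M}(x)\,dx$. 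One may assume $\lambda$ large and $\xi\in[-1/2,1/2)^3$: if $\xi=m+\xi_0$ with $0\ne m\in\Z^3$, $\xi_0\in[-1/2,1/2)^3$, then periodicity gives $|\mathcal F_{\Z^3}^{-1}\sigma_\lambda(\xi)|=|\mathcal F_{\Z^3}^{-1}\sigma_\lambda(\xi_0)|\lesssim\lambda^{3/c-1}$ (a byproduct of the argument below) while the decay \eqref{I.1}, scaled by $\lambda^{3/c-1}$, gives $|\int e(x\cdot\xi)\widetilde{\sigma}_\lambda^{\mathfrak M}(x)\,dx|\lesssim\lambda^{3/c-1}(1+\lambda^{1/c}|\xi|)^{-1}\lesssim\lambda^{2/c-1}$, so the difference is $\lesssim\lambda^{3/c-1}\le\lambda^{3/c-1}|\xi|$, absorbed by the $\lambda^{3/c-1}O(|\xi|)$ term. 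For $\xi\in[-1/2,1/2)^3$ I would prove the estimate without the $|\xi|$--term. Using the circle--method splitting \eqref{iden31},
\[
\mathcal F_{\Z^3}^{-1}\sigma_\lambda(\xi)-\int_{\R^3}e(x\cdot\xi)\widetilde{\sigma}_\lambda^{\mathfrak M}(x)\,dx=E_1+E_2+E_3,
\]
with $E_1:=\mathcal F_{\Z^3}^{-1}\sigma_\lambda^{\mathfrak m}(\xi)$, $E_2:=\sum_{n\in\Z^3}\big(\sigma_\lambda^{\mathfrak M}(n)-\widetilde{\sigma}_\lambda^{\mathfrak M}(n)\big)e(n\cdot\xi)$, and $E_3:=\sum_{n\in\Z^3}\widetilde{\sigma}_\lambda^{\mathfrak M}(n)e(n\cdot\xi)-\int_{\R^3}\widetilde{\sigma}_\lambda^{\mathfrak M}(x)e(x\cdot\xi)\,dx$.

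For the minor arc term $E_1$: by the product structure $\eta=\prod_{j=1}^3\eta_j$ and \eqref{eq:19} one has $E_1=\int_{-1/2}^{1/2}\widetilde{\psi}_\lambda(t)e(-\lambda t)\prod_{j=1}^3\Pi_{t,\lambda}^{\eta_j}(\xi_j)\,dt$, where $\widetilde{\psi}_\lambda$ is supported in $\{\lambda^{\kappa}/(2c)\le|t|\le 1/2\}\subseteq\{N_c\le|t|\le 1/2\}$ (with $N=\lambda$). On this range Lemma \ref{lem:L2} gives $|\Pi_{t,\lambda}^{\eta_1}(\xi_1)|\lesssim\lambda^{1/3+1/(3c)}\log(\lambda+1)$, while Plancherel in $t$ together with the injectivity up to sign of $n\mapsto\lfloor|n|^c\rfloor$ on $|n|\ge N_0$ gives $\int_{\T}|\Pi_{t,\lambda}^{\eta_j}(\xi_j)|^2\,dt\lesssim\lambda^{1/c}$ for $j=2,3$. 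Hence, by Cauchy--Schwarz in $t$,
\[
|E_1|\lesssim\lambda^{1/3+1/(3c)}\log(\lambda+1)\cdot\lambda^{1/(2c)}\cdot\lambda^{1/(2c)}=\lambda^{1/3+4/(3c)}\log(\lambda+1)\lesssim_\eps\lambda^{3/c-1-(5-4c)/(3c)+\eps},
\]
since $1/3+4/(3c)=3/c-1-(5-4c)/(3c)$, and $(5-4c)/(3c)>0$ precisely because $c<5/4$. It is crucial here to apply Plancherel to two of the three factors; bounding only one by Plancherel does not beat $\lambda^{3/c-1}$.

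For $E_2$ (replacing $Q$ by $|\cdot|_c^c$): since $|Q(n)-|n|_c^c|\le 3$, the mean value theorem, the rapid decay of $\mathcal F^{-1}_{\R}\psi$, and the equivalence $1+\lambda^{\kappa}|\theta-\lambda|\simeq 1+\lambda^{\kappa}\big||n|_c^c-\lambda\big|$ for $\theta$ between $Q(n)$ and $|n|_c^c$ (as in Step~3 of the proof of Theorem \ref{thm:erg}) give $\big|\sigma_\lambda^{\mathfrak M}(n)-\widetilde{\sigma}_\lambda^{\mathfrak M}(n)\big|\lesssim\lambda^{2\kappa}\omega_\lambda(n)$; summing with $\sum_{n}\omega_\lambda(n)\lesssim\lambda^{3/c-\kappa-1}$ (cf.\ \eqref{id:14}) yields $|E_2|\lesssim\lambda^{\kappa+3/c-1}\le\lambda^{3/c-1-1/(4c)}$, the last step being $\kappa\le-1/(4c)$, i.e.\ $1/c\le 1$. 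For $E_3$ (sum versus integral): Poisson summation — valid as for \eqref{I.0}, $\widetilde{\sigma}_\lambda^{\mathfrak M}$ being continuous, compactly supported, and obeying \eqref{I.1} scaled by $\lambda^{3/c-1}$ — gives $E_3=\sum_{m\in\Z^3\setminus\{0\}}\mathcal F_{\R^3}\widetilde{\sigma}_\lambda^{\mathfrak M}(m-\xi)$. Estimate \eqref{I.1} (the cutoff $\eta$ being $\equiv 1$ on the effective support $|x|_c^c\approx\lambda$, so that the integration-by-parts argument resting on Lemma \ref{lem:FTest} applies verbatim) gives $\big|\mathcal F_{\R^3}\widetilde{\sigma}_\lambda^{\mathfrak M}(m-\xi)\big|\lesssim\lambda^{3/c-1}\big(\prod_{j}(1+|m_j|)^{-1}\big)(1+\lambda^{1/c}|m-\xi|)^{-1}$; for $m\ne 0$, $\xi\in[-1/2,1/2)^3$ one has $|m-\xi|\gtrsim|m|\gtrsim 1$, hence $(1+\lambda^{1/c}|m-\xi|)^{-1}\lesssim\lambda^{-1/c}|m|^{-1}$, and as $\sum_{m\ne 0}|m|^{-1}\prod_{j}(1+|m_j|)^{-1}<\infty$ we obtain $|E_3|\lesssim\lambda^{2/c-1}\le\lambda^{3/c-1-1/(4c)}$. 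Adding $E_1$, $E_2$, $E_3$ and recalling the first-paragraph reduction proves the lemma.

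The main obstacle is $E_3$: because $c\in(1,2)$, the function $|x|_c^c$, and hence $\widetilde{\sigma}_\lambda^{\mathfrak M}$, is not smooth across the coordinate hyperplanes, so the Fourier decay needed to sum the Poisson tail cannot come from routine integration by parts and must instead invoke the non-smooth stationary-phase estimate of Lemma \ref{lem:FTest}/Corollary \ref{cor:FTest} and the surface-measure decay \eqref{eq:fourier}; since exactly this has already been carried out as \eqref{I.1} in Section \ref{sec:erg}, the only genuinely new bookkeeping is the two-fold Plancherel in the minor arc bound $E_1$, calibrated so that the error exponent lands precisely at $(5-4c)/(3c)$.
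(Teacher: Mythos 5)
Your proof is correct and follows the paper's decomposition and bounds for the minor-arc piece ($E_1$, which is \eqref{estA}: Lemma~\ref{lem:L2} for one factor, Cauchy--Schwarz and Plancherel for the other two, with the same exponent arithmetic $1/3+4/(3c)=3/c-1-(5-4c)/(3c)$) and the floor-replacement piece ($E_2$, which is \eqref{estB}: mean-value theorem, $|Q(n)-|n|_c^c|\le 3$, and $\sum_n\omega_\lambda(n)\lesssim\lambda^{3/c-\kappa-1}$).

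The genuine difference is in the sum-versus-integral piece $E_3$. The paper proves \eqref{estC} by a direct cube-by-cube Riemann-sum comparison: for $x\in m+[0,1]^3$ it bounds $|e(m\cdot\xi)\eta(m/\lambda^{1/c})-e(x\cdot\xi)\eta(x/\lambda^{1/c})|\lesssim(|\xi|+\lambda^{-1/c})$ and the $\mathcal F^{-1}_{\R}\psi$--difference by another mean-value argument using $\big||m|_c^c-|x|_c^c\big|\lesssim\lambda^{(c-1)/c}\le\lambda^{-\kappa}$, then integrates with Lemma~\ref{lem:FB}. That argument is elementary, self-contained within this section, works for all $\xi\in\R^3$ at once, and is precisely where the $\lambda^{3/c-1}O(|\xi|)$ term originates. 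You instead apply Poisson summation to $\widetilde{\sigma}^{\mathfrak M}_\lambda$, identify the $m=0$ term with the integral, and control the tail $\sum_{m\ne0}$ with the Fourier-decay estimate \eqref{I.1} from Section~\ref{sec:erg}, treating $|\xi|\ge 1/2$ separately via periodicity. This is slicker and gives a $\xi$-independent bound $O(\lambda^{2/c-1})$ on the fundamental domain, which is sharper than the paper's $O\big(\lambda^{3/c-1}(|\xi|+\lambda^{-1/(4c)})\big)$ when $|\xi|\gg\lambda^{-1/c}$, but it imports the comparatively heavy machinery behind Lemma~\ref{lem:FTest}/\eqref{I.1}, and you still owe one technical step: \eqref{I.1} is proved for $K_\lambda^3$ without the cutoff $\eta(\cdot/\lambda^{1/c})$, so to apply it to $\widetilde{\sigma}^{\mathfrak M}_\lambda$ one must track the Leibniz terms where the integrations by parts hit $\eta$; these are indeed harmless (each brings a factor $\lambda^{-1/c}$ and is supported where $|x|_c^c>4\lambda$, far from the concentration shell), but this should be carried out rather than asserted. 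The paper sidesteps exactly this bookkeeping by not taking the Poisson route here.
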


\begin{proof}
Using \eqref{iden31} we see that
\begin{align}  \nonumber
\mathcal F_{\Z^3}^{-1}\sigma_\lambda (\xi) & = 
\sum_{m \in \Z^3} e(m\cdot \xi) \sigma_\lambda (m) \\ \label{iden32}
& =
\lambda^{\kappa} \sum_{m \in \Z^3} e(m\cdot \xi)
\eta \Big( \frac{m}{\lambda^{1/c}} \Big) 
\mathcal{F}^{-1}_{\R} \psi \big( \lambda^{\kappa} (Q (m) - \lambda) \big) \\ \nonumber
& \quad +
\sum_{m \in \Z^3} e(m\cdot \xi) 
\eta \Big( \frac{m}{\lambda^{1/c}} \Big) 
\int_{-1/2}^{1/2} e((Q (m) - \lambda) t) 
 \widetilde{\psi}_\lambda (t)  \, dt.
\end{align} 
We first show that 
\begin{align}
\label{estA}
\begin{split}
& \Big| \sum_{m \in \Z^3} e(m\cdot \xi) 
\eta \Big( \frac{m}{\lambda^{1/c}} \Big) 
\int_{-1/2}^{1/2} e((Q (m) - \lambda) t) 
 \widetilde{\psi}_\lambda (t)  \, dt \Big| \\ 
& \qquad \qquad \qquad \qquad \qquad \qquad \lesssim
\lambda^{3/c - 1 - (5-4c)/(3c) } \log(\lambda + 1),
\qquad \lambda \in \Z_+, \quad \xi \in \R^3,
\end{split}
\end{align}
and 
\begin{align}
\label{estB}
\begin{split}
 \sum_{m \in \Z^3} \mathbbm{1}_{|m| \lesssim \lambda^{1/c} } 
\big| 
\mathcal{F}^{-1}_{\R} \psi \big( \lambda^{\kappa} (Q (m) - \lambda) \big)
&-
\mathcal{F}^{-1}_{\R} \psi \big( \lambda^{\kappa} (|m|_c^c - \lambda) \big) \big| \\ 
& \lesssim
\lambda^{3/c - 1}, \qquad \lambda \in \Z_+.
\end{split}
\end{align} 
We first justify \eqref{estA}. Setting
$\lambda_c := (2c)^{-1} (2 \lambda)^{\kappa}$ and using
respectively Lemma~\ref{lem:L2} (with $g = \eta_1$), the
Cauchy--Schwarz inequality and the Plancherel theorem we see that the
left-hand side of \eqref{estA} is controlled by
\begin{align*}
\int_{\lambda_c \le |t| \le 1/2} 
\big| \prod_{j=1}^3 \Pi_{t,\lambda}^{\eta_j} (\xi_j) \big| \, dt
& \lesssim
\lambda^{1/3 + 1/(3c)} \log(\lambda + 1)
\int_{|t| \le 1/2} 
\big| \prod_{j=2}^3 \Pi_{t,\lambda}^{\eta_j} (\xi_j) \big| \, dt \\
& \lesssim
\lambda^{1/3 + 1/(3c)} \log(\lambda + 1)
\prod_{j=2}^3 \Big( \int_{|t| \le 1/2} 
\big| \Pi_{t,\lambda}^{\eta_j} (\xi_j) \big|^2 \, dt \Big)^{1/2} \\
& \lesssim
\lambda^{3/c - 1 - (5-4c)/(3c) } \log(\lambda + 1),
\end{align*}
uniformly in $\lambda \in \Z_+$ and $\xi \in \R^3$ as desired.  Next,
we verify \eqref{estB}. By the mean value theorem and the fact that
\begin{align*}
\frac{1}{1 + \big( \lambda^{\kappa} |\theta - \lambda| \big)^{10}}
\simeq
\frac{1}{1 + \big( \lambda^{\kappa} ||m|_c^c - \lambda| \big)^{10}},
\qquad \lambda \in \Z_+, \quad  m \in \Z^3,
\end{align*}
where $\theta$ is a convex combination of $|m|_c^c$ and $Q(m)$, we see that the left-hand side of \eqref{estB} is dominated by
\begin{align*}
\sum_{m \in \Z^3} 
\frac{\mathbbm{1}_{|m| \lesssim \lambda^{1/c} } \lambda^{\kappa}}{1 + \big( \lambda^{\kappa} ||m|_c^c - \lambda| \big)^{10}},
\end{align*}
which, in turn, by Lemma~\ref{kernel} and 
Lemma~\ref{lem:FB} (with $A = \lambda^{\kappa}$) is controlled by
\begin{align*}
\int_{\R^3} 
\frac{\mathbbm{1}_{|x| \lesssim \lambda^{1/c} } \lambda^{\kappa}\,dx}{1 + \big( \lambda^{\kappa} ||x|_c^c - \lambda| \big)^{10}}
\lesssim
\lambda^{3/c - 1},
\end{align*}
giving the claim in \eqref{estB}.

Combining \eqref{iden32} with \eqref{estA} and \eqref{estB} we see
that  the proof of Lemma~\ref{lem:1} will be completed if we show, uniformly in $\lambda \in\Z_+$ and $\xi \in \R^3$, that
\begin{align}
\label{estC}
\begin{split}
& \lambda^{\kappa} \Big| \sum_{m \in \Z^3} e(m \cdot\xi)
\eta \Big( \frac{m}{\lambda^{1/c}} \Big) 
\mathcal{F}^{-1}_{\R} \psi \big( \lambda^{\kappa} (|m|_c^c - \lambda) \big) \\ 
& \qquad \qquad \qquad \qquad \qquad 
- \int_{\R^3} e(x \cdot\xi)
\eta \Big( \frac{x}{\lambda^{1/c}} \Big) 
\mathcal{F}^{-1}_{\R} \psi \big( \lambda^{\kappa} (|x|_c^c - \lambda) \big) \, dx
\Big| \\
&\qquad \qquad \qquad \qquad \qquad \qquad \qquad  \qquad \lesssim 
\lambda^{3/c - 1} \big( |\xi| + \lambda^{- 1/(4c)} \big).
\end{split}
\end{align}  
For $x \in m + [0,1]^3$, $\lambda \in\Z_+$, and $\xi \in \R^3$, we observe that
\begin{align*}
\Big| e(m\cdot \xi) 
\eta \Big( \frac{m}{\lambda^{1/c}} \Big)  - e(x\cdot \xi) 
\eta \Big( \frac{x}{\lambda^{1/c}} \Big) \Big|
\lesssim
\big( |\xi| + \lambda^{- 1/c} \big) \mathbbm{1}_{|x| \lesssim \lambda^{1/c} }.
\end{align*}
Applying the mean value theorem we obtain
\begin{align*}
& \eta \Big( \frac{x}{\lambda^{1/c}} \Big) 
\Big|
\mathcal{F}^{-1}_{\R} \psi \big( \lambda^{\kappa} (|m|_c^c - \lambda) \big)
-
\mathcal{F}^{-1}_{\R} \psi \big( \lambda^{\kappa} (|x|_c^c - \lambda) \big)
\Big| \\
& \qquad \qquad \qquad \qquad \lesssim 
\frac{\mathbbm{1}_{|x| \lesssim \lambda^{1/c} }
\lambda^{\kappa} \lambda^{(c-1)/c}}{1 + \big( \lambda^{\kappa} |\theta_{m,x} - \lambda| \big)^{10}} \\
& \qquad \qquad \qquad \qquad \simeq
\frac{\mathbbm{1}_{|x| \lesssim \lambda^{1/c} }
\lambda^{-1/(4c)} }{1 + \big( \lambda^{\kappa} ||x|_c^c - \lambda| \big)^{10}},
\qquad x \in m + [0,1]^3, \quad \lambda \in\Z_+, 
\end{align*}
where $\theta_{m,x}$ is a convex combination of $|m|_c^c$ and $|x|_c^c$;
here we have used the fact that 
\[
||x|_c^c - |m|_c^c| \lesssim \lambda^{(c-1)/c} \le \lambda^{-\kappa}
\]
uniformly in $|m| \lesssim \lambda^{1/c}$, $x \in m + [0,1]^3$,
$\lambda \in\Z_+$.  Applying these estimates together with
Lemma~\ref{lem:FB} (with $A = \lambda^{\kappa}$) we see
that the left-hand side of \eqref{estC} is controlled by
\begin{align*}
\lambda^{\kappa}
\int_{\R^3} \mathbbm{1}_{|x| \lesssim \lambda^{1/c} } 
\frac{\big( |\xi| + \lambda^{- 1/(4c)} \big) dx}{1 + \big( \lambda^{\kappa} ||x|_c^c - \lambda| \big)^{10}}
\lesssim
\lambda^{3/c - 1} \big( |\xi| + \lambda^{- 1/(4c)} \big),
\end{align*}
as desired.
This finishes the proof of Lemma~\ref{lem:1}.
\end{proof}

\begin{lem} \label{lem:FA}
We have the following estimates
\begin{align}
\label{eq:30}
\int_{\R} | \mathcal{F}_{\R} \phi_{a,\delta}^{\pm} (t) | \, dt
\lesssim
\log(1 + \delta^{-1}), \qquad 0 < a \le 100, \quad 0 < \delta \le 1/2,
\end{align}
and
\begin{align}
\label{eq:31}
\int_{\R} (1 + |t|) | \mathcal{F}_{\R} \phi_{a,\delta}^{\pm} (t) | \, dt
\lesssim
\delta^{-1}, \qquad 0 < a \le 100, \quad 0 < \delta \le 1/2.
\end{align}
\end{lem}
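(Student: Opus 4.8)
The plan is to reduce both estimates to the elementary fact that the Fourier transform of the indicator of a bounded interval decays like $\min\{1,|t|^{-1}\}$, combined with the rapid decay of the (rescaled) Schwartz function $\mathcal{F}_{\R}\phi_\delta$. First I would write $\phi_{a,\delta}^{\pm} = \ind_{[a\pm\delta,100]} \ast \phi_\delta$, so that by the convolution theorem $\mathcal{F}_{\R}\phi_{a,\delta}^{\pm}(t) = \mathcal{F}_{\R}\ind_{[a\pm\delta,100]}(t)\,\mathcal{F}_{\R}\phi_\delta(t)$. If $a\pm\delta>100$ the interval is empty, $\phi_{a,\delta}^{\pm}\equiv 0$, and both bounds are trivial; otherwise the interval $[a\pm\delta,100]$ has length at most $101$, and a direct computation of $\int_{a\pm\delta}^{100} e(-xt)\,dx$ yields $|\mathcal{F}_{\R}\ind_{[a\pm\delta,100]}(t)| \lesssim \min\{1,|t|^{-1}\}$, uniformly in $0<a\le 100$ and $0<\delta\le 1/2$. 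On the other hand, since $\phi\in C^{\8}(\R)$ has compact support, the change of variables in the definition of $\phi_\delta(t)=\delta^{-1}\phi(\delta^{-1}t)$ gives $\mathcal{F}_{\R}\phi_\delta(t) = (\mathcal{F}_{\R}\phi)(\delta t)$ with $\mathcal{F}_{\R}\phi$ Schwartz, hence $|\mathcal{F}_{\R}\phi_\delta(t)| \lesssim_N (1+\delta|t|)^{-N}$ for every $N\in\Z_+$. Multiplying these two bounds gives $|\mathcal{F}_{\R}\phi_{a,\delta}^{\pm}(t)| \lesssim_N \min\{1,|t|^{-1}\}(1+\delta|t|)^{-N}$, uniformly in $a$ and $\delta$ as above.

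For \eqref{eq:30} I would integrate this pointwise bound by splitting $\R$ into the regions $|t|\le 1$, $1\le|t|\le\delta^{-1}$, and $|t|\ge\delta^{-1}$. On the first region the contribution is $O(1)$; on the second, $\int_1^{\delta^{-1}} |t|^{-1}\,dt = \log(1/\delta) \lesssim \log(1+\delta^{-1})$; on the third, using $(1+\delta|t|)^{-N}\le(\delta|t|)^{-N}$ with $N\ge 2$ makes $\int_{\delta^{-1}}^{\8} |t|^{-1}(\delta|t|)^{-N}\,dt = O(1)$. Adding up yields \eqref{eq:30}. For \eqref{eq:31} the key observation is that $(1+|t|)\min\{1,|t|^{-1}\}\le 2$ for every $t\in\R$, so the weight $(1+|t|)$ is absorbed and $(1+|t|)|\mathcal{F}_{\R}\phi_{a,\delta}^{\pm}(t)| \lesssim_N (1+\delta|t|)^{-N}$; a change of variables $s=\delta t$ then gives $\int_{\R}(1+\delta|t|)^{-N}\,dt = \delta^{-1}\int_{\R}(1+|s|)^{-N}\,ds \lesssim \delta^{-1}$ for $N\ge 2$, which is \eqref{eq:31}.

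This lemma is a routine harmonic-analysis estimate, and I do not anticipate any genuine obstacle. The only point requiring a little care is the bookkeeping around the threshold $|t|\sim\delta^{-1}$: it is precisely this transition, between the regime where the interval-indicator decay dominates and the regime where the Schwartz tail of $\mathcal{F}_{\R}\phi_\delta$ takes over, that produces the logarithm in \eqref{eq:30}, and that prevents the extra factor of $|t|$ from degrading the $\delta^{-1}$ bound in \eqref{eq:31}.
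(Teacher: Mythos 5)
Your proof is correct and follows essentially the same route as the paper: factor the Fourier transform via the convolution theorem, combine the $\min\{1,|t|^{-1}\}$ decay of the interval indicator with the rapid decay of $\mathcal{F}_{\R}\phi(\delta t)$, then split at $|t|\sim\delta^{-1}$ for \eqref{eq:30} and absorb the weight before rescaling for \eqref{eq:31}. The only cosmetic difference is that you split the integral into three regions where the paper uses two, which changes nothing.
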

\begin{proof}
Since $\phi_{a,\delta}^{\pm} = \mathbbm{1}_{[a \pm \delta,100]} \ast \phi_{\delta}$, we have
\begin{align*}
\mathcal{F}_{\R} \phi_{a,\delta}^{\pm} (t) 
=
\mathcal{F}_{\R} (\mathbbm{1}_{[a \pm \delta,100]}) (t) 
\mathcal{F}_{\R} (\phi_{\delta}) (t)
=
\mathcal{F}_{\R} (\mathbbm{1}_{[a \pm \delta,100]}) (t) 
\mathcal{F}_{\R} (\phi) (\delta t).
\end{align*}
Taking into account the estimate 
\begin{align*}
| \mathcal{F}_{\R} (\mathbbm{1}_{[a , b]}) (t) |
\lesssim
(1 + |t|)^{-1}, \qquad t \in \R, \quad - \infty < a < b \le 100,
\end{align*}
we see that for every $M \in \Z_+$ we have
\begin{align*}
| \mathcal{F}_{\R} \phi_{a,\delta}^{\pm} (t)  |
\lesssim_M
(1 + |t|)^{-1} (1 + \delta |t|)^{-M}, 
\qquad t \in \R, \quad 0 < a \le 100, \quad \delta > 0.
\end{align*}
To prove the estimate in \eqref{eq:30} it suffices to use the above
estimate and split the integral in question into two pieces
$|t| \le \delta^{-1}$ and $|t| > \delta^{-1}$. Then, the conclusion
easily follows.  The proof of \eqref{eq:31} is even simpler. It is
enough to make the change of variable $\delta t \mapsto t$.
\end{proof}

Now we are ready to prove Theorem~\ref{thm:disc}.

\begin{proof}[{Proof of Theorem~\ref{thm:disc}}]
We set $\delta := \lambda^{-1/(2c)}$. Using the inverse
Fourier transform formula and applying Lemma~\ref{lem:1} and
Lemma~\ref{lem:FA} we infer that
\begin{align}
\label{iden33}
\begin{split}
\sum_{m\in\mathbf S_c^3 (\lambda)} 
\phi_{a,\delta}^{\pm} \Big( \frac{m \cdot \xi}{\lambda^{1/c}} \Big)
& =
\int_{\R} \mathcal{F}_{\R} \phi_{a,\delta}^{\pm} (t) 
\mathcal F_{\Z^3}^{-1}\sigma_{\lambda} \Big( \frac{t \xi}{\lambda^{1/c}} \Big) \, dt \\ 
 =I_\lambda(\xi)& +
O_\eps \big(\lambda^{3/c - 1 - (5-4c)/(3c) + \eps} \big)
+ \lambda^{3/c - 1} O\big(\lambda^{- 1/(4c)} \log(\lambda + 1) \big),
\end{split}
\end{align}
uniformly in $\lambda \ge 1$, $\xi \in \S^2$, $0 < a \le 100$, where
\begin{align*}
I_\lambda(\xi):=
\lambda^{\kappa}
\int_{\R} \mathcal{F}_{\R} \phi_{a,\delta}^{\pm} (t)
\int_{\R^3} e(\lambda^{ - 1/c}x\cdot t\xi) \eta \Big( \frac{x}{\lambda^{1/c}} \Big) 
\mathcal{F}^{-1}_{\R} \psi \big( \lambda^{\kappa} (|x|_c^c - \lambda) \big) 
\, dx \, dt.
\end{align*}
Changing the variable $x \mapsto \lambda^{1/c} y$ and using
the polar decomposition, see \eqref{polar}, we obtain
\begin{align*} 
I_{\lambda}(\xi)
& =
\lambda^{3/c+\kappa}
\int_{\R} \mathcal{F}_{\R} \phi_{a,\delta}^{\pm} (t)
\int_{\R^3} e(y \cdot t\xi) \eta (y) 
\mathcal{F}^{-1}_{\R} \psi \big( \lambda^{\kappa + 1} (|y|_c^c - 1) \big) 
\, dy \, dt \\
& =
\lambda^{3/c+\kappa}
\int_{\R} \mathcal{F}_{\R} \phi_{a,\delta}^{\pm} (t)
\int_{0}^{\8} r^2 \int_{\mathbb S_c^2}
 e(r w \cdot t\xi) \eta (rw) 
\mathcal{F}^{-1}_{\R} \psi \big( \lambda^{\kappa + 1} (r^c - 1) \big) 
\, d\mu_c(w) \, dr \, dt.
\end{align*}
We now show that
\begin{align}
\label{est31}
\begin{split}
& \int_{\R} | \mathcal{F}_{\R} \phi_{a,\delta}^{\pm} (t) |
\Big| \int_{0}^{\8} r^2
 e(r w \cdot t\xi) \eta (rw) 
\mathcal{F}^{-1}_{\R} \psi \big( \lambda^{\kappa + 1} (r^c - 1) \big) \, dr \\ 
& \qquad \qquad \qquad \qquad \qquad \qquad- 
\int_{0}^{\8} e(w \cdot t\xi) 
\mathcal{F}^{-1}_{\R} \psi \big( \lambda^{\kappa + 1} (r^c - 1) \big) \, dr 
\Big| \, dt
\lesssim
\lambda^{-1/c},
\end{split}
\end{align}
uniformly in $\lambda \in \Z_+$, $\xi \in \S^2$, $0 < a \le 100$ and 
$w \in \mathbb S_c^2$. Indeed, by definition $\eta (w) = 1$ for 
$w \in \mathbb S_c^2$ and we see
\begin{align*} 
| r^2 e(r w \cdot t\xi) \eta (rw) 
- e(w \cdot t\xi) |
\lesssim
|r-1| (1 + |t|), \qquad t \in \R, \quad r > 0, \quad \xi \in \S^2, \quad w \in \mathbb S_c^2.
\end{align*}
This together with \eqref{eq:31} shows that the left-hand side of \eqref{est31} is bounded by
\begin{align*} 
\lambda^{1/(2c)}
\int_0^{\8} 
\frac{|r - 1| \, dr}{1 + \big| \lambda^{ \kappa+1}  ( r^c - 1) \big|^3 }
\lesssim
\lambda^{1/(2c)}
\int_{\R} 
\frac{|r| \, dr}{1 + \big| \lambda^{ \kappa+1}  r \big|^3}
\lesssim 
\lambda^{-1/c},
\end{align*}
which is the asserted estimate.
Let
\begin{align*} 
J_{\lambda}(\xi)
:=
\lambda^{3/c+\kappa}
\int_{\R} \mathcal{F}_{\R} \phi_{a,\delta}^{\pm} (t)
\int_{0}^{\8} \int_{\mathbb S_c^2}
 e(w \cdot t\xi)
\mathcal{F}^{-1}_{\R} \psi \big( \lambda^{\kappa + 1} (r^c - 1) \big) 
\, d\mu_c(w) \, dr \, dt.
\end{align*}
Then estimate \eqref{est31} yields
\begin{align} \label{est32}
| I_{\lambda}(\xi) - J_{\lambda}(\xi) |
\lesssim
\lambda^{3/c -1 - 1/(4c) },
\qquad \lambda \in\Z_+, \quad \xi \in \S^2, \quad 0 < a \le 100.
\end{align}
Changing the variable $r^c \mapsto s + 1$ we infer that for every $M \in \Z_+$  we have
\begin{align} \nonumber
J_{\lambda}(\xi)
& =
c^{-1} \lambda^{3/c+\kappa}
\int_{\R} \mathcal{F}_{\R} \phi_{a,\delta}^{\pm} (t)
\int_{-1/2}^{1/2} \int_{\mathbb S_c^2}
 e(w \cdot t\xi)
\mathcal{F}^{-1}_{\R} \psi \big( \lambda^{\kappa + 1} s \big) 
\, d\mu_c(w) (s+1)^{1/c - 1} \, ds \, dt \\ \nonumber
& \quad + 
O_M(\lambda^{-M}) \\ \label{iden34}
& =
c^{-1} \lambda^{3/c+\kappa}
\int_{\R} \mathcal{F}_{\R} \phi_{a,\delta}^{\pm} (t)
\int_{-1/2}^{1/2} \int_{\mathbb S_c^2}
e( w \cdot t\xi)
\mathcal{F}^{-1}_{\R} \psi \big( \lambda^{\kappa + 1} s \big) 
\, d\mu_c(w) \, ds \, dt \\ \nonumber
& \quad + 
O\big( \lambda^{3/c - 1 - 1/(4c) } \big),
\end{align} 
uniformly in $\lambda \in\Z_+$, $\xi \in \S^2$ and $0 < a \le 100$.

On the other hand, using the formula
\begin{align*} 
\int_{\mathbb S_c^2} 
\phi_{a,\delta}^{\pm} (x \cdot \xi) \, d\nu_{c} (x)
=
\int_{\R} \mathcal{F}_{\R} \phi_{a,\delta}^{\pm} (t)
\mathcal{F}^{-1}_{\R^3} (\nu_{c}) (t \xi) \, dt,
\end{align*}
and Corollary~\ref{cor:asym} we see that for every $\eps > 0$ we have
\begin{align*} 
r_c(\lambda) \int_{\mathbb S_c^2} 
\phi_{a,\delta}^{\pm} (x \cdot \xi) \, d\nu_{c} (x)
=
c^{-1} \lambda^{3/c- 1}
\int_{\R} \mathcal{F}_{\R} \phi_{a,\delta}^{\pm} (t)
\mathcal{F}^{-1}_{\R^3} (\mu_{c}) (t \xi) \, dt
+
O_{\eps} \big(  \lambda^{3/c - 1 - (9 - 8c)/(5c) + \eps} \big),
\end{align*}
uniformly in $\lambda \in \Z_+$, $\xi \in \S^2$ and $0 < a \le 100$.
Combining this with \eqref{iden33}, \eqref{est32} and \eqref{iden34},
and taking into account Lemma~\ref{lem:sm_disc} it suffices to verify
that
\begin{align*} 
\Big|
\lambda^{\kappa + 1}
\int_{\R} \mathcal{F}_{\R} \phi_{a,\delta}^{\pm} (t)
\int_{-1/2}^{1/2} \mathcal{F}^{-1}_{\R^3} (\mu_c) (t \xi)
\mathcal{F}^{-1}_{\R} \psi \big( \lambda^{\kappa + 1} s \big) 
\, ds \, dt 
-
\int_{\R} \mathcal{F}_{\R} \phi_{a,\delta}^{\pm} (t)
\mathcal{F}^{-1}_{\R^3} (\mu_c) (t \xi) \, dt \Big|
\lesssim
\lambda^{- 1 },
\end{align*} 
uniformly in $\lambda \in\Z_+$, $\xi \in \S^2$ and $0 < a \le 100$.
Changing the variable $\lambda^{\kappa + 1} s \mapsto u$ in the first
integral above, using the estimate
$|\mathcal{F}^{-1}_{\R} (\mu_c) (\zeta)| \lesssim 1$ for
$\zeta \in \R^3$, and Lemma~\ref{lem:FA} (a), we see that the above
estimate will follow once we show that
\begin{align*} 
\Big|
\int_{- \lambda^{\kappa + 1}/2 }^{ \lambda^{\kappa + 1}/2 } 
\mathcal{F}^{-1}_{\R} \psi (u) 
\, du
- 1 \Big|
\lesssim
\lambda^{- 2 },
\qquad \lambda \ge 1.
\end{align*} 
This, however, is a direct consequence of the fact that 
$\int_{\R} \mathcal{F}^{-1}_{\R} \psi(t)dt = \psi(0) = 1$.
This completes the proof of Theorem~\ref{thm:disc}.
\end{proof}

Finally we prove our equidistribution theorem.

\begin{proof}[Proof of Theorem \ref{thm:equi}]
Here we will proceed in a similar way as in the proof of
Theorem~\ref{thm:disc}. For the  convenience of the reader we give a sketch of
the proof. Since only  the values of $\phi$ on a neighborhood of
$\mathbb S_c^2$ play a role, without any loss of generality we may
assume that $\phi \in C_c (\mathbb{R}^3)$.  By
Corollary~\ref{cor:asym} one can replace $r_c(\lambda)$ by
$\big(\frac{2}{c}\big)^3\frac{\Gamma(1/c)^3 }{\Gamma(3/c)} \lambda^{3/c - 1}$
on the left-hand side of \eqref{eq:47}.  Now using the inverse Fourier transform
formula we obtain
\begin{align*} 
\sum_{x \in \mathbf P_{c}^3(\lambda) } \phi (x)
& =
\int_{\R^3} \mathcal{F}_{\R^3} \phi (\xi) 
\mathcal F_{\Z^3}^{-1}\sigma_{\lambda} \Big( \frac{\xi}{\lambda^{1/c}} \Big) \, d\xi, \\
\int_{\mathbb S_c^2} 
\phi (w) \, d\nu_{c} (w)
& =
\int_{\R^3} \mathcal{F}_{\R^3} \phi (\xi)
\mathcal{F}_{\R^3}^{-1} (\nu_{c}) (\xi) \, d\xi.
\end{align*}
By Remark \ref{rem:10} we have
$\nu_{c} = \frac{c^2 \Gamma(3/c)}{8 \Gamma(1/c)^3} \mu_{c}$, we see
that our task is reduced to showing that
\begin{align*} 
c \lambda^{-3/c + 1}
\int_{\R^3} \mathcal{F}_{\R^3} \phi (\xi) 
\mathcal F_{\Z^3}^{-1}\sigma_{\lambda} \Big( \frac{\xi}{\lambda^{1/c}} \Big) \, d\xi
\xrightarrow[\lambda \to\8]{} 
\int_{\R^3} \mathcal{F}_{\R^3} \phi (\xi)
\mathcal{F}_{\R^3}^{-1} (\mu_{c}) (\xi) \, d\xi.
\end{align*} 
Now applying Lemma~\ref{lem:1} we see that we can further reduce our problem to proving that
\begin{align} \label{red122}
I_{\lambda} 
\xrightarrow[\lambda \to\8]{} 
\int_{\R^3} \mathcal{F}_{\R^3} \phi (\xi)
\mathcal{F}_{\R^3}^{-1} (\mu_{c}) (\xi) \, d\xi,
\end{align} 
where
\begin{align*} 
I_{\lambda} :
=
c \lambda^{-3/c + 1 + \kappa}
\int_{\R^3} \mathcal{F}_{\R^3} \phi (\xi) 
\int_{\R^3} e\Big( \frac{x \cdot \xi}{\lambda^{1/c}} \Big) 
\eta \Big( \frac{x}{\lambda^{1/c}} \Big) 
\mathcal{F}^{-1}_{\R} \psi \big( \lambda^{\kappa} (|x|_c^c - \lambda) \big) 
\, dx \, d\xi.
\end{align*} 
Changing the variable $x \mapsto \lambda^{1/c} y$ and using
the polar decomposition \eqref{polar}, we obtain
\begin{align*} 
I_{\lambda} 
& =
c \lambda^{1 + \kappa}
\int_{\R^3} \mathcal{F}_{\R^3} \phi (\xi) 
\int_{\R^3} e( y \cdot \xi) 
\eta (y) 
\mathcal{F}^{-1}_{\R} \psi \big( \lambda^{\kappa+1} (|y|_c^c - 1) \big) 
\, dy \, d\xi \\
& =
c \lambda^{1 + \kappa}
\int_{\R^3} \mathcal{F}_{\R^3} \phi (\xi) 
\int_{0}^{\8} r^2 \int_{\mathbb S_c^2} e( rw \cdot \xi) 
\eta (rw) 
\mathcal{F}^{-1}_{\R} \psi \big( \lambda^{\kappa+1} (r^c - 1) \big) 
\, d\mu_c(w) \, dr \, d\xi.
\end{align*} 
Applying the bound
\begin{align*} 
| r^2 e(r w \cdot \xi) \eta (rw) 
- e(w \cdot \xi) |
\lesssim
|r-1| (|\xi| + 1), \qquad r > 0, \quad \xi \in \R^3, \quad w \in \mathbb S_c^2,
\end{align*}
and changing the variable $r^c \mapsto s + 1$
we see that
\begin{align*} 
I_{\lambda} 
& =
c \lambda^{1 + \kappa}
\int_{\R^3} \mathcal{F}_{\R^3} \phi (\xi) \mathcal{F}_{\R^3}^{-1} (\mu_{c}) (\xi)
\int_{0}^{\8}  
\mathcal{F}^{-1}_{\R} \psi \big( \lambda^{\kappa+1} (r^c - 1) \big) 
 \, dr \, d\xi
+ O(\lambda^{- (1 + \kappa)}) \\
& =
\lambda^{1 + \kappa}
\int_{\R^3} \mathcal{F}_{\R^3} \phi (\xi) \mathcal{F}_{\R^3}^{-1} (\mu_{c}) (\xi)
\int_{-1}^{\8}  
\mathcal{F}^{-1}_{\R} \psi \big( \lambda^{\kappa+1} s \big) 
(s+1)^{1/c - 1} \, ds \, d\xi
+ O(\lambda^{- (1 + \kappa)}) \\
& =
\lambda^{1 + \kappa}
\int_{\R^3} \mathcal{F}_{\R^3} \phi (\xi) \mathcal{F}_{\R^3}^{-1} (\mu_{c}) (\xi)
\int_{-1/2}^{1/2}  
\mathcal{F}^{-1}_{\R} \psi \big( \lambda^{\kappa+1} s \big) 
\, ds \, d\xi
+ O(\lambda^{- (1 + \kappa)}).
\end{align*}
Since $\psi(0)=1$ we have
\begin{align*} 
\lim_{\lambda \to \infty} 
\lambda^{1 + \kappa}
\int_{-1/2}^{1/2}  
\mathcal{F}^{-1}_{\R} \psi \big( \lambda^{\kappa+1} s \big) 
\, ds
&=
\lim_{\lambda \to \infty}
\int_{- \lambda^{\kappa + 1}/2 }^{ \lambda^{\kappa + 1}/2 } 
\mathcal{F}^{-1}_{\R} \psi (u) 
\, du\\
&=
\int_{\R} \mathcal{F}^{-1}_{\R} \psi(u) \, du \\
&= \psi(0),
\end{align*} 
using the dominated convergence theorem we see that 
the identity \eqref{red122} follows.
\end{proof}



\begin{thebibliography}{99}

\bibitem{And} 
\textsc{T.C. Anderson.} 
\newblock{Quantitative $\ell^p$-improving for discrete spherical averages along the primes.} 
\newblock{\textit{J{.} Fourier Anal{.} Appl{.}} 26 (2020), Paper No{.} 32, 12 pp.}

\bibitem{ACHK0} 
\textsc{T. Anderson, B. Cook, K. Hughes, A. Kumchev.} 
\newblock{On the ergodic Waring--Goldbach problem.} 
\newblock{Preprint (2017), available at arXiv:1703.02713.}


\bibitem{ACHK} 
\textsc{T. Anderson, B. Cook, K. Hughes, A. Kumchev.} 
\newblock{Improved $\ell^p$-boundedness for integral $k$-spherical maximal functions.} 
\newblock{\textit{Discrete Anal{.}} (2018), Paper No{.} 10, 18 pp.}

\bibitem{AM} 
\textsc{T. Anderson, J. Madrid.} 
\newblock{New bounds for discrete lacunary spherical averages.} 
\newblock{Preprint 2020, arXiv:2001.11557.}

\bibitem{APK} 
\textsc{T.C. Anderson, A.V. Kumchev, E.A. Palsson.} 
\newblock{Discrete maximal operators over surfaces of higher codimension.} 
\newblock{Preprint 2020, arXiv:2006.09968.}

\bibitem{AP} 
\textsc{T.C. Anderson, E.A. Palsson.} 
\newblock{Bounds for discrete multilinear spherical maximal functions.} 
\newblock{Preprint 2019, arXiv:1910.11409. To appear in Collect. Math.}
Online first version: doi:10.1007/s13348-020-00308-z


\bibitem{AZ}
\textsc{G.I. Arkhipov, A.N. Zhitkov}
\newblock{Waring's problem with nonintegral exponents.}
\newblock{ Izv. Akad. Nauk
SSSR Ser. Mat. 48 (1984), 1138--1150; English translation: Math. USSR-Izv. 25 (1985),
443--454.}


\bibitem{AS} 
\textsc{M. Avdispahi\'c, L. Smajlovi\'c.} 
\newblock{On maximal operators on $k$-spheres in $\mathbb{Z}^n$.} 
\newblock{\textit{Proc{.} Amer{.} Math{.} Soc{.}} 134 (2006), 2125--2130.}



\bibitem{BF}
\textsc{A. Balog, J.P. Friedlander.}
\newblock{A hybrid of theorems of Vinogradov and Piatetski--Shapiro.}
\newblock{\textit{Pacific J. Math.} 156 (1992), 45--62.}


\bibitem{Bat}
\textsc{P.T. Bateman.}
\newblock{On the Representations of a Number as the Sum of Three Squares.}
\newblock{\textit{Trans. Amer. Math. Soc. } 71 (1951), 70--101.}




\bibitem{BORSS}
\textsc{D. Beltran, R. Oberlin, L. Roncal, A. Seeger, B. Stovall.}
\newblock{Variation bounds for spherical averages.}
\newblock{Preprint 2020, available at arXiv:2009.07366. To appear in Math. Ann.}


\bibitem{Bir} 
\textsc{B.J. Birch.} 
\newblock{Forms in many variables.} 
\newblock{\textit{Proc. Royal Soc. London, Series A, Math. and Phy. Sci.} 265 (1962), 245--263.}


\bibitem{[7]} 
\textsc{J. Bourgain.} 
\newblock{Averages in the plane over convex curves and maximal operators.} 
\newblock{\textit{J{.} Anal{.} Math{.}} 47 (1986), 69--85.}

\bibitem{B1} \textsc{J. Bourgain.}
\newblock {On the maximal ergodic theorem for certain subsets of the integers.}
\newblock {\textit{Israel J. Math{.}} 61 (1988), 39--72.}

\bibitem{B2} \textsc{J. Bourgain.}
\newblock {On the pointwise ergodic theorem on $L^p$ for arithmetic sets.}
\newblock {\textit{Israel J{.} Math{.}} 61 (1988), 73--84.}

\bibitem{B3} \textsc{J. Bourgain.}
\newblock {Pointwise ergodic theorems for arithmetic sets, with an appendix by the author, H. Furstenberg, Y. Katznelson, and D.S. Ornstein.}
\newblock {\textit{Inst. Hautes Etudes Sci. Publ. Math.} 69 (1989), 5--45.}



\bibitem{BMSW1} \textsc{J. Bourgain, M. Mirek, E.M. Stein, B. Wr\'obel.}
\newblock{Dimension-free estimates for discrete Hardy--Littlewood
averaging operators over the cubes in
$\mathbb Z^d$}.
\newblock{\textit{Amer. J. Math.} 141 (2019), 857--905.}


\bibitem{BMSW} \textsc{J. Bourgain, M. Mirek, E.M. Stein, B. Wr\'obel.}
\newblock {On the Hardy--Littlewood maximal functions in high dimensions: Continuous and discrete perspective.}
\newblock {Preprint 2018, \texttt{arXiv:1812.00153}.} 


\bibitem{BRS}\textsc{J. Bourgain, Z. Rudnick, P. Sarnak.}
\newblock {Spatial statistics for lattice points on the sphere I: Individual results.}
\newblock {\textit{Bull.  Iranian Math. Soc.} 43 (2017),  361--386.}

\bibitem{Cal} \textsc{C.P. Calder{\'o}n.}
\newblock{Lacunary spherical means.}
\newblock{\textit{Illinois J. of Math.} 23 (1979), 476--484.}



\bibitem{CW} \textsc{R.R. Coifman, G. Weiss.}
\newblock{Review: R. E. Edwards and G. I. Gaudry, Littlewood-Paley and multiplier theory.}
\newblock{ \textit{Bull. of Amer. Math. Soc.} 84 (1978),  242--250.}


\bibitem{C1} 
\textsc{B. Cook.} 
\newblock{Maximal function inequalities and a theorem of Birch.} 
\newblock{\textit{Israel J{.} Math{.}} 231 (2019), 211--241.}

\bibitem{C2} 
\textsc{B. Cook.} 
\newblock{Discrete multilinear spherical averages.} 
\newblock{\textit{Canad{.} Math{.} Bull{.}} 62 (2019), 243--246.}

\bibitem{C3} 
\textsc{B. Cook.} 
\newblock{A note on discrete spherical averages over sparse sequences.} 
\newblock{Preprint 2018, arXiv:1808.03822. To appear in Proc. Amer. Math. Soc.}

\bibitem{CH} 
\textsc{B. Cook, K. Hughes.} 
\newblock{Bounds for lacunary maximal functions given by Birch--Magyar averages.} 
\newblock{\textit{Trans. Amer. Math. Soc.} 374 (2021), 3859--3879.}




\bibitem{Dav} \textsc{H. Davenport.}  \newblock{Cubic forms in
thirty-two variables.}  
\newblock{Philosophical Transactions of the
Royal Society of London. Series A, Mathematical and Physical Sciences
251 (1959), 193--232.}




\bibitem{Des} \textsc{J.M. Deshouillers.}
\newblock{Probl{\'e}me de Waring avec exposants non entiers.}
\newblock{\textit{Bull. Soc. Math. France} 101 (1973), 285--295.}

 


\bibitem{Duke} \textsc{W. Duke.}
\newblock{Hyperbolic  distribution  problems  and  half-integral  weight  Maass  forms.}
\newblock{\textit{Invent. Math.} 92 (1988), 73--90.}


\bibitem{EMV} \textsc{J. Ellenberg, P. Michel, A. Venkatesh.}
\newblock{Linnik's ergodic method and the distribution of integer
points on spheres.}  
\newblock{Automorphic representations and
L-functions, 119--185, Tata Inst. Fundam. Res. Stud. Math., 22, Tata
Inst. Fund. Res., Mumbai, 2013.}


\bibitem{folland} \textsc{G.B. Folland.} 
\newblock{Real analysis. Modern techniques and their applications.}
\newblock {\textit{2nd ed., Pure and Applied Mathematics (New York), John Wiley and Sons, Inc., New York,
(1999).}}


\bibitem{Gauss} \textsc{C.F. Gauss.}  
\newblock{Disquisitiones
arithmeticae.}  \newblock {Springer-Verlag, New York,
1801. Translated and with a preface by Arthur A. Clarke; Revised by
William C. Waterhouse,Cornelius Greither and A. W. Grootendorst and
with a preface by Waterhouse.}




\bibitem{GF}\textsc{E.P. Golubeva, O.M. Fomenko.}
\newblock{Asymptotic distribution of lattice points on
the three-dimensional sphere.}  
\newblock {
Zap. Nauchn. Sem. Leningrad. Otdel. Mat. Inst. Steklov.(LOMI) 160
(1987), Anal. Teor. Chisel i Teor. Funktsii. 8, 54--71, 297;
translation in J. Soviet Math. 52 (1990), 3036--3048.}



\bibitem{grafclas} \textsc{L. Grafakos.} 
\newblock{Classical Fourier Analysis.} 
\newblock {\textit{3rd ed{.}, Grad{.} Texts in Math{.} 249, Springer, 2014.}}

\bibitem{gros} \textsc{E. Grosswald.} 
\newblock{Representations of Integers as Sums of Squares.} 
\newblock {Berlin: Springer-Verlag (1985).}


\bibitem{HL1} \textsc{G{.}H{.} Hardy, J{.}E{.} Littlewood.} 
\newblock{A new solution of Waring's problem.}
\newblock {\textit{Q{.} J{.} Math{.}} 48 (1919), 272--293.}

\bibitem{HL2} \textsc{G{.}H{.} Hardy, J{.}E{.} Littlewood.}
 \newblock{Some problems of "Partitio Numerorum". A new solution of Waring's problem.} 
\newblock {\textit{Gottingen Nach{.}} (1920), 33--54. }

\bibitem{HB}\textsc{D.R. Heath--Brown.}
\newblock {The Pjateckii--Sapiro prime number theorem. }
\newblock {\textit{J. Number Theory} 16 (1983), 242--266.}

\bibitem{Hilb}\textsc{D. Hilbert.}
 \newblock{Beweis fiir die Darstellbarkeit der ganzen zahlen durch eine feste Anzahl n--ter Potenzen (Waringsches Problem).} 
\newblock {\textit{Math. Ann.} 67 (1909), 281--300.}


\bibitem{H1} 
\textsc{K. Hughes.} 
\newblock{The discrete spherical averages over a family of sparse sequences.} 
\newblock{\textit{J{.} Anal{.} Math{.}} 138 (2019), 1--21.}

\bibitem{H2} 
\textsc{K. Hughes.} 
\newblock{Restricted weak-type endpoint estimates for $k$-spherical maximal functions.} 
\newblock{\textit{Math{.} Z{.}} 286 (2017), 1303--1321.}

\bibitem{H3} 
\textsc{K. Hughes.} 
\newblock{Maximal functions and ergodic averages related to Waring's problem.} 
\newblock{\textit{Israel J{.} Math{.}} 217 (2017), 17--55.}

\bibitem{H4} 
\textsc{K. Hughes.} 
\newblock{$\ell^p$-improving for discrete spherical averages.} 
\newblock{\textit{Ann. H. Lebesgue} 3 (2020), 959--980.}


\bibitem{I1} 
\textsc{A.D. Ionescu.} 
\newblock{An endpoint estimate for the discrete spherical maximal function.} 
\newblock{\textit{Proc{.} Amer{.} Math{.} Soc{.}} 132 (2004), 1411--1417.}

\bibitem{IS} 
\textsc{A. Iosevich, E. Sawyer} 
\newblock{Maximal averages over surfaces.}
\newblock{\textit{Adv. Math.} 132 (1997), 46--119.}

\bibitem{ISS} 
\textsc{A. Iosevich, E. Sawyer, A. Seeger} 
\newblock{On averaging operators associated with convex hypersurfaces of finite type.} 
\newblock{\textit{J. Anal. Math.} 79 (1999), 159--187.}


\bibitem{Iw} 
\textsc{H.  Iwaniec.}
\newblock{Fourier  coefficients  of  modular  forms  of  half-integral  weight.}
\newblock{\textit{Invent. Math.} 87 (1987), 385--401.}


\bibitem{IK} \textsc{H. Iwaniec, E. Kowalski.}
\newblock {Analytic Number Theory.}
\newblock {\textit{Vol. 53, Amer. Math. Soc. Colloquium
Publications, Providence RI, (2004).}}

\bibitem{JSW} \textsc{R.L. Jones, A. Seeger, J. Wright.}
\newblock {Strong variational and jump inequalities in harmonic analysis.}
\newblock {{Trans. Amer. Math. Soc.} 360 (2008),  6711--6742.}


\bibitem{K1} 
\textsc{R. Kesler.} 
\newblock{$\ell^p$-improving properties and sparse bounds for discrete spherical maximal averages.} 
\newblock{Preprint 2018, arXiv:1805.09925. To appear in J. Anal. Math.}
Online first version: doi:10.1007/s11854-021-0150-y

\bibitem{K2} 
\textsc{R. Kesler.} 
\newblock{$\ell^p$-improving properties and sparse bounds for discrete spherical maximal means, revisited.} 
\newblock{Preprint 2018, arXiv:1809.06468.}

\bibitem{KL} 
\textsc{R. Kesler, M. Lacey.} 
\newblock{$\ell^p$-improving inequalities for discrete spherical averages.} 
\newblock{\textit{Anal{.} Math{.}} 46 (2020), 85--95.}

\bibitem{KLM} 
\textsc{R. Kesler, M. Lacey, D. Mena Arias.} 
\newblock{Lacunary discrete spherical maximal functions.} 
\newblock{\textit{New York J{.} Math{.}} 25 (2019), 541--557.}

\bibitem{KLM2} 
\textsc{R. Kesler, M. Lacey, D. Mena.} 
\newblock{Sparse bounds for the discrete spherical maximal functions.} 
\newblock{\textit{Pure Appl{.} Anal{.}} 2 (2020), 75--92.}




\bibitem{Lin}\textsc{Yu.V. Linnik.}  \newblock{Ergodic properties of
algebraic fields.}  \newblock{Translated from the Russian
by M. S. Keane. Ergebnisse der Mathematik und ihrer Grenzgebiete, Band
45 Springer-Verlag New York Inc., New York 1968.}



\bibitem{Ma} \textsc{\'A. Magyar.}
\newblock {$L^p$-bounds for spherical maximal operators on $\mathbb{Z}^n$.} 
\newblock {\textit{Rev{.} Mat{.} Iberoamericana} 13 (1997), 307--317.}

\bibitem{Ma2} \textsc{\'A. Magyar.}
\newblock {Diophantine equations and ergodic theorems.}
\newblock {\textit{Amer. J. Math.} 124 (2002), 921--953.}

\bibitem{Ma3} \textsc{\'A. Magyar.}
\newblock {On the distribution of lattice points on spheres and level surfaces of polynomials.}
\newblock {\textit{J. Num. Theory} 122 (2007), 69--83.}


\bibitem{Mag} \textsc{\'A. Magyar.}
\newblock {On the distribution of solutions to Diophantine equations. A panorama of discrepancy theory.}
\newblock {\textit{Lecture Notes in Math. 2107, Springer, 2014.} 487--538.}


\bibitem{MSW} \textsc{\'A. Magyar, E.M. Stein, S. Wainger.}
\newblock {Discrete analogues in harmonic analysis: spherical averages.}
\newblock {\textit{Ann. Math.} 155 (2002), 189--208.}


\bibitem{M1} \textsc{M. Mirek.}
\newblock {Roth's theorem in the Piatetski--Shapiro primes.}
\newblock {\textit{Rev. Mat. Iberoamericana} 31 (2015), 617--656.} 

\bibitem{M2} \textsc{M. Mirek.}
\newblock {Weak type $(1, 1)$ inequalities for discrete rough maximal functions.}
\newblock {\textit{J. Anal. Math.} 127 (2015), 247--281.}


\bibitem{Piat} \textsc{M. Mirek.}
\newblock {$l^{p} (\mathbb{Z})$-Boundedness of discrete maximal functions along thin subsets of primes and pointwise ergodic theorems.}
\newblock {\textit{Math. Z.} 279 (2015), 27--59.} 





\bibitem{MSZ1} \textsc{M. Mirek, E.M. Stein, P. Zorin-Kranich.}
\newblock{A bootstrapping approach to jump inequalities and their applications.}
\newblock{\textit{Anal. PDE} 13 (2020), 527--558.}


\bibitem{MusSch} 
\textsc{C. Muscalu, W. Schlag.} 
\newblock {Classical and Multilinear Harmonic Analysis, Volume 1.}
\newblock {\textit{Cambridge Studies in Advanced Mathematics. Cambridge University Press, 2013.}}


\bibitem{Nat} \textsc{M.B. Nathanson.}
\newblock {Additive Number Theory. The Classical Bases.}
\newblock {\textit{Springer--Verlag, 1996.}}

\bibitem{PS} \textsc{I. Piatetskii--Shapiro.}
\newblock {On the distribution of prime numbers in sequences of the form $\lfloor f(n)\rfloor$.}
\newblock {\textit{Math. Sbornik} 33 (1953), 559--566.}



\bibitem{Sieg} \textsc{C.L. Siegel.}
\newblock {{\"U}ber  die  Klassenzahl  quadratischen  Zahlkorper}
\newblock {\textit{Acta Arith.} 1 (1935), 83--86.}





\bibitem{[42]}
\textsc{E{.}M{.} Stein.} 
\newblock{Maximal functions. I. Spherical means.} 
\newblock{\textit{Proc{.} Nat{.} Acad{.} Sci{.} U{.}S{.}A{.}} 73 (1976), 2174--2175.}

\bibitem{bigs} 
\textsc{E{.}M{.} Stein.} 
\newblock {{H}armonic {A}nalysis: {R}eal-{V}ariable {M}ethods, {O}rthogonality,
		and {O}scillatory {I}ntegrals.}
\newblock {\textit{Princeton University Press (1993).}}


\bibitem{VDC} \textsc{J.G. van der Corput.}
\newblock {Neue zahlentheoretische Abschatzungen II.}
\newblock {\textit{Math. Z.} 29 (1929), 397--426.}


\bibitem{vau}
\textsc{R{.}C{.} Vaughan.} 
\newblock{On Waring's problem for smaller exponents. II.}
\newblock{\textit{Mathematika} 33 (1986), 6--22.}

\end{thebibliography}
\end{document}